\documentclass[10pt,reqno]{amsart}
\usepackage{amsmath,amssymb,cite,mathrsfs}
\usepackage[left=1.9cm,right=1.9cm,top=2.1cm,bottom=2.0cm]{geometry}
\usepackage{enumitem,microtype}

\usepackage[hidelinks,
linktocpage,pdfpagelabels, bookmarksnumbered,bookmarksopen]{hyperref}
\usepackage[english]{babel}

\numberwithin{equation}{section}

\newtheorem{thm}{Theorem}[section]
\newtheorem{lem}[thm]{Lemma}
\newtheorem{cor}[thm]{Corollary}
\newtheorem{Prop}[thm]{Proposition}

\newtheorem{Rem}[thm]{Remark}

\newcommand{\N}{\mathbb{N}}
\newcommand{\R}{\mathbb{R}}

\newcommand{\ths}{{\widetilde{H}^s(\Omega)}}
\newcommand{\Ds}{{(-\Delta)^{s}}}
\newcommand{\ds}{{(-\Delta)^{s/2}}}
\newcommand{\twostar}{{2^{*}_{\mu,s}}}
\newcommand{\SH}{{\mathcal S_{H,s}}}

\begin{document}
	\baselineskip=14pt

	\title[Critical fractional Hartree problems]{Critical functions and  asymptotics for fractional Hartree Brezis--Nirenberg problems in low dimensions}
	 \author[G. Vaira]{Giusi Vaira$^{*}$}
     \author[Y. Wu]{Yue Wu}
	\author[M. Yang]{Minbo Yang$^{**}$}

	\address{Giusi Vaira
		 \newline\indent Dipartimento di Matematica, Università degli Studi di Bari Aldo Moro
		\newline\indent
		Bari 70125, Italy}
	    \email{\href{mailto:giusi.vaira@uniba.it}{giusi.vaira@uniba.it}}
	\address{Yue Wu
		 \newline\indent School of Mathematical Sciences, Zhejiang Normal University,
		\newline\indent
		Jinhua 321004, People's Republic of China}
	    \email{\href{mailto:yuewu1@zjnu.edu.cn}{yuewu1@zjnu.edu.cn}}

	\address{Minbo Yang
		 \newline\indent School of Mathematical Sciences, Zhejiang Normal University,
		\newline\indent
		Jinhua 321004, People's Republic of China}
	    \email{\href{mailto:mbyang@zjnu.edu.cn}{mbyang@zjnu.edu.cn}}

		\subjclass[2020]{{35R11, 35J20, 35J60, 35B33, 35A15}}
	\keywords{Fractional Laplacian; Hartree equation; Brezis--Nirenberg problem; critical potential; blow-up; Robin function.}

\thanks{$^*$ Giusi Vaira is partially supported by the INdAM-GNAMPA
project ``Singolarità, nonlinearità e interazioni non locali: nuovi
approcci analitici per equazioni differenziali e sistemi''
(CUP E53C25002010001).
   }
	\thanks{$^{**}$Minbo Yang is partially supported by National Natural Science Foundation of China (12471114) and the Natural Science Foundation of Zhejiang Province (LZ26A010002).}

\begin{abstract}
We study the Brezis--Nirenberg problem for the fractional Laplacian with an upper critical Hartree nonlinearity on a bounded domain $\Omega\subset\R^N$, in the low-dimensional range $2s<N<4s$. For a continuous potential $a$, we prove that the following properties are equivalent: the Robin function of $(-\Delta)^s+a$ is negative somewhere in $\Omega$; the associated Hartree--Sobolev quotient $\SH(a)$ lies strictly below the best constant $\SH$; and $\SH(a)$ is attained. This leads to the notion of a critical potential, for which $\SH(a)=\SH$ and every nontrivial decrease of $a$ lowers the infimum. For a critical potential $a$ and $V\in L^\infty(\Omega)$, we determine the leading term in the asymptotics of $\SH(a+\varepsilon V)$ as $\varepsilon\to0^+$ and describe the blow-up of almost-minimizers, including the concentration speed and the location of the concentration point. Compared with the Sobolev case, the nonlocal nonlinearity produces several new interaction terms; a key ingredient is a proof of non-attainment at the threshold based on the sharp Hardy--Littlewood--Sobolev inequality on the sphere.
\end{abstract}

\maketitle

\medskip

\section{Introduction and main results}

\subsection{Background}
Let $\Omega\subset\R^N$ be a bounded domain and let $a\in C(\overline\Omega)$. The classical Brezis--Nirenberg problem \cite{BrNi} asks whether the quotient
\begin{equation*}
S(a):=\inf_{u\in H^1_0(\Omega)\setminus\{0\}}\frac{\int_\Omega|\nabla u|^2dx+\int_\Omega au^2dx}{\|u\|_{L^{2^*}(\Omega)}^2},
\qquad 2^*=\frac{2N}{N-2},
\end{equation*}
lies strictly below the Sobolev constant $S$. Since $S(a)\leq S$ always holds, the strict inequality is the condition that rules out loss of compactness through concentration, and it yields a minimizer, hence a positive solution of $-\Delta u+au=u^{2^*-1}$ in $\Omega$. Brezis and Nirenberg discovered that the answer depends on the dimension in an essential way. If $N\geq4$, the condition is local: $S(a)<S$ as soon as $a<0$ at some point. If $N=3$, local information on $a$ is not enough; for instance, when $\Omega$ is a ball and $a\equiv-\lambda$ is constant, one has $S(a)<S$ if and only if $\lambda>\lambda_1/4$, where $\lambda_1$ is the first Dirichlet eigenvalue. The reason is that the Aubin--Talenti bubble is not square integrable in dimension three, so that the gain coming from a negative potential is of the same order as the interaction of the bubble with the boundary. This interaction is described by the regular part of the Green function of $-\Delta+a$.

This picture was made precise by Druet \cite{Druet2002}, who proved, as conjectured by Brezis, that for $N=3$ the infimum $S(a)$ is attained if and only if the Robin function of $-\Delta+a$ (the regular part of the Green function evaluated on the diagonal) is negative somewhere in $\Omega$. Following Hebey and Vaugon \cite{HeVa2001}, a potential $a$ is called \emph{critical} if $S(a)=S$ and $S(\widetilde a)<S$ for every $\widetilde a\leq a$ with $\widetilde a\not\equiv a$. Critical potentials sit exactly at the threshold between compactness and concentration, and it is natural to ask how $S(a+\varepsilon V)$ behaves when a critical potential $a$ is perturbed. For $N=3$, this question was answered by Frank, K\"onig and Kova\v{r}\'{\i}k \cite{FKK1}, who computed the leading term of $S-S(a+\varepsilon V)$ as $\varepsilon\to0^+$ and described the concentration of almost-minimizers, including the concentration speed and the location of the concentration points. The analysis was extended to solutions in \cite{FKK3} and to higher dimensions in \cite{FKK2}. The fact that the Robin function governs the location of blow-up points of critical problems goes back to the works of Rey \cite{Rey1989,Rey1990}.

For the fractional Laplacian $\Ds$, $s\in(0,1)$, Brezis--Nirenberg type existence results were obtained by Servadei and Valdinoci \cite{SeVa2013,SeVa2015}. In the fractional setting, the role of dimension three is played by the range $2s<N<4s$: the fractional bubble belongs to $L^2(\R^N)$ if and only if $N>4s$, and for $2s<N<4s$ the energy gain produced by the potential is again of the same order as the boundary interaction. In this low-dimensional range, De Nitti and K\"onig \cite{DeNittiKonig2023} proved the fractional counterparts of the results of Druet and of Frank, K\"onig and Kova\v{r}\'{\i}k: the characterization of $S(a)<S$ through the Robin function of $\Ds+a$, the energy asymptotics for perturbations of critical potentials, and the blow-up profile of almost-minimizers.

A different source of nonlocality comes from the nonlinearity. Problems with a Hartree (or Choquard) term $\bigl(|x|^{-\mu}*|u|^p\bigr)|u|^{p-2}u$ arise in quantum mechanics and in the theory of self-gravitating matter, and the Hardy--Littlewood--Sobolev inequality singles out an upper critical exponent for which the associated energy is invariant under translations and dilations. Brezis--Nirenberg type problems with the upper critical Hartree nonlinearity were first studied by Gao and Yang \cite{GY}. More recently, energy asymptotics and blow-up of concentrating solutions for the critical Hartree problem in the local case and in dimension three were investigated in \cite{GGYZ,WuYangHartree,Gao2026Existence}. For the fractional Laplacian, the blow-up of solutions of slightly subcritical Hartree problems, and of the critical Hartree Brezis--Nirenberg problem in the high-dimensional range $N>4s$, was recently studied in \cite{BorgiaCingolaniYangZhao2026}. The purpose of the present paper is to carry out the program of \cite{Druet2002,FKK1,DeNittiKonig2023} for a problem which is nonlocal both in the operator and in the nonlinearity, namely the fractional Hartree Brezis--Nirenberg problem in the low-dimensional range $2s<N<4s$.

\subsection{Setting}

For a given $a\in C(\overline{\Omega})$, we study the variational problem of minimizing the quotient
\begin{equation*}
\SH(a)[u]
:=
\frac{
\displaystyle\int_{\R^N}|\ds u|^2dx
+\int_\Omega a(x)u^2(x)dx
}{
\displaystyle\left(
\int_\Omega\int_\Omega
\frac{|u(x)|^{\twostar}|u(y)|^{\twostar}}
{|x-y|^\mu}
dxdy
\right)^{\frac{1}{\twostar}}
}
\end{equation*}
for $u\in\ths\setminus\{0\}$ and
$
\ths
:=
\left\{
u\in H^s(\R^N):
u\equiv0\ \text{on }\R^N\setminus\Omega
\right\},
$
where  $N\in\N$, $s\in(0,1)$,  $2s<N<4s$, $0<\mu<N$, $\Omega\subset\R^N$ is a bounded  domain with $C^2$ boundary, the exponent
$\twostar=(2N-\mu)/(N-2s)$ is the upper critical Hardy--Littlewood--Sobolev exponent. We recall the Hardy--Littlewood--Sobolev (HLS for short) inequality.
\begin{Prop}[Hardy--Littlewood--Sobolev inequality, see \cite{LL}]\label{HLS1}
Let $t,r>1$ and $0<\mu<N$ with $1/t+\mu/N+1/r=2$, $f\in L^{t}(\mathbb{R}^N)$ and $h\in L^{r}(\mathbb{R}^N)$. There exists a sharp constant $C(t,N,\mu,r)$, independent of $f,h$, such that
\begin{equation*}
\int_{\mathbb{R}^{N}}\int_{\mathbb{R}^{N}}\frac{f(x)h(y)}{|x-y|^{\mu}}dxdy\leq C(t,N,\mu,r) \left\|f\right\|_{t}\left\|h\right\|_{r}.
\end{equation*}
If $t=r=2N/(2N-\mu)$, then
\begin{equation*}
 C(t,N,\mu,r)=C(N,\mu)=\pi^{\frac{\mu}{2}}\frac{\Gamma(\frac{N}{2}-\frac{\mu}{2})}{\Gamma(N-\frac{\mu}{2})}\left\{\frac{\Gamma(\frac{N}{2})}{\Gamma(N)}\right\}^{-1+\frac{\mu}{N}}.
\end{equation*}
\end{Prop}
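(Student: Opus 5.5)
The HLS inequality has two parts, the existence of a finite constant for general exponents and the explicit sharp constant in the diagonal case $t=r=2N/(2N-\mu)$, and I would prove them separately following Lieb's approach.

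\textbf{Existence of a finite constant.} By homogeneity, normalize $\|f\|_t=\|h\|_r=1$. Since $|\int\int f h|x-y|^{-\mu}| \le \int\int |f||h||x-y|^{-\mu}$, we may assume $f,h\ge0$.

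I would write $|x-y|^{-\mu}=\mu\int_0^\infty c^{-\mu-1}\chi_{\{|x-y|<c\}}\,dc$. Then I would use the layer-cake representations $f=\int_0^\infty\chi_{\{f>a\}}\,da$ and $h=\int_0^\infty\chi_{\{h>b\}}\,db$. This reduces the double integral to an integral over $(a,b,c)$ of
\[
I(a,b,c)=\int\int \chi_{\{f>a\}}(x)\,\chi_{\{|x-y|<c\}}\,\chi_{\{h>b\}}(y)\,dx\,dy .
\]
Write $u(a)=|\{f>a\}|$, $v(b)=|\{h>b\}|$ and $w(c)=|B_c|$. The quantity $I$ is bounded by the minimum of $uv$, $uw$ and $vw$.

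I would then split the $c$-integration according to which of these three products is smallest. The exponent relation $1/t+\mu/N+1/r=2$ makes every resulting integral scale correctly. It leaves bounds in terms of $\int u(a)a^{t-1}\,da=\|f\|_t^t/t$ and the analogous quantity for $h$. The final step is Young's inequality $a^{\alpha}b^{\beta}\le\ldots$.

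The condition $t,r>1$ enters exactly in the convergence of the $c$-integrals at $0$ and $\infty$. This step is routine but computational.

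\textbf{Sharp constant in the diagonal case.} First, by Riesz's rearrangement inequality, it suffices to consider $f=f^*$ and $h=h^*$. In the diagonal case the functional is symmetric. A Cauchy--Schwarz argument, using the positive definiteness of the kernel $|x|^{-\mu}$ (its Fourier transform is a positive multiple of $|\xi|^{\mu-N}$), shows that the optimum may be taken with $f=h$.

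Next I would prove that a maximizer exists. Rearrangement gives a maximizing sequence of symmetric decreasing functions. Using dilations, I would normalize it, for instance $f_n(e_1)$ suitably fixed. Helly's selection theorem then gives pointwise convergence. The pointwise bound $f^*(x)\le C|x|^{-N/t}\|f\|_t$ lets one exclude loss of mass by a Brezis--Lieb type argument.

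\textbf{Main obstacle.} The genuinely hard step is identifying the maximizer. For this I would use the conformal invariance of the functional in the diagonal case. Via stereographic projection $\mathcal{S}:\mathbb{R}^N\to\mathbb{S}^N$, the functional transfers to the sphere with kernel $|\xi-\eta|^{-\mu}$. The transferred functional is invariant under the conformal group, including inversion $x\mapsto x/|x|^2$ with $f\mapsto |x|^{-2N/t}f(x/|x|^2)$.

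I would then apply the competing symmetries argument of Carlen--Loss. Alternate the rearrangement $f\mapsto f^*$ with a fixed conformal rotation $J$ of the sphere. Show that the iterates converge in $L^t$ to a multiple of $h_0(x)=(1+|x|^2)^{-(2N-\mu)/2}$, which corresponds to a constant function on the sphere. Each step does not decrease the functional, so $h_0$ is a maximizer.

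Finally, I would compute $C(N,\mu)$ by evaluating the functional at $h_0$. Equivalently, evaluate it on the sphere for the constant function. This requires $\int_{\mathbb{S}^N}|\xi-\eta|^{-\mu}\,d\eta$, which follows from standard Beta function identities. After simplification with $|\mathbb{S}^N|=2\pi^{(N+1)/2}/\Gamma(\tfrac{N+1}{2})$ and the duplication formula, this yields the stated value
\[
C(N,\mu)=\pi^{\mu/2}\,\frac{\Gamma(\tfrac N2-\tfrac\mu2)}{\Gamma(N-\tfrac\mu2)}\left\{\frac{\Gamma(N/2)}{\Gamma(N)}\right\}^{-1+\mu/N}.
\]
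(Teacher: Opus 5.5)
Your outline is correct and is essentially the proof in Lieb--Loss that the paper cites without proof: the layer-cake bound gives a finite constant for general exponents, and the Carlen--Loss competing-symmetries argument gives the sharp diagonal constant. Together with the $L^t$-continuity of the functional from the first part, competing symmetries yields $\mathcal I(f)\le \mathcal I(h_f)$ for every $f$ directly, so your separate existence-of-a-maximizer step via Helly's theorem is unnecessary.

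One correction: the $c$-integrals converge simply because $0<\mu<N$. The hypotheses $t,r>1$ are needed afterwards, in the H\"older step for the $(a,b)$-integrals. For instance, $\int_0^{a^{t/r}}v(b)^{1-\mu/N}\,db$ is controlled by $\bigl(\int_0^\infty v(b)b^{r-1}\,db\bigr)^{1-\mu/N}$ only because $r(1-\mu/N)<1$, which is equivalent to $t>1$; symmetrically, $r>1$ is used for the $a$-integral.
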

\noindent
By Proposition~\ref{HLS1} and the fractional Sobolev embedding, the functional
\begin{equation*}
\int_{\mathbb{R}^{N}}\int_{\mathbb{R}^{N}}\frac{\left|u(x)\right|^p\left|v(y)\right|^p}{|x-y|^{\mu}}dxdy
\end{equation*}
is well defined on $H^s(\R^N) \times H^s(\R^N)$ for $\frac{2N-\mu}{N}\leq p \leq \frac{2N-\mu}{N-2s}$. Combining Proposition~\ref{HLS1} with the fractional Sobolev inequality
\begin{equation}
    \label{sob_ineq}
            \int_{\R^N} |\ds u|^2 dx \geq \mathcal S \| u\|_{\frac{2N}{N-2s}}^2 ,
\end{equation}
where $\mathcal S:=2^{2s}\pi^s
\frac{\Gamma((N+2s)/2)}{\Gamma((N-2s)/2)}
\left(\frac{\Gamma(N/2)}{\Gamma(N)}\right)^{2s/N}$ is the sharp fractional Sobolev constant, we are led to the following nonlocal Sobolev inequality
\begin{equation}
\label{hls_ineq}
\int_{\R^N}|\ds u|^2dx
\geq
\SH
\left(
\int_{\R^N}\int_{\R^N}
\frac{|u(x)|^{\twostar}|u(y)|^{\twostar}}
{|x-y|^\mu}
dxdy
\right)^{\frac{1}{\twostar}}.
\end{equation}
The sharp constant is given by
\begin{equation*}
\SH
=
\mathcal S\,C(N,\mu)^{-\frac{1}{\twostar}}.
\end{equation*}

In this paper, the fractional Laplacian operator $\Ds u$ is defined for any $u \in H^s(\R^N)$ by the Fourier transform
\begin{equation*}
(-\Delta)^s u = \mathcal F^{-1} (|\xi|^{2s} \mathcal F u).
\end{equation*}
Moreover, the fractional Laplacian also admits the singular integral representation (see  \cite{BuVa2016,MR3916700}):
\begin{equation*}
(-\Delta)^s u(x) := C_{N,s} P.V. \int_{\R^N} \frac{u(x) - u(y)}{|x-y|^{N+2s}}  dy,
\end{equation*}
where
\begin{equation*}
C_{N,s} :=   \frac{s 2^{2s} \Gamma(\frac{N+2s}{2})}{\pi^{N/2} \Gamma (1-s)}.
\end{equation*}

For the quotient $\SH(a)[u]$ introduced at the beginning of this section, we denote the corresponding infimum by
\begin{equation*}
\SH (a)  :=  \inf \left \{ \SH (a) [u] \, :u \in \ths,u\neq 0 \right \},
\end{equation*}
and compare it with $\SH$  introduced in \eqref{hls_ineq}.

Our goal is to answer, for the fractional Hartree quotient $\SH(a)[u]$ in the range $2s<N<4s$, the three questions considered in \cite{DeNittiKonig2023} for the fractional Sobolev quotient: the characterization of the strict inequality $\SH(a)<\SH$, the energy asymptotics under perturbations of a critical potential, and the blow-up behavior of almost-minimizers. As in the Sobolev case, the regular part of the Green function of $\Ds+a$ enters the leading energy correction. However, the Hartree structure creates several new difficulties, which we now describe.

First, the denominator of $\SH(a)[u]$ is a double integral. When it is evaluated at a projected bubble corrected by the regular part of the Green function, all powers $\phi_a(z)^k\lambda^{-k(N-2s)}$ with $k\leq K=\lfloor\frac{2s}{N-2s}\rfloor$ contribute at an order not smaller than $\lambda^{-2s}$, and they appear both through single integrals and through mixed double integrals; see Theorem~\ref{lem:de-2.1}. Second, the second variation of the Hartree term contains two genuinely nonlocal quadratic forms, and both of them must be kept when the remainder of a concentrating sequence is estimated. Their control on the orthogonal complement of the tangent space of the manifold of bubbles relies on the nondegeneracy and spectral estimates of Lu, Yang and Zhao \cite{LuYangZhao2025}. Third, the proof that $\SH(a)$ is not attained when $\SH(a)=\SH$ (Proposition~\ref{lem:de-3.1}), which is the starting point of the whole analysis, cannot be reduced to the Sobolev case. Following the strategy of Druet \cite{Druet2002}, we transplant a putative minimizer to the sphere and use the sharp inequalities of Frank and Lieb \cite{FrLi2012} for the Hardy--Littlewood--Sobolev functional on $\mathbb S^N$. Finally, the refined lower bound for almost-minimizers requires a reparametrization of both the concentration scale and the amplitude of the bubble, which produces the factor $\beta_\varepsilon$ in Theorem~\ref{lem:de-1.6}.

We now introduce the bubbles used to describe concentration:
\begin{equation*}
	U_{z, \lambda}(x)=\left(\frac{\lambda}{1+\lambda^2|x-z|^2} \right)^{\frac{N-2s}{2}}, \lambda \in \R^+,z\in \R^N.
\end{equation*}
A direct computation gives
\begin{equation*}
  \Ds U_{z, \lambda}(y)= A_{N,\mu,s} \left(\int_{\R^N}\frac{U_{z, \lambda}^{\twostar}(x)}{\left|x-y\right|^{\mu}}dx\right)U^{\twostar-1}_{z, \lambda}(y),
\end{equation*}
where
\begin{equation*}
A_{N,\mu,s}:
=\frac{2^{2s}
\Gamma\left(\frac{N+2s}{2}\right)
\Gamma\left(N-\frac{\mu}{2}\right)
}{\pi^{N/2}
\Gamma\left(\frac{N-2s}{2}\right)
\Gamma\left(\frac{N-\mu}{2}\right)}.
\end{equation*}

Moreover, by \cite{DeNittiKonig2023}, $U_{z, \lambda}$ satisfies
\begin{equation*}
  \Ds U_{z, \lambda}= a_{N,s} U_{z, \lambda}^{\frac{N+2s}{N-2s}},
\end{equation*}
where
\begin{equation*}
  a_{N,s}:= 2^{2s} \frac{\Gamma(\frac{N+2s}{2})}{\Gamma(\frac{N-2s}{2})}.
\end{equation*}
Combining these identities, we obtain
\begin{equation*}
\int_{\R^N}
    \frac{U_{z,\lambda}^{\twostar}(x)}{|x-y|^\mu}dx
    =
    \frac{a_{N,s}}{A_{N,\mu,s}}
    U_{z,\lambda}^{\frac{\mu}{N-2s}}(y),
    \qquad y\in\R^N.
\end{equation*}

After normalization of the amplitude, the Euler--Lagrange equation associated with \eqref{hls_ineq} is
\begin{equation}\label{equation_s_hls}
	\Ds u(z)=  \left(\int_{\R^N}\frac{\left|u(x)\right|^{\twostar}}{\left|x-z\right|^{\mu}}dx\right)\left|u(z)\right|^{\twostar-2}u(z),
\qquad z\in\R^N.
\end{equation}
Le \cite{Le2019Symmetry} proved that all positive solutions of \eqref{equation_s_hls} are functions of the form
\begin{equation*}
\bar{U}_{z, \lambda}(x)=A_{N,\mu,s}^{\frac{N-2s}{2(N+2s-\mu)}}\left(\frac{\lambda}{1+\lambda^2|x-z|^2} \right)^{\frac{N-2s}{2}}= \kappa_{N,\mu,s} U_{z, \lambda}, \lambda \in \R^+,z\in \R^N,
\end{equation*}
where $\kappa_{N,\mu,s}:=A_{N,\mu,s}^{\frac{N-2s}{2(N+2s-\mu)}}$. Then we have
\begin{equation*}
(-\Delta)^s\bar{U}_{z,\lambda}
=
a_{N,s}
A_{N,\mu,s}^{-\frac{2s}{N+2s-\mu}}
\bar{U}_{z,\lambda}^{\frac{N+2s}{N-2s}}
\end{equation*}
and
\begin{equation}
\label{easybarU}
\int_{\R^N}
\frac{\bar{U}_{z,\lambda}^{\twostar}(x)}
{|x-y|^\mu}dx
=
a_{N,s}
A_{N,\mu,s}^{-\frac{2s}{N+2s-\mu}}
\bar{U}_{z,\lambda}^{\frac{\mu}{N-2s}}(y),
\qquad y\in\R^N.
\end{equation}

\subsection{Main results}
\label{subsec:mainresults}

We shall use the Green function of $\Ds + a$. Assume that $a \in C(\overline{\Omega})$ and that $(-\Delta)^s + a$ is coercive, i.e., $\int_{\R^{N}} |\ds v|^2 dx + \int_\Omega a v^2 dx\geq c \int_{\R^N} |\ds v|^2 dx$ for some $c > 0$. We define $G_a: \Omega \times \R^N \to \R$ as the unique function such that for every fixed $x \in \Omega$
\begin{equation*}
\begin{cases}
(- \Delta_{z})^s G_a(x, z) + a(z) G_a(x, z) = \gamma_{N,s} \delta_x(z) & z \in \Omega,  \\
G_a(x, z) = 0  & z \in \R^N \setminus  \Omega.
\end{cases}
\end{equation*}
Here, we set $\gamma_{N,s} = \frac{2^{2s} \pi^{N/2} \Gamma(s)}{\Gamma(\frac{N-2s}{2})}$, so that $\Ds |z|^{-N+2s} = \gamma_{N,s} \delta_0$ on $\R^N$. With this choice of $\gamma_{N,s}$, we write $G_a$ in terms of its singular part and regular part $H_a(x,z)$, namely, $H_a(x,z)=|x-z|^{-(N-2s)}-G_a(x,z)$. The function $H_a$ is continuous up to the diagonal; see Remark~\ref{rem:continuous-a}. We then define the \emph{Robin function}
\begin{equation*}
\phi_a(x) := H_a(x,x), \qquad  x \in \Omega.
\end{equation*}
The local expansion of $H_a$ used below is recalled in Lemma~\ref{lemma Ha expansion}.

Our first result  is the fractional Hartree counterpart of the criterion of Druet \cite{Druet2002}.

\begin{thm}[Characterization of criticality]
	\label{main}
Let $2s < N < 4s$ and let $a \in C(\overline{\Omega})$ be such that $(-\Delta)^s + a$ is coercive. The following properties are equivalent.
	\begin{enumerate}
		\item[(i)] There is $x \in \Omega$ such that $\phi_a(x) < 0$.
		\item[(ii)] $\SH (a) < \SH $.
		\item[(iii)] $\SH (a)$ is achieved by some function $u \in \ths$.
		\end{enumerate}
	\end{thm}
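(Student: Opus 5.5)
We prove the cycle (i)$\Rightarrow$(ii)$\Rightarrow$(iii)$\Rightarrow$(i), following the scheme of Druet \cite{Druet2002} and De Nitti--K\"onig \cite{DeNittiKonig2023}.

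\emph{(i)$\Rightarrow$(ii).} This is a test-function computation. Fix $x_0\in\Omega$ with $\phi_a(x_0)<0$ and take the projected bubble corrected by the regular part of the Green function,
\[
\psi_{x_0,\lambda}:=\eta U_{x_0,\lambda}-\lambda^{-\frac{N-2s}{2}}\bigl(\eta H_a(x_0,\cdot)\bigr),
\]
suitably modified so that it vanishes outside $\Omega$. By the construction of $G_a$, it is essentially $\lambda^{-\frac{N-2s}{2}}G_a(x_0,\cdot)$ away from $x_0$.

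\begin{itemize}
\item \emph{Numerator.} The expansion, as in \cite{DeNittiKonig2023}, reads $a_{N,s}\|U\|_{2^*}^{2^*}-c\,\phi_a(x_0)\lambda^{-(N-2s)}+o(\lambda^{-(N-2s)})$. Here $2s<N<4s$ is exactly what makes $\lambda^{-(N-2s)}$ dominate the $L^2$ contribution, which is of order $\lambda^{-2s}$.
\item \emph{Denominator.} We use the expansion of the Hartree double integral from Theorem~\ref{lem:de-2.1}, using \eqref{easybarU} to turn the linear term into a single integral against $U^{\frac{N+2s}{N-2s}}$. This gives a first-order contribution proportional to $-\phi_a(x_0)\lambda^{-(N-2s)}$, together with the higher powers $\phi_a^k\lambda^{-k(N-2s)}$.
\end{itemize}

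Tracking constants, the leading correction to the quotient should be $C\phi_a(x_0)\lambda^{-(N-2s)}$ with $C>0$ (as in the Sobolev case, where the gain is half of the denominator's first-order term). Hence $\SH(a)[\psi]<\SH$ for $\lambda$ large.

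\emph{(ii)$\Rightarrow$(iii).} We use a standard concentration-compactness argument. Take a minimizing sequence normalized in the Hartree term. Coercivity gives boundedness in $\ths$, so we may pass to a weak limit $u$. Write $u_n=u+r_n$. A Brezis--Lieb lemma for the Hartree functional (splitting the double integral into $u$, $r_n$ and vanishing cross terms) gives
\[
1=D(u)+D(r_n)+o(1),\qquad D(v):=\int_\Omega\int_\Omega \frac{|v(x)|^{\twostar}|v(y)|^{\twostar}}{|x-y|^{\mu}}\,dx\,dy .
\]
Moreover $\int a r_n^2\to0$, by compactness of $\ths\hookrightarrow L^2$. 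Then \eqref{hls_ineq} and the concavity of $t\mapsto t^{1/\twostar}$ force
\[
\SH(a)\ge \SH(a)\,D(u)^{1/\twostar}+\SH\, D(r_n)^{1/\twostar}+o(1).
\]
Since $\SH(a)<\SH$, this gives $D(r_n)\to0$ and $D(u)=1$, so $u$ is a minimizer.

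\emph{(iii)$\Rightarrow$(i).} This is the main obstacle. We argue by contradiction: suppose $\phi_a\ge0$ on $\Omega$ and $\SH(a)$ is attained.
\begin{itemize}
\item \emph{Step 1.} We first show $\SH(a)=\SH$. The upper bound $\SH(a)\le\SH$ comes from concentrating bubbles. For the lower bound, if $\SH(a)<\SH$ we would use the monotonicity trick of Druet: minimizers of $\SH(a-\varepsilon)$ blow up as $\varepsilon\to0$, and the energy expansion of concentrating minimizers (the lower-bound analysis) would force $\phi_a(x)<0$ at the limit point. Alternatively, we keep only the statement needed: attainment at the level $\SH$ is impossible, i.e.\ Proposition~\ref{lem:de-3.1}.
\item \emph{Step 2 (the key point).} We show that $\SH(a)=\SH$ is never attained. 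Given a minimizer $u\ge0$ (replace $u$ by $|u|$), compare with $a+\delta$ and $a-\delta$. Transplanting $u$ to $\mathbb S^N$ via stereographic projection, the Frank--Lieb sharp HLS inequality on the sphere \cite{FrLi2012} and its equality cases show that equality in \eqref{hls_ineq}, combined with $\int a u^2$ being nonnegative along the relevant direction, forces $u$ to be a bubble. A bubble cannot vanish outside $\Omega$, which is a contradiction.
\end{itemize}
Concretely, Step 2 first uses the Euler--Lagrange equation and the strong maximum principle to get $u>0$ in $\Omega$. Then, since $\phi_a\ge0$ gives the blow-up lower bound, we deduce $\SH(a)=\SH$, and the minimizer would be an extremal of the sphere functional with a support restriction, which is impossible. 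The delicate part is making the transplantation argument rigorous with the nonlocal operator and the double integral.
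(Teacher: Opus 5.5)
Your steps (i)$\Rightarrow$(ii) and (ii)$\Rightarrow$(iii) are fine and agree with the paper. For merely continuous $a$, only the leading term $c_1\phi_a(x_0)\lambda^{-(N-2s)}$ from Remark~\ref{rem:continuous-a} is needed. The gap is in (iii)$\Rightarrow$(i), which requires both $\phi_a\geq0\Rightarrow\SH(a)=\SH$ (i.e.\ (ii)$\Rightarrow$(i)) and $\SH(a)=\SH\Rightarrow$ not attained; your ``alternative'' of keeping only Proposition~\ref{lem:de-3.1} supplies just the second.

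For the first, your claim that, when $\SH(a)<\SH$, minimizers of $\SH(a-\varepsilon)$ blow up as $\varepsilon\to0$ is false. They form a minimizing sequence for $\SH(a)<\SH$, so your own Brezis--Lieb argument shows they converge, up to a subsequence, to a minimizer of $\SH(a)$. Concentration only happens at a level equal to $\SH$ that is not attained. So you must first pass to the threshold potential $a^*=a+\gamma^*$, with $\gamma^*:=\sup\{\gamma\geq0:\SH(a+\gamma)<\SH\}\in(0,\|a^-\|_\infty]$, using that $\gamma\mapsto\SH(a+\gamma)$ is continuous, nondecreasing and equal to $\SH$ once $a+\gamma\geq0$. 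Then you work with almost-minimizers of $\SH(a^*-\varepsilon_n)$. It is precisely the non-attainment of $\SH(a^*)=\SH$ that rules out a nonzero weak limit, so your Step~2 is an input to Step~1.

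Even then, the lower bound $\SH>\SH(a^*-\varepsilon_n)[u_n]\geq\SH+c_1\phi_{a^*}(z_n)\lambda_n^{-(N-2s)}+o(\lambda_n^{-(N-2s)})$ of Proposition~\ref{prop:refined-lower-bound}(a) only gives $\phi_{a^*}(z_0)\leq0$, not ``$\phi_a(x)<0$''. For a potential whose Robin function is nonnegative, this merely says $\phi(z_0)=0$ and yields no contradiction. The missing idea is the strict monotonicity of the Robin function in the potential. By the resolvent identity, $\phi_{a^*}(x)-\phi_a(x)=\frac{\gamma^*}{\gamma_{N,s}}\int_\Omega G_a(x,t)G_{a^*}(t,x)\,dt>0$, so $\min_K\phi_{a^*}>0$ on the compact set $K\subset\Omega$ containing the concentration points (Proposition~\ref{prop:remainder-estimates}). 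Only then does the lower bound exceed $\SH$. Your sentence ``since $\phi_a\geq0$ gives the blow-up lower bound, we deduce $\SH(a)=\SH$'' skips this. It also skips the profile decomposition, the coercivity on $T_{z,\lambda}^\perp$ and the control of the tangential part of $h_z$, on which that lower bound rests.

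Step~2 also rests on the wrong mechanism. Comparison with \eqref{hls_ineq} disposes of a minimizer at level $\SH$ only when $\int_\Omega au^2\,dx\geq0$, since then $u$ would either be an optimizer or violate the inequality. The genuine case is $\int_\Omega au^2\,dx<0$, where there is no sign to exploit and you cannot conclude that $u$ is a bubble. The paper instead tests minimality on $u(1+\varepsilon\varphi)$ and uses the Euler--Lagrange equation to absorb the potential,
\[
\int_\Omega u\Ds u\,(1+\varepsilon\varphi)^2dx+\int_\Omega au^2(1+\varepsilon\varphi)^2dx=\SH\int_\Omega\Bigl(\int_\Omega\frac{u^{\twostar}(y)}{|x-y|^\mu}dy\Bigr)u^{\twostar}(x)\bigl(1+\varepsilon\varphi(x)\bigr)^2dx .
\]
Only the commutator term $\mathcal I(\varphi)$ from the fractional product rule survives, giving $\twostar A_2+(\twostar-2)A_3\leq2(\twostar-1)A_1^2+\SH^{-1}\mathcal I(\varphi)$. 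Take $\varphi_i$ to be the coordinates of a stereographic projection whose center and scale are fixed by a Brouwer argument so that $A_1(\varphi_i)=0$, and sum over $i$. Then \cite[Proposition~4.1]{FrLi2012} yields $\sum_i\mathcal I(\varphi_i)>\frac{4s}{N-2s}\SH$; the inequality is strict because the transplanted $U^{\twostar}$ vanishes on an open subset of $\mathbb S^N$. On the other hand, $\sum_i\mathcal I(\varphi_i)\leq\frac{C_{N,s}}{2}\delta_{N,s}\int_{\mathbb S^N}U^2$, together with \cite[Corollary~4.3]{FrLi2012} and H\"older, gives the reverse strict inequality. This second-variation computation, not an equality-case argument, is the content of non-attainment, and your proposal does not provide it.
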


Following the terminology of Hebey and Vaugon \cite{HeVa2001}, we call $a\in C(\overline\Omega)$ a \emph{critical potential} (or \emph{critical function}) if $\SH(a)=\SH$ and $\SH(\widetilde a)<\SH$ for every $\widetilde a\in C(\overline\Omega)$ such that $\widetilde a\leq a$ and $\widetilde a\not\equiv a$. Thus, criticality requires that every nontrivial decrease of the potential lowers the best constant below the threshold. Theorem~\ref{main} gives the following property.

\begin{cor}
\label{lem:de-1.3}
Let $a$ be a critical potential. Then $\inf_{x\in\Omega}\phi_a(x)=0$.
\end{cor}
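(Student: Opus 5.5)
The plan is to prove two inequalities: $\inf_\Omega \phi_a \geq 0$ and $\inf_\Omega \phi_a \leq 0$. Both rest on Theorem~\ref{main}, together with the monotone dependence of $\phi_a$ on $a$.

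\emph{Lower bound.} Since $a$ is critical, $\SH(a)=\SH$, so (ii) of Theorem~\ref{main} fails. One point needs care: the definition of critical potential does not explicitly require coercivity of $(-\Delta)^s+a$. I will derive it from $\SH(a)=\SH>0$ together with criticality. If the first eigenvalue $\lambda_1(a)$ were $\le 0$, then decreasing $a$ slightly (using $a-\delta$) would give $\SH(a-\delta)\le 0<\SH$. That is consistent with criticality, so it gives no contradiction by itself. Instead I use $\SH(a)[u]\ge \SH(a)>0$ for the first eigenfunction $u$: this forces $\lambda_1(a)>0$, which is equivalent to coercivity. With coercivity in hand, Theorem~\ref{main} applies, and the failure of (ii) gives the failure of (i), i.e.\ $\phi_a\ge 0$ on $\Omega$.

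\emph{Upper bound.} Suppose instead that $\inf_\Omega\phi_a=:2\eta>0$. I will produce $\widetilde a\le a$ with $\widetilde a\not\equiv a$ and $\phi_{\widetilde a}>0$ everywhere. By Theorem~\ref{main} this gives $\SH(\widetilde a)=\SH$, contradicting criticality. The natural choice is $\widetilde a=a-\varepsilon\psi$ with $\psi\in C_c^\infty(\Omega)$, $0\le\psi\not\equiv0$. For small $\varepsilon$ the operator remains coercive, and the resolvent identity gives
\begin{equation*}
H_{\widetilde a}(x,y)-H_a(x,y)=G_a(x,y)-G_{\widetilde a}(x,y)=-\frac{\varepsilon}{\gamma_{N,s}}\int_\Omega G_a(x,z)\psi(z)G_{\widetilde a}(z,y)\,dz .
\end{equation*}
Hence
\begin{equation*}
\phi_{\widetilde a}(x)\ge \phi_a(x)-C\varepsilon\int_\Omega G_a(x,z)\psi(z)G_{\widetilde a}(z,x)\,dz .
\end{equation*}

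\emph{Main obstacle.} The key step, and the one I expect to be hardest, is showing this correction is $O(\varepsilon)$ uniformly in $x\in\Omega$. Two sources of trouble must be handled:
\begin{itemize}
\item the singularity when $x\in\operatorname{supp}\psi$;
\item the behaviour as $x\to\partial\Omega$.
\end{itemize}
For the singularity, the integrand is bounded by $|x-z|^{-2(N-2s)}$, which is integrable because $N<4s$. This is precisely why the range $2s<N<4s$ makes $\phi_a$ continuous. For the boundary, the Green functions vanish on $\partial\Omega$ and $\psi$ has compact support, so the correction tends to $0$ there. A uniform bound $0\le G_{\widetilde a}\le C|x-z|^{2s-N}$ for $\varepsilon$ small (by comparison with $G_{a-\varepsilon_0\psi}$ via the maximum principle) then gives $\phi_{\widetilde a}\ge 2\eta-C\varepsilon>0$.

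This contradiction shows $\inf\phi_a\le0$, and combined with the lower bound it proves $\inf_\Omega\phi_a=0$.
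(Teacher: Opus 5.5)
Your argument is correct and follows the same plan as the paper. You get coercivity from $\SH(a)=\SH>0$ and use Theorem~\ref{main} for $\phi_a\ge0$. You then take a small decrease $\widetilde a\le a$ with $0\le\phi_a-\phi_{\widetilde a}\le C\delta$ uniformly on $\Omega$, so that Theorem~\ref{main} contradicts criticality.

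The difference is in how the uniform bound is obtained. The paper estimates $v_x=H_a(x,\cdot)-H_{\widetilde a}(x,\cdot)$ in $L^\infty(\Omega)$ by an energy estimate plus the $L^2\to L^\infty$ bound of \cite{BiccariWarmaZuazua2017}. You instead use the resolvent identity on the diagonal together with $\sup_{x\in\Omega}\int_\Omega|x-z|^{-2(N-2s)}\,dz<\infty$.

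Your route is more explicit, but it relies on a global bound $G_b(x,z)\lesssim|x-z|^{-(N-2s)}$ on $\Omega\times\Omega$ for the sign-changing potentials $b=a$ and $b=a-\varepsilon_0\psi$. For such $b$ this is not a consequence of the maximum principle by itself: comparison only moves the problem to another sign-changing potential. It is most directly proved as in the paper, by writing $G_b(x,\cdot)=G_0(x,\cdot)-(H_b-H_0)(x,\cdot)$ and bounding $H_b(x,\cdot)-H_0(x,\cdot)$ in $L^\infty$ uniformly in $x\in\Omega$. So the regularity estimate is relocated rather than avoided. Once that bound is available, your separate discussion of $x\to\partial\Omega$ is superfluous, since the kernel estimate is already uniform over all $x\in\Omega$.

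Conversely, the paper's proof also shows that $\phi_a\to+\infty$ at $\partial\Omega$, so the infimum is attained. The stated corollary does not need this, and your contradiction works with $\inf_\Omega\phi_a>0$ directly. However, attainment is what gives that $\mathcal N_a$ is nonempty and compact, which is used in Section~\ref{sec:proof-asymptotics}.
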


We next consider bounded perturbations of a critical potential. Let $\mathcal N_a:=\{x\in\Omega:\phi_a(x)=0\}$. For $V\in L^\infty(\Omega)$, define $Q_V(x):=\int_\Omega V(y)G_a(x,y)^2dy$ and $\mathcal N_a(V):=\{x\in\mathcal N_a:Q_V(x)<0\}$. The integral defining $Q_V$ is finite because $N<4s$. The set $\mathcal N_a$ describes the possible limiting concentration points, while $Q_V$ measures the first contribution of the perturbation at these points. For the following asymptotic results, we assume in addition that
\begin{equation}
\label{eq:assumption-a-on-Na}
a\in C(\overline\Omega)\cap C^1(\Omega),
\qquad
a(x)<0
\quad\text{for all }x\in\mathcal N_a.
\end{equation}

If $\mathcal N_a(V) \neq  \emptyset$, the perturbation lowers the energy below $\SH$. More precisely, we have
\begin{thm}[Energy asymptotics]
\label{lem:de-1.4}
Let $\frac52s<N<4s$, let $a$ be a critical potential satisfying \eqref{eq:assumption-a-on-Na}, let $V\in L^\infty(\Omega)$, and suppose that $\mathcal N_a(V)\neq\emptyset$. Then $\SH(a+\varepsilon V)<\SH$ for every $\varepsilon>0$. Moreover,
\begin{equation*}
\lim_{\varepsilon\to0^+}
\frac{\SH-\SH(a+\varepsilon V)}
{\varepsilon^{\frac{2s}{4s-N}}}
=
\frac{4s-N}{2s}
\left(\frac{N-2s}{2s}\right)^{\frac{N-2s}{4s-N}}
\frac{\kappa_{N,\mu,s}^{2}\,\SH^{-\frac{N-2s}{N+2s-\mu}}}
{\left(a_{N,s}d_{N,s}b_{N,s}+\alpha_{N,s}\right)^{\frac{N-2s}{4s-N}}}
\sup_{x\in\mathcal N_a(V)}
\frac{|Q_V(x)|^{\frac{2s}{4s-N}}}
{|a(x)|^{\frac{N-2s}{4s-N}}}.
\end{equation*}
Here $a_{N,s}$, $b_{N,s}$, $\alpha_{N,s}$ and $d_{N,s}$ are the constants of Lemmas~\ref{lem:explicit-constants} and \ref{lemma Ha expansion}.
\end{thm}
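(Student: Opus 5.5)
The plan is to prove matching upper and lower bounds for $\SH-\SH(a+\varepsilon V)$, following the scheme of \cite{FKK1,DeNittiKonig2023}.

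\emph{Upper bound on $\SH(a+\varepsilon V)$.} Fix $x_0\in\mathcal N_a(V)$. Use as test function the projected bubble corrected by the regular part of the Green function,
\[
\psi_{x_0,\lambda}=P U_{x_0,\lambda}-\lambda^{-\frac{N-2s}{2}}\bigl(H_a(x_0,\cdot)-H_0(x_0,\cdot)\bigr),
\]
or a variant of it, and insert it into the quotient. For the numerator, Lemma~\ref{lemma Ha expansion} together with the explicit constants gives an expansion of the form
\[
\text{const}-c_1\phi_a(x_0)\lambda^{-(N-2s)}+c_2\,\lambda^{-2s}\bigl(|a(x_0)|\text{-type term}\bigr)+\varepsilon\lambda^{-(N-2s)}Q_V(x_0)+o\bigl(\lambda^{-2s}+\varepsilon\lambda^{-(N-2s)}\bigr).
\]
For the denominator, use the expansion of Theorem~\ref{lem:de-2.1}: all powers $\phi_a^k\lambda^{-k(N-2s)}$ with $k\le K$ appear, but they vanish at $x_0$ because $\phi_a(x_0)=0$. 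What survives is a term of order $\lambda^{-2s}$, which combines with the numerator term to give the coefficient $a_{N,s}d_{N,s}b_{N,s}+\alpha_{N,s}$ multiplying $|a(x_0)|$.

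This yields
\[
\SH(a+\varepsilon V)\le \SH-\kappa\bigl(\varepsilon|Q_V(x_0)|\lambda^{-(N-2s)}-C|a(x_0)|\lambda^{-2s}\bigr)+o(\cdot).
\]
The restriction $N>\tfrac52 s$ is what guarantees that the remaining error terms are $o(\lambda^{-2s})$. I will then optimize in $\lambda$, taking $\lambda\sim\varepsilon^{-1/(4s-N)}$. Elementary calculus gives the factor $\frac{4s-N}{2s}\bigl(\frac{N-2s}{2s}\bigr)^{\frac{N-2s}{4s-N}}$ and the power $\varepsilon^{2s/(4s-N)}$. Finally, take the supremum over $x_0\in\mathcal N_a(V)$. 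Since this upper bound is strictly negative for small $\lambda$-optimized values, and since $\varepsilon\mapsto\SH(a+\varepsilon V)$ is concave, we obtain $\SH(a+\varepsilon V)<\SH$ for every $\varepsilon>0$.

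\emph{Lower bound.} For the lower bound, $\SH(a+\varepsilon V)<\SH$ and Theorem~\ref{main} give a minimizer $u_\varepsilon$. Because $a$ is critical, Proposition~\ref{lem:de-3.1} (non-attainment at the threshold) forces $u_\varepsilon$ to concentrate; here the Frank--Lieb spherical argument enters. A Struwe/Lions profile decomposition then lets me write
\[
u_\varepsilon=\alpha_\varepsilon\bigl(\psi_{x_\varepsilon,\lambda_\varepsilon}+q_\varepsilon\bigr),
\]
with $q_\varepsilon$ orthogonal to the tangent space of the bubble manifold. The remainder $q_\varepsilon$ is controlled by coercivity of the second variation, which contains the two nonlocal Hartree quadratic forms. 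This coercivity comes from the nondegeneracy and spectral gap of \cite{LuYangZhao2025}, and it yields $\|q_\varepsilon\|=O(\varepsilon\lambda^{-\frac{N-2s}{2}}+\lambda^{-s})$, so that $q_\varepsilon$ contributes only at higher order. The same expansion as in the upper bound, now used as a lower bound, involves $\phi_a(x_\varepsilon)\ge0$ (Corollary~\ref{lem:de-1.3}), which can simply be dropped. Minimizing over $\lambda$ reproduces the same constant, with $x_0$ replaced by $\lim x_\varepsilon$. A priori bounds $\lambda_\varepsilon\sim\varepsilon^{-1/(4s-N)}$ show that this limit lies in $\mathcal N_a(V)$ and that $d(x_\varepsilon,\partial\Omega)$ stays bounded below.

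\emph{Main obstacle.} The hardest step is the refined lower bound, namely showing that the cross terms between $q_\varepsilon$ and the bubble in the double integral are truly of lower order. I would first reparametrize both the scale and the amplitude (the factor $\beta_\varepsilon$ mentioned in the introduction) so that the linear terms in $q_\varepsilon$ vanish up to $O(\lambda^{-2s})$ errors. Then I would bound the nonlocal cubic and higher remainders by HLS (Proposition~\ref{HLS1}) combined with Sobolev.
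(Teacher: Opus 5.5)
Your architecture matches the paper's: the test function $\psi_{x,\lambda}$ plus concavity for the upper bound, and for the lower bound non-attainment, profile decomposition, coercivity, an amplitude/scale reparametrization, and optimization in $\Lambda=\varepsilon^{\frac{1}{4s-N}}\lambda$. Your upper bound is correct. However, $N>\frac52 s$ plays no role there: Theorem~\ref{lem:de-2.1} holds with error $o(\lambda^{-2s})$ on all of $2s<N<4s$.

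The hypothesis is needed exactly at the step you describe as ``the linear terms in $q_\varepsilon$ vanish up to $O(\lambda^{-2s})$ errors'', and that claim is wrong. First, the decomposition $u_\varepsilon=\alpha_\varepsilon(\psi_{x_\varepsilon,\lambda_\varepsilon}+q_\varepsilon)$ with $q_\varepsilon\perp T$ is not directly available. After absorbing the $e_0$- and $e_{N+1}$-components of $-\kappa_{N,\mu,s}\lambda^{-\frac{N-2s}{2}}\Pi_{z,\lambda}h_z$ into amplitude and scale, one only gets $\alpha^{-1}u=(1+b_0)(\psi_{z,\lambda'}+\eta)$ with $\eta=(q+\varrho)/(1+b_0)$. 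Here $\varrho$ collects two things:
the translation components, which are $O(\lambda^{-(N-2s)-\sigma})$ only thanks to $a\in C^1$ and the cancellation $\int_{\R^N}\partial_{z_i}\bar U^{2_s^*-1}=0$;
and remainders of size $\lambda^{-2(N-2s)}$ from the change of scale.

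Second, $\psi_{z,\lambda}$ is only an approximate critical point. The available bound is $|D\mathcal Q_\varepsilon[\psi_{z,\lambda}]\eta|\lesssim(|\phi_a(z)|\lambda^{-(N-2s)}+\omega(\lambda)+\varepsilon\lambda^{-\frac{N-2s}{2}})\|\ds\eta\|_2$. Here $\omega(\lambda)$ contains $\lambda^{-2(N-2s)}$, coming from the Hartree terms quadratic in $\kappa_{N,\mu,s}\lambda^{-\frac{N-2s}{2}}H_a(z,\cdot)$. At the relevant scale, even $\varepsilon\lambda^{-\frac{N-2s}{2}}\gg\lambda^{-2s}$. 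In any case, a linear contribution that is merely $O(\lambda^{-2s})$ would destroy the sharp constant.

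What closes the argument is Young's inequality against the coercive term $\frac{3\rho}{4}\|\ds q\|_2^2$. This leaves errors $\omega(\lambda)^2+\|\ds\varrho\|_2^2+\varepsilon^2\lambda^{-(N-2s)}+\phi_a(z)^2\lambda^{-2(N-2s)}$. These are $o(\lambda^{-2s})+o(\varepsilon\lambda^{-(N-2s)})+o(|\phi_a(z)|\lambda^{-(N-2s)})$ precisely because $4(N-2s)>2s$, i.e.\ $N>\frac52 s$. Relatedly, $q_\varepsilon$ does not ``contribute only at higher order'': a priori $\|\ds q_\varepsilon\|_2^2$ is of order $\lambda_\varepsilon^{-2s}$. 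It is harmless only because its quadratic contribution is nonnegative.

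Your handling of interior concentration is circular. You cannot deduce it from $\lambda_\varepsilon\sim\varepsilon^{-\frac{1}{4s-N}}$, because that bound is an output of the refined expansion. The ingredients of that expansion (Theorem~\ref{lem:de-2.1} and Lemmas~\ref{lem:first-derivative}--\ref{lem:tangential-h}) are uniform only for $z$ in compact subsets of $\Omega$.

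The missing step is the crude expansion around $P\bar U_{z_\varepsilon,\lambda_\varepsilon}$ in Lemma~\ref{lem:first-energy-expansion}. Use two facts: $\phi_0(z)\gtrsim d(z)^{-(N-2s)}$, and Proposition~\ref{prop:coercivity-with-potential} applies to $w_\varepsilon\in T^\perp_{z_\varepsilon,\lambda_\varepsilon}$ because $\lambda_\varepsilon d(z_\varepsilon)\to\infty$. Then the inequality $\mathcal Q_\varepsilon[u_\varepsilon]<\SH$ forces $(\lambda_\varepsilon d(z_\varepsilon))^{-(N-2s)}+\|\ds w_\varepsilon\|_2^2\lesssim\lambda_\varepsilon^{-(N-2s)}$. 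This gives $d(z_\varepsilon)\gtrsim1$ and $\|\ds w_\varepsilon\|_2=O(\lambda_\varepsilon^{-\frac{N-2s}{2}})$ before any refined analysis. The latter is what makes $\|\ds q_\varepsilon\|_2\to0$, so that the second-order Taylor expansion is legitimate. Only after this do the refined lower bound, $\phi_a(z_\varepsilon)\to0$, the scale bound and $Q_V(z_0)<0$ follow as you describe.
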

On the other hand, when $\mathcal N_a(V)=\emptyset$, the next theorem
shows that $\SH(a+\varepsilon V)=\SH$ for all sufficiently small
$\varepsilon>0$, provided that $Q_V>0$ on $\mathcal N_a$. In the
 case $\min_{\mathcal N_a}Q_V=0$, we only obtain an
$o\bigl(\varepsilon^{\frac{2s}{4s-N}}\bigr)$ estimate and do not determine the precise leading
term of $\SH(a+\varepsilon V)-\SH$.

\begin{thm}[Energy asymptotics in the degenerate case]
\label{lem:de-1.5}
Let $\frac52s<N<4s$, let $a$ be a critical potential satisfying \eqref{eq:assumption-a-on-Na}, let $V\in L^\infty(\Omega)$, and suppose that $\mathcal N_a(V)=\emptyset$. Then $\SH(a+\varepsilon V)=\SH+o\bigl(\varepsilon^{\frac{2s}{4s-N}}\bigr)$ as $\varepsilon\to0^+$. If, in addition, $Q_V(x)>0$ for every $x\in\mathcal N_a$, then $\SH(a+\varepsilon V)=\SH$ for all sufficiently small $\varepsilon>0$.
\end{thm}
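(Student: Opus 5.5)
The degenerate case is the $\mathcal N_a(V)=\emptyset$ counterpart of Theorem~\ref{lem:de-1.4}. I would prove it by the same two-sided argument: an upper bound from test functions, and a lower bound from the blow-up analysis of almost-minimizers.

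\emph{Upper bound.} We need $\SH(a+\varepsilon V)\le \SH+o(\varepsilon^{\frac{2s}{4s-N}})$. Take $x_0\in\mathcal N_a$, which is nonempty by Corollary~\ref{lem:de-1.3}, and use the test functions $\psi_{x_0,\lambda}=PU_{x_0,\lambda}-\lambda^{-\frac{N-2s}{2}}H_a(x_0,\cdot)$. The expansion of the quotient from Theorem~\ref{lem:de-2.1} gives
\[
\SH(a+\varepsilon V)[\psi_{x_0,\lambda}]=\SH+c_1\phi_a(x_0)\lambda^{-(N-2s)}+c_2\varepsilon Q_V(x_0)\lambda^{-(N-2s)}+c_3|a(x_0)|\lambda^{-2s}+o(\lambda^{-2s})+o(\varepsilon\lambda^{-(N-2s)}),
\]
with $c_1,c_2,c_3>0$. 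The $\lambda^{-2s}$ term comes from the $a$-interaction with the $L^2$-part and is positive because $a(x_0)<0$; this sign is the reason for \eqref{eq:assumption-a-on-Na}. Since $\phi_a(x_0)=0$ and $Q_V(x_0)\ge0$, letting $\lambda\to\infty$ gives $\SH(a+\varepsilon V)\le\SH$, which is already sufficient. If $\varepsilon$ enters with a sign, I would instead choose $\lambda=\delta^{-1}\varepsilon^{-\frac1{4s-N}}$ with $\delta\to0$ afterwards. Then $\SH(a+\varepsilon V)\le\SH$ holds for all $\varepsilon$, because the infimum over $\lambda$ only approaches $\SH$ from above.

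\emph{Lower bound.} We need $\SH-\SH(a+\varepsilon V)=o(\varepsilon^{\frac{2s}{4s-N}})$. Argue by contradiction along a sequence $\varepsilon_n\to0$ with $\SH-\SH(a+\varepsilon_nV)\ge c\,\varepsilon_n^{\frac{2s}{4s-N}}$. In particular $\SH(a+\varepsilon_n V)<\SH$, so by Theorem~\ref{main} there exist minimizers $u_n$. Since $a$ is critical, Proposition~\ref{lem:de-3.1} and the usual compactness argument force the $u_n$ to concentrate. The blow-up analysis (Theorem~\ref{lem:de-1.6}) then applies: $u_n=\beta_n(\psi_{x_n,\lambda_n}+w_n)$, with $w_n$ orthogonal to the tangent space, $\|w_n\|$ controlled through the nondegeneracy and coercivity estimates of \cite{LuYangZhao2025}, $x_n\to x_0\in\mathcal N_a$ and $\lambda_n\to\infty$. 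The refined lower bound should read
\[
\SH(a+\varepsilon_nV)\ge \SH+\bigl(c_1\phi_a(x_n)+c_2\varepsilon_nQ_V(x_n)\bigr)\lambda_n^{-(N-2s)}+c_3|a(x_n)|\lambda_n^{-2s}+o(\lambda_n^{-2s}+\varepsilon_n\lambda_n^{-(N-2s)}).
\]
Here $\phi_a\ge0$, and $Q_V(x_n)\ge Q_V(x_0)+o(1)$ with $Q_V(x_0)\ge0$, since $x_0\notin\mathcal N_a(V)$ and $Q_V$ is continuous. The only possible negative contribution is therefore $o(\varepsilon_n\lambda_n^{-(N-2s)})$. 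Optimizing $c_3|a|\lambda^{-2s}-o(1)\varepsilon\lambda^{-(N-2s)}$ over $\lambda$ gives at most $o(1)\varepsilon^{\frac{2s}{4s-N}}$, which contradicts the assumed gap.

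\emph{Strict case $Q_V>0$ on $\mathcal N_a$.} Suppose $\SH(a+\varepsilon_nV)<\SH$ along some $\varepsilon_n\to0$. Repeating the argument, $Q_V(x_n)\ge c>0$ for large $n$ because $\mathcal N_a$ is compact in $\Omega$ by Corollary~\ref{lem:de-1.3} and the boundary behaviour of $\phi_a$. Then every term in the expansion is nonnegative, and the leading part $c_2\varepsilon_nQ_V\lambda_n^{-(N-2s)}+c_3|a|\lambda_n^{-2s}$ strictly dominates the error terms. Hence $\SH(a+\varepsilon_n V)>\SH$, a contradiction. Together with the upper bound this gives equality.

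\emph{Main obstacle.} The hard step is the error control in the lower bound. The remainder terms must be genuinely $o(\lambda_n^{-2s}+\varepsilon_n\lambda_n^{-(N-2s)})$, and this requires the quadratic estimate for $w_n$ involving both nonlocal Hartree second-variation forms, together with the $\beta_n$ reparametrization. I would take these directly from the proof of Theorem~\ref{lem:de-1.4}, checking that nowhere there was $Q_V<0$ used except to identify the leading term.
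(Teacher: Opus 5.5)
Your proposal is correct and follows essentially the same argument as the paper. The upper bound $\SH(a+\varepsilon V)\le\SH$ comes from letting $\lambda\to\infty$ in the test-function expansion. The lower bound comes from inserting $Q_V(z_\varepsilon)\ge -o(1)$ into the refined lower bound for concentrating (almost-)minimizers and then optimizing $c\,|a(z_0)|\lambda^{-2s}-o(1)\,\varepsilon\lambda^{-(N-2s)}$ over $\lambda$. The strict case follows because all leading terms are positive.

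One citation needs correcting. Theorem~\ref{lem:de-1.6} assumes $\mathcal N_a(V)\neq\emptyset$, so you cannot invoke it here. The input you actually need is Proposition~\ref{prop:asymptotics-sequences}, i.e.\ inequality \eqref{eq:basic-inequality}, which only requires $\SH(a+\varepsilon V)<\SH$ along the sequence. That is exactly the property you propose to check.
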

To impose the boundary condition on $\Omega$, we project $	\bar{U}_{z, \lambda}$ to $\ths$. More precisely, $P	\bar{U}_{z, \lambda} \in \ths$ is the unique solution of
\begin{equation*}
\begin{cases}
(- \Delta)^s P	\bar{U}_{z, \lambda} = (- \Delta)^s \bar{U}_{z, \lambda}  & \quad \text{ in } \Omega, \\
P  \bar{U}_{z, \lambda}  = 0  & \quad \text{ on } \R^N \setminus  \Omega, \end{cases}
\end{equation*}
in the weak sense, that is,
\begin{equation*}
  \int_{\R^N} \ds P\bar{U}_{z, \lambda}  \ds \eta dy = \int_{\R^N} \Ds\bar{U}_{z, \lambda}  \eta d y = a_{N,s}
A_{N,\mu,s}^{-\frac{2s}{N+2s-\mu}} \int_\Omega \bar{U}_{z, \lambda} ^\frac{N+2s}{N-2s} \eta d y
\end{equation*}
for every $\eta \in \ths$.

We also set
\begin{equation*}
  T_{z, \lambda} = \text{span} \left\{ P\bar{U}_{z, \lambda}, \partial_\lambda P\bar{U}_{z, \lambda}, \{\partial_{z_i} P\bar{U}_{z, \lambda}\}_{i=1}^N \right\} \subset \ths ,
\end{equation*}
and denote by $T_{z, \lambda}^\perp \subset \ths$ its orthogonal complement in $\ths$ with respect to the scalar product $(u,v) := \int_{\R^N} \ds u \ds v dy$. We denote by $\Pi_{z, \lambda}$ and $\Pi_{z, \lambda}^\bot$ the projections onto $T_{z, \lambda}$ and $T_{z, \lambda}^\bot$ respectively.

If $\mathcal N_a(V)\neq\emptyset$, Theorem~\ref{lem:de-1.4} yields $\SH(a+\varepsilon V)<\SH$ for all sufficiently small $\varepsilon>0$. Since $(-\Delta)^s+a+\varepsilon V$ remains coercive for such $\varepsilon$, the infimum $\SH(a+\varepsilon V)$ is attained. Indeed, the proof of the implication (ii)~$\Rightarrow$~(iii) of Theorem~\ref{main} only uses that the potential is bounded and that the associated operator is coercive, and therefore applies to $a+\varepsilon V\in L^\infty(\Omega)$.

We now study the asymptotic behavior of normalized almost-minimizers of
$\SH(a+\varepsilon V)$. The following result gives their decomposition
into a projected bubble, a Green function correction, and an orthogonal
remainder.

\begin{thm}[Concentration of almost-minimizers]
\label{lem:de-1.6}
Let $\frac52s<N<4s$, let $a$ be a critical potential satisfying \eqref{eq:assumption-a-on-Na}, let $V\in L^\infty(\Omega)$, and suppose that $\mathcal N_a(V)\neq\emptyset$. Let $(u_\varepsilon)\subset\ths$ satisfy
\begin{equation*}
\frac{
\SH(a+\varepsilon V)[u_\varepsilon]-\SH(a+\varepsilon V)
}{
\SH-\SH(a+\varepsilon V)
}
\longrightarrow0
\end{equation*}
and
\begin{equation*}
\int_\Omega\int_\Omega
\frac{|u_\varepsilon(x)|^{\twostar}|u_\varepsilon(y)|^{\twostar}}
{|x-y|^\mu}dxdy
=
\int_{\R^N}\int_{\R^N}
\frac{\bar U_{0,1}^{\twostar}(x)\bar U_{0,1}^{\twostar}(y)}
{|x-y|^\mu}dxdy.
\end{equation*}
Then there exist sequences $(z_\varepsilon)\subset\Omega$, $(\lambda_\varepsilon)\subset(0,\infty)$, $(\alpha_\varepsilon)\subset\R$, $(\beta_\varepsilon)\subset(0,\infty)$, and $(r_\varepsilon)\subset T_{z_\varepsilon,\lambda_\varepsilon}^\bot$ such that, up to a subsequence,
\begin{equation*}
\begin{split}
u_\varepsilon
=
\alpha_\varepsilon\Big(&P\bar U_{z_\varepsilon,\lambda_\varepsilon}
+\kappa_{N,\mu,s}\beta_\varepsilon\lambda_\varepsilon^{-\frac{N-2s}{2}}
\Pi_{z_\varepsilon,\lambda_\varepsilon}^\bot
\left(H_0(z_\varepsilon,\cdot)-H_a(z_\varepsilon,\cdot)\right)
+r_\varepsilon\Big).
\end{split}
\end{equation*}
Moreover, $z_\varepsilon\to z_0\in\mathcal N_a(V)$, where
\begin{equation*}
\frac{|Q_V(z_0)|^{\frac{2s}{4s-N}}}
{|a(z_0)|^{\frac{N-2s}{4s-N}}}
=
\sup_{z\in\mathcal N_a(V)}
\frac{|Q_V(z)|^{\frac{2s}{4s-N}}}
{|a(z)|^{\frac{N-2s}{4s-N}}}.
\end{equation*}
Furthermore, $\phi_a(z_\varepsilon)=o(\varepsilon)$ and
\begin{equation*}
\lim_{\varepsilon\to0^+}\varepsilon^{\frac{1}{4s-N}}\lambda_\varepsilon
=
\left(
\frac{2s\left(a_{N,s}d_{N,s}b_{N,s}+\alpha_{N,s}\right)|a(z_0)|}
{(N-2s)|Q_V(z_0)|}
\right)^{\frac{1}{4s-N}}.
\end{equation*}
For some $\varsigma\in\{-1,1\}$, we also have $\alpha_\varepsilon=\varsigma+O\bigl(\varepsilon^{\frac{N-2s}{4s-N}}\bigr)$, $\beta_\varepsilon=1+O\bigl(\varepsilon^{\frac{N-2s}{4s-N}}\bigr)$ and $\|\ds r_\varepsilon\|_2^2=o\bigl(\varepsilon^{\frac{2s}{4s-N}}\bigr)$.
\end{thm}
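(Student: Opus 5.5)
The plan follows the scheme of \cite{FKK1,DeNittiKonig2023}, adapted to the Hartree denominator.

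\textbf{Step 1: first decomposition.} Since $\SH(a+\varepsilon V)\to\SH$ (Theorem~\ref{lem:de-1.4} together with $\SH(a)=\SH$), the normalized sequence $u_\varepsilon$ is a minimizing sequence for $\SH$ and is bounded in $\ths$. By Proposition~\ref{lem:de-3.1}, $\SH(a)$ is not attained, so weak limits vanish. A profile decomposition (concentration–compactness for the HLS quotient, using that the extremals are exactly the $\bar U_{z,\lambda}$ by \cite{Le2019Symmetry}) then gives
\[
u_\varepsilon=\alpha_\varepsilon\bigl(P\bar U_{z_\varepsilon,\lambda_\varepsilon}+w_\varepsilon\bigr),\qquad w_\varepsilon\in T^\perp_{z_\varepsilon,\lambda_\varepsilon},\qquad \|\ds w_\varepsilon\|_2\to0 .
\]
Here $\lambda_\varepsilon\,d(z_\varepsilon,\partial\Omega)\to\infty$, $\alpha_\varepsilon\to\varsigma\in\{\pm1\}$, and the parameters are chosen by minimizing the distance to the bubble manifold. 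Boundary concentration is ruled out because, in the fractional setting, it raises the energy above $\SH$.

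\textbf{Step 2: refine the remainder.} I split
\[
w_\varepsilon=\kappa_{N,\mu,s}\beta_\varepsilon\lambda_\varepsilon^{-\frac{N-2s}{2}}\Pi^\perp\bigl(H_0(z_\varepsilon,\cdot)-H_a(z_\varepsilon,\cdot)\bigr)+r_\varepsilon .
\]
The scalar $\beta_\varepsilon$ is an extra free parameter, determined together with a re-choice of $(\alpha_\varepsilon,\lambda_\varepsilon)$ so that the amplitude renormalization induced by the Green correction is absorbed.

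I then expand numerator and denominator of $\SH(a+\varepsilon V)[u_\varepsilon]$ using Theorem~\ref{lem:de-2.1}, Lemma~\ref{lemma Ha expansion} and Lemma~\ref{lem:explicit-constants}. With $\psi$ the bubble plus Green correction, this yields
\[
\SH(a+\varepsilon V)[u_\varepsilon]\ \ge\ \SH
+c_1\phi_a(z_\varepsilon)\lambda_\varepsilon^{-(N-2s)}
+c_2\bigl(a_{N,s}d_{N,s}b_{N,s}+\alpha_{N,s}\bigr)|a(z_\varepsilon)|\lambda_\varepsilon^{-2s}
-c_3\,\varepsilon|Q_V(z_\varepsilon)|\lambda_\varepsilon^{-(N-2s)}
+c\|\ds r_\varepsilon\|_2^2+o\bigl(\lambda_\varepsilon^{-2s}+\varepsilon\lambda_\varepsilon^{-(N-2s)}\bigr).
\]
The key inputs are:
\begin{itemize}[leftmargin=*]
\item the coercivity of the second variation on $T^\perp$, which includes \emph{both} nonlocal quadratic Hartree forms, from \cite{LuYangZhao2025};
\item cross terms between $r_\varepsilon$ and $\psi$ that are small, by orthogonality and the fact that $\psi$ nearly solves the linearized equation;
\item the restriction $N>\tfrac52 s$, which makes the cubic and higher-order remainders in $r_\varepsilon$ of order $o(\lambda_\varepsilon^{-2s})$.
\end{itemize}

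\textbf{Step 3: compare with the upper bound.} Theorem~\ref{lem:de-1.4} gives
\[
\SH(a+\varepsilon V)[u_\varepsilon]=\SH-(C_*+o(1))\,\varepsilon^{\frac{2s}{4s-N}} .
\]
Since $\phi_a\ge0$ by Corollary~\ref{lem:de-1.3}, minimizing the right-hand side of Step 2 in $\lambda$ gives $\lambda^{-2s}\sim\varepsilon\lambda^{-(N-2s)}$, i.e.\ $\lambda\sim\varepsilon^{-1/(4s-N)}$. Matching the constants then forces several conclusions:
\begin{enumerate}[label=(\alph*)]
\item the rescaled $\varepsilon^{1/(4s-N)}\lambda_\varepsilon$ converges to the optimizer of $t\mapsto At^{-2s}-Bt^{-(N-2s)}$, which is the stated limit;
\item $\phi_a(z_\varepsilon)\lambda_\varepsilon^{-(N-2s)}=o(\varepsilon^{\frac{2s}{4s-N}})$, i.e.\ $\phi_a(z_\varepsilon)=o(\varepsilon)$;
\item $z_0\in\mathcal N_a$ with $Q_V(z_0)<0$, and $z_0$ maximizes the ratio;
\item $\|\ds r_\varepsilon\|_2^2=o(\varepsilon^{\frac{2s}{4s-N}})$.
\end{enumerate}
The rates for $\alpha_\varepsilon$ and $\beta_\varepsilon$ come from the normalization of the denominator, where the leading correction has size $\lambda_\varepsilon^{-(N-2s)}\sim\varepsilon^{\frac{N-2s}{4s-N}}$.

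\textbf{Main obstacle.} The hardest step is Step 2's lower bound. It requires tracking every mixed term in the double-integral expansion of the denominator evaluated at $\psi+r_\varepsilon$, including the single and double integrals of powers $\phi_a^k\lambda^{-k(N-2s)}$ with $k\le K$. One must show that all linear terms in $r_\varepsilon$ are $o(\lambda^{-2s})\|\ds r_\varepsilon\|_2$. My first attempt would expand the HLS functional to second order around $\psi$, bound the third-order remainder by HLS and Sobolev, and then choose $\beta_\varepsilon$ precisely to cancel the linear term along the $P\bar U$ direction.
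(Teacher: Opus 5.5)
Your architecture is the paper's: profile decomposition with $w_\varepsilon\in T^\perp$, interior concentration, expansion of $\mathcal Q_\varepsilon$ around the Green-corrected test function, coercivity on $T^\perp$, and matching with the upper bound. But the quantitative core of Step~2 is set up so that it would not close.

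You require all linear terms in $r_\varepsilon$ to be $o(\lambda_\varepsilon^{-2s})\|\ds r_\varepsilon\|_2$, and this is false. The term $\varepsilon\int_\Omega V\psi\,r_\varepsilon$ is genuinely of size $\varepsilon\lambda^{-\frac{N-2s}{2}}\|\ds r_\varepsilon\|_2$ on $T^\perp$. At the scale $\varepsilon\sim\lambda^{-(4s-N)}$ this coefficient is $\lambda^{-\frac{6s-N}{2}}\gg\lambda^{-2s}$. Moreover, the Hartree part of $D\mathcal Q_\varepsilon[\psi]$ is only controlled by $\lambda^{-(N-2s)-\sigma}+\lambda^{-2(N-2s)}$, and $\lambda^{-(N-2s)-\sigma}$ is never $o(\lambda^{-2s})$.

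The paper instead proves $|D\mathcal Q_\varepsilon[\psi]\eta|\lesssim\bigl(|\phi_a(z)|\lambda^{-(N-2s)}+\omega(\lambda)+\varepsilon\lambda^{-\frac{N-2s}{2}}\bigr)\|\ds\eta\|_2$ (Lemma~\ref{lem:first-derivative}). It then absorbs this into $\rho\|\ds\eta\|_2^2$ by Young's inequality, at the cost $\omega(\lambda)^2+\varepsilon^2\lambda^{-(N-2s)}+\phi_a(z)^2\lambda^{-2(N-2s)}=o(\lambda^{-2s})+o(\varepsilon\lambda^{-(N-2s)})+o(|\phi_a(z)|\lambda^{-(N-2s)})$. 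This is where $N>\frac52 s$ is used, via $\lambda^{-4(N-2s)}=o(\lambda^{-2s})$. Cubic remainders in $r_\varepsilon$ are $o(\|\ds r_\varepsilon\|_2^2)$ in every dimension, since $\twostar>2$.

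Likewise, $\beta_\varepsilon$ is not chosen to cancel a linear term along $P\bar U$. There is no such term: $r_\varepsilon\in T^\perp$, and $D\mathcal Q_\varepsilon[\psi]\psi=0$ by homogeneity. The real obstruction is that $h_z=H_0(z,\cdot)-H_a(z,\cdot)\notin T^\perp$. Write $-\kappa_{N,\mu,s}\lambda^{-\frac{N-2s}{2}}\Pi_{z,\lambda}h_z=\sum_jb_je_j$. One absorbs $b_0e_0+b_{N+1}e_{N+1}$ into an amplitude $1+b_0$ and a new scale $\lambda'$. This forces $\beta=(1+b_0)(\lambda'/\lambda)^{-\frac{N-2s}{2}}$, so that the $h_z$-coefficient matches that of $\psi_{z,\lambda'}$. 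One must also show $\sum_i|b_i|=O(\lambda^{-(N-2s)-\sigma})$ for the translation coefficients, which uses $a\in C^1$ (Lemma~\ref{lem:tangential-h}). With only $o(\lambda^{-(N-2s)})$, the leftover error $\varrho$ would fail $\|\ds\varrho\|_2^2=o(\lambda^{-2s})$ when $N<3s$. Your ``re-choice of $(\alpha_\varepsilon,\lambda_\varepsilon)$'' points in this direction, but it misses the translation directions and the role of the $C^1$ hypothesis.

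Second, the lower bound you write in Step~2 is too weak for Step~3. With $-c_3\varepsilon|Q_V(z_\varepsilon)|$ in place of $+c_3\varepsilon Q_V(z_\varepsilon)$, you cannot conclude $Q_V(z_0)<0$. A point of $\mathcal N_a$ with large positive $Q_V$ is compatible with both your inequality and the upper bound. So neither $z_0\in\mathcal N_a(V)$ nor the maximality of $z_0$ over $\mathcal N_a(V)$ follows. The paper keeps the sign and uses that $F_x>0$ whenever $Q_V(x)\ge0$ (Lemma~\ref{lem:F-minimization}), against the strictly negative upper bound.

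Replacing $-a(z_\varepsilon)$ by $|a(z_\varepsilon)|$ is also premature. First derive $\phi_a(z_\varepsilon)\to0$ from the lower bound with $-c_2a(z_\varepsilon)\lambda^{-2s}$, using $\phi_a\ge0$ and boundedness of $a$ and $Q_V$. This gives $z_0\in\mathcal N_a$, hence $a(z_0)<0$ by \eqref{eq:assumption-a-on-Na}, and only then may you write $|a|$.

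Finally, $\lambda_\varepsilon$ is given rather than chosen, so ``minimizing in $\lambda$'' is not the argument. You need the two-sided bound $c\le\varepsilon^{\frac{1}{4s-N}}\lambda_\varepsilon\le C$, obtained from the lower bound together with $\SH(a+\varepsilon V)-\SH\le-c\,\varepsilon^{\frac{2s}{4s-N}}$. Then pass to a subsequence limit and use the uniqueness of the minimizer of $F_{z_0}$.
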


\subsection{Comments on the results}

Some comments on the results are in order.

\begin{enumerate}[label=(\alph*),leftmargin=*]
\item Since $s\in(0,1)$, the range $2s<N<4s$ is nonempty only for $N\in\{1,2,3\}$: it corresponds to $s\in(\frac14,\frac12)$ if $N=1$, to $s\in(\frac12,1)$ if $N=2$, and to $s\in(\frac34,1)$ if $N=3$. Formally, the case $N=3$, $s\to1$ corresponds to the local three-dimensional problem studied in \cite{Druet2002,FKK1} and, for the Hartree nonlinearity, in \cite{GGYZ,WuYangHartree}.
\item Theorem~\ref{main} holds in the whole range $2s<N<4s$ and for every $0<\mu<N$, and it only requires the continuity of $a$. The additional restriction $\frac52s<N<4s$ in Theorems~\ref{lem:de-1.4}--\ref{lem:de-1.6} is used only in the refined lower bound of Proposition~\ref{prop:refined-lower-bound}\,(b). There, the error produced by the reparametrization must be $o(\lambda^{-2s})$ after squaring. In terms of $(N,s)$, this means $s\in(\frac14,\frac25)$ if $N=1$, $s\in(\frac12,\frac45)$ if $N=2$, while for $N=3$ the restriction is void.
\item The rate $\varepsilon^{\frac{2s}{4s-N}}$ in Theorem~\ref{lem:de-1.4} and the concentration scale $\lambda_\varepsilon\sim\varepsilon^{-\frac{1}{4s-N}}$ in Theorem~\ref{lem:de-1.6} come from the competition between the term $-a(z)\lambda^{-2s}$, generated by the expansion of $H_a$ near the diagonal, and the term $\varepsilon Q_V(z)\lambda^{-(N-2s)}$, generated by the perturbation. For $N=3$ and $s=1$ they reduce to $\varepsilon^2$ and $\varepsilon^{-1}$, in agreement with \cite{FKK1}. The exponent $\mu$ only enters through the constants $\kappa_{N,\mu,s}$ and $\SH$.
\end{enumerate}

\subsection{Organization of the paper}
In Section~\ref{sec:UB} we expand the quotient $\SH(a+\varepsilon V)$ at a suitable test function, namely a projected bubble corrected by the regular part of the Green function, and deduce the upper bounds and some necessary conditions for critical potentials. Section~\ref{sec:lower-bound-first-expansion} contains the non-existence of minimizers at the threshold level, the profile decomposition of almost-minimizing sequences, the coercivity of the second variation on the orthogonal complement of the tangent space, and a first expansion of the energy. In Section~\ref{sec:lower-bound-refined} we refine this expansion and obtain a lower bound which matches the upper bound of Section~\ref{sec:UB}. Theorem~\ref{main} and Corollary~\ref{lem:de-1.3} are proved in Section~\ref{sec:proof-characterization}, and Theorems~\ref{lem:de-1.4}, \ref{lem:de-1.5} and \ref{lem:de-1.6} in Section~\ref{sec:proof-asymptotics}. The Appendix collects the explicit constants and the estimates on bubbles and on the regular part of the Green function used throughout the paper.

\subsection{Notation}

Unless stated otherwise, we assume throughout that $s\in(0,1)$,
$2s<N<4s$, and $0<\mu<N$. We denote the fractional Sobolev critical
exponent and the upper critical Hardy--Littlewood--Sobolev exponent by
$$
2_s^*:=\frac{2N}{N-2s},
\qquad
\twostar:=\frac{2N-\mu}{N-2s},
$$
respectively.
We also set
$K:=\left\lfloor\frac{2s}{N-2s}\right\rfloor$, where $K$ is the largest
integer less than or equal to $\frac{2s}{N-2s}$.
 For $q\geq1$, we write
$\|u\|_q:=\|u\|_{L^q(\R^N)}$, and use the shorthand
$\|\ds u\|_2^2:=\int_{\R^N}|\ds u|^2dx$. For $z\in\Omega$, we set
$d(z):=\operatorname{dist}(z,\partial\Omega)$.

\section{Proof of the upper bound}
\label{sec:UB}

We derive an expansion of $\SH (a + \varepsilon V) [\psi_{z, \lambda}]$ using the test function
\begin{equation*}
\psi_{z, \lambda}(x) := P\bar{U}_{z, \lambda}(x) - \lambda^{-\frac{N-2s}{2}} \kappa_{N,\mu,s}(H_a(z,x) - H_0(z,x)),
\end{equation*}
as $\lambda \to \infty$. This expansion does not require the criticality of $a$.
\begin{thm}[Expansion of $\SH (a + \varepsilon V) {[\psi_{z, \lambda}]}$]\label{lem:de-2.1}
Let $a\in C(\overline\Omega)\cap C^1(\Omega)$ be such that $(-\Delta)^s+a$ is coercive, and let $V\in L^\infty(\Omega)$.
As $\lambda \to \infty$, uniformly for $z$ in compact subsets of $\Omega$ and for $\varepsilon \geq 0$, we have
\begin{equation}
\label{eq:numerator-expansion}
\begin{split}
\int_{\R^N} \left|\ds \psi_{z,\lambda}\right|^2dx
+ \int_\Omega \bigl(a(x)+\varepsilon V(x)\bigr)
\psi_{z,\lambda}^2(x)dx
&=\SH^{\frac{2N-\mu}{N+2s-\mu}}
-\kappa_{N,\mu,s}^{2}
a_{N,s}\mathfrak a_{N,s}^{(1)}
\phi_a(z)\lambda^{-(N-2s)}
\\
&+\kappa_{N,\mu,s}^{2}
\left(a_{N,s}d_{N,s}b_{N,s}-\alpha_{N,s}\right)
a(z)\lambda^{-2s}
\\
&+\kappa_{N,\mu,s}^{2}
\varepsilon Q_V(z)\lambda^{-(N-2s)}
+o\left(\lambda^{-2s}\right)
+o\left(\varepsilon\lambda^{-(N-2s)}\right),
\end{split}
\end{equation}
and
\begin{equation}
\label{eq:denominator-expansion}
\begin{split}
&\int_\Omega\int_\Omega
\frac{
\left|\psi_{z,\lambda}(x)\right|^{\twostar}
\left|\psi_{z,\lambda}(y)\right|^{\twostar}}
{|x-y|^\mu}dxdy
\\
=&\SH^{\frac{2N-\mu}{N+2s-\mu}}
-2\times\twostar a_{N,s}\kappa_{N,\mu,s}^{2}
\mathfrak a_{N,s}^{(1)}\phi_a(z)\lambda^{-(N-2s)}
+2\times\twostar a_{N,s}\kappa_{N,\mu,s}^{2}
d_{N,s}b_{N,s}a(z)\lambda^{-2s}
\\
&+\sum_{k=2}^{K}(-1)^k
\Bigg[
2a_{N,s}\kappa_{N,\mu,s}^{2}
\binom{\twostar}{k}\mathfrak a_{N,s}^{(k)}
+\kappa_{N,\mu,s}^{k}
\sum_{i=1}^{k-1}
\binom{\twostar}{i}
\binom{\twostar}{k-i}
C_{i,k-i}
\Bigg]
\phi_a^k(z)\lambda^{-k(N-2s)}
+o\left(\lambda^{-2s}\right).
\end{split}
\end{equation}
In particular,
\begin{equation}
\label{eq:quotient-expansion}
\begin{split}
\SH(a+\varepsilon V)[\psi_{z,\lambda}]
=&\SH
+\kappa_{N,\mu,s}^{2}
a_{N,s}\mathfrak a_{N,s}^{(1)}
\SH^{-\frac{N-2s}{N+2s-\mu}}
\phi_a(z)\lambda^{-(N-2s)}
\\
&+\mathcal T_1\left(\phi_a(z),\lambda\right)
-\kappa_{N,\mu,s}^{2}
\left(a_{N,s}d_{N,s}b_{N,s}+\alpha_{N,s}\right)
\SH^{-\frac{N-2s}{N+2s-\mu}}
a(z)\lambda^{-2s}
\\
&+\kappa_{N,\mu,s}^{2}
\SH^{-\frac{N-2s}{N+2s-\mu}}
\varepsilon Q_V(z)\lambda^{-(N-2s)}
+o\left(\lambda^{-2s}\right)
+o\left(\varepsilon\lambda^{-(N-2s)}\right).
\end{split}
\end{equation}
Here, for $i=1,2$, $\mathcal T_i(\phi,\lambda)$ are (possibly empty) sums of the form
\begin{equation}
\label{eq:T-i-definition}
\mathcal T_i(\phi,\lambda)
:=\sum_{k=2}^{K}\gamma_i(k)\phi^k\lambda^{-k(N-2s)}
\end{equation}
for suitable constants $\gamma_i(k)\in\R$.
\end{thm}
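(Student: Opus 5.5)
The plan is to follow the structure of the Sobolev computation in \cite{DeNittiKonig2023}, treating the numerator and the Hartree denominator separately and then expanding the quotient. Write $\psi_{z,\lambda}=P\bar U_{z,\lambda}-\kappa_{N,\mu,s}\lambda^{-\frac{N-2s}{2}}(H_a(z,\cdot)-H_0(z,\cdot))$ and use two facts from the Appendix. First, $P\bar U_{z,\lambda}=\bar U_{z,\lambda}-\kappa_{N,\mu,s}\lambda^{-\frac{N-2s}{2}}H_0(z,\cdot)+f_{z,\lambda}$ with $f_{z,\lambda}$ small in $L^\infty$ and in $\ths$. Hence $\psi_{z,\lambda}=\bar U_{z,\lambda}-\kappa_{N,\mu,s}\lambda^{-\frac{N-2s}{2}}H_a(z,\cdot)+f_{z,\lambda}$ in $\Omega$. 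Second, Lemma~\ref{lemma Ha expansion} gives $H_a(z,x)=\phi_a(z)-d_{N,s}a(z)|x-z|^{4s-N}+o(|x-z|^{4s-N})$ near the diagonal; the $C^1$ assumption on $a$ controls this remainder.

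\textbf{Numerator.} I integrate by parts using the weak equation for $P\bar U_{z,\lambda}$. The defining equation of $G_a$ gives $(-\Delta)^s H_a(z,\cdot)+aH_a(z,\cdot)=a|z-\cdot|^{-(N-2s)}$ in $\Omega$. Consequently, when $\|\ds\psi\|_2^2+\int a\psi^2$ is expanded, the cross terms between $\bar U$ and $H_a$ combine so that the potential interacts only with $\bar U-\kappa\lambda^{-\frac{N-2s}{2}}|x-z|^{-(N-2s)}$. This difference is of order $\lambda^{-2s}$ after rescaling $x=z+y/\lambda$, and the integral converges because $N<4s$. It produces the constant $\alpha_{N,s}$. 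The boundary term $\int \bar U^{\frac{N+2s}{N-2s}}H_a$ produces $\mathfrak a^{(1)}_{N,s}\phi_a(z)\lambda^{-(N-2s)}$ and $d_{N,s}b_{N,s}a(z)\lambda^{-2s}$. The $\varepsilon V$ term is $\varepsilon\int V\psi^2$. Since $\psi_{z,\lambda}\approx\kappa\lambda^{-\frac{N-2s}{2}}G_a(z,\cdot)$ away from $z$, and $G_a^2$ is integrable for $N<4s$, dominated convergence gives $\kappa^2\varepsilon Q_V(z)\lambda^{-(N-2s)}+o(\varepsilon\lambda^{-(N-2s)})$.

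\textbf{Denominator.} Set $\psi=\bar U-h$ with $h=\kappa\lambda^{-\frac{N-2s}{2}}H_a+\dots$, so $h=O(\lambda^{-\frac{N-2s}{2}})$. Near $z$, $h\ll\bar U$ on the scale $|x-z|\lesssim\lambda^{-1}$ where the mass concentrates. On the region where $\bar U\lesssim h$, the contribution is $O(\lambda^{-N})=o(\lambda^{-2s})$, which I verify with HLS (Proposition~\ref{HLS1}). I Taylor-expand $|\bar U-h|^{\twostar}=\sum_{k\le K}\binom{\twostar}{k}(-1)^k\bar U^{\twostar-k}h^k+R$ up to order $K$; each $h^k$ contributes $\lambda^{-k(N-2s)}$, so only $k\le K$ matter at order $\lambda^{-2s}$. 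Inserting this into the double integral gives diagonal terms (order $k$ in one variable, order $0$ in the other). These simplify via \eqref{easybarU} to single integrals $\int\bar U^{\frac{\mu}{N-2s}+\twostar-k}h^k$, yielding the $\mathfrak a^{(k)}_{N,s}$ terms with factor $2a_{N,s}$. The mixed terms $(i,k-i)$ give the $C_{i,k-i}$ constants. For $k=1$ I keep the next term of $H_a$, which gives $d_{N,s}b_{N,s}a(z)\lambda^{-2s}$. For $k\ge2$, replacing $H_a$ by $\phi_a(z)$ costs only $o(\lambda^{-2s})$. Finally, $\SH^{\frac{2N-\mu}{N+2s-\mu}}$ identifies the leading constants, using the normalization of $\bar U$.

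\textbf{Quotient and main obstacle.} Expanding $(D)^{-1/\twostar}$ to first order in the $\lambda^{-(N-2s)}$ and $\lambda^{-2s}$ terms gives \eqref{eq:quotient-expansion}; all higher powers $\phi_a^k\lambda^{-k(N-2s)}$ are collected in $\mathcal T_1$. The main obstacle is the remainder control in the denominator. The Taylor remainder $R$ is of order $\bar U^{\twostar-K-1}|h|^{K+1}$, which is not integrable against the Riesz kernel near infinity when $\twostar-K-1<0$. I would handle this by splitting the domain at $|x-z|\sim\lambda^{-\theta}$, using $||a-b|^p-\sum|\le C(|b|^{p}+\dots)$ on the outer part, and bounding each piece through HLS with exponent $\frac{2N}{2N-\mu}$.
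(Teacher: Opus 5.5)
Your proposal follows essentially the same route as the paper. The numerator is handled by the De Nitti--K\"onig computation. In the denominator you Taylor-expand $|\bar U_{z,\lambda}-\kappa_{N,\mu,s}\lambda^{-\frac{N-2s}{2}}H_a(z,\cdot)|^{\twostar}$ to order $K$, reduce the one-sided terms by \eqref{easybarU} up to an $o(\lambda^{-2s})$ correction from $x\in\R^N\setminus\Omega$, get the $C_{i,k-i}$ from the mixed terms, and then expand $(A+B)^{-1/\twostar}$. The obstacle you flag does not arise, since $\twostar-K-1\geq\frac{N-\mu}{N-2s}>0$ and the integration is over the bounded set $\Omega$. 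The paper bounds the remainder by $\bar U_{z,\lambda}^{\twostar-K-1}\lambda^{-\frac{(K+1)(N-2s)}{2}}|H_a(z,\cdot)|^{K+1}+\lambda^{-\frac{2N-\mu}{2}}|H_a(z,\cdot)|^{\twostar}$ and integrates this with \eqref{easybarU} and Lemma~\ref{lem:barU-Ha-power-asymptotics}, so no splitting of the domain is needed.
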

\begin{proof}
The numerator is treated as in \cite[Theorem~2.1]{DeNittiKonig2023}, which gives \eqref{eq:numerator-expansion}. We therefore focus on the denominator.

Recall that $\psi_{z,\lambda}(x)
=\bar U_{z,\lambda}(x)
-\kappa_{N,\mu,s}
\lambda^{-\frac{N-2s}{2}}H_a(z,x)
-f_{z,\lambda}(x)$. By H\"older's inequality, the HLS inequality and Lemma \ref{lem:barU-Lq-estimates}, we have
\begin{equation*}
\begin{split}
&\Bigg|
\int_\Omega\int_\Omega
\frac{|\psi_{z,\lambda}(x)|^{\twostar}
|\psi_{z,\lambda}(y)|^{\twostar}}{|x-y|^\mu}dxdy
\\
&\quad-
\int_\Omega\int_\Omega
\frac{
\Big|\bar U_{z,\lambda}(x)
-\kappa_{N,\mu,s}
\lambda^{-\frac{N-2s}{2}}H_a(z,x)\Big|^{\twostar}\Big|\bar U_{z,\lambda}(y)
-\kappa_{N,\mu,s}
\lambda^{-\frac{N-2s}{2}}H_a(z,y)\Big|^{\twostar}}
{|x-y|^\mu}dxdy
\Bigg|
\\
&\leq
\int_\Omega\int_\Omega
\frac{
\Big|
|\psi_{z,\lambda}(x)|^{\twostar}
-\big|\bar U_{z,\lambda}(x)
-\kappa_{N,\mu,s}
\lambda^{-\frac{N-2s}{2}}H_a(z,x)\big|^{\twostar}
\Big|
\Big|\psi_{z,\lambda}(y)\Big|^{\twostar}}
{|x-y|^\mu}dxdy
\\
&\quad+
\int_\Omega\int_\Omega
\frac{
\Big|\bar U_{z,\lambda}(x)
-\kappa_{N,\mu,s}
\lambda^{-\frac{N-2s}{2}}H_a(z,x)\Big|^{\twostar}
\Big|
|\psi_{z,\lambda}(y)|^{\twostar}
-\big|\bar U_{z,\lambda}(y)
-\kappa_{N,\mu,s}
\lambda^{-\frac{N-2s}{2}}H_a(z,y)\big|^{\twostar}
\Big|}
{|x-y|^\mu}dxdy
\\&
\lesssim
\left\|
|\psi_{z,\lambda}|^{\twostar}
-\left|\bar U_{z,\lambda}
-\kappa_{N,\mu,s}
\lambda^{-\frac{N-2s}{2}}H_a(z,\cdot)\right|^{\twostar}
\right\|_{\frac{2N}{2N-\mu}}
\left(
\left\|\psi_{z,\lambda}\right\|^{\twostar}_{2_s^*}
+\left\|
\bar U_{z,\lambda}
-\kappa_{N,\mu,s}
\lambda^{-\frac{N-2s}{2}}H_a(z,\cdot)
\right\|^{\twostar}_{2_s^*}
\right)
\\
&\lesssim
\left\|
\left|\bar U_{z,\lambda}
-\kappa_{N,\mu,s}
\lambda^{-\frac{N-2s}{2}}H_a(z,\cdot)\right|^{\twostar-1}
f_{z,\lambda}
\right\|_{\frac{2N}{2N-\mu}}
+
\left\||f_{z,\lambda}|^{\twostar}\right\|_{\frac{2N}{2N-\mu}}
\\
&\lesssim
\left\|
\bar U_{z,\lambda}
-\kappa_{N,\mu,s}
\lambda^{-\frac{N-2s}{2}}H_a(z,\cdot)
\right\|_{2_s^*}^{\twostar-1}
\left\|f_{z,\lambda}\right\|_{\infty}
+
\left\|f_{z,\lambda}\right\|_{\infty}^{\twostar}
\\&=o(\lambda^{-2s}),
\end{split}
\end{equation*}
where the last estimate follows from $\left\|f_{z,\lambda}\right\|_{\infty}\lesssim \lambda^{-\frac{N+4-2s}{2}}$. For the main term, Taylor's formula gives
\begin{equation*}
\begin{split}
\left|
\bar U_{z,\lambda}(x)
-\kappa_{N,\mu,s}\lambda^{-\frac{N-2s}{2}}H_a(z,x)
\right|^{\twostar}
=&
\sum_{k=0}^{K}
(-1)^k
\binom{\twostar}{k}
\bar U_{z,\lambda}^{\twostar-k}(x)
\kappa_{N,\mu,s}^{k}
\lambda^{-\frac{k(N-2s)}{2}}
H_a^k(z,x)+\rho_{K,z,\lambda}(x),
\end{split}
\end{equation*}
where
\begin{equation*}
\rho_{K,z,\lambda}(x)=O\left(
\bar U_{z,\lambda}^{\twostar-K-1}(x)
\lambda^{-\frac{(K+1)(N-2s)}{2}}
\left|H_a(z,x)\right|^{K+1}
+\lambda^{-\frac{2N-\mu}{2}}
\left|H_a(z,x)\right|^{\twostar}\right),
\end{equation*}
and $\binom{\twostar}{k}:=
\frac{\Gamma(\twostar+1)}
{\Gamma(\twostar-k+1)\Gamma(k+1)}$ is the generalized binomial coefficient. Then we have
\begin{equation}
\label{eq:denominator-taylor-expansion}
\begin{split}
&\int_\Omega\int_\Omega
\frac{
\left|\bar U_{z,\lambda}(x)
-\kappa_{N,\mu,s}\lambda^{-\frac{N-2s}{2}}H_a(z,x)\right|^{\twostar}
\left|\bar U_{z,\lambda}(y)
-\kappa_{N,\mu,s}\lambda^{-\frac{N-2s}{2}}H_a(z,y)\right|^{\twostar}}
{|x-y|^\mu}dxdy
\\
=&
\int_\Omega\int_\Omega
\frac{
\bar U_{z,\lambda}^{\twostar}(x)
\bar U_{z,\lambda}^{\twostar}(y)}
{|x-y|^\mu}dxdy
-2\times\twostar\kappa_{N,\mu,s}
\lambda^{-\frac{N-2s}{2}}
\int_\Omega\int_\Omega
\frac{
\bar U_{z,\lambda}^{\twostar}(x)
\bar U_{z,\lambda}^{\twostar-1}(y)H_a(z,y)}
{|x-y|^\mu}dxdy
\\
&+\sum_{\substack{i,j\geq0\\2\leq i+j\leq K}}
(-1)^{i+j}\kappa_{N,\mu,s}^{i+j}
\lambda^{-\frac{(i+j)(N-2s)}{2}}
\binom{\twostar}{i}
\binom{\twostar}{j}
\int_\Omega\int_\Omega
\frac{
\bar U_{z,\lambda}^{\twostar-i}(x)H_a^i(z,x)
\bar U_{z,\lambda}^{\twostar-j}(y)H_a^j(z,y)}
{|x-y|^\mu}dxdy
+\mathcal R_{K,z,\lambda},
\end{split}
\end{equation}
where
\begin{equation*}
\begin{split}
\mathcal R_{K,z,\lambda}
&=O\Biggl(
\sum_{\substack{0\leq i,j\leq K\\ i+j\geq K+1}}
\lambda^{-(i+j)(N-2s)/2}
\int_\Omega\int_\Omega
\frac{\bar U_{z,\lambda}^{\twostar-i}(x)|H_a(z,x)|^i
\bar U_{z,\lambda}^{\twostar-j}(y)|H_a(z,y)|^j}{|x-y|^\mu}dxdy
\Biggr)
\\
&+O\Biggl(
\int_\Omega\int_\Omega
\frac{\left|\bar U_{z,\lambda}(x)-\kappa_{N,\mu,s}\lambda^{-(N-2s)/2}H_a(z,x)\right|^{\twostar}
\left|\rho_{K,z,\lambda}(y)\right|}{|x-y|^\mu}dxdy+
\int_\Omega\int_\Omega
\frac{\left|\rho_{K,z,\lambda}(x)\right|
\left|\rho_{K,z,\lambda}(y)\right|}{|x-y|^\mu}dxdy
\Biggr).
\end{split}
\end{equation*}

Using \eqref{easybarU}, the HLS inequality, and Lemmas \ref{lem:barU-Lq-estimates} and \ref{lem:barU-global-exterior-Lq}, we obtain
\begin{equation*}
\begin{split}
&\int_\Omega\int_\Omega
\frac{\bar U_{z,\lambda}^{\twostar}(y)
\bar U_{z,\lambda}^{\twostar}(x)}{|x-y|^\mu}dxdy
\\
=&\int_{\R^N}\int_\Omega
\frac{\bar U_{z,\lambda}^{\twostar}(y)
\bar U_{z,\lambda}^{\twostar}(x)}{|x-y|^\mu}dxdy
-\int_{\R^N\setminus\Omega}\int_\Omega
\frac{\bar U_{z,\lambda}^{\twostar}(y)
\bar U_{z,\lambda}^{\twostar}(x)}{|x-y|^\mu}dxdy
\\
=&a_{N,s}A_{N,\mu,s}^{-\frac{2s}{N+2s-\mu}}
\int_{\R^N}\bar U_{z,\lambda}^{2_s^*}(x)dx
+O\left(\int_{\R^N\setminus\Omega}
\bar U_{z,\lambda}^{2_s^*}(x)dx\right)
\\
=&a_{N,s}\kappa_{N,\mu,s}^{2}\mathfrak a_{N,s}^{(0)}
+O\left(\lambda^{-N}\right)
=\SH^{\frac{2N-\mu}{N+2s-\mu}}
+o\left(\lambda^{-2s}\right).
\end{split}
\end{equation*}
The last equality follows from the equations for $\bar U_{0,1}$, Lemma \ref{lem:explicit-constants}, and the fact that $\bar U_{0,1}$ minimizes the quotient in \eqref{hls_ineq}: indeed, testing \eqref{equation_s_hls} with $\bar U_{0,1}$ gives $\|\ds\bar U_{0,1}\|_2^2=D:=\int_{\R^N}\int_{\R^N}\frac{\bar U_{0,1}^{\twostar}(x)\bar U_{0,1}^{\twostar}(y)}{|x-y|^\mu}dxdy$, hence $\SH=D^{1-\frac{1}{\twostar}}$, that is, $D=\SH^{\frac{2N-\mu}{N+2s-\mu}}$.

For the second term in \eqref{eq:denominator-taylor-expansion}, we first estimate the integral over $\R^N\setminus\Omega$. We claim that
\begin{equation}
\label{eq:linear-Ha-exterior-error}
\lambda^{-\frac{N-2s}{2}}
\left|\int_\Omega\int_{\R^N\setminus\Omega}
\frac{\bar U_{z,\lambda}^{\twostar}(x)
\bar U_{z,\lambda}^{\twostar-1}(y)H_a(z,y)}{|x-y|^\mu}dxdy\right|
=o\left(\lambda^{-2s}\right).
\end{equation}
To this end, choose $r>1$ such that
\begin{equation*}
1<r<
\begin{cases}
\displaystyle
\min\left\{\frac{N}{N-\mu},\frac{N}{4s-N}\right\},
&\text{if}\quad0<\mu\leq 2s,
\\[2mm]
\displaystyle
\frac{N}{N+2s-\mu},
&\text{if}\quad2s<\mu<N,
\end{cases}
\end{equation*}
and let $t>1$ be determined by $1/t+\mu/N+1/r=2$, so that $(t,r)$ is an HLS-admissible pair. Since $\twostar t>\frac{N}{N-2s}$ and $\R^N\setminus\Omega\subset\R^N\setminus B_{d(z)}(z)$, Lemma \ref{lem:barU-global-exterior-Lq} (with $\tau=d(z)$) gives
\begin{equation*}
\left\|\bar U_{z,\lambda}\right\|_{L^{\twostar t}(\R^N\setminus\Omega)}^{\twostar}
\lesssim
\lambda^{-\frac{2N-\mu}{2}}.
\end{equation*}
Moreover, Lemma \ref{lem:barU-Lq-estimates} yields
\begin{equation*}
\left\|\bar U_{z,\lambda}\right\|_{L^{(\twostar-1)r}(\Omega)}^{\twostar-1}
\lesssim
\begin{cases}
\displaystyle
\lambda^{\frac{N+2s-\mu}{2}-\frac{N}{r}}
=o\left(\lambda^{\frac{3N-6s-\mu}{2}}\right),
&\text{if}\quad 0<\mu\leq 2s,
\\[2mm]
\displaystyle
\lambda^{-\frac{N+2s-\mu}{2}},
&\text{if}\quad 2s<\mu<N.
\end{cases}
\end{equation*}
Since $H_a(z,\cdot)$ is uniformly bounded for $z$ in compact subsets of $\Omega$, the preceding bounds and the HLS inequality give
\begin{equation*}
\lambda^{-\frac{N-2s}{2}}
\left|
\int_\Omega\int_{\R^N\setminus\Omega}
\frac{
\bar U_{z,\lambda}^{\twostar}(x)
\bar U_{z,\lambda}^{\twostar-1}(y)H_a(z,y)}
{|x-y|^\mu}dxdy
\right|
\lesssim
\lambda^{-\frac{N-2s}{2}}
\left\|\bar U_{z,\lambda}\right\|_{L^{\twostar t}(\R^N\setminus\Omega)}^{\twostar}
\left\|\bar U_{z,\lambda}\right\|_{L^{(\twostar-1)r}(\Omega)}^{\twostar-1}
=o\left(\lambda^{-2s}\right).
\end{equation*}
Combining this estimate with \eqref{easybarU} and Lemma \ref{lem:barU-Ha-power-asymptotics}, we obtain
\begin{equation*}
\begin{split}
&-2\times\twostar\kappa_{N,\mu,s}
\lambda^{-\frac{N-2s}{2}}
\int_\Omega\int_\Omega
\frac{\bar U_{z,\lambda}^{\twostar}(x)
\bar U_{z,\lambda}^{\twostar-1}(y)H_a(z,y)}{|x-y|^\mu}dxdy
\\
=&-2\times\twostar\kappa_{N,\mu,s}
\lambda^{-\frac{N-2s}{2}}
\int_\Omega\int_{\R^N}
\frac{\bar U_{z,\lambda}^{\twostar}(x)
\bar U_{z,\lambda}^{\twostar-1}(y)H_a(z,y)}{|x-y|^\mu}dxdy
+o\left(\lambda^{-2s}\right)
\\
=&-2\times\twostar\kappa_{N,\mu,s}a_{N,s}
A_{N,\mu,s}^{-\frac{2s}{N+2s-\mu}}
\lambda^{-\frac{N-2s}{2}}
\int_\Omega\bar U_{z,\lambda}^{2_s^*-1}(y)H_a(z,y)dy
+o\left(\lambda^{-2s}\right)
\\
=&-2\times\twostar a_{N,s}\kappa_{N,\mu,s}^{2}
\mathfrak a_{N,s}^{(1)}\phi_a(z)\lambda^{-(N-2s)}
+2\times\twostar a_{N,s}\kappa_{N,\mu,s}^{2}
d_{N,s}b_{N,s}a(z)\lambda^{-2s}
+o\left(\lambda^{-2s}\right).
\end{split}
\end{equation*}

For the third term in \eqref{eq:denominator-taylor-expansion}, Lemma \ref{lem:barU-Ha-double-endpoint-asymptotics} gives
\begin{equation*}
\begin{split}
&\sum_{\substack{i,j\geq0\\2\leq i+j\leq K}}
(-1)^{i+j}\kappa_{N,\mu,s}^{i+j}
\lambda^{-\frac{(i+j)(N-2s)}{2}}
\binom{\twostar}{i}
\binom{\twostar}{j}
\int_\Omega\int_\Omega
\frac{
\bar U_{z,\lambda}^{\twostar-i}(x)H_a^i(z,x)
\bar U_{z,\lambda}^{\twostar-j}(y)H_a^j(z,y)}
{|x-y|^\mu}dxdy
\\
=&\sum_{k=2}^{K}(-1)^k
\Bigg[
2a_{N,s}\kappa_{N,\mu,s}^{2}
\binom{\twostar}{k}
\mathfrak a_{N,s}^{(k)}
+\kappa_{N,\mu,s}^{k}
\sum_{i=1}^{k-1}
\binom{\twostar}{i}
\binom{\twostar}{k-i}
C_{i,k-i}
\Bigg]
\phi_a^k(z)\lambda^{-k(N-2s)}
+o\left(\lambda^{-2s}\right).
\end{split}
\end{equation*}

It remains to bound $\mathcal R_{K,z,\lambda}$. Lemma \ref{lem:barU-Ha-double-endpoint-asymptotics} gives
\begin{equation*}
  \sum_{\substack{0\leq i,j\leq K\\ i+j\geq K+1}}
\lambda^{-(i+j)(N-2s)/2}
\int_\Omega\int_\Omega
\frac{\bar U_{z,\lambda}^{\twostar-i}(x)|H_a(z,x)|^i
\bar U_{z,\lambda}^{\twostar-j}(y)|H_a(z,y)|^j}{|x-y|^\mu}dxdy=o\left(\lambda^{-2s}\right).
\end{equation*}
Using \eqref{easybarU} and Lemmas \ref{lem:barU-Lq-estimates} and \ref{lem:barU-Ha-power-asymptotics}, a direct computation shows
\begin{equation*}
  \begin{split}
   & \int_\Omega\int_\Omega
\frac{\left|\bar U_{z,\lambda}(x)-\kappa_{N,\mu,s}\lambda^{-(N-2s)/2}H_a(z,x)\right|^{\twostar}
\left|\rho_{K,z,\lambda}(y)\right|}{|x-y|^\mu}dxdy\\
\lesssim &
\int_\Omega\int_\Omega
\frac{\bar U_{z,\lambda}^{\twostar}(x)
\left(
\bar U_{z,\lambda}^{\twostar-K-1}(y)
\lambda^{-\frac{(K+1)(N-2s)}{2}}
\left|H_a(z,y)\right|^{K+1}
+\lambda^{-\frac{2N-\mu}{2}}
\left|H_a(z,y)\right|^{\twostar}\right)}{|x-y|^\mu}dxdy+o\left(\lambda^{-2s}\right)
\\
\lesssim & \int_\Omega
\left( \lambda^{-\frac{(K+1)(N-2s)}{2}}
\bar U_{z,\lambda}^{2_s^*-K-1}(y)\left|H_a(z,y)\right|^{K+1}+\lambda^{-\frac{2N-\mu}{2}}
\bar U_{z,\lambda}^{\frac{\mu}{N-2s}}(y)\right)dy+o\left(\lambda^{-2s}\right)=o\left(\lambda^{-2s}\right).
  \end{split}
\end{equation*}
Similarly, we obtain
\begin{equation*}
  \int_\Omega\int_\Omega
\frac{\left|\rho_{K,z,\lambda}(x)\right|
\left|\rho_{K,z,\lambda}(y)\right|}{|x-y|^\mu}dxdy=o\left(\lambda^{-2s}\right).
\end{equation*}
Collecting these estimates gives \eqref{eq:denominator-expansion}.

We now apply the Taylor expansion
\begin{equation*}
(A+B)^{-\frac{1}{\twostar}}
=A^{-\frac{1}{\twostar}}
-\frac{1}{\twostar}A^{-\frac{1}{\twostar}-1}B
+\sum_{k=2}^{K}
\binom{-\frac{1}{\twostar}}{k}
A^{-\frac{1}{\twostar}-k}B^k
+O\left(B^{K+1}\right)
\end{equation*}
to the right-hand side of \eqref{eq:denominator-expansion}, with $
A=\SH^{\frac{2N-\mu}{N+2s-\mu}}
$ and
\begin{equation*}
\begin{split}
B=&-2\times\twostar a_{N,s}\kappa_{N,\mu,s}^{2}
\mathfrak a_{N,s}^{(1)}\phi_a(z)\lambda^{-(N-2s)}+2\times\twostar a_{N,s}\kappa_{N,\mu,s}^{2}
d_{N,s}b_{N,s}a(z)\lambda^{-2s}
\\
&+\sum_{k=2}^{K}(-1)^k
\Bigg[
2a_{N,s}\kappa_{N,\mu,s}^{2}
\binom{\twostar}{k}\mathfrak a_{N,s}^{(k)}
+\kappa_{N,\mu,s}^{k}
\sum_{i=1}^{k-1}
\binom{\twostar}{i}
\binom{\twostar}{k-i}
C_{i,k-i}
\Bigg]
\phi_a^k(z)\lambda^{-k(N-2s)}
+o\left(\lambda^{-2s}\right).
\end{split}
\end{equation*}
Since $B=O\left(\lambda^{-(N-2s)}\right)$ and $K+1>\frac{2s}{N-2s}$, we have $
O\left(B^{K+1}\right)=o\left(\lambda^{-2s}\right)$. Therefore,
\begin{equation*}
\begin{split}
\left(
\int_\Omega\int_\Omega
\frac{
\left|\psi_{z,\lambda}(x)\right|^{\twostar}
\left|\psi_{z,\lambda}(y)\right|^{\twostar}}
{|x-y|^\mu}dxdy
\right)^{-\frac{1}{\twostar}}
=&\SH^{-\frac{N-2s}{N+2s-\mu}}
+2a_{N,s}\kappa_{N,\mu,s}^{2}
\mathfrak a_{N,s}^{(1)}
\SH^{-\frac{3N-2s-\mu}{N+2s-\mu}}
\phi_a(z)\lambda^{-(N-2s)}
\\
+\mathcal T_2\left(\phi_a(z),\lambda\right)&
-2a_{N,s}\kappa_{N,\mu,s}^{2}
d_{N,s}b_{N,s}
\SH^{-\frac{3N-2s-\mu}{N+2s-\mu}}
a(z)\lambda^{-2s}
+o\left(\lambda^{-2s}\right),
\end{split}
\end{equation*}
where $\mathcal T_2$ is given by \eqref{eq:T-i-definition}. Multiplying this expansion by \eqref{eq:numerator-expansion}, we obtain \eqref{eq:quotient-expansion}.
\end{proof}

The expansion in Theorem \ref{lem:de-2.1} gives two useful properties of the potential.
\begin{cor}[Properties of critical potentials]
\label{lem:de-2.2}
Let $a\in C(\overline\Omega)\cap C^1(\Omega)$ be such that $(-\Delta)^s+a$ is coercive.
\begin{enumerate}
\item[(i)] If $\SH(a)=\SH$, then $\phi_a(x)\geq0$ for every $x\in\Omega$.
\item[(ii)] If $\SH(a)=\SH$ and $\phi_a(x_0)=0$ for some $x_0\in\Omega$, then $a(x_0)\leq0$.
\end{enumerate}
\end{cor}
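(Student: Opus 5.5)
Both parts will be proved by contradiction, testing $\SH(a)$ on $\psi_{z,\lambda}$ and using the expansion \eqref{eq:quotient-expansion} of Theorem~\ref{lem:de-2.1} with $\varepsilon=0$. Since $\SH(a)\le \SH(a)[\psi_{z,\lambda}]$ for every $z\in\Omega$ and every large $\lambda$, it suffices to find, in each case, a point $z$ at which the leading correction in \eqref{eq:quotient-expansion} is strictly negative. Throughout we use that the constants $\kappa_{N,\mu,s}$, $a_{N,s}$, $\mathfrak a_{N,s}^{(1)}$ and $\SH$ are positive, and that $a_{N,s}d_{N,s}b_{N,s}+\alpha_{N,s}>0$. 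These positivity facts come from the explicit formulas in Lemma~\ref{lem:explicit-constants} and Lemma~\ref{lemma Ha expansion}, and we will check them there.

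For (i), suppose $\phi_a(x)<0$ for some $x\in\Omega$ and fix $z=x$. Because $2s<N<4s$, we have $N-2s<2s$, so $\lambda^{-(N-2s)}$ dominates $\lambda^{-2s}$. The sum $\mathcal T_1(\phi_a(z),\lambda)$ only contains powers $\lambda^{-k(N-2s)}$ with $k\ge2$, and each of these is $o(\lambda^{-(N-2s)})$. The expansion therefore reads
\[
\SH(a)[\psi_{z,\lambda}]=\SH+\kappa_{N,\mu,s}^{2}a_{N,s}\mathfrak a_{N,s}^{(1)}\SH^{-\frac{N-2s}{N+2s-\mu}}\phi_a(z)\lambda^{-(N-2s)}+o\bigl(\lambda^{-(N-2s)}\bigr).
\]
Since $\phi_a(z)<0$, the right-hand side is $<\SH$ for $\lambda$ large. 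This contradicts $\SH(a)=\SH$.

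For (ii), take $z=x_0$ with $\phi_a(x_0)=0$ and suppose $a(x_0)>0$. Then $\mathcal T_1(0,\lambda)=0$, since every term carries at least one factor of $\phi$. The $\phi_a$-linear term also vanishes, so
\[
\SH(a)[\psi_{x_0,\lambda}]=\SH-\kappa_{N,\mu,s}^{2}\bigl(a_{N,s}d_{N,s}b_{N,s}+\alpha_{N,s}\bigr)\SH^{-\frac{N-2s}{N+2s-\mu}}a(x_0)\lambda^{-2s}+o(\lambda^{-2s}).
\]
This is $<\SH$ for large $\lambda$, again a contradiction. Hence $a(x_0)\le0$.

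The main point needing care is the sign of $a_{N,s}d_{N,s}b_{N,s}+\alpha_{N,s}$. We expect to read it off the appendix: the $a(z)\lambda^{-2s}$ term comes from the expansion $H_a(x,y)=H_0(x,y)+d_{N,s}a(x)|x-y|^{-(N-4s)}+\dots$ of Lemma~\ref{lemma Ha expansion}. If that sign is not immediate, one can instead use a heuristic: by monotonicity of $H_a$ in $a$, a positive potential lowers $G_a$, hence raises $H_a$ locally, which should give the energy gain. A second point is uniformity in $z$, but here $z$ is a single fixed point, so the uniform statement of Theorem~\ref{lem:de-2.1} on compact sets suffices.
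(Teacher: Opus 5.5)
Your argument is the paper's proof: test $\SH(a)$ with $\psi_{x,\lambda}$ at $\varepsilon=0$ in \eqref{eq:quotient-expansion}. In (i) the negative term $\phi_a(x)\lambda^{-(N-2s)}$ dominates because $N-2s<2s$ and $\mathcal T_1=O(\lambda^{-2(N-2s)})$, while in (ii) $\mathcal T_1(0,\lambda)=0$ leaves the negative $a(x_0)\lambda^{-2s}$ term. The sign you deferred is immediate, so the monotonicity heuristic is unnecessary and should be dropped (it could not fix the sign of a universal constant anyway): $a_{N,s}>0$, $d_{N,s}>0$ by Lemma~\ref{lemma Ha expansion}, and $b_{N,s}$ and $\alpha_{N,s}=\int_{\R^N}U_{0,1}h\,dx$ are integrals of positive functions, whereas the paper instead simplifies the sum to $-\pi^{N/2}\Gamma(\frac N2-2s)/\Gamma(N-2s)>0$.
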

\begin{proof}
We set $\varepsilon=0$ in \eqref{eq:quotient-expansion}. Note that $\kappa_{N,\mu,s}^2a_{N,s}\mathfrak a_{N,s}^{(1)}>0$ and the explicit formulas in Lemmas~\ref{lem:explicit-constants} and \ref{lemma Ha expansion} give
\begin{equation*}
a_{N,s}d_{N,s}b_{N,s}+\alpha_{N,s}
=-\frac{\pi^{N/2}\Gamma\left(\frac N2-2s\right)}{\Gamma(N-2s)}>0,
\end{equation*}
since $-1<\frac N2-2s<0$ and $N-2s>0$, so that the Gamma factors in the numerator and denominator are negative and positive, respectively. If $\phi_a(x)<0$ at some point $x\in\Omega$, the term of order $\lambda^{-(N-2s)}$ dominates, since $N-2s<2s$ and $\mathcal T_1(\phi_a(x),\lambda)=O(\lambda^{-2(N-2s)})$. If $\phi_a(x)=0<a(x)$, then $\mathcal T_1(0,\lambda)=0$ and the term of order $\lambda^{-2s}$ is negative. In both cases, for $\lambda$ sufficiently large, the expansion yields
\begin{equation*}
\SH(a)\leq\SH(a)[\psi_{x,\lambda}]<\SH,
\end{equation*}
contrary to the assumption $\SH(a)=\SH$.
\end{proof}
\begin{Rem}[{Expansion of $\SH(a+\varepsilon V)[\psi_{z,\lambda}]$ for merely continuous potentials}]
\label{rem:continuous-a}
Let $a\in C(\overline\Omega)$ be such that $(-\Delta)^s+a$ is coercive.
The difference $H_a(z,\cdot)-H_0(z,\cdot)=G_0(z,\cdot)-G_a(z,\cdot)$ is the weak solution in $\ths$ of
\begin{equation*}
(-\Delta)^s\bigl(H_a(z,\cdot)-H_0(z,\cdot)\bigr)=aG_a(z,\cdot)\quad\text{in }\Omega,
\qquad H_a(z,\cdot)-H_0(z,\cdot)=0\quad\text{in }\R^N\setminus\Omega.
\end{equation*}
Since $2(N-2s)<N$, the Green function bound gives $\|aG_a(z,\cdot)\|_{L^2(\Omega)}\leq C$, uniformly for $z$ in compact subsets of $\Omega$. The energy estimate and \cite[Theorem~1.3]{BiccariWarmaZuazua2017} yield locally uniform $W^{2s,2}$ bounds for $H_a(z,\cdot)-H_0(z,\cdot)$. By the Sobolev embedding, this difference is locally $C^\alpha$ for $0<\alpha<\min\{1,2s-N/2\}$, with bounds uniform for $z$ in compact sets. Its symmetry and the continuity of $H_0$ therefore imply that $H_a$ is jointly continuous in $\Omega\times\Omega$, and in particular $\phi_a\in C(\Omega)$.
Consequently, $H_a(z,x)=\phi_a(z)+o(1)$ as $x\to z$, uniformly for $z$ in compact sets.
Splitting the integral near and away from $z$, the uniform continuity at the diagonal, Lemma~\ref{lem:barU-Lq-estimates} and the bubble decay give
\begin{equation*}
\lambda^{-\frac{N-2s}{2}}\int_\Omega\bar U_{z,\lambda}^{2_s^*-1}(x)|H_a(z,x)-\phi_a(z)|\,dx
=o\left(\lambda^{-(N-2s)}\right).
\end{equation*}
Using this estimate in the proof of Theorem~\ref{lem:de-2.1}, together with $N<4s$ and $\varepsilon\int_\Omega V\psi_{z,\lambda}^2\,dx=O(\varepsilon\lambda^{-(N-2s)})$, we obtain, as $\lambda\to\infty$ and $\varepsilon\to0^+$, uniformly for $z$ in compact subsets of $\Omega$,
\begin{equation*}
\int_{\R^N}|\ds\psi_{z,\lambda}|^2\,dx+\int_\Omega(a+\varepsilon V)\psi_{z,\lambda}^2\,dx
=\SH^{\frac{2N-\mu}{N+2s-\mu}}-\kappa_{N,\mu,s}^2a_{N,s}\mathfrak a_{N,s}^{(1)}\phi_a(z)\lambda^{-(N-2s)}+o\left(\lambda^{-(N-2s)}\right),
\end{equation*}
and
\begin{equation*}
\int_\Omega\int_\Omega\frac{|\psi_{z,\lambda}(x)|^{\twostar}|\psi_{z,\lambda}(y)|^{\twostar}}{|x-y|^\mu}\,dxdy
=\SH^{\frac{2N-\mu}{N+2s-\mu}}-2\twostar\kappa_{N,\mu,s}^2a_{N,s}\mathfrak a_{N,s}^{(1)}\phi_a(z)\lambda^{-(N-2s)}+o\left(\lambda^{-(N-2s)}\right).
\end{equation*}
Taking the quotient yields
\begin{equation}
\label{eq:quotient-expansion-continuous}
\SH(a+\varepsilon V)[\psi_{z,\lambda}]
=\SH+c_1\phi_a(z)\lambda^{-(N-2s)}+o\left(\lambda^{-(N-2s)}\right),
\qquad
c_1:=\kappa_{N,\mu,s}^{2}a_{N,s}\mathfrak a_{N,s}^{(1)}\SH^{-\frac{N-2s}{N+2s-\mu}}>0.
\end{equation}
\end{Rem}

\section{Proof of the lower bound I: a first expansion}
\label{sec:lower-bound-first-expansion}

\subsection{Non-existence of a minimizer for $\SH(a)$}
\label{subsec:non-existence-minimizer}

We first show that, at the threshold level $\SH$, the infimum $\SH(a)$ is never attained. This will be used both in the profile decomposition below and in the proof of Theorem~\ref{main}.

\begin{Prop}[Non-existence of a minimizer for $\SH(a)$]
\label{lem:de-3.1}
Suppose that $a\in C(\overline{\Omega})$ satisfies $\SH(a)=\SH$; for instance, let $a$ be a critical potential. Then
\begin{equation*}
\SH(a)
=
\inf_{u\in\ths,\,u\not\equiv0}
\frac{
\displaystyle\int_{\R^N}|\ds u|^2dx
+\displaystyle\int_\Omega a(x)u^2(x)dx
}{
\displaystyle\left(
\int_\Omega\int_\Omega
\frac{|u(x)|^{\twostar}|u(y)|^{\twostar}}{|x-y|^\mu}dxdy
\right)^{\frac{1}{\twostar}}
}
\end{equation*}
is not achieved.
\end{Prop}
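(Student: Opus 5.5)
The plan is to argue by contradiction, following Druet's strategy. Suppose $u\in\ths\setminus\{0\}$ attains $\SH(a)=\SH$. Replacing $u$ by $|u|$ does not increase $\int|\ds u|^2$ and leaves the other terms unchanged, so we may assume $u\ge0$. Normalize $u$ so that its Euler--Lagrange equation reads
\begin{equation*}
\Ds u+au=\Big(\int_\Omega\frac{u^{\twostar}(x)}{|x-y|^\mu}dx\Big)u^{\twostar-1}\quad\text{in }\Omega .
\end{equation*}
By the strong maximum principle for $\Ds+a$ (coercivity holds because $\SH(a)=\SH>0$), we get $u>0$ in $\Omega$. Outside $\Omega$ we have $u\equiv0$, and
\begin{equation*}
\Ds u(x)=-C_{N,s}\int_\Omega\frac{u(y)}{|x-y|^{N+2s}}dy<0\quad\text{for }x\in\R^N\setminus\overline\Omega .
\end{equation*}
This strictly negative exterior term is where we expect to extract the contradiction.

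Next, we plug $u$ and perturbations of it into the inequality \eqref{hls_ineq} on the whole of $\R^N$, where test functions need not vanish outside $\Omega$. For $w\in H^s(\R^N)$ and small $t$, set $v_t=u+tw$. Since $u$ attains $\SH(a)=\SH$ with the potential term $\int_\Omega au^2$, and \eqref{hls_ineq} has no potential term, we do not compare quotients directly. Instead we combine two pieces of information:
\begin{enumerate}
\item[(1)] the minimality of $u$ among functions in $\ths$;
\item[(2)] the global inequality \eqref{hls_ineq} for $v_t$.
\end{enumerate}
To make (2) sharp, we transplant to $\mathbb S^N$ by stereographic projection. There $u$ becomes a function $F$ supported in a proper closed subset $\overline{\Omega}^\sharp\subsetneq\mathbb S^N$. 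The Hartree--Sobolev quotient becomes the conformally invariant quotient with $\|\ds\cdot\|^2$ replaced by the conformal operator $A_{2s}$ and the HLS double integral taken over the sphere. We then invoke the Frank--Lieb sharp HLS inequality on $\mathbb S^N$ \cite{FrLi2012}, in its form with the explicit second variation at the constant function, together with the fact that the only optimizers are (images of) bubbles. A function supported in a proper subset of the sphere is not a bubble, so the inequality is strict for $F$. Our goal is to quantify this strictness so that it beats the gain $\int_\Omega a u^2$, which has no sign a priori.

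The concrete mechanism we plan to try is Druet's. For $\tau>0$ consider $F_\tau=F+\tau$, adding a constant on the sphere, which corresponds to adding a multiple of the bubble $\bar U_{0,1}$ on $\R^N$. We expand both sides of the sphere inequality to second order in $\tau$:
\begin{itemize}
\item The zeroth-order term is the identity $E(F)=\SH D(F)^{1/\twostar}+(\text{potential term})$ inherited from the minimality of $u$.
\item The first-order term is computed from the Euler--Lagrange equation. Because $F$ solves the equation only on $\Omega^\sharp$, the first-order discrepancy is exactly the exterior integral $\int_{\mathbb S^N\setminus\Omega^\sharp}A_{2s}F$. Transplanted back, this equals $\int_{\R^N\setminus\Omega}\Ds u\,\bar U_{0,1}<0$.
\end{itemize}
Since \eqref{hls_ineq} (equivalently the sphere inequality) must hold for all $\tau$ of both signs, a nonzero first-order term already gives a contradiction, provided that the potential contribution $\int_\Omega a u\bar U_{0,1}$ is matched by the equation. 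To arrange this, we will instead take the perturbation $w$ to be the solution of $(\Ds+a)w=(\Ds)\bar U_{0,1}$ type corrections, or, following \cite{DeNittiKonig2023}, we use $\SH(a)=\SH$ once more to pass to $a$-free statements. If the first-order term turns out to vanish, we fall back on the second-order term. There the Frank--Lieb spectral gap of the HLS Hessian at constants forces strict positivity, contradicting the minimality of $u$.

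The main obstacle is the bookkeeping of the Hartree cross terms in the $\tau$-expansion of the double integral, namely $2\twostar\tau\int\!\!\int F^{\twostar}F^{\twostar-1}/|\xi-\eta|^\mu$ plus the two distinct second-order nonlocal forms. The delicate part is identifying exactly which combination reduces to the strictly negative exterior quantity, with the potential term cancelling through the equation. If the direct first-order argument fails, we will fall back on the sharp second-order Frank--Lieb inequality.
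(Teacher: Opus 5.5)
\textbf{There is a genuine gap in your central mechanism.}

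Any perturbation that leaves $\ths$ can only be tested against the whole-space inequality \eqref{hls_ineq}. This includes $F+\tau$ on $\mathbb S^N$, that is, adding a bubble to $u$. But \eqref{hls_ineq} is \emph{not} saturated at $u$. With $\mathcal D(u)=1$, minimality gives
\begin{equation*}
\|\ds u\|_2^2-\SH\,\mathcal D(u)^{1/\twostar}=-\int_\Omega au^2\,dx .
\end{equation*}
This quantity is $\geq0$ by \eqref{hls_ineq}, and in fact $>0$. Equality would make $u$ an optimizer of \eqref{hls_ineq} on $\R^N$, i.e.\ a bubble, which is impossible since $u\equiv0$ off $\Omega$.

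Because of this strict slack at $\tau=0$, the following give no contradiction:
\begin{itemize}
\item a nonzero first derivative in $\tau$, namely your exterior term $\int_{\R^N\setminus\Omega}\Ds u\,\bar U<0$ together with the unsigned term $-\int_\Omega au\bar U$;
\item any second-order information for small $\tau$;
\item any additive correction $w$, which cannot make the potential ``cancel through the equation''.
\end{itemize}
Your fallback is also misdirected. The Frank--Lieb second variation at the constant function concerns functions near constants, whereas your transplanted $F$ vanishes on an open set and is far from constant. Strictness of HLS for non-bubbles cannot be quantified against $-\int_\Omega au^2$ without additional structure. As written, your argument only covers the trivial situation $\int_\Omega au^2=0$ (e.g.\ $a\equiv0$). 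The negativity of $\Ds u$ outside $\overline\Omega$ plays no role in a correct proof.

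\textbf{The missing idea is a multiplicative (ground-state) perturbation that stays inside $\ths$.} Test $\SH(a)[u(1+\varepsilon\varphi)]\geq\SH$ for smooth bounded $\varphi$. Set $\psi=1+\varepsilon\varphi$ and use the pointwise identity
\begin{equation*}
(u(x)\psi(x)-u(y)\psi(y))^2=(u(x)-u(y))(u(x)\psi(x)^2-u(y)\psi(y)^2)+u(x)u(y)(\psi(x)-\psi(y))^2 .
\end{equation*}
It gives
\begin{equation*}
\|\ds(u\psi)\|_2^2+\int_\Omega au^2\psi^2\,dx=\int_\Omega u\psi^2\left(\Ds u+au\right)dx+\varepsilon^2\mathcal I(\varphi).
\end{equation*}
Consequently the Euler--Lagrange equation eliminates $a$ entirely, the first-order terms cancel identically, and the $\varepsilon^2$ terms yield \eqref{eq:de-3.8}.

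After that, the proof needs three further ingredients, none of which appear in your plan.
\begin{enumerate}
\item A Brouwer fixed-point choice of a stereographic projection $\mathscr S$ whose coordinate functions $\varphi_i$ satisfy $A_1(\varphi_i)=0$.
\item The global (not perturbative) inequality \cite[Proposition~4.1]{FrLi2012}. It gives $\sum_iA_2(\varphi_i)>\frac{\mu}{2N-\mu}$ strictly, because $U^{\twostar}$ is not constant. Hence $\sum_i\mathcal I(\varphi_i)>\frac{4s}{N-2s}\SH$.
\item The competing upper bound $\sum_i\mathcal I(\varphi_i)\leq\frac{C_{N,s}}{2}\delta_{N,s}\int_{\mathbb S^N}U^2<\frac{4s}{N-2s}\SH$. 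It follows from $\sum_i|\varphi_i(x)-\varphi_i(y)|^2=|\mathscr S(x)-\mathscr S(y)|^2$, Cauchy--Schwarz, the HLS inequality on $\mathbb S^N$, and H\"older.
\end{enumerate}
The contradiction between items 2 and 3 is what proves non-attainment.
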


\begin{proof}
Suppose, by contradiction, that $\SH(a)=\SH$ and $\SH(a)$ is attained by some function $u$. Replacing $u$ by $|u|$, which is still a minimizer because $\|\ds|u|\|_2\leq\|\ds u\|_2$, we may assume that $u\geq0$. We normalize $u$ so that
\begin{equation}
\label{eq:de-3.2}
\int_\Omega\int_\Omega
\frac{u^{\twostar}(x)u^{\twostar}(y)}{|x-y|^\mu}dxdy
=1.
\end{equation}
Then $u$ satisfies the Euler--Lagrange equation
\begin{equation}
\label{eq:de-3.3}
\Ds u(x)+a(x)u(x)
=
\SH
\left(
\int_\Omega
\frac{u^{\twostar}(y)}{|x-y|^\mu}dy
\right)
u^{\twostar-1}(x)
\qquad\text{in }\Omega.
\end{equation}

Since $\SH(a)=\SH$, for every $\varphi\in C^\infty(\R^N)$ which is bounded together with all its derivatives and every sufficiently small $\varepsilon>0$ (so that $1+\varepsilon\varphi>0$), we have
\begin{equation}
\label{eq:de-3.4}
\begin{split}
&
\left(
\int_\Omega\int_\Omega
\frac{
|u(x)(1+\varepsilon\varphi(x))|^{\twostar}
|u(y)(1+\varepsilon\varphi(y))|^{\twostar}
}{|x-y|^\mu}dxdy
\right)^{\frac{1}{\twostar}}
\\
&\quad\leq
\SH^{-1}
\left[
\int_{\R^N}
\left|
\ds\left(u(1+\varepsilon\varphi)\right)
\right|^2dx
+
\int_\Omega
a(x)u^2(x)(1+\varepsilon\varphi(x))^2dx
\right].
\end{split}
\end{equation}

By Taylor's formula and \eqref{eq:de-3.2}, the left-hand side of \eqref{eq:de-3.4} has the expansion
\begin{equation}
\label{eq:de-3.5}
\begin{split}
&
\left(
\int_\Omega\int_\Omega
\frac{
|u(x)(1+\varepsilon\varphi(x))|^{\twostar}
|u(y)(1+\varepsilon\varphi(y))|^{\twostar}
}{|x-y|^\mu}dxdy
\right)^{\frac{1}{\twostar}}
\\
&\quad \quad \quad \quad=
1+2A_1\varepsilon
+
\left[
\twostar A_2
+
(\twostar-1)(A_3-2A_1^2)
\right]\varepsilon^2
+o(\varepsilon^2),
\end{split}
\end{equation}
where
\begin{equation*}
\begin{split}
A_1
&:=
\int_\Omega\int_\Omega
\frac{
u^{\twostar}(x)u^{\twostar}(y)\varphi(y)
}{|x-y|^\mu}dxdy,
\\
A_2
&:=
\int_\Omega\int_\Omega
\frac{
u^{\twostar}(x)\varphi(x)
u^{\twostar}(y)\varphi(y)
}{|x-y|^\mu}dxdy,
\\
A_3
&:=
\int_\Omega\int_\Omega
\frac{
u^{\twostar}(x)u^{\twostar}(y)\varphi^2(y)
}{|x-y|^\mu}dxdy.
\end{split}
\end{equation*}

For the right-hand side of \eqref{eq:de-3.4}, we use the fractional product rule
\begin{equation}
\label{eq:de-3.1}
\Ds(uv)
=
u\Ds v+v\Ds u-I_s(u,v),
\end{equation}
where
\begin{equation*}
I_s(u,v)(x)
:=
C_{N,s}\operatorname{P.V.}
\int_{\R^N}
\frac{(u(x)-u(y))(v(x)-v(y))}{|x-y|^{N+2s}}dy.
\end{equation*}
Using \eqref{eq:de-3.1} and integration by parts, we obtain
\begin{equation*}
\begin{split}
&\int_{\R^N}
\left|
\ds\left(u(1+\varepsilon\varphi)\right)
\right|^2dx
\\=&
\int_\Omega
u(1+\varepsilon\varphi)
\Ds\left(u(1+\varepsilon\varphi)\right)dx
\\
=&
\int_\Omega
u\Ds u(1+\varepsilon\varphi)^2dx
+
\varepsilon
\int_\Omega
u^2(1+\varepsilon\varphi)\Ds\varphi dx
-
\varepsilon
\int_\Omega
u(1+\varepsilon\varphi)I_s(u,\varphi)dx
\\
=&
\int_\Omega
u\Ds u(1+\varepsilon\varphi)^2dx
+
C_{N,s}
\int_{\R^N}\int_{\R^N}
u(x)u(y)(1+\varepsilon\varphi(x))
\frac{\varepsilon(\varphi(x)-\varphi(y))}{|x-y|^{N+2s}}dxdy
\\
=&
\int_\Omega
u\Ds u(1+\varepsilon\varphi)^2dx
+
\frac{C_{N,s}\varepsilon^2}{2}
\int_{\R^N}\int_{\R^N}
u(x)u(y)
\frac{|\varphi(x)-\varphi(y)|^2}{|x-y|^{N+2s}}dxdy.
\end{split}
\end{equation*}
In the last equality, we have symmetrized in $x$ and $y$. Using \eqref{eq:de-3.2}, \eqref{eq:de-3.3}, and the definitions of $A_1$ and $A_3$, we rewrite the right-hand side of \eqref{eq:de-3.4} as
\begin{equation}
\label{eq:right-hand-side-expansion}
\begin{split}
&
\SH^{-1}
\left[
\int_{\R^N}
\left|
\ds\left(u(1+\varepsilon\varphi)\right)
\right|^2dx
+
\int_\Omega
a u^2(1+\varepsilon\varphi)^2dx
\right]
\\
&\quad=
\int_\Omega
\left(
\int_\Omega
\frac{u^{\twostar}(y)}{|x-y|^\mu}dy
\right)
u^{\twostar}(x)(1+\varepsilon\varphi(x))^2dx
+
\SH^{-1}\varepsilon^2\mathcal I(\varphi)
\\
&\quad=
1+2A_1\varepsilon
+A_3\varepsilon^2
+\SH^{-1}\varepsilon^2\mathcal I(\varphi),
\end{split}
\end{equation}
where
\begin{equation*}
\mathcal I(\varphi)
:=
\frac{C_{N,s}}{2}
\int_{\R^N}\int_{\R^N}
u(x)u(y)
\frac{|\varphi(x)-\varphi(y)|^2}{|x-y|^{N+2s}}dxdy.
\end{equation*}

Combining \eqref{eq:de-3.4} with the expansions \eqref{eq:de-3.5} and \eqref{eq:right-hand-side-expansion}, we obtain
\begin{equation}
\label{eq:de-3.8}
\twostar A_2+(\twostar-2)A_3
\leq
2(\twostar-1)A_1^2
+\SH^{-1}\mathcal I(\varphi).
\end{equation}

We next choose $(\zeta,t)\in\R^N\times(0,\infty)$ such that
\begin{equation}
\label{eq:FG=0}
\begin{split}
F(\zeta,t)
&:=
\int_\Omega\int_\Omega
\frac{u^{\twostar}(x)u^{\twostar}(y)}{|x-y|^\mu}
\frac{2t(y-\zeta)}{1+t^2|y-\zeta|^2}dxdy
=0,
\\
G(\zeta,t)
&:=
\int_\Omega\int_\Omega
\frac{u^{\twostar}(x)u^{\twostar}(y)}{|x-y|^\mu}
\frac{1-t^2|y-\zeta|^2}{1+t^2|y-\zeta|^2}dxdy
=0.
\end{split}
\end{equation}
Indeed, following \cite{Druet2002}, we consider the map $H:\R^N\times\R\longrightarrow\R^N\times\R$ defined by
\begin{equation*}
H(\zeta,\tau)
:=
\left(
F\left(\zeta,\frac{\tau+\sqrt{\tau^2+4}}{2}\right)+\zeta,
G\left(\zeta,\frac{\tau+\sqrt{\tau^2+4}}{2}\right)+\tau
\right).
\end{equation*}
A direct asymptotic computation using \eqref{eq:de-3.2} and the boundedness of $\Omega$ yields $
\left|H(\zeta,\tau)\right|^2
\leq
|\zeta|^2+\tau^2
$ whenever $|\zeta|^2+\tau^2$ is sufficiently large. Moreover, by \eqref{eq:de-3.2}, $|(F,G)|\leq1$, so that $|H(\zeta,\tau)|\leq|(\zeta,\tau)|+1$ for all $(\zeta,\tau)$. Thus, for sufficiently large $R>0$, $H$ maps $\overline{B_R(0)}$ into itself, and Brouwer's fixed point theorem implies that $H$ admits a fixed point $(\zeta,\tau)$. Consequently, $
\left(\zeta,\frac{\tau+\sqrt{\tau^2+4}}{2}\right)
$ satisfies \eqref{eq:FG=0}.

With the pair $(\zeta,t)$ obtained above, let $\mathscr S:\R^N\to\mathbb S^N$ be the inverse stereographic projection given by
\begin{equation*}
  \mathscr S(x)
:=
\left(
\frac{2t(x-\zeta)}{1+t^2|x-\zeta|^2},
\frac{1-t^2|x-\zeta|^2}{1+t^2|x-\zeta|^2}
\right).
\end{equation*}
Set
\begin{equation}
\label{eq:de-3.9}
\varphi_i(x)
:=
\bigl(\mathscr S(x)\bigr)_i,
\qquad i=1,\ldots,N+1.
\end{equation}
Then \eqref{eq:FG=0} gives
\begin{equation}
\label{eq:de-3.10}
A_1(\varphi_i)=0,
\qquad i=1,\ldots,N+1.
\end{equation}
Moreover, since $
\sum_{i=1}^{N+1}\varphi_i^2(x)=1$, it follows from \eqref{eq:de-3.2}, \eqref{eq:de-3.8} and \eqref{eq:de-3.10} that
\begin{equation}
\label{eq:de-3.11}
\twostar\sum_{i=1}^{N+1}A_2(\varphi_i)
+(\twostar-2)
\leq
\SH^{-1}\sum_{i=1}^{N+1}\mathcal I(\varphi_i).
\end{equation}
We first estimate $A_2$. Set $
J_{\mathscr S^{-1}}(\eta)
:=
\det D\mathscr S^{-1}(\eta)
$ and define
\begin{equation*}
U(\eta)
:=
u\bigl(\mathscr S^{-1}(\eta)\bigr)
J_{\mathscr S^{-1}}(\eta)^{\frac{N-2s}{2N}},
\qquad \eta\in\mathbb S^N.
\end{equation*}
The distance transformation is
\begin{equation*}
\left|\mathscr S^{-1}(\eta)-\mathscr S^{-1}(\xi)\right|
=
J_{\mathscr S^{-1}}(\eta)^{\frac{1}{2N}}
|\eta-\xi|
J_{\mathscr S^{-1}}(\xi)^{\frac{1}{2N}},
\end{equation*}
so a change of variables in \eqref{eq:de-3.2} gives
\begin{equation}\label{eq:twostar=1}
\int_{\mathbb S^N}\int_{\mathbb S^N}
\frac{
U^{\twostar}(\eta)U^{\twostar}(\xi)}
{|\eta-\xi|^\mu}
d\eta d\xi
=\int_\Omega\int_\Omega
\frac{u^{\twostar}(x)u^{\twostar}(y)}{|x-y|^\mu}dxdy=1.
\end{equation}
The same change of variables yields
\begin{equation*}
\begin{split}
\sum_{i=1}^{N+1}A_2(\varphi_i)
&=
\int_\Omega\int_\Omega
\frac{u^{\twostar}(x)u^{\twostar}(y)}{|x-y|^\mu}
\sum_{i=1}^{N+1}\varphi_i(x)\varphi_i(y)dxdy
\\
&=
\int_\Omega\int_\Omega
\frac{
u^{\twostar}(x)
\bigl(\mathscr S(x)\cdot\mathscr S(y)\bigr)
u^{\twostar}(y)}
{|x-y|^\mu}dxdy
\\
&=
\int_{\mathbb S^N}\int_{\mathbb S^N}
\frac{
U^{\twostar}(\eta)
(\eta\cdot\xi)
U^{\twostar}(\xi)}
{|\eta-\xi|^\mu}
d\eta d\xi
\\
&\geq
\frac{\mu}{2N-\mu}
\int_{\mathbb S^N}\int_{\mathbb S^N}
\frac{
U^{\twostar}(\eta)U^{\twostar}(\xi)}
{|\eta-\xi|^\mu}
d\eta d\xi
=
\frac{\mu}{2N-\mu},
\end{split}
\end{equation*}
where the inequality follows from \cite[Proposition~4.1]{FrLi2012}. The inequality is strict because $U^{\twostar}$ vanishes on a nonempty open subset of $\mathbb S^N$ and therefore cannot be constant. Consequently, \eqref{eq:de-3.11} yields
\begin{equation}\label{de:eq-i}
\sum_{i=1}^{N+1}\mathcal I(\varphi_i)
>
\frac{4s}{N-2s}\SH.
\end{equation}

We now estimate $\sum_{i=1}^{N+1}\mathcal I(\varphi_i)$. By the definition of $\mathcal I$, \eqref{eq:de-3.9},  and the Cauchy--Schwarz inequality, we obtain
\begin{equation*}
\begin{split}
\sum_{i=1}^{N+1}\mathcal I(\varphi_i)
&=
\frac{C_{N,s}}{2}
\int_{\R^N}\int_{\R^N}
u(x)u(y)
\frac{
\sum_{i=1}^{N+1}
|\varphi_i(x)-\varphi_i(y)|^2
}
{|x-y|^{N+2s}}
dxdy
\\
&=
\frac{C_{N,s}}{2}
\int_{\R^N}\int_{\R^N}
u(x)u(y)
\frac{
|\mathscr S(x)-\mathscr S(y)|^2
}
{|x-y|^{N+2s}}
dxdy
\\
&=
\frac{C_{N,s}}{2}
\int_{\mathbb S^N}\int_{\mathbb S^N}
\frac{U(\eta)U(\xi)}
{|\eta-\xi|^{N+2s-2}}
d\eta d\xi
\\
&\leq
\frac{C_{N,s}}{2}\delta_{N,s}
\int_{\mathbb S^N}U^2(\eta)d\eta,
\end{split}
\end{equation*}
where
\begin{equation*}
\delta_{N,s}
:=
\int_{\mathbb S^N}
\frac{d\xi}{|\eta-\xi|^{N+2s-2}}
=
2^{2-2s}\pi^{N/2}
\frac{\Gamma(1-s)}
{\Gamma\left(\frac N2+1-s\right)}.
\end{equation*}
It remains to bound $\int_{\mathbb S^N}U^2(\eta)d\eta$. Let $
\mathfrak m
:=
\frac{1}{|\mathbb S^N|}
\int_{\mathbb S^N}U^{\twostar}(\eta)d\eta.
$ By \eqref{eq:twostar=1} and the decomposition $U^{\twostar}=\mathfrak m+(U^{\twostar}-\mathfrak m)$, we obtain
\begin{equation*}
\begin{split}
1
&=
\int_{\mathbb S^N}\int_{\mathbb S^N}
\frac{
\left[
\mathfrak m
+
\left(U^{\twostar}(\eta)-\mathfrak m\right)
\right]
\left[
\mathfrak m
+
\left(U^{\twostar}(\xi)-\mathfrak m\right)
\right]
}
{|\eta-\xi|^\mu}
d\eta d\xi
\\
&=
\int_{\mathbb S^N}\int_{\mathbb S^N}
\frac{
\mathfrak m^2
+
\mathfrak m\left(U^{\twostar}(\xi)-\mathfrak m\right)
+
\left(U^{\twostar}(\eta)-\mathfrak m\right)\mathfrak m
+
\left(U^{\twostar}(\eta)-\mathfrak m\right)
\left(U^{\twostar}(\xi)-\mathfrak m\right)
}
{|\eta-\xi|^\mu}
d\eta d\xi
\\
&=
2^{N-\mu}\pi^{N/2}
\frac{\Gamma\left(\frac{N-\mu}{2}\right)}
{\Gamma\left(N-\frac{\mu}{2}\right)}
\frac{1}{|\mathbb S^N|}
\left(\int_{\mathbb S^N}U^{\twostar}(\eta)d\eta\right)
^2
+
\int_{\mathbb S^N}\int_{\mathbb S^N}
\frac{
\left(U^{\twostar}(\eta)-\mathfrak m\right)
\left(U^{\twostar}(\xi)-\mathfrak m\right)
}
{|\eta-\xi|^\mu}
d\eta d\xi.
\end{split}
\end{equation*}
Here the cross term vanishes because
\begin{equation*}
\int_{\mathbb S^N}\int_{\mathbb S^N}
\frac{
\mathfrak m\left(U^{\twostar}(\xi)-\mathfrak m\right)
}
{|\eta-\xi|^\mu}
d\eta d\xi
=
2^{N-\mu}\pi^{N/2}
\frac{\Gamma\left(\frac{N-\mu}{2}\right)}
{\Gamma\left(N-\frac{\mu}{2}\right)}
\mathfrak m
\int_{\mathbb S^N}
\left(
U^{\twostar}(\xi)-\mathfrak m
\right)d\xi
=0.
\end{equation*}
By \cite[Corollary~4.3]{FrLi2012} and the fact that $U^{\twostar}-\mathfrak m\not\equiv0$, we have
\begin{equation*}
\int_{\mathbb S^N}\int_{\mathbb S^N}
\frac{
\left(U^{\twostar}(\eta)-\mathfrak m\right)
\left(U^{\twostar}(\xi)-\mathfrak m\right)
}
{|\eta-\xi|^\mu}
d\eta d\xi
>0.
\end{equation*}
H\"older's inequality then gives
\begin{equation*}
\begin{split}
2^{N-\mu}\pi^{N/2}
\frac{\Gamma\left(\frac{N-\mu}{2}\right)}
{\Gamma\left(N-\frac{\mu}{2}\right)}
|\mathbb S^N|^{1-\twostar}
\left(
\int_{\mathbb S^N}U^2(\eta)d\eta
\right)^{\twostar}
\leq
2^{N-\mu}\pi^{N/2}
\frac{\Gamma\left(\frac{N-\mu}{2}\right)}
{\Gamma\left(N-\frac{\mu}{2}\right)}
\frac{1}{|\mathbb S^N|}
\left(
\int_{\mathbb S^N}U^{\twostar}(\eta)d\eta
\right)^2
<1.
\end{split}
\end{equation*}
Combining the preceding estimates and using $\SH=\mathcal S C(N,\mu)^{-1/\twostar}$, together with the identities
\begin{equation*}
\mathcal S=\frac{\Gamma\left(\frac{N+2s}{2}\right)}{\Gamma\left(\frac{N-2s}{2}\right)}|\mathbb S^N|^{\frac{2s}{N}},
\qquad
C(N,\mu)=2^{N-\mu}\pi^{N/2}\frac{\Gamma\left(\frac{N-\mu}{2}\right)}{\Gamma\left(N-\frac{\mu}{2}\right)}|\mathbb S^N|^{-\frac{N-\mu}{N}},
\qquad
\frac{C_{N,s}}{2}\delta_{N,s}=\frac{4s}{N-2s}\frac{\Gamma\left(\frac{N+2s}{2}\right)}{\Gamma\left(\frac{N-2s}{2}\right)},
\end{equation*}
a direct computation shows
\begin{equation*}
\begin{split}
\sum_{i=1}^{N+1}\mathcal I(\varphi_i)
&\leq
\frac{C_{N,s}}{2}\delta_{N,s}
\int_{\mathbb S^N}U^2(\eta)d\eta
<
\frac{C_{N,s}}{2}\delta_{N,s}
|\mathbb S^N|^{\frac{2s}{N}}
C(N,\mu)^{-\frac{1}{\twostar}}
=
\frac{4s}{N-2s}\SH.
\end{split}
\end{equation*}
This contradicts \eqref{de:eq-i} and proves the proposition.
\end{proof}

\subsection{Profile decomposition}
\label{subsec:profile-decomposition}

We now study the asymptotic decomposition of  normalized almost-minimizers of $\SH(a+\varepsilon V)$.
\begin{Prop}[Profile decomposition]
\label{prop:profile-decomposition}
Let $a\in C(\overline\Omega)$ satisfy $\SH(a)=\SH$ (for instance, let $a$ be critical) and let $V\in L^\infty(\Omega)$. Suppose that $\SH(a+\varepsilon V)<\SH$ for a sequence $\varepsilon\to0^+$  and let $(u_\varepsilon)\subset\ths$  be a sequence satisfying
\begin{equation}
\label{eq:profile-almost-minimizing}
\frac{\SH(a+\varepsilon V)[u_\varepsilon]-\SH(a+\varepsilon V)}{\SH-\SH(a+\varepsilon V)}\longrightarrow0
\end{equation}
and
\begin{equation}
\label{eq:profile-normalization}
\int_\Omega\int_\Omega\frac{|u_\varepsilon(x)|^{\twostar}|u_\varepsilon(y)|^{\twostar}}{|x-y|^\mu}\,dx\,dy
=\int_{\R^N}\int_{\R^N}\frac{\bar U_{0,1}^{\twostar}(x)\bar U_{0,1}^{\twostar}(y)}{|x-y|^\mu}\,dx\,dy.
\end{equation}
Then, up to a subsequence, there exist $z_\varepsilon\in\Omega$, $\lambda_\varepsilon>0$, $\alpha_\varepsilon\in\R$, and $w_\varepsilon\in T_{z_\varepsilon,\lambda_\varepsilon}^{\perp}$ such that
\begin{equation}
\label{eq:profile-decomposition}
u_\varepsilon=\alpha_\varepsilon\bigl(P\bar U_{z_\varepsilon,\lambda_\varepsilon}+w_\varepsilon\bigr).
\end{equation}
Moreover, as $\varepsilon\to0$,
\begin{equation}
\label{eq:profile-parameter-limits}
\|\ds w_\varepsilon\|_2\to0,\qquad
\lambda_\varepsilon d(z_\varepsilon)\to\infty,\qquad
z_\varepsilon\to z_0\in\overline\Omega,\qquad
\alpha_\varepsilon\to\varsigma\in\{-1,1\}.
\end{equation}
\end{Prop}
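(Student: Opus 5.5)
The plan is to turn $(u_\varepsilon)$ into a minimizing sequence for the whole-space inequality \eqref{hls_ineq}, apply a concentration-compactness argument to locate a single bubble, and then recover the orthogonal decomposition by a minimization in the parameters $(\alpha,z,\lambda)$.

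First I would check that $(u_\varepsilon)$ minimizes $\SH$ after removing the potential. Since $\SH(a+\varepsilon V)\to\SH$, hypothesis \eqref{eq:profile-almost-minimizing} gives $\SH(a+\varepsilon V)[u_\varepsilon]\to\SH$. Indeed, $\SH(a+\varepsilon V)\geq\SH(a)-C\varepsilon=\SH-C\varepsilon$ by coercivity and \eqref{hls_ineq}, and by assumption $\SH(a+\varepsilon V)<\SH$. The normalization \eqref{eq:profile-normalization} and coercivity then bound $\|\ds u_\varepsilon\|_2$, so up to a subsequence $u_\varepsilon\rightharpoonup u$ in $\ths$.

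Next I would show that the weak limit vanishes. The quadratic form splits as $\|\ds u\|_2^2+\int a u^2+\|\ds(u_\varepsilon-u)\|_2^2+o(1)$, because $\int_\Omega a u_\varepsilon^2\to\int_\Omega a u^2$ by compact embedding and $\varepsilon\int V u_\varepsilon^2\to0$. The denominator splits by the nonlocal Brezis--Lieb lemma for the Hartree term. Using the elementary inequality $A^{1/p}+B^{1/p}\geq(A+B)^{1/p}$ with $p=\twostar>1$, together with $\SH(a)=\SH$ and \eqref{hls_ineq}, one gets that either $u\equiv0$ or $u_\varepsilon\to u$ strongly. In the second case $u$ would be a minimizer of $\SH(a)=\SH$, which Proposition~\ref{lem:de-3.1} excludes. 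Hence $u=0$, $\int a u_\varepsilon^2\to0$, and $(u_\varepsilon)$, extended by zero, is a minimizing sequence for \eqref{hls_ineq} on $\R^N$ with fixed Hartree norm.

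I would then invoke concentration-compactness for minimizing sequences of \eqref{hls_ineq}, which follows from Lions' method or from the profile decomposition in $\dot H^s$ combined with the strict subadditivity of the exponent. This gives $\tilde z_\varepsilon$, $\tilde\lambda_\varepsilon\to\infty$ and signs $\varsigma$ with $\|\ds(u_\varepsilon-\varsigma\bar U_{\tilde z_\varepsilon,\tilde\lambda_\varepsilon})\|_2\to0$. The scale $\tilde\lambda_\varepsilon$ must diverge because $u_\varepsilon\rightharpoonup0$. The classification by Le \cite{Le2019Symmetry} together with the normalization fixes the amplitude, so the profile is exactly $\bar U$ with coefficient $\pm1$. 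Since $u_\varepsilon$ vanishes outside $\Omega$, I would then argue that $\tilde\lambda_\varepsilon\,\mathrm{dist}(\tilde z_\varepsilon,\R^N\setminus\Omega)\to\infty$. Otherwise, after blowing up around $\tilde z_\varepsilon$ at scale $\tilde\lambda_\varepsilon$, the limit bubble would vanish on a half-space (using that $\partial\Omega$ is $C^2$) or on a nonempty open set, which is impossible for a positive bubble. In particular $\tilde z_\varepsilon\in\Omega$ eventually, so $z_\varepsilon\to z_0\in\overline\Omega$, and $\|\ds(\bar U_{\tilde z_\varepsilon,\tilde\lambda_\varepsilon}-P\bar U_{\tilde z_\varepsilon,\tilde\lambda_\varepsilon})\|_2\to0$ by the standard projection estimates in the Appendix.

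The last step produces the orthogonal decomposition, and I expect it to be the main technical obstacle. I would minimize $\|\ds(u_\varepsilon-\alpha P\bar U_{z,\lambda})\|_2$ over $\alpha$ near $\varsigma$, over $z$ with $\lambda d(z)$ large, and over $\lambda$ comparable to $\tilde\lambda_\varepsilon$. By the previous step the infimum tends to $0$. The task is to show that it is attained in the interior of this parameter region, away from $\partial\Omega$ and from degenerate scales. For that I would use the asymptotic orthogonality and nondegeneracy of $\{P\bar U,\partial_\lambda P\bar U,\partial_{z_i}P\bar U\}$ (suitably scaled), as in the Bahri--Coron argument adapted in \cite{DeNittiKonig2023}, or equivalently an implicit function argument. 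At a minimizer, the first-order conditions give exactly $w_\varepsilon:=\alpha_\varepsilon^{-1}u_\varepsilon-P\bar U_{z_\varepsilon,\lambda_\varepsilon}\in T_{z_\varepsilon,\lambda_\varepsilon}^\perp$. The limits in \eqref{eq:profile-parameter-limits} then follow: $\|\ds w_\varepsilon\|_2\to0$, $\alpha_\varepsilon\to\varsigma$, and $\lambda_\varepsilon d(z_\varepsilon)\to\infty$ carries over from $(\tilde z_\varepsilon,\tilde\lambda_\varepsilon)$ because the new parameters are close to the old ones.
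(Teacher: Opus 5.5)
Your proposal is correct and follows the paper's proof step by step. The shared steps are: vanishing of the weak limit via the nonlocal Brezis--Lieb splitting and Proposition~\ref{lem:de-3.1}, reduction to a whole-space minimizing sequence, single-bubble concentration, the boundary-distance argument from $u_\varepsilon\equiv0$ off $\Omega$, and the Bahri--Coron parameter minimization of \cite[Proposition~3.3, Step~2]{DeNittiKonig2023} for the orthogonal decomposition.

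The only real difference is in the concentration step. You plan a separate concentration-compactness argument and classification for the Hartree quotient; the paper avoids both. Since $\SH=\mathcal S\,C(N,\mu)^{-1/\twostar}$, the chain $\mathcal S\le\|\ds u_\varepsilon\|_2^2/\|u_\varepsilon\|_{2_s^*}^2\le C(N,\mu)^{1/\twostar}\|\ds u_\varepsilon\|_2^2\,\mathcal D(u_\varepsilon)^{-1/\twostar}\to\mathcal S$ shows that $(u_\varepsilon)$ is already minimizing for the fractional Sobolev quotient, so \cite[Theorem~3]{PaPi2014} applies directly.

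A small precision: $u_\varepsilon\rightharpoonup0$ alone does not force $\tilde\lambda_\varepsilon\to\infty$, since it is also consistent with $\tilde\lambda_\varepsilon\to0$. This is harmless, because your support argument excludes bounded and vanishing scales as well.
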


\begin{proof}
Since $\SH(a+\varepsilon V)\to\SH(a)=\SH$,
it follows from \eqref{eq:profile-almost-minimizing} and
\eqref{eq:profile-normalization} that
\begin{equation*}
\int_{\R^N}|\ds u_\varepsilon|^2\,dx+\int_\Omega(a+\varepsilon V)u_\varepsilon^2\,dx
=\SH^{\frac{2N-\mu}{N+2s-\mu}}+o(1).
\end{equation*}
Since $\SH(a)=\SH>0$, the operator $(-\Delta)^s+a$ is coercive (if $\|\ds v_n\|_2=1$ and $\|\ds v_n\|_2^2+\int_\Omega av_n^2\,dx\to0$, a weak limit $v$ of $(v_n)$ would satisfy $\int_\Omega av^2\,dx=-1$, so $v\neq0$, and $\|\ds v\|_2^2+\int_\Omega av^2\,dx\leq0$, contradicting $\SH(a)>0$). Hence $(u_\varepsilon)$ is bounded in $\ths$. Then, up to a subsequence, we may assume that $u_\varepsilon\rightharpoonup u_0$  in $\ths$. Moreover, by Rellich's lemma, we can assume that  $u_\varepsilon\to u_0$ in $L^2(\Omega)$ and almost everywhere. Using the nonlocal Brezis--Lieb lemma \cite[Lemma~3.3]{MukherjeeSreenadh2016} and adapting the splitting argument in \cite[proof of Proposition~2.2, Step~2]{WuYangHartree}, we obtain
\begin{equation*}
\int_{\R^N}|\ds u_0|^2\,dx+\int_\Omega au_0^2\,dx
\leq\SH\left(\int_\Omega\int_\Omega\frac{|u_0(x)|^{\twostar}|u_0(y)|^{\twostar}}{|x-y|^\mu}\,dx\,dy\right)^{\frac{1}{\twostar}}.
\end{equation*}
Thus, if $u_0\neq0$, the definition of $\SH(a)$ implies that $u_0$ is a minimizer for $\SH(a)$. Proposition~\ref{lem:de-3.1} rules out this possibility, and hence $u_0\equiv0$. It follows that
\begin{equation}
\label{eq:profile-vanishing-potential}
\int_\Omega(a+\varepsilon V)u_\varepsilon^2\,dx=o(1),\qquad
\|\ds u_\varepsilon\|_2^2=\SH^{\frac{2N-\mu}{N+2s-\mu}}+o(1).
\end{equation}
Taking the HLS inequality and  the fractional Sobolev embedding into consideration, we have
\begin{equation*}
\mathcal S\leq\frac{\|\ds u_\varepsilon\|_2^2}{\|u_\varepsilon\|_{2_s^*}^2}
\leq C(N,\mu)^{\frac{1}{\twostar}}\frac{\|\ds u_\varepsilon\|_2^2}{\displaystyle\left(\int_\Omega\int_\Omega\frac{|u_\varepsilon(x)|^{\twostar}|u_\varepsilon(y)|^{\twostar}}{|x-y|^\mu}\,dx\,dy\right)^{\frac{1}{\twostar}}}\longrightarrow C(N,\mu)^{\frac{1}{\twostar}}\SH=\mathcal S.
\end{equation*}
Hence $(u_\varepsilon)$ is a minimizing sequence for the fractional Sobolev quotient.  After normalization in $\dot H^s(\R^N)$, the hypotheses of
\cite[Theorem~3]{PaPi2014} are satisfied. By translations and a change of variables, there exist
$\widetilde z_\varepsilon\in\R^N$, $\widetilde\lambda_\varepsilon>0$, and
$\sigma_\varepsilon\in\dot H^s(\R^N)$ such that
\begin{equation}
\label{eq:profile-unprojected-bubble}
u_\varepsilon
=
\varsigma\bar U_{\widetilde z_\varepsilon,\widetilde\lambda_\varepsilon}
+\sigma_\varepsilon,
\qquad
\|\ds\sigma_\varepsilon\|_2\to0.
\end{equation}
Here $\varsigma\in\{-1,1\}$, since \eqref{eq:profile-vanishing-potential} gives $\|\ds u_\varepsilon\|_2^2\to\|\ds\bar U_{0,1}\|_2^2$.
Moreover, since $u_\varepsilon\equiv0$ outside $\Omega$ and $\|u_\varepsilon-\varsigma\bar U_{\widetilde z_\varepsilon,\widetilde\lambda_\varepsilon}\|_{2_s^*}\to0$, the smoothness of $\Omega$, together with
\begin{equation*}
\int_{\widetilde\lambda_\varepsilon(\Omega-\widetilde z_\varepsilon)}
\bar U_{0,1}^{2_s^*}(y)\,dy
=
\int_\Omega
\bar U_{\widetilde z_\varepsilon,\widetilde\lambda_\varepsilon}^{2_s^*}(y)\,dy
=
\int_{\R^N}\bar U_{0,1}^{2_s^*}(y)\,dy+o(1)
\end{equation*}
implies that
$\widetilde\lambda_\varepsilon\operatorname{dist}
(\widetilde z_\varepsilon,\R^N\setminus\Omega)\to\infty$.

Finally,  applying the argument in
\cite[proof of Proposition~3.3, Step~2]{DeNittiKonig2023}, we complete
 the proof.
\end{proof}

\subsection{Coercivity}
\label{subsec:coercivity}

In the following, we shall improve the estimate for the remainder obtained in
Proposition~\ref{prop:profile-decomposition}. The coercivity inequality below,
together with its version including the potential in
Proposition~\ref{prop:coercivity-with-potential}, will be essential for refining
the expansion of $u_\varepsilon$.
\begin{Prop}[Coercivity inequality]
\label{prop:coercivity-without-potential} (See \cite[Lemma~2.5]{LuYangZhao2025}.)
There exists a constant $\rho_0>0$, depending only on $N$, $\mu$, and $s$, such that for every $z\in\Omega$, $\lambda>0$, and $v\in T_{z,\lambda}^{\perp}$,
\begin{equation}
\label{eq:coercivity-without-potential}
\begin{aligned}
\|\ds v\|_2^2
&-(\twostar-1)\int_\Omega\int_\Omega
\frac{\bar U_{z,\lambda}^{\twostar}(x)\bar U_{z,\lambda}^{\twostar-2}(y)v(y)^2}{|x-y|^\mu}\,dx\,dy\\
&-\twostar\int_\Omega\int_\Omega
\frac{\bar U_{z,\lambda}^{\twostar-1}(x)v(x)\bar U_{z,\lambda}^{\twostar-1}(y)v(y)}{|x-y|^\mu}\,dx\,dy
\geq\rho_0\|\ds v\|_2^2.
\end{aligned}
\end{equation}
\end{Prop}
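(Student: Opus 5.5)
We would reduce the inequality to a spectral statement on all of $\R^N$ for a single bubble, and then use the nondegeneracy of $\bar U_{0,1}$ for \eqref{equation_s_hls}. The constant will be uniform in $(z,\lambda)$ and in $\Omega$ because the orthogonality conditions defining $T_{z,\lambda}^\perp$ transfer exactly to $\R^N$.

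\emph{Step 1: reduction to $\R^N$.} Extend $v\in T_{z,\lambda}^\perp$ by zero outside $\Omega$, so $v\in\dot H^s(\R^N)$. By the weak definition of $P\bar U_{z,\lambda}$, for every $\eta\in\ths$ we have $(P\bar U_{z,\lambda},\eta)=\int_{\R^N}\ds\bar U_{z,\lambda}\,\ds\eta\,dy$. Differentiating in $\lambda$ and $z_i$ gives the same identity for $\partial_\lambda P\bar U_{z,\lambda}$ and $\partial_{z_i}P\bar U_{z,\lambda}$. Hence $v\in T_{z,\lambda}^\perp$ if and only if $v$ is $\dot H^s(\R^N)$-orthogonal to $\bar U_{z,\lambda}$, $\partial_\lambda\bar U_{z,\lambda}$ and $\partial_{z_i}\bar U_{z,\lambda}$.

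Next compare the two Hartree terms with their $\R^N\times\R^N$ versions. The mixed term $\iint \bar U^{\twostar-1}v\,\bar U^{\twostar-1}v/|x-y|^\mu$ is unchanged, since $v$ vanishes off $\Omega$. In the first term the integrand is nonnegative, so enlarging the $x$-domain from $\Omega$ to $\R^N$ only increases it. It therefore suffices to prove \eqref{eq:coercivity-without-potential} with both integrals over $\R^N\times\R^N$, for all $v\in\dot H^s(\R^N)$ orthogonal to the $N+2$ functions above. By the translation and dilation invariance of all three terms, we may take $z=0$ and $\lambda=1$.

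\emph{Step 2: spectral gap.} Write $Q(v)$ for the sum of the two Hartree quadratic forms with coefficients $(\twostar-1)$ and $\twostar$. By HLS and Hölder, $Q$ is a bounded quadratic form on $\dot H^s(\R^N)$. It is also compact: the weights $\bar U_{0,1}^{\twostar-2}$ and $\bar U_{0,1}^{\twostar-1}$ decay, so splitting into a large ball and its complement together with local Rellich compactness gives compactness of the associated operator $K$ defined by $(Kv,w)=Q(v,w)$. Thus $K$ is a compact self-adjoint operator on $\dot H^s(\R^N)$, with eigenvalues $\nu_1\ge\nu_2\ge\dots\to0$.

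We then identify the top of the spectrum.
\begin{enumerate}
\item[(a)] Testing the equation for $\bar U_{0,1}$ shows $K\bar U_{0,1}=(2\twostar-1)\bar U_{0,1}$.
\item[(b)] Differentiating \eqref{equation_s_hls} along the family $\bar U_{z,\lambda}$ shows that $\partial_\lambda\bar U$ and $\partial_{z_i}\bar U$ are eigenfunctions with eigenvalue $1$.
\item[(c)] The nondegeneracy theorem for the fractional Hartree bubble says these are the only solutions of the linearized equation in $\dot H^s$, so the eigenvalue $1$ has exactly this $(N+1)$-dimensional eigenspace.
\item[(d)] Because $\bar U_{0,1}$ minimizes the quotient in \eqref{hls_ineq}, the second variation of the quotient at $\bar U_{0,1}$ is nonnegative. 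This forces every eigenvalue in the complement of $\operatorname{span}\{\bar U_{0,1}\}$ to be at most $1$.
\end{enumerate}
Combining (a)--(d) with discreteness, the largest eigenvalue of $K$ on the orthogonal complement of $\operatorname{span}\{\bar U,\partial_\lambda\bar U,\partial_{z_i}\bar U\}$ is some $\nu_*<1$. Then $\|\ds v\|_2^2-Q(v)\ge(1-\nu_*)\|\ds v\|_2^2$, which gives $\rho_0=1-\nu_*$, depending only on $N,\mu,s$.

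\emph{Main obstacle.} The only nontrivial input is (c), the nondegeneracy of $\bar U_{0,1}$ for the nonlocal linearized equation. This is exactly what \cite{LuYangZhao2025} provides, building on the classification of \cite{Le2019Symmetry}. If one had to prove it directly, I would try Frank--Lieb type stereographic projection to $\mathbb S^N$. There the linearized operator is diagonalized by spherical harmonics, since both $\Ds$ and the HLS kernel act as multipliers on each harmonic degree. One would then check that the resulting eigenvalue equation equals $1$ only in degree one, which corresponds to the $N+1$ directions $\partial_\lambda\bar U$, $\partial_{z_i}\bar U$, and is strictly below $1$ in every degree $\geq2$. Once (c) is available, Steps 1 and 2 are routine.
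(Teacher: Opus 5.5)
Your proposal is correct and follows the paper's route. Your Step~1 is exactly the reduction the paper gives right after the statement: the weak definition of $P\bar U_{z,\lambda}$ transfers the orthogonality conditions to $\dot H^s(\R^N)$, the mixed Hartree term is unchanged since $v\equiv0$ off $\Omega$, and the first Hartree term only grows when the $x$-integration is extended to $\R^N$. For the whole-space inequality the paper simply cites \cite[Lemma~2.5]{LuYangZhao2025}, and your Step~2 is a sound sketch of the standard argument behind that citation: compactness of the Hartree form, the eigenvalues $2\twostar-1$ and $1$, the bound $K\le 1$ on $\bar U_{0,1}^\perp$ from minimality, and nondegeneracy to exclude the eigenvalue $1$ on the remaining complement.
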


Although \cite[Lemma~2.5]{LuYangZhao2025} is formulated on $\R^N$, it applies in the present setting. Indeed, for $v\in\ths$ the definition of $P\bar U_{z,\lambda}$ gives $(v,P\bar U_{z,\lambda})=(v,\bar U_{z,\lambda})$, and similarly $(v,\partial_\lambda P\bar U_{z,\lambda})=(v,\partial_\lambda\bar U_{z,\lambda})$, $(v,\partial_{z_i}P\bar U_{z,\lambda})=(v,\partial_{z_i}\bar U_{z,\lambda})$, where $(\cdot,\cdot)$ is the scalar product of $\dot H^s(\R^N)$. Hence every $v\in T_{z,\lambda}^\perp$ is orthogonal in $\dot H^s(\R^N)$ to the tangent space of the manifold of bubbles at $\bar U_{z,\lambda}$. Moreover, since $v\equiv0$ outside $\Omega$, the last double integral in \eqref{eq:coercivity-without-potential} coincides with the corresponding integral over $\R^N\times\R^N$, while the first one is bounded from above by it.

We next establish the corresponding coercivity inequality with the potential $a$.
\begin{Prop}[Coercivity with a potential $a$]
\label{prop:coercivity-with-potential}
Let $a\in C(\overline\Omega)$ and assume that $(-\Delta)^s+a$ is coercive. There exist constants $T^*>0$ and $\rho>0$ such that, whenever $z\in\Omega$, $\lambda d(z)\geq T^*$, and $v\in T_{z,\lambda}^{\perp}$,
\begin{equation}
\label{eq:coercivity-with-potential}
\begin{aligned}
\|\ds v\|_2^2+\int_\Omega av^2\,dx
&-(\twostar-1)\int_\Omega\int_\Omega
\frac{\bar U_{z,\lambda}^{\twostar}(x)\bar U_{z,\lambda}^{\twostar-2}(y)v(y)^2}{|x-y|^\mu}\,dx\,dy\\
&-\twostar\int_\Omega\int_\Omega
\frac{\bar U_{z,\lambda}^{\twostar-1}(x)v(x)\bar U_{z,\lambda}^{\twostar-1}(y)v(y)}{|x-y|^\mu}\,dx\,dy
\geq\rho\|\ds v\|_2^2.
\end{aligned}
\end{equation}
\end{Prop}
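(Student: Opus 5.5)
The plan is a contradiction argument combined with compactness, as in De Nitti--K\"onig. Suppose the statement fails. Then there are sequences $z_n\in\Omega$, $\lambda_n$ with $\lambda_n d(z_n)\to\infty$, and $v_n\in T_{z_n,\lambda_n}^\perp$ normalized by $\|\ds v_n\|_2=1$, for which the left-hand side of \eqref{eq:coercivity-with-potential} is at most $1/n$.

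Write $\mathcal Q_{z,\lambda}(v)$ for the quadratic form in Proposition~\ref{prop:coercivity-without-potential}. That proposition gives $\mathcal Q_{z_n,\lambda_n}(v_n)\ge\rho_0$. It also gives an interpolated bound: for $\theta\in(0,1)$,
\[
(1-\theta)\mathcal Q+\theta\bigl(\|\ds v\|_2^2+\textstyle\int_\Omega a v^2\bigr)\ge(1-\theta)\rho_0\|\ds v\|_2^2+\theta\bigl(\|\ds v\|_2^2+\textstyle\int_\Omega av^2\bigr).
\]
The full form equals $\mathcal Q+\int_\Omega av^2$. So it suffices to show that $\int_\Omega a v_n^2$ cannot destroy the positivity.

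Pass to a subsequence with $v_n\rightharpoonup v$ in $\ths$. By Rellich, $v_n\to v$ in $L^2(\Omega)$, and therefore $\int_\Omega av_n^2\to\int_\Omega av^2$.

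The key step is to show that the two Hartree terms in $\mathcal Q_{z_n,\lambda_n}(v_n)$ split asymptotically. I expect them to be a part coming from the weak limit $v$, which vanishes, plus the Hartree terms evaluated on $v_n-v$. The reason is that $\bar U_{z_n,\lambda_n}$ concentrates as $\lambda_n\to\infty$, at least when $\lambda_n\to\infty$. Indeed $\lambda_n d(z_n)\to\infty$ with $\Omega$ bounded either forces $\lambda_n\to\infty$, or leaves $\lambda_n$ bounded with $d(z_n)$ bounded below; the second case needs separate care, see below.

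When $\lambda_n\to\infty$, consider the weights $\bar U^{\twostar-2}(y)\int\bar U^{\twostar}(x)|x-y|^{-\mu}dx$, which by \eqref{easybarU} equal a constant times $\bar U^{2_s^*-2}$. They converge to zero in $L^{N/(2s)}$ away from the concentration point. Hence the term $\int \bar U_n^{2^*_s-2}v^2$ tends to zero, since $v$ is fixed. The mixed-term bilinear forms with $v$ also vanish. The same holds for the second Hartree term, by HLS and because $\bar U_n^{\twostar-1}v\to0$ in $L^{2N/(2N-\mu)}$.

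Set $w_n=v_n-v$. Then $\|\ds w_n\|^2=1-\|\ds v\|^2+o(1)$, and the weak limit $v$ enters only through $\|\ds v\|_2^2+\int_\Omega av^2$. We then get
\[
\liminf(\text{full form})\ge \liminf\mathcal Q_n(w_n)+\|\ds v\|_2^2+\int_\Omega av^2 .
\]
The first term will follow from Proposition~\ref{prop:coercivity-without-potential} if $w_n$ is almost orthogonal to $T_{z_n,\lambda_n}$. This holds because $(v,\partial P\bar U_n)\to0$ for a fixed $v$ when $\lambda_n\to\infty$, after normalizing the tangent vectors. Projecting $w_n$ onto $T^\perp$ therefore changes it only by $o(1)$, so $\liminf\mathcal Q_n(w_n)\ge\rho_0(1-\|\ds v\|_2^2)$. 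The second term is at least $c\|\ds v\|_2^2$ by coercivity of $(-\Delta)^s+a$. The total is at least $\min\{\rho_0,c\}>0$, which contradicts the assumption that the form is at most $1/n$.

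The main obstacle is making the splitting of the nonlocal terms rigorous. The cross terms such as $\int\!\!\int \bar U^{\twostar}(x)\bar U^{\twostar-2}(y)v(y)w_n(y)|x-y|^{-\mu}$ must be shown to be $o(1)$. I plan to reduce them via \eqref{easybarU} to $\int \bar U_n^{2_s^*-2}|v||w_n|$, and to use Hölder with the vanishing of $\bar U_n^{2_s^*-2}|v|$ in $L^{N/(2s)}\cdot L^{2_s^*}$ duality. The bounded-$\lambda_n$ case must also be handled. In that case I would take $T^*$ large and note that $\lambda_n\ge T^*/\mathrm{diam}\,\Omega$. Alternatively, I would replace the contradiction by a direct estimate: $|\int_\Omega av^2|\le\|a\|_\infty\|v\|_{L^2(\Omega)}^2$, and $\|v\|_{L^2}$ is small relative to $\|\ds v\|$ in the bubble region. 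If needed, I would simply argue as in De Nitti--K\"onig, Proposition 3.4, whose structure carries over.
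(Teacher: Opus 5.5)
Your argument is correct, but it is organized differently from the paper's.

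\textbf{The paper's route (Esposito's method).} It minimizes $1+\int_\Omega av^2-\Psi_{z_n,\lambda_n}(v)$ over the unit sphere of $T_{z_n,\lambda_n}^\perp$. It shows that the infimum $C_n$ is attained whenever $C_n<1$, and passes to the limit in the Euler--Lagrange equation, using $\|\ds\Pi_{z_n,\lambda_n}\eta\|_2\to0$ and $D\Psi_{z_n,\lambda_n}(v_n)[\eta]\to0$ for fixed $\eta$. This gives $(1-L)(-\Delta)^sv_0+av_0=0$, hence $v_0=0$ since $L<c$. Then $\int_\Omega av_n^2\to0$, so only Proposition~\ref{prop:coercivity-without-potential} is needed.

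\textbf{Your route.} You split an arbitrary bad sequence as $v_n=v+w_n$ and decouple the form directly:
\begin{itemize}
\item the potential term sees only $v$, by Rellich;
\item the Hartree terms see only $w_n$, because the concentrating weights kill every term containing the fixed $v$;
\item since $\|\Pi_{z_n,\lambda_n}w_n\|=\|\Pi_{z_n,\lambda_n}v\|\to0$, Proposition~\ref{prop:coercivity-without-potential} applies to $w_n$ up to $o(1)$.
\end{itemize}
For the Hartree terms, the bounds $\int_\Omega\bar U^{\twostar}(x)|x-y|^{-\mu}dx\le c_0\bar U^{\mu/(N-2s)}(y)$, Cauchy--Schwarz, and HLS with $\bar U_n^{\twostar-1}v\to0$ in $L^{2N/(2N-\mu)}$ all work. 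This yields $\liminf\geq\rho_0(1-\|\ds v\|_2^2)+c\|\ds v\|_2^2\geq\min\{\rho_0,c\}$, the same constant as the paper.

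\textbf{Comparison.} Your version avoids the attainment step and the limiting equation, at the cost of the (easy) cross-term and almost-orthogonality estimates. The paper's version avoids cross terms but needs existence of minimizers for $C_n$. Both rest on the same concentration facts.

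\textbf{One correction.} The ``bounded $\lambda_n$'' case you set aside cannot occur. Since $d(z_n)\le\operatorname{diam}\Omega$, the condition $\lambda_nd(z_n)\to\infty$ forces $\lambda_n\to\infty$. So the separate treatment can be dropped. So can the alternative ``direct estimate'' through smallness of $\|v\|_{L^2(\Omega)}$, which would fail for non-concentrated $v$ anyway. The opening interpolation inequality is also never used.
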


\begin{proof}
We use the method of  \cite{Esposito2004}. Denote
\begin{equation*}
\begin{aligned}
\Psi_{z,\lambda}(v):=(\twostar-1)\int_\Omega\int_\Omega
\frac{\bar U_{z,\lambda}^{\twostar}(x)\bar U_{z,\lambda}^{\twostar-2}(y)v(y)^2}{|x-y|^\mu}\,dx\,dy+\twostar\int_\Omega\int_\Omega
\frac{\bar U_{z,\lambda}^{\twostar-1}(x)v(x)\bar U_{z,\lambda}^{\twostar-1}(y)v(y)}{|x-y|^\mu}\,dx\,dy.
\end{aligned}
\end{equation*}
Let $z_n\in\Omega$ and $\lambda_n>0$ satisfy $\lambda_n d(z_n)\to+\infty$, and define
\begin{equation*}
C_n:=\inf_{\substack{v\in T_{z_n,\lambda_n}^{\perp}\\\|\ds v\|_2=1}}
\left\{1+\int_\Omega av^2\,dx-\Psi_{z_n,\lambda_n}(v)\right\}.
\end{equation*}
The HLS inequality and the Sobolev embedding imply that
$C_n$ is bounded from below, uniformly in $n$. We first
claim that $C_n$ is achieved whenever $C_n<1$.
Indeed, assume that $C_n<1$ and let
$(\tilde v_k)\subset T_{z_n,\lambda_n}^{\perp}$ be a
minimizing sequence. Up to a subsequence, we may assume that
$\tilde v_k\rightharpoonup v_n$ in $\ths$ as $k\to\infty$, where
$v_n\in T_{z_n,\lambda_n}^{\perp}$ and
$\|\ds v_n\|_2^2\leq1$. Since $z_n$ and $\lambda_n$ are fixed here, the weights $\bar U_{z_n,\lambda_n}$ are bounded on $\Omega$; hence, by the HLS inequality and the compact embedding $\ths\hookrightarrow L^2(\Omega)$, both $\int_\Omega a\tilde v_k^2\,dx$ and $\Psi_{z_n,\lambda_n}(\tilde v_k)$ converge to the corresponding quantities for $v_n$. We obtain
\begin{equation*}
C_n
=
1+\int_\Omega av_n^2\,dx
-\Psi_{z_n,\lambda_n}(v_n).
\end{equation*}

Since $C_n<1$, we have $v_n\not\equiv0$. Moreover, the
definition of $C_n$ gives
\begin{equation*}
\begin{aligned}
0
\leq
(1-C_n)\|\ds v_n\|_2^2
+\int_\Omega av_n^2\,dx
-\Psi_{z_n,\lambda_n}(v_n)=
(1-C_n)
\bigl(\|\ds v_n\|_2^2-1\bigr)
\leq0.
\end{aligned}
\end{equation*}
Therefore, $\|\ds v_n\|_2=1$, and the infimum
$C_n$ is achieved by $v_n$.

Let $c\in(0,1]$ be the coercivity constant of $(-\Delta)^s+a$, that is, $\|\ds v\|_2^2+\int_\Omega av^2\,dx\geq c\|\ds v\|_2^2$ for all $v\in\ths$. We now show that $\liminf_{n\to\infty}C_n\geq\min\{c,\rho_0\}$. Otherwise, by the uniform lower bound for $C_n$, we may choose a subsequence such that $C_n\to L<\min\{c,\rho_0\}\leq1$; in particular, $C_n$ is achieved by some $v_n$ for $n$ large, and we may assume $v_n\rightharpoonup v_0$ in $\ths$. The minimizer $v_n$ satisfies
\begin{equation*}
(1-C_n)(v_n,\eta)+\int_\Omega av_n\eta\,dx-\frac12 D\Psi_{z_n,\lambda_n}(v_n)[\eta]=0
\qquad\text{for all }\eta\in T_{z_n,\lambda_n}^\perp.
\end{equation*}
For fixed $\eta\in C_c^\infty(\Omega)$ we apply this identity to $\Pi_{z_n,\lambda_n}^\perp\eta$. Since $\lambda_n d(z_n)\to\infty$, the bubbles concentrate, so that $\|\ds\Pi_{z_n,\lambda_n}\eta\|_2\to0$ and $D\Psi_{z_n,\lambda_n}(v_n)[\eta]\to0$. Letting $n\to\infty$, we find that $v_0$ satisfies
\begin{equation*}
(1-L)(-\Delta)^s v_0+av_0=0
\qquad\text{in }(\ths)'.
\end{equation*}
Testing with $v_0$ gives $0=(1-L)\|\ds v_0\|_2^2+\int_\Omega av_0^2\,dx\geq(c-L)\|\ds v_0\|_2^2$, and since $L<c$ we conclude $v_0=0$. The compact embedding into $L^2(\Omega)$ then gives $\int_\Omega av_n^2\,dx=o(1)$. By Proposition~\ref{prop:coercivity-without-potential} and $\|\ds v_n\|_2=1$,
\begin{equation*}
C_n=\|\ds v_n\|_2^2-\Psi_{z_n,\lambda_n}(v_n)+o(1)
\geq\rho_0+o(1),
\end{equation*}
contradicting $L<\rho_0$. Since the sequence $(z_n,\lambda_n)$ with $\lambda_n d(z_n)\to\infty$ was arbitrary, a contradiction argument shows that the conclusion holds with $\rho=\frac12\min\{c,\rho_0\}$ and
$T^*$ sufficiently large. This completes the proof.
\end{proof}

\subsection{Refining the decomposition}
\label{subsec:Refining the decomposition}

We next refine the decomposition in Proposition~\ref{prop:profile-decomposition}. The following proposition gives a quantitative estimate on $w_\varepsilon$ and shows that the  concentration point $z_0$ lies in the interior of $\Omega$.
\begin{Prop}
\label{prop:remainder-estimates}
As $\varepsilon\to0$, we have
\begin{equation}
\label{eq:remainder-and-distance-estimates}
\|\ds w_\varepsilon\|_2=O\left(\lambda_\varepsilon^{-\frac{N-2s}{2}}\right),
\qquad d(z_\varepsilon)^{-1}=O(1).
\end{equation}
In particular, $z_0\in\Omega$.
\end{Prop}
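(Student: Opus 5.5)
We will follow \cite[Proposition~3.4]{DeNittiKonig2023}. The idea is to expand the almost-minimizing quotient at $u_\varepsilon=\alpha_\varepsilon(P\bar U_{z_\varepsilon,\lambda_\varepsilon}+w_\varepsilon)$ to second order in $w_\varepsilon$, and to compare it with the upper bound $\SH(a+\varepsilon V)[u_\varepsilon]\leq\SH+o(1)\cdot(\SH-\SH(a+\varepsilon V))\leq\SH$ (for $\varepsilon$ small). We write $z=z_\varepsilon$, $\lambda=\lambda_\varepsilon$, $w=w_\varepsilon$, $d=d(z)$, and use $\lambda d\to\infty$ from \eqref{eq:profile-parameter-limits}.

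\emph{Numerator.} Since $w\in T_{z,\lambda}^\perp$, we have $(P\bar U_{z,\lambda},w)=0$. Hence the numerator equals $\|\ds P\bar U_{z,\lambda}\|_2^2+\|\ds w\|_2^2+\int_\Omega(a+\varepsilon V)(P\bar U_{z,\lambda}+w)^2\,dx$. We will use the projection estimates from the Appendix (as in \cite{DeNittiKonig2023}): $P\bar U_{z,\lambda}=\bar U_{z,\lambda}-\kappa_{N,\mu,s}\lambda^{-\frac{N-2s}{2}}H_0(z,\cdot)+f$, together with $H_0(z,z)\sim d^{-(N-2s)}$. These give
\[
\|\ds P\bar U_{z,\lambda}\|_2^2=\SH^{\frac{2N-\mu}{N+2s-\mu}}-c\,\phi_0(z)\lambda^{-(N-2s)}+o\bigl((\lambda d)^{-(N-2s)}\bigr), \qquad c>0.
\]
Since $N<4s$, the potential terms are bounded by $\|a+\varepsilon V\|_\infty\bigl(\|\bar U_{z,\lambda}\|_{L^2(\Omega)}^2+\|P\bar U_{z,\lambda}\|_{L^2(\Omega)}\|w\|_2+\|w\|_{L^2(\Omega)}^2\bigr)$. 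By Lemma~\ref{lem:barU-Lq-estimates}, $\|\bar U_{z,\lambda}\|_{L^2(\Omega)}^2\lesssim\lambda^{-(N-2s)}$. The cross term is therefore $O(\lambda^{-\frac{N-2s}{2}}\|\ds w\|_2)$, and the last term is kept inside the coercive quadratic form.

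\emph{Denominator.} We Taylor expand the HLS double integral of $|P\bar U_{z,\lambda}+w|^{\twostar}$ to second order in $w$. The linear term is
\[
2\twostar\iint\frac{P\bar U_{z,\lambda}^{\twostar}(x)P\bar U_{z,\lambda}^{\twostar-1}(y)w(y)}{|x-y|^\mu}\,dx\,dy .
\]
Replacing $P\bar U_{z,\lambda}$ by $\bar U_{z,\lambda}$ and using \eqref{easybarU} together with the weak equation for $P\bar U_{z,\lambda}$ shows that this term equals $c'(P\bar U_{z,\lambda},w)=0$, up to an error $O((\lambda d)^{-\frac{N-2s}{2}}\|\ds w\|_2)$. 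The error is controlled through $\|\bar U_{z,\lambda}-P\bar U_{z,\lambda}\|_\infty\lesssim\lambda^{-\frac{N-2s}{2}}d^{-(N-2s)}$, HLS and H\"older. The quadratic part is exactly $\Psi_{z,\lambda}(w)$ plus $o(\|\ds w\|_2^2)$. The cubic and higher remainders are $o(\|\ds w\|_2^2)$, by the elementary inequality $\bigl||A+B|^p-A^p-pA^{p-1}B-\tfrac{p(p-1)}2A^{p-2}B^2\bigr|\lesssim |B|^{p}+A^{p-3}|B|^3\mathbf 1_{p>3}$ combined with HLS, because $\|\ds w\|_2\to0$. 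Similarly, the zeroth order term is $\SH^{\frac{2N-\mu}{N+2s-\mu}}-c''\phi_0(z)\lambda^{-(N-2s)}+o((\lambda d)^{-(N-2s)})$. Here $c''=2\twostar c$ up to the normalization, as in Theorem~\ref{lem:de-2.1} with $a=0$. The normalization \eqref{eq:profile-normalization} fixes $\alpha_\varepsilon$ accordingly.

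\emph{Combining.} Dividing, the zeroth order terms combine exactly as in \eqref{eq:quotient-expansion-continuous} with $a$ replaced by $0$. Since $\SH(a+\varepsilon V)[u_\varepsilon]\leq\SH$, we get
\[
\SH\geq\SH+c_1\phi_0(z)\lambda^{-(N-2s)}+c_2\Bigl(\|\ds w\|_2^2+\int_\Omega aw^2\,dx-\Psi_{z,\lambda}(w)\Bigr)-C\lambda^{-\frac{N-2s}{2}}\|\ds w\|_2-o\bigl((\lambda d)^{-(N-2s)}\bigr)-o(\lambda^{-(N-2s)}),
\]
with $c_1,c_2>0$. Proposition~\ref{prop:coercivity-with-potential} bounds the bracket below by $\rho\|\ds w\|_2^2$, since $\lambda d\geq T^*$ eventually. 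Because $\phi_0(z)\gtrsim d^{-(N-2s)}$, this yields
\[
(\lambda d)^{-(N-2s)}+\|\ds w\|_2^2\lesssim\lambda^{-\frac{N-2s}{2}}\|\ds w\|_2+o\bigl(\lambda^{-(N-2s)}\bigr).
\]
Absorbing via Young's inequality gives $\|\ds w\|_2=O(\lambda^{-\frac{N-2s}{2}})$ and $d^{-(N-2s)}=O(1)$, so $z_0\in\Omega$.

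\emph{Main obstacle.} The delicate point is the linear term of the denominator: showing that the double-integral linear term reduces, up to the stated error, to the scalar product $(P\bar U_{z,\lambda},w)$. A second difficulty is making all error terms uniform in $d$, i.e.\ of size $(\lambda d)^{-\dots}$ rather than $\lambda^{-\dots}$, since $d$ is not yet known to be bounded below. I would first handle the linear term via \eqref{easybarU} on $\R^N$, and then bound the exterior piece $\R^N\setminus\Omega$ by Lemma~\ref{lem:barU-global-exterior-Lq} with $\tau=d$, as done for \eqref{eq:linear-Ha-exterior-error}.
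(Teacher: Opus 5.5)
Your proposal follows the paper's argument. You rederive the expansion of Lemma~\ref{lem:first-energy-expansion}, and then conclude as the paper does, from $\SH(a+\varepsilon V)[u_\varepsilon]<\SH$, Proposition~\ref{prop:coercivity-with-potential}, $\phi_0(z)\gtrsim d(z)^{-(N-2s)}$ and Young's inequality.

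One estimate needs sharpening. You bound the error in the linear term of the denominator by $O\bigl((\lambda d)^{-\frac{N-2s}{2}}\|\ds w\|_2\bigr)$, and then leave it out of your combined inequality. Taken at face value, this term cannot be dropped. It is not dominated by $\lambda^{-\frac{N-2s}{2}}\|\ds w\|_2$ when $d\to0$. Moreover, a term $C_1XY$ with $X=(\lambda d)^{-\frac{N-2s}{2}}$ and $Y=\|\ds w\|_2$ can be absorbed by $cX^2+\rho Y^2$ only if $C_1^2<4c\rho$, which you have no control over.

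The $L^\infty$ bound you cite actually gives more. Since $0\leq P\bar U_{z,\lambda}\leq\bar U_{z,\lambda}$, the replacement error is bounded, up to a constant, by $\|\varphi_{z,\lambda}\|_{L^\infty}\lesssim d^{-(N-2s)}\lambda^{-\frac{N-2s}{2}}$ times
\[
\int_\Omega\int_\Omega\frac{\bar U_{z,\lambda}^{\twostar-1}(x)\bar U_{z,\lambda}^{\twostar-1}(y)|w(y)|}{|x-y|^\mu}\,dx\,dy+\int_\Omega\bar U_{z,\lambda}^{2_s^*-2}|w|\,dy\lesssim\lambda^{-\frac{N-2s}{2}}\|\ds w\|_2 .
\]
This holds uniformly in $z$, by HLS, H\"older, \eqref{easybarU} and the global bounds of Lemma~\ref{lem:barU-global-exterior-Lq}, using $\mu<4s$ and $N<6s$. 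Hence the replacement error is $O\bigl((\lambda d)^{-(N-2s)}\|\ds w\|_2\bigr)=o\bigl((\lambda d)^{-(N-2s)}\bigr)$. The exterior piece is $O\bigl((\lambda d)^{-\frac{2N-\mu}{2}}\|\ds w\|_2\bigr)$, which is also $o\bigl((\lambda d)^{-(N-2s)}\bigr)$ because $\mu<4s$. This is exactly the form recorded in \eqref{eq:first-expansion-denominator}, and with it your absorption step goes through.

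There are also two harmless slips. First, $\int_\Omega(a+\varepsilon V)(P\bar U_{z,\lambda})^2\,dx$ is $O(\lambda^{-(N-2s)})$, not $o$; Young's inequality produces a term of that size anyway. Second, almost-minimality gives $\SH(a+\varepsilon V)[u_\varepsilon]=\SH(a+\varepsilon V)+o(1)\bigl(\SH-\SH(a+\varepsilon V)\bigr)<\SH$, not $\SH+o(1)(\cdots)$.
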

The proof of Proposition~\ref{prop:remainder-estimates} relies on the following expansion of $\SH(a+\varepsilon V)[u_\varepsilon]$ under the decomposition $u_\varepsilon=\alpha_\varepsilon(P\bar U_{z_\varepsilon,\lambda_\varepsilon}+w_\varepsilon)$ obtained in Proposition~\ref{prop:profile-decomposition}.
\begin{lem}
\label{lem:first-energy-expansion}
As $\varepsilon\to0$, we have
\begin{equation}
\label{eq:first-energy-expansion}
\begin{aligned}
&\SH(a+\varepsilon V)[u_\varepsilon]=\SH+\kappa_{N,\mu,s}^2 a_{N,s}\mathfrak a_{N,s}^{(1)}
\SH^{-\frac{N-2s}{N+2s-\mu}}
\phi_0(z_\varepsilon)\lambda_\varepsilon^{-(N-2s)}\\
&\quad+\SH^{-\frac{N-2s}{N+2s-\mu}}
\Bigg\{\|\ds w_\varepsilon\|_2^2+\int_\Omega aw_\varepsilon^2\,dx-(\twostar-1)\int_\Omega\int_\Omega
\frac{\bar U_{z_\varepsilon,\lambda_\varepsilon}^{\twostar}(x)
\bar U_{z_\varepsilon,\lambda_\varepsilon}^{\twostar-2}(y)w_\varepsilon(y)^2}
{|x-y|^\mu}\,dx\,dy\\
&\quad-\twostar\int_\Omega\int_\Omega
\frac{\bar U_{z_\varepsilon,\lambda_\varepsilon}^{\twostar-1}(x)w_\varepsilon(x)
\bar U_{z_\varepsilon,\lambda_\varepsilon}^{\twostar-1}(y)w_\varepsilon(y)}
{|x-y|^\mu}\,dx\,dy\Bigg\}+O\left(\lambda_\varepsilon^{-(N-2s)}\right)
+O\left(\lambda_\varepsilon^{-\frac{N-2s}{2}}\|\ds w_\varepsilon\|_2\right)\\
&\quad+o\left((\lambda_\varepsilon d(z_\varepsilon))^{-(N-2s)}\right)
+o\left(\|\ds w_\varepsilon\|_2^2\right).
\end{aligned}
\end{equation}
\end{lem}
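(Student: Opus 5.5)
Write $u_\varepsilon=\alpha_\varepsilon(P\bar U+w)$ with $P\bar U:=P\bar U_{z_\varepsilon,\lambda_\varepsilon}$ and $w:=w_\varepsilon\in T_{z_\varepsilon,\lambda_\varepsilon}^\perp$. Since the quotient is $0$-homogeneous, we may drop $\alpha_\varepsilon$. We then expand the numerator and the denominator separately, using $\|\ds w\|_2\to0$ and $\lambda_\varepsilon d(z_\varepsilon)\to\infty$ from Proposition~\ref{prop:profile-decomposition}, and divide at the end.

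\emph{Numerator.} By orthogonality, $(P\bar U,w)=0$, so $\|\ds(P\bar U+w)\|_2^2=\|\ds P\bar U\|_2^2+\|\ds w\|_2^2$. For the first term we use the expansion of the projected bubble from \cite{DeNittiKonig2023} (the Appendix lemmas on $P\bar U=\bar U-\kappa_{N,\mu,s}\lambda^{-\frac{N-2s}{2}}H_0(z,\cdot)+f$):
\[
\|\ds P\bar U\|_2^2=\SH^{\frac{2N-\mu}{N+2s-\mu}}-\kappa_{N,\mu,s}^2a_{N,s}\mathfrak a^{(1)}_{N,s}\phi_0(z)\lambda^{-(N-2s)}+o\bigl((\lambda d)^{-(N-2s)}\bigr).
\]
The potential term splits as $\int a(P\bar U)^2+2\int aP\bar U w+\int aw^2$. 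Since $N<4s$, we have $\|P\bar U\|_{L^2(\Omega)}^2\lesssim\lambda^{-(N-2s)}$, which gives the $O(\lambda^{-(N-2s)})$ error. By Cauchy--Schwarz and Sobolev, the cross term is $O(\lambda^{-\frac{N-2s}{2}}\|\ds w\|_2)$. The term $\varepsilon\int V(P\bar U+w)^2$ is likewise absorbed into these errors and into $o(\|\ds w\|_2^2)$.

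\emph{Denominator.} Let $D(v):=\iint|v|^{\twostar}|v|^{\twostar}|x-y|^{-\mu}$. We Taylor expand $|P\bar U+w|^{\twostar}$ to second order in $w$ inside the double integral. The zeroth order term $D(P\bar U)$ is computed as in the proof of Theorem~\ref{lem:de-2.1}, now with $H_0$ in place of $H_a$ and keeping only the $\lambda^{-(N-2s)}$ term, which gives $\SH^{\frac{2N-\mu}{N+2s-\mu}}-2\twostar a_{N,s}\kappa^2\mathfrak a^{(1)}\phi_0\lambda^{-(N-2s)}+o((\lambda d)^{-(N-2s)})$. The higher powers of $\phi_0(z)\lambda^{-(N-2s)}$ are $o$ of the first because $\lambda d\to\infty$.

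For the linear term $2\twostar\iint \frac{(P\bar U)^{\twostar}(x)(P\bar U)^{\twostar-1}(y)w(y)}{|x-y|^\mu}$, we replace $P\bar U$ by $\bar U$ at cost $O(\lambda^{-\frac{N-2s}{2}}\|\ds w\|_2)$, using HLS and $\|\bar U-P\bar U\|_\infty\lesssim\lambda^{-\frac{N-2s}{2}}d^{-(N-2s)}$. Then, by \eqref{easybarU}, this term becomes a multiple of $\int\Ds\bar U\,w=(\bar U,w)=(P\bar U,w)=0$.

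For the quadratic part, again replacing $P\bar U$ by $\bar U$ up to $o(\|\ds w\|_2^2)$, we get exactly $\twostar(\twostar-1)\iint\bar U^{\twostar}\bar U^{\twostar-2}w^2+\twostar^2\iint\bar U^{\twostar-1}w\,\bar U^{\twostar-1}w$. The cubic remainder is $O(\|\ds w\|_2^{\min(3,2\twostar)})=o(\|\ds w\|_2^2)$ by HLS and Sobolev. The step I expect to be the main obstacle is the careful treatment when $\twostar<2$ (possible if $\mu$ is large). There $|t|^{\twostar}$ is not $C^2$ at $0$, and one has to use the standard elementary inequality $\bigl||a+b|^p-|a|^p-p|a|^{p-2}ab-\tfrac{p(p-1)}2|a|^{p-2}b^2\bigr|\lesssim |b|^{p}$ (for $p\le2$) or the $\min(|a|^{p-3}|b|^3,|b|^p)$ version, followed by HLS on each piece.

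\emph{Quotient.} We raise the denominator to $-1/\twostar$ and expand, using $(1+X)^{-1/\twostar}=1-X/\twostar+O(X^2)$ with $X=O(\lambda^{-(N-2s)}+\lambda^{-\frac{N-2s}2}\|\ds w\|_2+\|\ds w\|_2^2)$. We then multiply by the numerator. The $\phi_0$-coefficients combine as $-1+2=1$ times $\kappa^2a_{N,s}\mathfrak a^{(1)}\SH^{-\frac{N-2s}{N+2s-\mu}}$. The quadratic forms in $w$ acquire the factor $\SH^{-\frac{N-2s}{N+2s-\mu}}$, and the prefactor $\twostar$ cancels against $1/\twostar$, producing the bracket in \eqref{eq:first-energy-expansion}. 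All cross products are absorbed into the stated error terms.
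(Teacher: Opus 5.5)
This is essentially the paper's argument: the numerator as in De Nitti--K\"onig, the denominator Taylor-expanded to second order in $w_\varepsilon$, then the expansion of the power $-1/\twostar$. Your treatment of the linear terms via $(P\bar U_{z_\varepsilon,\lambda_\varepsilon},w_\varepsilon)=0$ and \eqref{easybarU} is correct.

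Two bookkeeping corrections, neither of which breaks the proof. First, the case $\twostar<2$ that you flag as the main obstacle never occurs: $\mu<N<4s$ forces $\twostar>2$, which is exactly what the paper uses, and the remainder is $O(\|\ds w_\varepsilon\|_2^{\min\{3,\twostar\}})$ rather than $O(\|\ds w_\varepsilon\|_2^{\min\{3,2\twostar\}})$. Second, $d(z_\varepsilon)$ may still tend to $0$ at this stage; the lemma is used precisely to rule this out. So errors must be tracked in $\lambda_\varepsilon d(z_\varepsilon)$ rather than borrowed from Theorem~\ref{lem:de-2.1}, which is only uniform on compact sets. For example, your $L^\infty$ bound for $\bar U-P\bar U$ gives a replacement cost $O\bigl((\lambda_\varepsilon d(z_\varepsilon))^{-(N-2s)}\|\ds w_\varepsilon\|_2\bigr)=o\bigl((\lambda_\varepsilon d(z_\varepsilon))^{-(N-2s)}\bigr)$, not $O\bigl(\lambda_\varepsilon^{-\frac{N-2s}{2}}\|\ds w_\varepsilon\|_2\bigr)$. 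Likewise, your $X$ contains $\phi_0(z_\varepsilon)\lambda_\varepsilon^{-(N-2s)}\sim(\lambda_\varepsilon d(z_\varepsilon))^{-(N-2s)}$ rather than being $O(\lambda_\varepsilon^{-(N-2s)})$. Both are still absorbed by the stated error terms.
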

\begin{proof}
The numerator is treated as in \cite[proof of Lemma~3.7, Step~1]{DeNittiKonig2023}. We state the resulting expansion directly:
\begin{equation}
\label{eq:first-expansion-numerator}
\begin{aligned}
&\alpha_\varepsilon^{-2}\left(\|\ds u_\varepsilon\|_2^2+\int_\Omega(a+\varepsilon V)u_\varepsilon^2\,dx\right)=\SH^{\frac{2N-\mu}{N+2s-\mu}}
-\kappa_{N,\mu,s}^2 a_{N,s}\mathfrak a_{N,s}^{(1)}\phi_0(z_\varepsilon)\lambda_\varepsilon^{-(N-2s)}\\
&\qquad\quad+\|\ds w_\varepsilon\|_2^2+\int_\Omega aw_\varepsilon^2\,dx
+O\left(\lambda_\varepsilon^{-(N-2s)}\right)
+O\left(\lambda_\varepsilon^{-\frac{N-2s}{2}}\|\ds w_\varepsilon\|_2\right)\\
&\qquad\quad+o\left((\lambda_\varepsilon d(z_\varepsilon))^{-(N-2s)}\right)
+o\left(\|\ds w_\varepsilon\|_2^2\right).
\end{aligned}
\end{equation}
For the denominator, we argue as in the proof of \cite[Lemma~5.5]{GGYZ}, using  the fractional Sobolev embedding, to obtain
\begin{equation}
\label{eq:first-expansion-denominator}
\begin{aligned}
&|\alpha_\varepsilon|^{-2 \cdot \twostar}
\int_\Omega\int_\Omega
\frac{|u_\varepsilon(x)|^{\twostar}|u_\varepsilon(y)|^{\twostar}}
{|x-y|^\mu}\,dx\,dy=\SH^{\frac{2N-\mu}{N+2s-\mu}}
-2\cdot\twostar\kappa_{N,\mu,s}^2 a_{N,s}\mathfrak a_{N,s}^{(1)}
\phi_0(z_\varepsilon)\lambda_\varepsilon^{-(N-2s)}\\
&\quad+\twostar(\twostar-1)\int_\Omega\int_\Omega
\frac{\bar U_{z_\varepsilon,\lambda_\varepsilon}^{\twostar}(x)
\bar U_{z_\varepsilon,\lambda_\varepsilon}^{\twostar-2}(y)
w_\varepsilon(y)^2}
{|x-y|^\mu}\,dx\,dy+(\twostar)^2\int_\Omega\int_\Omega
\frac{\bar U_{z_\varepsilon,\lambda_\varepsilon}^{\twostar-1}(x)
w_\varepsilon(x)
\bar U_{z_\varepsilon,\lambda_\varepsilon}^{\twostar-1}(y)
w_\varepsilon(y)}
{|x-y|^\mu}\,dx\,dy\\
&\quad+o\left((\lambda_\varepsilon d(z_\varepsilon))^{-(N-2s)}\right)
+o\left(\|\ds w_\varepsilon\|_2^2\right).
\end{aligned}
\end{equation}
Since $\twostar>2$, the Taylor remainder in $w_\varepsilon$ is $o(\|\ds w_\varepsilon\|_2^2)$. Applying Taylor's formula to the power $-1/\twostar$ of \eqref{eq:first-expansion-denominator} and multiplying by \eqref{eq:first-expansion-numerator}, we obtain \eqref{eq:first-energy-expansion}.
\end{proof}

\begin{proof}[Proof of Proposition~\ref{prop:remainder-estimates}]
By \eqref{eq:profile-almost-minimizing} and $\SH(a+\varepsilon V)<\SH$, we have $\SH(a+\varepsilon V)[u_\varepsilon]<\SH$ for $\varepsilon$ small. Since $w_\varepsilon\in T_{z_\varepsilon,\lambda_\varepsilon}^\perp$ and $\lambda_\varepsilon d(z_\varepsilon)\to\infty$, Proposition~\ref{prop:coercivity-with-potential} bounds the term in braces in \eqref{eq:first-energy-expansion} from below by $\rho\|\ds w_\varepsilon\|_2^2$. Moreover, $\phi_0(z)\geq c\,d(z)^{-(N-2s)}$ for some $c>0$ (see \cite[Lemma~A.1, (A.1)]{DeNittiKonig2023}). Hence \eqref{eq:first-energy-expansion} gives
\begin{equation*}
c\,(\lambda_\varepsilon d(z_\varepsilon))^{-(N-2s)}+\rho\|\ds w_\varepsilon\|_2^2
\lesssim
\lambda_\varepsilon^{-(N-2s)}+\lambda_\varepsilon^{-\frac{N-2s}{2}}\|\ds w_\varepsilon\|_2
+o\left((\lambda_\varepsilon d(z_\varepsilon))^{-(N-2s)}\right)+o\left(\|\ds w_\varepsilon\|_2^2\right).
\end{equation*}
Absorbing the $o(\cdot)$ terms and using the inequality
\begin{equation*}
\lambda_\varepsilon^{-\frac{N-2s}{2}}\|\ds w_\varepsilon\|_2\leq\frac{\rho}{2C}\|\ds w_\varepsilon\|_2^2+C'\lambda_\varepsilon^{-(N-2s)},
\end{equation*}
we obtain $(\lambda_\varepsilon d(z_\varepsilon))^{-(N-2s)}+\|\ds w_\varepsilon\|_2^2\lesssim\lambda_\varepsilon^{-(N-2s)}$, which is \eqref{eq:remainder-and-distance-estimates}.
\end{proof}

\section{Proof of the lower bound II: a refined expansion}
\label{sec:lower-bound-refined}

Throughout this section we write
\begin{equation*}
\theta:=\frac{N-2s}{N+2s-\mu},\qquad
D_0:=\SH^{\frac{2N-\mu}{N+2s-\mu}},\qquad
c_0:=a_{N,s}A_{N,\mu,s}^{-\frac{2s}{N+2s-\mu}},
\end{equation*}
so that $(-\Delta)^s\bar U_{z,\lambda}=c_0\bar U_{z,\lambda}^{2_s^*-1}$ and $D_0=\|\ds\bar U_{0,1}\|_2^2=\int_{\R^N}\int_{\R^N}\frac{\bar U_{0,1}^{\twostar}(x)\bar U_{0,1}^{\twostar}(y)}{|x-y|^\mu}dxdy$. We also set
\begin{equation*}
c_1:=\kappa_{N,\mu,s}^{2}a_{N,s}\mathfrak a_{N,s}^{(1)}\SH^{-\theta},\qquad
c_2:=\kappa_{N,\mu,s}^{2}\left(a_{N,s}d_{N,s}b_{N,s}+\alpha_{N,s}\right)\SH^{-\theta},\qquad
c_3:=\kappa_{N,\mu,s}^{2}\SH^{-\theta},
\end{equation*}
which are positive constants, so that \eqref{eq:quotient-expansion} reads
\begin{equation}
\label{eq:quotient-expansion-short}
\begin{split}
\SH(a+\varepsilon V)[\psi_{z,\lambda}]
&=\SH+c_1\phi_a(z)\lambda^{-(N-2s)}+\mathcal T_1(\phi_a(z),\lambda)-c_2a(z)\lambda^{-2s}+c_3\varepsilon Q_V(z)\lambda^{-(N-2s)}
+o\left(\lambda^{-2s}\right)\\&\quad+o\left(\varepsilon\lambda^{-(N-2s)}\right).
\end{split}
\end{equation}
For $v\in\ths$ we abbreviate
\begin{equation*}
\mathcal Q_\varepsilon[v]:=\SH(a+\varepsilon V)[v],\qquad
\mathcal N_\varepsilon(v):=\|\ds v\|_2^2+\int_\Omega(a+\varepsilon V)v^2\,dx,\qquad
\mathcal D(v):=\int_\Omega\int_\Omega\frac{|v(x)|^{\twostar}|v(y)|^{\twostar}}{|x-y|^\mu}dxdy,
\end{equation*}
and, with $\Psi_{z,\lambda}$ as in the proof of Proposition~\ref{prop:coercivity-with-potential},
\begin{equation*}
\mathcal E_{z,\lambda}(v):=\|\ds v\|_2^2+\int_\Omega av^2\,dx-\Psi_{z,\lambda}(v).
\end{equation*}
Finally, we put
\begin{equation*}
\sigma:=\min\{1,4s-N\}\in(0,1],\qquad
\gamma_*:=\min\left\{\frac{N+2s}{2},\frac{2N-\mu}{2}\right\},
\end{equation*}
\begin{equation*}
\omega(\lambda):=\lambda^{-(N-2s)-\sigma}+\lambda^{-2(N-2s)}+\lambda^{-\gamma_*}\log\lambda .
\end{equation*}
Note that $\gamma_*>s$ and $N+\sigma>3s$ (recall $N>2s\geq3s-1$ and $N+(4s-N)=4s$). Hence, if $N>\frac52s$, then $\omega(\lambda)^2=o(\lambda^{-2s})$.

\subsection{Derivatives of the quotient at the test function}

Recall the test function $\psi_{z,\lambda}=P\bar U_{z,\lambda}+\kappa_{N,\mu,s}\lambda^{-\frac{N-2s}{2}}h_z$, where
\begin{equation*}
h_z:=H_0(z,\cdot)-H_a(z,\cdot)=G_a(z,\cdot)-G_0(z,\cdot)\quad\text{in }\Omega,\qquad h_z:=0\quad\text{in }\R^N\setminus\Omega.
\end{equation*}

\begin{lem}[Properties of $h_z$]
\label{lem:hz}
Let $a\in C(\overline\Omega)$ be such that $(-\Delta)^s+a$ is coercive. Then, for every $z\in\Omega$, $h_z\in\ths\cap L^\infty(\Omega)$ and
\begin{equation*}
(-\Delta)^sh_z=-a\,G_a(z,\cdot)\quad\text{in }\Omega,
\end{equation*}
with $\|\ds h_z\|_2+\|h_z\|_{L^\infty(\Omega)}$ bounded uniformly for $z$ in compact subsets of $\Omega$. Moreover, $h_z(x)-h_z(z)=o(1)$ as $x\to z$, and if in addition $a\in C^1(\Omega)$, then $|h_z(x)-h_z(z)|\lesssim|x-z|^\sigma$ for $x\in\Omega$, uniformly for $z$ in compact subsets of $\Omega$.
\end{lem}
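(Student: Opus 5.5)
The approach is to work from the equation for $h_z$ and treat it as a linear problem with a right-hand side of controlled integrability. Since $G_a(z,\cdot)$ and $G_0(z,\cdot)$ both vanish outside $\Omega$ and solve their respective equations with the same Dirac mass $\gamma_{N,s}\delta_z$, subtracting gives
\[
(-\Delta)^s h_z=(-\Delta)^s G_a(z,\cdot)-(-\Delta)^s G_0(z,\cdot)=-aG_a(z,\cdot)\quad\text{in }\Omega,
\]
exactly as in Remark~\ref{rem:continuous-a}. I use the Green function bound $0\le G_a(z,x)\lesssim|x-z|^{-(N-2s)}$, which holds since $G_a\le C G_0$ for coercive $a$ by a comparison argument. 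Because $2(N-2s)<N$ (equivalently $N<4s$), this bound puts $aG_a(z,\cdot)$ in $L^2(\Omega)$ uniformly for $z$ in a compact $K\subset\Omega$. Coercivity of $(-\Delta)^s$ on $\ths$ together with the embedding $L^2(\Omega)\subset(\ths)'$ then yields a unique weak solution in $\ths$ with $\|\ds h_z\|_2\lesssim\|aG_a(z,\cdot)\|_{L^2}\le C_K$. This weak solution coincides with $G_a-G_0$, by uniqueness of the Green function representation.

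For the $L^\infty$ bound and the modulus of continuity I would use the representation
\[
h_z(x)=-\gamma_{N,s}^{-1}\int_\Omega G_0(x,y)\,a(y)G_a(z,y)\,dy .
\]
With $G_0(x,y)\le|x-y|^{-(N-2s)}$ and $G_a(z,y)\lesssim|z-y|^{-(N-2s)}$, the convolution of two Riesz-type kernels on a bounded set is bounded whenever $2(N-2s)<N$. This gives $\|h_z\|_\infty\le C_K$. Continuity at $z$ already follows from Remark~\ref{rem:continuous-a}: the $W^{2s,2}_{loc}$ bounds give $h_z\in C^\alpha_{loc}$ uniformly in $z\in K$, so $h_z(x)-h_z(z)=o(1)$, and in fact $O(|x-z|^\alpha)$ for any $\alpha<\min\{1,2s-N/2\}$.

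The quantitative estimate $|h_z(x)-h_z(z)|\lesssim|x-z|^\sigma$ with $\sigma=\min\{1,4s-N\}$ is the main obstacle. The Sobolev exponent $2s-N/2$ is smaller than $4s-N$, so the $W^{2s,2}$ route is not sharp, and I would argue by a direct kernel computation. Write $h_z=-\gamma_{N,s}^{-1}\int_\Omega G_0(\cdot,y)f(y)\,dy$ with $f:=aG_a(z,\cdot)$. The key observation is that $f$ is only as singular as $|y-z|^{-(N-2s)}$ near $z$. Splitting $G_0=|x-y|^{-(N-2s)}-H_0(x,y)$, the regular part $H_0$ is smooth in $x$ on $K$ and contributes a Lipschitz term. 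The singular part gives the Riesz potential $I_{2s}f(x)=\int|x-y|^{-(N-2s)}f(y)\,dy$. I split $f=a(z)|y-z|^{-(N-2s)}\chi+g$, where $\chi$ is a cutoff near $z$ and
\[
g=(a-a(z))|y-z|^{-(N-2s)}\chi-a\,H_a(z,\cdot)\chi+(1-\chi)f .
\]
Since $a\in C^1$, this remainder satisfies $|g(y)|\lesssim|y-z|^{-(N-2s)+1}+1$.

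For the explicit piece, the scaling identity
\[
\int_{\R^N}|x-y|^{-(N-2s)}|y-z|^{-(N-2s)}dy=c\,|x-z|^{-(N-4s)}=c|x-z|^{4s-N}
\]
shows that this term, after cutoff, differs from its value at $z$ by $O(|x-z|^{4s-N})$ plus a smooth correction. For the remainder, $I_{2s}g$ has a gain of one power, so it is $C^{\min\{1,\,4s-N+1\}-}$. Here I would check carefully that the log-loss endpoint does not occur, or else accept a harmless bound by $|x-z|^\sigma$, since $4s-N<1$ or $\sigma\le1$ absorbs it. Uniformity in $z\in K$ comes from the uniform bounds on $a$, $\nabla a$ on a neighborhood of $K$ and the uniform boundedness of $H_a$ from Remark~\ref{rem:continuous-a}. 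For $x$ away from $z$ the estimate is trivial from the $L^\infty$ bound.
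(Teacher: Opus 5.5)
Your derivation of the equation, of the $\ths$ bound (you use $L^2(\Omega)\subset(\ths)'$, the paper uses $L^{\frac{2N}{N+2s}}$) and of the $o(1)$ continuity matches the paper. Your $L^\infty$ bound via the convolution of two Riesz kernels is a valid, more elementary substitute for the paper's appeal to Remark~\ref{rem:continuous-a}. For the quantitative modulus the paper does no computation: it reads $|h_z(x)-h_z(z)|\lesssim|x-z|^\sigma$ off the diagonal expansion of $H_a$ in Lemma~\ref{lemma Ha expansion} (quoted from De Nitti--K\"onig), together with the smoothness of $H_0(z,\cdot)$ near $z$. Your kernel computation amounts to re-deriving the needed part of that expansion. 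That is legitimate, but as written the step you call the main obstacle has two defects.

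\emph{The whole-space identity is false.} The identity $\int_{\R^N}|x-y|^{-(N-2s)}|y-z|^{-(N-2s)}\,dy=c|x-z|^{4s-N}$ fails for $N<4s$. The integrand decays like $|y|^{-2(N-2s)}$ and $2(N-2s)<N$, so the left-hand side is $+\infty$ for every $x$; in any case a positive integral could not vanish at $x=z$. The power $|x-z|^{4s-N}$ only appears after renormalization, so you have to estimate the cut-off difference directly:
\[
E(x)-E(z)=\int\chi(y)\,|y-z|^{-(N-2s)}\Bigl(|x-y|^{-(N-2s)}-|z-y|^{-(N-2s)}\Bigr)\,dy .
\]
Split at $|y-z|=2|x-z|$. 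The inner region gives $O(|x-z|^{4s-N})$ by scaling, since $2(N-2s)<N$. On the outer region the mean value bound gives $|x-z|\int_{2|x-z|<|y-z|<1}|y-z|^{-(2N-4s+1)}\,dy$. This is $O(|x-z|^{4s-N})$ if $4s-N<1$ and $O(|x-z|)$ if $4s-N>1$. At $4s-N=1$, however, it is only $O\bigl(|x-z|\log\frac{1}{|x-z|}\bigr)$.

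\emph{The endpoint and the remainder.} The endpoint $4s-N=1$ really occurs: $N=2$, $s=\frac34$, where $\sigma=1$. Your fallback that ``$\sigma\le1$ absorbs'' a logarithmic or $C^{1-}$ loss fails exactly there, because such a bound does not give $\lesssim|x-z|$.
\begin{itemize}
\item \emph{Fixing the explicit piece.} Take $\chi$ radial about $z$ and Taylor expand to second order on the outer region. The first-order term is odd in $y-z$ and integrates to zero. The remainder is bounded by $|x-z|^2\int_{|y-z|>2|x-z|}|y-z|^{-(2N-4s+2)}\,dy\lesssim|x-z|^{4s-N}$, because $4s-N<2$.
\item \emph{Fixing the remainder $I_{2s}g$.} Your ``gain of one power'' heuristic overstates things: $I_{2s}g\in C^{1-}$ fails for $N=1$, where $2s<1$. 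The same splitting with $|g(y)|\lesssim|y-z|^{1-(N-2s)}+1$ gives $|I_{2s}g(x)-I_{2s}g(z)|\lesssim|x-z|^{\min\{1,2s\}}$. This suffices because $4s-N<2s$, so $\sigma\le\min\{1,2s\}$.
\end{itemize}
With these two corrections, and the uniformity in $z$ you describe, your argument does prove the lemma.
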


\begin{proof}
The equation follows from $(-\Delta)^s_xG_a(z,\cdot)+aG_a(z,\cdot)=\gamma_{N,s}\delta_z=(-\Delta)^s_xG_0(z,\cdot)$ in $\Omega$ and the symmetry of $G_a$ and $G_0$. Since $0\leq G_a(z,x)\lesssim|x-z|^{-(N-2s)}$ and $(N-2s)\frac{2N}{N+2s}<N$ (because $N<6s$), the right-hand side belongs to $L^{\frac{2N}{N+2s}}(\Omega)$, uniformly for $z$ in compact sets; hence $h_z\in\ths$ with uniformly bounded norm. Boundedness and the modulus of continuity at $x=z$ follow from Remark~\ref{rem:continuous-a}, respectively from Lemma~\ref{lemma Ha expansion} and the smoothness of $H_0(z,\cdot)$ near $z$; away from $z$, $h_z$ is bounded because $|h_z(x)|\leq|x-z|^{-(N-2s)}+|H_a(z,x)|$.
\end{proof}

\begin{lem}[First derivative]
\label{lem:first-derivative}
Let $a\in C(\overline\Omega)\cap C^1(\Omega)$ be such that $(-\Delta)^s+a$ is coercive, and let $V\in L^\infty(\Omega)$. As $\lambda\to\infty$, uniformly for $z$ in compact subsets of $\Omega$, $\varepsilon\in[0,1]$ and $\eta\in\ths$,
\begin{equation}
\label{eq:first-derivative}
\left|D\mathcal Q_\varepsilon[\psi_{z,\lambda}]\eta\right|
\lesssim
\left(|\phi_a(z)|\lambda^{-(N-2s)}+\omega(\lambda)+\varepsilon\lambda^{-\frac{N-2s}{2}}\right)\|\ds\eta\|_2 .
\end{equation}
If $a\in C(\overline\Omega)$ only, the same holds with $\omega(\lambda)$ replaced by a function $\omega_0(\lambda)=o(\lambda^{-(N-2s)})$.
\end{lem}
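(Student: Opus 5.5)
We write the derivative of the quotient in the form
\[
D\mathcal Q_\varepsilon[\psi]\eta=\frac{2}{\mathcal D(\psi)^{1/\twostar}}\Big(\langle\psi,\eta\rangle_a+\varepsilon\int_\Omega V\psi\eta\,dx-\frac{\mathcal N_\varepsilon(\psi)}{\mathcal D(\psi)}\int_\Omega\int_\Omega\frac{|\psi(x)|^{\twostar}|\psi(y)|^{\twostar-2}\psi(y)\eta(y)}{|x-y|^\mu}dxdy\Big),
\]
where $\psi=\psi_{z,\lambda}$ and $\langle\psi,\eta\rangle_a=(\psi,\eta)+\int_\Omega a\psi\eta$. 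The plan is to write both the linear term and the nonlinear term as $c_0\int_\Omega \bar U_{z,\lambda}^{2_s^*-1}\eta$ plus errors, so that the leading parts cancel exactly.

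\textbf{The linear part.} The key identity is
\[
(-\Delta)^s\psi+a\psi=c_0\bar U^{2_s^*-1}+\kappa_{N,\mu,s}\lambda^{-\frac{N-2s}2}\bigl(-aG_a(z,\cdot)\bigr)+aP\bar U+a\kappa_{N,\mu,s}\lambda^{-\frac{N-2s}{2}}h_z \quad\text{in }\Omega,
\]
which follows from Lemma~\ref{lem:hz}. We use $P\bar U=\bar U-\kappa\lambda^{-\frac{N-2s}2}H_0(z,\cdot)-f_{z,\lambda}$ and $G_a=|x-z|^{-(N-2s)}-H_a$. The terms $a\bar U$ and $-a\kappa\lambda^{-\frac{N-2s}{2}}|x-z|^{-(N-2s)}$ then nearly cancel, because $\bar U$ is close to $\kappa\lambda^{-\frac{N-2s}{2}}|x-z|^{-(N-2s)}$ away from the scale $\lambda^{-1}$. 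What remains is $a\cdot(\bar U-\kappa\lambda^{-\frac{N-2s}2}|x-z|^{2s-N})$ plus bounded terms of size $\lambda^{-\frac{N-2s}{2}}$ times $\bigl(H_a-H_0+h_z\bigr)=0$, plus $af_{z,\lambda}$.

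We pair this remainder with $\eta$ and bound it by its $L^{2N/(N+2s)}$ norm, using the Sobolev inequality. The difference $\bar U-\kappa\lambda^{-\frac{N-2s}{2}}|x-z|^{2s-N}$ is at most $\lambda^{\frac{N-2s}2}$ for $|x-z|<\lambda^{-1}$, and at most $\lambda^{-\frac{N+4-2s}{2}}|x-z|^{-(N+2-2s)}$ outside that ball. A scaling computation of its $L^{2N/(N+2s)}$ norm then gives $\lambda^{-\frac{N+2s}{2}}\le\omega(\lambda)$, up to a $\log\lambda$ factor at the endpoint. The $\varepsilon$-term is bounded directly by $\varepsilon\|\psi\|_{L^2}\|\eta\|_{2}\lesssim \varepsilon\lambda^{-\frac{N-2s}{2}}\|\ds\eta\|_2$, since $\|\bar U\|_{L^2(\Omega)}\sim\lambda^{-\frac{N-2s}{2}}\cdot$const because $N<4s$.

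\textbf{The nonlinear part.} This is the main obstacle. By Theorem~\ref{lem:de-2.1} we have $\mathcal N_\varepsilon(\psi)/\mathcal D(\psi)=1+O(|\phi_a|\lambda^{-(N-2s)}+\lambda^{-2s}+\varepsilon\lambda^{-(N-2s)})$. We expand $|\psi|^{\twostar}$ and $|\psi|^{\twostar-2}\psi$ around $\bar U$ with perturbation $g=-\kappa\lambda^{-\frac{N-2s}2}H_a(z,\cdot)-f$. The zeroth-order term gives exactly $c_0\int\bar U^{2_s^*-1}\eta$ by \eqref{easybarU}, up to the exterior-domain error handled as in \eqref{eq:linear-Ha-exterior-error}, which is $O(\lambda^{-\gamma_*})$. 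The first-order terms contain $H_a(z,\cdot)$. We write $H_a(z,x)=\phi_a(z)+(H_a(z,x)-\phi_a(z))$ and use the Hölder bound $|x-z|^\sigma$ from Lemma~\ref{lem:hz}, extended to $H_a$ via $H_0$ smooth. The constant part yields $|\phi_a(z)|\lambda^{-(N-2s)}$. The oscillating part gains a factor $\lambda^{-\sigma}$, giving $\lambda^{-(N-2s)-\sigma}$. These are estimated through HLS together with Lemmas~\ref{lem:barU-Lq-estimates} and \ref{lem:barU-Ha-power-asymptotics}. The quadratic and higher terms in $g$ give $\lambda^{-2(N-2s)}$. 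For the Taylor remainders where $|g|\gtrsim\bar U$, which happens only far from $z$, we bound $|g|^{\twostar-1}$ crudely and obtain $\lambda^{-\frac{2N-\mu}{2}}$. Finally, the prefactor error times $c_0\int\bar U^{2_s^*-1}\eta=O(\lambda^{-\frac{N-2s}2}\|\ds\eta\|_2)$ contributes $(|\phi_a|\lambda^{-(N-2s)}+\lambda^{-2s}+\varepsilon\lambda^{-(N-2s)})\lambda^{-\frac{N-2s}{2}}$, which is absorbed.

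\textbf{The case $a\in C(\overline\Omega)$ only.} Here the same bookkeeping applies, but we use only the $o(1)$ modulus of continuity of $H_a$ at the diagonal from Remark~\ref{rem:continuous-a}. This replaces $\lambda^{-\sigma}$ by $o(1)$, and so $\omega(\lambda)$ by some $\omega_0(\lambda)=o(\lambda^{-(N-2s)})$.
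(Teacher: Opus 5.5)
Your proposal follows the paper's proof. It uses the same formula for $D\mathcal Q_\varepsilon[\psi]\eta$ and the same equation $(-\Delta)^s\psi+a\psi=c_0\bar U^{2_s^*-1}+a\bigl(\bar U-\kappa_{N,\mu,s}\lambda^{-\frac{N-2s}{2}}|\cdot-z|^{-(N-2s)}-f_{z,\lambda}\bigr)$ in $\Omega$. It then expands the Hartree term around $\bar U$ with $H_a(z,\cdot)=\phi_a(z)+R_z$, and cancels $c_0\int_\Omega\bar U^{2_s^*-1}\eta$ up to the factor $\mathcal N_\varepsilon(\psi)/\mathcal D(\psi)$. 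The only difference is cosmetic: you split off the far set where $|g|\gtrsim\bar U$, whereas the paper uses $\bar U\gtrsim\lambda^{-\frac{N-2s}{2}}$ on $\Omega$ to get one Taylor-type inequality valid everywhere.

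Several of your intermediate claims are wrong, although none of them breaks the argument.
\begin{itemize}
\item \emph{Linear part.} $\bar U-\kappa_{N,\mu,s}\lambda^{-\frac{N-2s}{2}}|x-z|^{-(N-2s)}$ is not bounded by $\lambda^{\frac{N-2s}{2}}$ on $B_{1/\lambda}(z)$; it blows up like $|x-z|^{-(N-2s)}$ at $z$. By scaling, its $L^{\frac{2N}{N+2s}}(\Omega)$ norm is exactly of order $\lambda^{-2s}$, not $\lambda^{-\frac{N+2s}{2}}$. The norm is finite because $N<6s$.
\item \emph{Prefactor error.} $c_0\int_\Omega\bar U^{2_s^*-1}\eta=(P\bar U,\eta)$ is only $O(\|\ds\eta\|_2)$, not $O(\lambda^{-\frac{N-2s}{2}}\|\ds\eta\|_2)$; take $\eta=P\bar U$.
\end{itemize}
In both places the correct bound still suffices. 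Since $\sigma\le4s-N$, you have $\lambda^{-2s}\le\lambda^{-(N-2s)-\sigma}\le\omega(\lambda)$. Hence the prefactor error contributes $\bigl(|\phi_a(z)|\lambda^{-(N-2s)}+\lambda^{-2s}+\varepsilon\lambda^{-(N-2s)}\bigr)\|\ds\eta\|_2$, which fits \eqref{eq:first-derivative}.

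Some of your stated rates are also inaccurate, but the true rates remain inside $\omega(\lambda)$.
\begin{itemize}
\item The quadratic terms are $O(\lambda^{-2(N-2s)})$ only when $3N<10s$. Otherwise they are $O(\lambda^{-\frac{N+2s}{2}})$, with a logarithm at $3N=10s$.
\item The crude far-field bounds give $\lambda^{-\gamma_*}$, not $\lambda^{-\frac{2N-\mu}{2}}$. The remainder in the $y$-variable alone is of size $\lambda^{-\frac{N+2s}{2}}$.
\end{itemize}

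Finally, for merely continuous $a$ you cannot take the ratio $\mathcal N_\varepsilon(\psi)/\mathcal D(\psi)$ from Theorem~\ref{lem:de-2.1}, which assumes $a\in C^1(\Omega)$. Obtain \eqref{eq:ratio-N-D-only-C} instead from Remark~\ref{rem:continuous-a}, or from the identity $\mathcal J(\psi)=\mathcal D(\psi)$ as the paper does.
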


\begin{proof}
We write $\psi=\psi_{z,\lambda}$, $\bar U=\bar U_{z,\lambda}$ and $\|\cdot\|:=\|\ds\cdot\|_2$.

\emph{Step 1: the equation for $\psi$.} Since $(-\Delta)^sP\bar U=(-\Delta)^s\bar U=c_0\bar U^{2_s^*-1}$ in $\Omega$, Lemma~\ref{lem:hz} gives
\begin{equation*}
(-\Delta)^s\psi+a\psi=c_0\bar U^{2_s^*-1}+a\left(\psi-\kappa_{N,\mu,s}\lambda^{-\frac{N-2s}{2}}G_a(z,\cdot)\right)
\quad\text{in }\Omega .
\end{equation*}
By Lemma~\ref{lem:projection-barU}\,(ii),
\begin{equation*}
g:=\psi-\kappa_{N,\mu,s}\lambda^{-\frac{N-2s}{2}}G_a(z,\cdot)
=\bar U-\kappa_{N,\mu,s}\lambda^{-\frac{N-2s}{2}}|\cdot-z|^{-(N-2s)}-f_{z,\lambda}
\quad\text{in }\Omega .
\end{equation*}
Since $0\leq\kappa_{N,\mu,s}\lambda^{-\frac{N-2s}{2}}|x-z|^{-(N-2s)}-\bar U(x)\lesssim\lambda^{-\frac{N-2s}{2}}|x-z|^{-(N-2s)}\min\{1,(\lambda|x-z|)^{-2}\}$, a direct computation (splitting $\{|x-z|<\lambda^{-1}\}$ and its complement) gives $\|g\|_{L^{\frac{2N}{N+2s}}(\Omega)}\lesssim\lambda^{-2s}$; here we used $\|f_{z,\lambda}\|_{L^\infty(\Omega)}\lesssim\lambda^{-\frac{N+4-2s}{2}}=o(\lambda^{-2s})$. Hence
\begin{equation}
\label{eq:equation-psi}
\left|\int_\Omega\left((-\Delta)^s\psi+a\psi\right)\eta\,dx-c_0\int_\Omega\bar U^{2_s^*-1}\eta\,dx\right|\lesssim\lambda^{-2s}\|\eta\|.
\end{equation}

\emph{Step 2: the formula for $D\mathcal Q_\varepsilon$.} We have
\begin{equation*}
D\mathcal Q_\varepsilon[\psi]\eta
=2\mathcal D(\psi)^{-\frac{1}{\twostar}}
\left[\int_\Omega\left((-\Delta)^s\psi+a\psi\right)\eta\,dx+\varepsilon\int_\Omega V\psi\eta\,dx-\frac{\mathcal N_\varepsilon(\psi)}{\mathcal D(\psi)}\mathcal J(\eta)\right],
\end{equation*}
where
\begin{equation*}
\mathcal J(\eta):=\int_\Omega\int_\Omega\frac{|\psi(x)|^{\twostar}|\psi(y)|^{\twostar-2}\psi(y)\eta(y)}{|x-y|^\mu}dxdy .
\end{equation*}
The projection estimates, Lemma~\ref{lem:hz} and the HLS inequality give $\|\psi\|\lesssim1$ and $\mathcal D(\psi)=D_0+o(1)$.
Moreover, by Lemma~\ref{lem:barU-Lq-estimates} with $q=2<\frac{N}{N-2s}$, $\|\psi\|_{L^2(\Omega)}\lesssim\lambda^{-\frac{N-2s}{2}}$, so that $\varepsilon|\int_\Omega V\psi\eta|\lesssim\varepsilon\lambda^{-\frac{N-2s}{2}}\|\eta\|$.

\emph{Step 3: expansion of $\mathcal J(\eta)$.} Write $\psi=\bar U-b-f_{z,\lambda}$ with $b:=\kappa_{N,\mu,s}\lambda^{-\frac{N-2s}{2}}H_a(z,\cdot)$, so that $\|b\|_{L^\infty(\Omega)}\lesssim\lambda^{-\frac{N-2s}{2}}$. We claim that
\begin{equation*}
\begin{split}
\mathcal J(\eta)
=&\int_\Omega\int_\Omega\frac{\bar U^{\twostar}(x)\bar U^{\twostar-1}(y)\eta(y)}{|x-y|^\mu}dxdy
-\twostar\int_\Omega\int_\Omega\frac{\bar U^{\twostar-1}(x)b(x)\bar U^{\twostar-1}(y)\eta(y)}{|x-y|^\mu}dxdy\\
&-(\twostar-1)\int_\Omega\int_\Omega\frac{\bar U^{\twostar}(x)\bar U^{\twostar-2}(y)b(y)\eta(y)}{|x-y|^\mu}dxdy
+O\left(\left(\lambda^{-2(N-2s)}+\lambda^{-\frac{N+2s}{2}}\log\lambda\right)\|\eta\|\right).
\end{split}
\end{equation*}
Here the remainder contains the terms linear in $f_{z,\lambda}$ and the terms at least quadratic in $b+f_{z,\lambda}$. For the terms linear in $f_{z,\lambda}$, \eqref{easybarU}, the HLS and H\"older inequalities and Lemma~\ref{lem:barU-Lq-estimates} give
\begin{equation*}
\begin{split}
&\twostar\left|\int_\Omega\int_\Omega\frac{\bar U^{\twostar-1}(x)f_{z,\lambda}(x)\bar U^{\twostar-1}(y)\eta(y)}{|x-y|^\mu}dxdy\right|+(\twostar-1)\left|\int_\Omega\int_\Omega\frac{\bar U^{\twostar}(x)\bar U^{\twostar-2}(y)f_{z,\lambda}(y)\eta(y)}{|x-y|^\mu}dxdy\right|\\
\lesssim&\|f_{z,\lambda}\|_{L^\infty(\Omega)}
\left(\|\bar U\|_{L^{\frac{2N(\twostar-1)}{2N-\mu}}(\Omega)}^{\twostar-1}
+\|\bar U^{2_s^*-2}\|_{L^{\frac{2N}{N+2s}}(\Omega)}\right)\|\eta\|\lesssim\lambda^{-(N-2s)-2}\|\eta\|
=o\left(\lambda^{-2(N-2s)}\|\eta\|\right),
\end{split}
\end{equation*}
where we used $\mu<N<4s$, $N<6s$, $\|f_{z,\lambda}\|_{L^\infty(\Omega)}\lesssim\lambda^{-\frac{N-2s}{2}-2}$ and $N-2s<2$.

We next estimate the terms at least quadratic in $b+f_{z,\lambda}$. Since $\bar U\gtrsim\lambda^{-\frac{N-2s}{2}}$ on $\Omega$, we have $|b+f_{z,\lambda}|\leq C_0\bar U$ for some $C_0>0$. Taylor's formula gives, for $A=\bar U$ and $B=b+f_{z,\lambda}$,
\begin{equation*}
\left||A-B|^{\twostar}-A^{\twostar}+\twostar A^{\twostar-1}B\right|\lesssim A^{\twostar-2}B^2,
\end{equation*}
\begin{equation*}
\left||A-B|^{\twostar-2}(A-B)-A^{\twostar-1}+(\twostar-1)A^{\twostar-2}B\right|\lesssim A^{\twostar-3}B^2.
\end{equation*}
Both estimates hold whenever $A>0$ and $|B|\leq C_0A$. For $|B|\leq A/2$ they follow from Taylor's formula, also when $2<\twostar<3$. For $A/2<|B|\leq C_0A$, the left-hand sides are bounded by a constant times $A^{\twostar}$ and $A^{\twostar-1}$, respectively, and these are comparable to $A^{\twostar-2}B^2$ and $A^{\twostar-3}B^2$, since $A$ and $|B|$ are comparable. Using $|B|\lesssim\lambda^{-\frac{N-2s}{2}}$ and $|B|\leq C_0\bar U$, the terms at least quadratic in $B$ are bounded by
\begin{equation*}
\lambda^{-(N-2s)}\int_\Omega\int_\Omega\frac{\bar U^{\twostar}(x)\bar U^{\twostar-3}(y)|\eta(y)|+\bar U^{\twostar-2}(x)\bar U^{\twostar-1}(y)|\eta(y)|+\bar U^{\twostar-1}(x)\bar U^{\twostar-2}(y)|\eta(y)|}{|x-y|^\mu}dxdy .
\end{equation*}
By \eqref{easybarU} and H\"older's inequality, the contribution of the first integral is bounded by $\lambda^{-(N-2s)}\|\bar U^{2_s^*-3}\|_{L^{\frac{2N}{N+2s}}(\Omega)}\|\eta\|$. Since $(N-2s)(2_s^*-3)=6s-N$, Lemma~\ref{lem:barU-Lq-estimates} gives
\begin{equation*}
\lambda^{-(N-2s)}\left\|\bar U^{2_s^*-3}\right\|_{L^{\frac{2N}{N+2s}}(\Omega)}\lesssim
\begin{cases}
\lambda^{-2(N-2s)},&3N<10s,\\
\lambda^{-2(N-2s)}(\log\lambda)^{\frac{N+2s}{2N}},&3N=10s,\\
\lambda^{-\frac{N+2s}{2}},&3N>10s.
\end{cases}
\end{equation*}
For the second integral, take
\begin{equation*}
r=\frac{N^2}{N^2-2\mu(N-2s)},\qquad
t=\frac{N^2}{N^2-\mu(4s-N)}.
\end{equation*}
For every $0<\mu<N$, these exponents satisfy $r,t>1$, $1/r+1/t+\mu/N=2$ and $t<2_s^*$. By the HLS and H\"older inequalities and the Sobolev embedding, we obtain
\begin{equation*}
\begin{split}
&\lambda^{-(N-2s)}\int_\Omega\int_\Omega\frac{\bar U^{\twostar-2}(x)\bar U^{\twostar-1}(y)|\eta(y)|}{|x-y|^\mu}dxdy\\
\lesssim &\lambda^{-(N-2s)}\|\bar U^{\twostar-2}\|_{L^r(\Omega)}\|\bar U^{\twostar-1}\eta\|_{L^t(\Omega)}\\
\lesssim & \lambda^{-(N-2s)}\|\bar U^{\twostar-2}\|_{L^r(\Omega)}\|\bar U^{\twostar-1}\|_{L^{\frac{2Nt}{2N-(N-2s)t}}(\Omega)}\|\eta\|.
\end{split}
\end{equation*}
Since $\frac{4s-\mu}{N}-\frac1r=\frac{(4s-N)(N-\mu)}{N^2}>0$, we have $(\twostar-2)r>N/(N-2s)$. Applying Lemma~\ref{lem:barU-Lq-estimates} to the two remaining norms, we obtain
\begin{equation*}
\begin{split}
&\lambda^{-(N-2s)}\|\bar U^{\twostar-2}\|_{L^r(\Omega)}
\|\bar U^{\twostar-1}\|_{L^{\frac{2Nt}{2N-(N-2s)t}}(\Omega)}\|\eta\|\\
\lesssim&\left(\lambda^{-2(N-2s)}+\lambda^{-\frac{3(N-2s)}{2}-\frac Nr}\log\lambda\right)\|\eta\|
\lesssim\left(\lambda^{-2(N-2s)}+\lambda^{-\frac{N+2s}{2}}\log\lambda\right)\|\eta\|.
\end{split}
\end{equation*}
For the third integral, take instead
\begin{equation*}
r=\frac{2N^2}{2N^2-\mu(6s-N)},\qquad
t=\frac{2N^2}{2N^2-3\mu(N-2s)}.
\end{equation*}
Again $r,t>1$, $1/r+1/t+\mu/N=2$ and $t<2_s^*$ for every $0<\mu<N$. Applying the HLS inequality first, and then H\"older's inequality and the Sobolev embedding, gives
\begin{equation*}
\begin{split}
&\lambda^{-(N-2s)}\int_\Omega\int_\Omega\frac{\bar U^{\twostar-1}(x)\bar U^{\twostar-2}(y)|\eta(y)|}{|x-y|^\mu}dxdy\\
\lesssim&\lambda^{-(N-2s)}\|\bar U^{\twostar-1}\|_{L^r(\Omega)}\|\bar U^{\twostar-2}\eta\|_{L^t(\Omega)}\\
\lesssim&\lambda^{-(N-2s)}\|\bar U^{\twostar-1}\|_{L^r(\Omega)}
\|\bar U^{\twostar-2}\|_{L^{\frac{2Nt}{2N-(N-2s)t}}(\Omega)}\|\eta\|.
\end{split}
\end{equation*}
Since $\frac{4s-\mu}{N}-\left(\frac1t-\frac{N-2s}{2N}\right)=\frac{(6s-N)(N-\mu)}{2N^2}>0$, we have $(\twostar-2)\frac{2Nt}{2N-(N-2s)t}>\frac{N}{N-2s}$. Lemma~\ref{lem:barU-Lq-estimates} therefore gives
\begin{equation*}
\begin{split}
&\lambda^{-(N-2s)}\|\bar U^{\twostar-1}\|_{L^r(\Omega)}
\|\bar U^{\twostar-2}\|_{L^{\frac{2Nt}{2N-(N-2s)t}}(\Omega)}\|\eta\|\\
\lesssim&\left(\lambda^{-2(N-2s)}+\lambda^{-(N-2s)-\frac Nt}\log\lambda\right)\|\eta\|
\lesssim\left(\lambda^{-2(N-2s)}+\lambda^{-\frac{N+2s}{2}}\log\lambda\right)\|\eta\|.
\end{split}
\end{equation*}
This establishes the claimed expansion of $\mathcal J(\eta)$.

For the first term, \eqref{easybarU} gives
\begin{equation*}
\int_\Omega\int_\Omega\frac{\bar U^{\twostar}(x)\bar U^{\twostar-1}(y)\eta(y)}{|x-y|^\mu}dxdy
=c_0\int_\Omega\bar U^{2_s^*-1}\eta\,dy-\int_\Omega W(y)\bar U^{\twostar-1}(y)\eta(y)\,dy,
\qquad
W(y):=\int_{\R^N\setminus\Omega}\frac{\bar U^{\twostar}(x)}{|x-y|^\mu}dx .
\end{equation*}
Since $\bar U^{\twostar}(x)\lesssim\lambda^{-\frac{2N-\mu}{2}}|x-z|^{-(2N-\mu)}$ on $\R^N\setminus\Omega$ and $\mu<N$, we have $\|W\|_{L^\infty(\Omega)}\lesssim\lambda^{-\frac{2N-\mu}{2}}$, and therefore, by H\"older's inequality and Lemma~\ref{lem:barU-Lq-estimates},
\begin{equation*}
\left|\int_\Omega W\bar U^{\twostar-1}\eta\right|
\lesssim\lambda^{-\frac{2N-\mu}{2}}\left\|\bar U^{\twostar-1}\right\|_{L^{\frac{2N}{N+2s}}(\Omega)}\|\eta\|
\lesssim\left(\lambda^{-N}+\lambda^{-\frac{3N+2s-2\mu}{2}}\log\lambda\right)\|\eta\|
=O\left(\lambda^{-\gamma_*}\log\lambda\,\|\eta\|\right).
\end{equation*}

\emph{Step 4: the first order terms in $b$.} Write $H_a(z,x)=\phi_a(z)+R_z(x)$. By Lemma~\ref{lemma Ha expansion}, $|R_z(x)|\lesssim|x-z|^\sigma$ for $x\in\Omega$. For the contribution of $\phi_a(z)$, the HLS inequality with $t=r=\frac{2N}{2N-\mu}$ and Lemma~\ref{lem:barU-Lq-estimates} give
\begin{equation*}
|\phi_a(z)|\lambda^{-\frac{N-2s}{2}}\int_\Omega\int_\Omega\frac{\bar U^{\twostar-1}(x)\bar U^{\twostar-1}(y)|\eta(y)|}{|x-y|^\mu}dxdy
\lesssim|\phi_a(z)|\lambda^{-\frac{N-2s}{2}}\left\|\bar U\right\|_{L^{\frac{2N(\twostar-1)}{2N-\mu}}(\Omega)}^{\twostar-1}\|\eta\|
\lesssim|\phi_a(z)|\lambda^{-(N-2s)}\|\eta\|,
\end{equation*}
because $\frac{2N(\twostar-1)}{2N-\mu}>\frac{N}{N-2s}$ (equivalently, $\mu<4s$). Similarly, \eqref{easybarU} and H\"older's inequality give
\begin{equation*}
|\phi_a(z)|\lambda^{-\frac{N-2s}{2}}\int_\Omega\int_\Omega\frac{\bar U^{\twostar}(x)\bar U^{\twostar-2}(y)|\eta(y)|}{|x-y|^\mu}dxdy
\lesssim|\phi_a(z)|\lambda^{-\frac{N-2s}{2}}\left\|\bar U^{2_s^*-2}\right\|_{L^{\frac{2N}{N+2s}}(\Omega)}\|\eta\|\lesssim|\phi_a(z)|\lambda^{-(N-2s)}\|\eta\|,
\end{equation*}
since $\frac{4s}{N-2s}\cdot\frac{2N}{N+2s}>\frac{N}{N-2s}$ (equivalently, $N<6s$). Hence the contribution of $\phi_a(z)$ to the two first-order terms in Step~3 is bounded by $|\phi_a(z)|\lambda^{-(N-2s)}\|\eta\|$.
For the contribution of $R_z(x)$, take
\begin{equation*}
r=\frac{N^2}{N^2-2s\mu},\qquad
t=\frac{N^2}{N^2-(N-2s)\mu}.
\end{equation*}
These exponents satisfy $r,t>1$, $1/r+1/t+\mu/N=2$ and $t<2_s^*$. By the HLS and H\"older inequalities and the Sobolev embedding, we obtain
\begin{equation*}
\begin{split}
&\lambda^{-\frac{N-2s}{2}}\int_\Omega\int_\Omega
\frac{\bar U^{\twostar-1}(x)|R_z(x)|\bar U^{\twostar-1}(y)|\eta(y)|}{|x-y|^\mu}dxdy\\
\lesssim&\lambda^{-\frac{N-2s}{2}}
\left\||\cdot-z|^\sigma\bar U^{\twostar-1}\right\|_{L^r(\Omega)}
\|\bar U^{\twostar-1}\eta\|_{L^t(\Omega)}\\
\lesssim&\lambda^{-\frac{N-2s}{2}}
\left\||\cdot-z|^\sigma\bar U^{\twostar-1}\right\|_{L^r(\Omega)}
\|\bar U^{\twostar-1}\|_{L^{\frac{2Nt}{2N-(N-2s)t}}(\Omega)}\|\eta\|.
\end{split}
\end{equation*}
To estimate these two norms, we first check the integrability conditions after rescaling. Since $\sigma\leq4s-N$, we have
\begin{equation*}
\frac{N+2s-\mu-\sigma}{N}-\frac1r
=\frac{N(2s-\sigma)-\mu(N-2s)}{N^2}
\geq\frac{(N-2s)(N-\mu)}{N^2}>0,
\end{equation*}
\begin{equation*}
\frac{N+2s-\mu}{N}-\left(\frac1t-\frac{N-2s}{2N}\right)
=\frac{N+2s}{2N}-\frac{2s\mu}{N^2}
>\frac{N-2s}{2N}>0.
\end{equation*}
The first inequality gives $(N+2s-\mu-\sigma)r>N$, which ensures that the weighted integral after rescaling converges on $\R^N$. Thus, making the change of variables $X=\lambda(x-z)$, we obtain
\begin{equation*}
\left\||\cdot-z|^\sigma\bar U^{\twostar-1}\right\|_{L^r(\Omega)}
\lesssim\lambda^{\frac{N+2s-\mu}{2}-\sigma-\frac Nr}.
\end{equation*}
The second inequality gives $(\twostar-1)\frac{2Nt}{2N-(N-2s)t}>\frac{N}{N-2s}$, so Lemma~\ref{lem:barU-Lq-estimates} yields
\begin{equation*}
\|\bar U^{\twostar-1}\|_{L^{\frac{2Nt}{2N-(N-2s)t}}(\Omega)}
\lesssim\lambda^{\frac{N+2s-\mu}{2}-\frac Nt+\frac{N-2s}{2}}.
\end{equation*}
Since $N/r+N/t=2N-\mu$, these estimates bound the contribution of $R_z(x)$ by $C\lambda^{-(N-2s)-\sigma}\|\eta\|$. For the contribution of $R_z(y)$, \eqref{easybarU} and H\"older's inequality give
\begin{equation*}
\begin{split}
&\lambda^{-\frac{N-2s}{2}}\int_\Omega\int_\Omega
\frac{\bar U^{\twostar}(x)\bar U^{\twostar-2}(y)|R_z(y)||\eta(y)|}{|x-y|^\mu}dxdy\\
\lesssim&\lambda^{-\frac{N-2s}{2}}
\left\||\cdot-z|^\sigma\bar U^{2_s^*-2}\right\|_{L^{\frac{2N}{N+2s}}(\Omega)}\|\eta\|\\
\lesssim&\lambda^{-(N-2s)-\sigma}\|\eta\|.
\end{split}
\end{equation*}
The last inequality follows by the same change of variables, since $(4s-\sigma)\frac{2N}{N+2s}>N$. Hence the contribution of $R_z$ is $O(\lambda^{-(N-2s)-\sigma}\|\eta\|)$.

If $a$ is only continuous, $R_z(x)=o(1)$ as $x\to z$ uniformly (Remark~\ref{rem:continuous-a}). Given $\delta>0$, choose $r_\delta>0$ such that $|R_z(x)|\leq\delta$ whenever $|x-z|<r_\delta$, uniformly for $z$ in a fixed compact subset of $\Omega$. The uniform boundedness of $R_z$ then gives $|R_z(x)|\leq\delta+Cr_\delta^{-\sigma}|x-z|^\sigma$ on $\Omega$. The unweighted and weighted bubble estimates above bound the two contributions by
\begin{equation*}
C\left(\delta\lambda^{-(N-2s)}+r_\delta^{-\sigma}\lambda^{-(N-2s)-\sigma}\right)\|\eta\|.
\end{equation*}
Letting $\lambda\to\infty$ first and then $\delta\to0$ gives $o(\lambda^{-(N-2s)}\|\eta\|)$.

\emph{Step 5: conclusion.} Assume first that $a\in C(\overline\Omega)\cap C^1(\Omega)$. By \eqref{eq:numerator-expansion} and \eqref{eq:denominator-expansion}, we have
\begin{equation}
\label{eq:ratio-N-D}
\frac{\mathcal N_\varepsilon(\psi)}{\mathcal D(\psi)}=1+O\left(|\phi_a(z)|\lambda^{-(N-2s)}+\lambda^{-2s}+\varepsilon\lambda^{-(N-2s)}\right).
\end{equation}
Combining Steps 1--4, and using that $|\int_\Omega\bar U^{2_s^*-1}\eta|\lesssim\|\eta\|$, the terms $c_0\int_\Omega\bar U^{2_s^*-1}\eta$ cancel up to the factor in \eqref{eq:ratio-N-D}, and we obtain \eqref{eq:first-derivative}.

If $a\in C(\overline\Omega)$ only, we use the identity $\mathcal J(\psi)=\mathcal D(\psi)$. Taking $\eta=\psi$ in \eqref{eq:equation-psi} and in the estimates of Steps~3--4, with the continuity argument in Step~4, yields $\mathcal N_0(\psi)-\mathcal D(\psi)=O(|\phi_a(z)|\lambda^{-(N-2s)})+o(\lambda^{-(N-2s)})$, since $N<4s$. Adding $\varepsilon\int_\Omega V\psi^2\,dx=O(\varepsilon\lambda^{-(N-2s)})$ and dividing by $\mathcal D(\psi)$, we obtain
\begin{equation}\label{eq:ratio-N-D-only-C}
\frac{\mathcal N_\varepsilon(\psi)}{\mathcal D(\psi)}
=1+O\left(|\phi_a(z)|\lambda^{-(N-2s)}\right)
+o\left(\lambda^{-(N-2s)}\right)+O\left(\varepsilon\lambda^{-(N-2s)}\right),
\end{equation}
uniformly for $z$ in compact subsets of $\Omega$ and $\varepsilon\in[0,1]$. Using this estimate in the formula of Step~2 and combining the estimates above, we obtain the conclusion with $\omega_0(\lambda)=o(\lambda^{-(N-2s)})$.
\end{proof}

\begin{lem}[Second order expansion]
\label{lem:second-order}
Let $a\in C(\overline\Omega)$ be such that $(-\Delta)^s+a$ is coercive, and let $V\in L^\infty(\Omega)$. As $\lambda\to\infty$, $\varepsilon\to0^+$ and $\|\ds\eta\|_2\to0$, uniformly for $z$ in compact subsets of $\Omega$,
\begin{equation*}
\mathcal Q_\varepsilon[\psi_{z,\lambda}+\eta]
=\mathcal Q_\varepsilon[\psi_{z,\lambda}]+D\mathcal Q_\varepsilon[\psi_{z,\lambda}]\eta
+\SH^{-\theta}\mathcal E_{z,\lambda}(\eta)
+O\left((\bar U_{z,\lambda},\eta)^2\right)+o\left(\|\ds\eta\|_2^2\right),
\end{equation*}
where $(\bar U_{z,\lambda},\eta)=\int_{\R^N}\ds\bar U_{z,\lambda}\ds\eta\,dx=(P\bar U_{z,\lambda},\eta)$.
\end{lem}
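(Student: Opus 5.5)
We expand numerator and denominator of $\mathcal Q_\varepsilon$ at $\psi:=\psi_{z,\lambda}$ separately to second order in $\eta$, and then expand the quotient. Write $\|\cdot\|:=\|\ds\cdot\|_2$ and recall $\mathcal N_\varepsilon(\psi)=D_0+o(1)$, $\mathcal D(\psi)=D_0+o(1)$ from Theorem~\ref{lem:de-2.1} (or Remark~\ref{rem:continuous-a}).

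\emph{Numerator.} It is exactly quadratic:
\[
\mathcal N_\varepsilon(\psi+\eta)=\mathcal N_\varepsilon(\psi)+2\Big[(\psi,\eta)+\int_\Omega(a+\varepsilon V)\psi\eta\Big]+\|\eta\|^2+\int_\Omega a\eta^2+O(\varepsilon\|\eta\|^2).
\]
The last term is $o(\|\eta\|^2)$.

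\emph{Denominator.} Taylor expand $|\psi+\eta|^{\twostar}$ pointwise to second order, with a remainder $O(|\psi|^{\twostar-3}|\eta|^3+|\eta|^{\twostar})$; this form is valid since $\twostar>2$, handling the regime $|\eta|\gtrsim|\psi|$ as in Step~3 of Lemma~\ref{lem:first-derivative}. Inserting this into the double integral gives
\[
\mathcal D(\psi+\eta)=\mathcal D(\psi)+2\twostar\mathcal J(\eta)+\twostar(\twostar-1)\iint\frac{|\psi|^{\twostar}(x)|\psi|^{\twostar-2}(y)\eta^2(y)}{|x-y|^\mu}+(\twostar)^2\iint\frac{(|\psi|^{\twostar-2}\psi\eta)(x)(|\psi|^{\twostar-2}\psi\eta)(y)}{|x-y|^\mu}+o(\|\eta\|^2).
\]
Here the cubic and higher remainders are $o(\|\eta\|^2)$ by HLS, H\"older and Sobolev. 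Next, in the quadratic terms we replace $\psi$ by $\bar U_{z,\lambda}$. Since $\psi-\bar U=-b-f_{z,\lambda}$ with $\|b+f\|_{L^\infty(\Omega)}\lesssim\lambda^{-\frac{N-2s}{2}}$ and $|b+f|\lesssim\bar U$, the difference is bounded (by the same HLS/H\"older exponent choices as in Lemma~\ref{lem:first-derivative}) by $o(1)\|\eta\|^2$ as $\lambda\to\infty$. The quadratic part thus becomes $\twostar\Psi_{z,\lambda}(\eta)+o(\|\eta\|^2)$.

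\emph{Quotient.} Set $N_1:=2[(\psi,\eta)+\int(a+\varepsilon V)\psi\eta]$ and $D_1:=2\twostar\mathcal J(\eta)$, so that $D\mathcal Q_\varepsilon[\psi]\eta=\mathcal D(\psi)^{-1/\twostar}\big(N_1-\frac{\mathcal N_\varepsilon(\psi)}{\twostar\mathcal D(\psi)}D_1\big)$. Expanding $(\mathcal D(\psi)+\delta)^{-1/\twostar}$ to second order, the quadratic terms are:
\begin{itemize}
\item $\mathcal D^{-1/\twostar}(\|\eta\|^2+\int a\eta^2)$;
\item $-\frac{\mathcal N}{\twostar}\mathcal D^{-1/\twostar-1}\cdot\twostar\Psi_{z,\lambda}(\eta)$;
\item cross and square terms proportional to $N_1D_1$ and $D_1^2$.
\end{itemize}
Using $\mathcal N/\mathcal D=1+o(1)$ and $\mathcal D^{-1/\twostar}=D_0^{-1/\twostar}+o(1)=\SH^{-\theta}+o(1)$ (since $D_0^{-1/\twostar}=\SH^{-(N-2s)/(N+2s-\mu)}$), the first two terms give $\SH^{-\theta}\mathcal E_{z,\lambda}(\eta)+o(\|\eta\|^2)$.

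\emph{Main obstacle: the cross terms $N_1D_1$ and $D_1^2$.} These are not small relative to $\|\eta\|^2$ in general, so they must be identified with $(\bar U,\eta)^2$. By Step~3 of Lemma~\ref{lem:first-derivative} and \eqref{easybarU},
\[
\mathcal J(\eta)=c_0\int_\Omega\bar U^{2_s^*-1}\eta+o(\|\eta\|)=(\bar U,\eta)+o(\|\eta\|).
\]
Similarly, $N_1=2(P\bar U,\eta)+O(\lambda^{-\frac{N-2s}{2}}\|\eta\|)$, because $\|\psi-P\bar U\|$, $\|\psi\|_{L^2}$ and the potential contributions are $o(1)$. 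Hence $N_1,D_1=O(|(\bar U,\eta)|)+o(\|\eta\|)$, so $N_1D_1$ and $D_1^2$ are $O((\bar U,\eta)^2)+o(\|\eta\|^2)$. Here we use $(P\bar U,\eta)=(\bar U,\eta)$ for $\eta\in\ths$. Together with the higher-order Taylor remainder $O(|\delta|^3)=o(\|\eta\|^2)$, this gives the statement. The delicate point is to make all $o(\cdot)$ estimates uniform in $z$ in compact sets, which follows from the uniform bounds of Lemma~\ref{lem:hz}.
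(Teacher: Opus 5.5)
Your proposal is correct and follows essentially the same route as the paper. The paper applies Taylor's formula to $\mathcal Q_\varepsilon=\mathcal N_\varepsilon\mathcal D^{-1/\twostar}$, using the $C^{2,\beta}$ regularity of $\mathcal D$, and writes out $\frac12D^2\mathcal Q_\varepsilon[\psi](\eta,\eta)$; this is exactly what you get by expanding numerator and denominator by hand. As you do, it then replaces $\psi$ by $\bar U_{z,\lambda}$ in the quadratic forms at cost $o(\|\ds\eta\|_2^2)$, and it absorbs the cross terms using $D\mathcal N_\varepsilon(\psi)\eta=2(\bar U_{z,\lambda},\eta)+o(\|\ds\eta\|_2)$ and $D\mathcal D(\psi)\eta=2\twostar(\bar U_{z,\lambda},\eta)+o(\|\ds\eta\|_2)$.
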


\begin{proof}
Since $\twostar>2$, the map $v\mapsto\mathcal D(v)$ is of class $C^{2,\beta}$ on $L^{2_s^*}(\Omega)$ with $\beta:=\min\{1,\twostar-2\}$, and $\mathcal D(\psi)=D_0+o(1)$ is bounded away from zero. Hence Taylor's formula gives $\mathcal Q_\varepsilon[\psi+\eta]=\mathcal Q_\varepsilon[\psi]+D\mathcal Q_\varepsilon[\psi]\eta+\frac12D^2\mathcal Q_\varepsilon[\psi](\eta,\eta)+O(\|\eta\|^{2+\beta})$. Writing $\mathcal Q_\varepsilon=\mathcal N_\varepsilon\mathcal D^{-1/\twostar}$, we have
\begin{equation*}
\begin{split}
\frac12D^2\mathcal Q_\varepsilon[\psi](\eta,\eta)
=&\mathcal D(\psi)^{-\frac{1}{\twostar}}\left[\frac12D^2\mathcal N_\varepsilon(\psi)(\eta,\eta)-\frac{\mathcal N_\varepsilon(\psi)}{2\twostar\mathcal D(\psi)}D^2\mathcal D(\psi)(\eta,\eta)\right]\\
&-\frac{1}{\twostar}\mathcal D(\psi)^{-\frac{1}{\twostar}-1}D\mathcal N_\varepsilon(\psi)\eta\,D\mathcal D(\psi)\eta
+\frac{\twostar+1}{2(\twostar)^2}\mathcal N_\varepsilon(\psi)\mathcal D(\psi)^{-\frac{1}{\twostar}-2}\left(D\mathcal D(\psi)\eta\right)^2 .
\end{split}
\end{equation*}
By Steps 1--4 of the proof of Lemma~\ref{lem:first-derivative} (which use only the continuity of $H_a$ when the error terms are not quantified), $D\mathcal N_\varepsilon(\psi)\eta=2(\bar U,\eta)+o(\|\eta\|)$ and $D\mathcal D(\psi)\eta=2\twostar(\bar U,\eta)+o(\|\eta\|)$, so the last two terms are $O((\bar U,\eta)^2)+o(\|\eta\|^2)$. Next, $\frac12D^2\mathcal N_\varepsilon(\psi)(\eta,\eta)=\|\eta\|^2+\int_\Omega a\eta^2+\varepsilon\int_\Omega V\eta^2$, and $\varepsilon\int_\Omega V\eta^2=o(\|\eta\|^2)$ since $\varepsilon\to0$. Moreover,
\begin{small}\begin{equation*}
\frac{1}{2\twostar}D^2\mathcal D(\psi)(\eta,\eta)=(\twostar-1)\int_\Omega\int_\Omega\frac{|\psi(x)|^{\twostar}|\psi(y)|^{\twostar-2}\eta(y)^2}{|x-y|^\mu}dxdy+\twostar\int_\Omega\int_\Omega\frac{(|\psi|^{\twostar-2}\psi\eta)(x)(|\psi|^{\twostar-2}\psi\eta)(y)}{|x-y|^\mu}dxdy,
\end{equation*}\end{small}
and, since $\|\psi-\bar U\|_{L^{2_s^*}(\Omega)}\lesssim\lambda^{-\frac{N-2s}{2}}$, the HLS and H\"older inequalities allow us to replace $\psi$ by $\bar U$ at the cost of $o(\|\eta\|^2)$. Finally, $\mathcal N_\varepsilon(\psi)/\mathcal D(\psi)=1+o(1)$ by \eqref{eq:ratio-N-D-only-C}, and $\mathcal D(\psi)^{-1/\twostar}=D_0^{-1/\twostar}+o(1)=\SH^{-\theta}+o(1)$. This proves the lemma.
\end{proof}

\subsection{The tangential part of $h_z$}

Let $e_0:=P\bar U_{z,\lambda}$, $e_{N+1}:=\lambda\partial_\lambda P\bar U_{z,\lambda}$ and $e_i:=\lambda^{-1}\partial_{z_i}P\bar U_{z,\lambda}$, $i=1,\ldots,N$, so that $T_{z,\lambda}=\operatorname{span}\{e_0,\ldots,e_{N+1}\}$.

\begin{lem}
\label{lem:tangential-h}
Let $a\in C(\overline\Omega)$ be such that $(-\Delta)^s+a$ is coercive. As $\lambda\to\infty$ with $z$ in a compact subset of $\Omega$,
\begin{equation*}
-\kappa_{N,\mu,s}\lambda^{-\frac{N-2s}{2}}\Pi_{z,\lambda}h_z=\sum_{j=0}^{N+1}b_je_j,
\qquad
|b_0|+|b_{N+1}|=O\left(\lambda^{-(N-2s)}\right),
\qquad
\sum_{i=1}^N|b_i|=o\left(\lambda^{-(N-2s)}\right).
\end{equation*}
If in addition $a\in C^1(\Omega)$, then $\sum_{i=1}^N|b_i|=O(\lambda^{-(N-2s)-\sigma})$.
\end{lem}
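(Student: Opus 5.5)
The plan is to compute the coefficients $b_j$ from the Gram system. Since $-\kappa_{N,\mu,s}\lambda^{-\frac{N-2s}{2}}\Pi_{z,\lambda}h_z=\sum_j b_je_j$, the vector $(b_j)$ solves
\begin{equation*}
\sum_{j=0}^{N+1}(e_i,e_j)\,b_j=-\kappa_{N,\mu,s}\lambda^{-\frac{N-2s}{2}}(e_i,h_z),\qquad i=0,\ldots,N+1 .
\end{equation*}
The Gram matrix $M=((e_i,e_j))$ converges, as $\lambda d(z)\to\infty$, to the Gram matrix of $\bar U_{0,1}$, $\lambda\partial_\lambda\bar U_{z,\lambda}|_{\lambda=1}$ and $\partial_{z_i}\bar U_{0,1}$ in $\dot H^s(\R^N)$. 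This limit is diagonal, by parity in each $z_i$ and because $(\bar U,\lambda\partial_\lambda\bar U)=\frac12\lambda\partial_\lambda\|\ds\bar U\|_2^2=0$, and it is positive definite. The correction is $O((\lambda d(z))^{-(N-2s)})$ by the projection estimates of the Appendix. Hence $M^{-1}$ is uniformly bounded and is diagonal up to $O(\lambda^{-(N-2s)})$. It therefore suffices to bound the right-hand sides $(e_i,h_z)$, keeping track of the off-diagonal mixing.

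Next I would compute the right-hand sides. Since $h_z\in\ths$, the definition of the projection gives $(e_i,h_z)=(\partial\bar U,h_z)=c_0\int_\Omega\partial(\bar U^{2_s^*-1})h_z\,dx$, where $\partial$ is the relevant operator ($1$, $\lambda\partial_\lambda$ or $\lambda^{-1}\partial_{z_i}$). For $i=0$ and $i=N+1$, Lemma~\ref{lem:hz} gives $\|h_z\|_{L^\infty}\lesssim1$, and $|\partial\bar U^{2_s^*-1}|\lesssim\bar U^{2_s^*-1}$. Lemma~\ref{lem:barU-Lq-estimates} gives $\int_\Omega\bar U^{2_s^*-1}\lesssim\lambda^{-\frac{N-2s}{2}}$. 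So the right-hand side is $O(\lambda^{-(N-2s)})$, which yields $|b_0|+|b_{N+1}|=O(\lambda^{-(N-2s)})$.

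For $1\le i\le N$, I would write $h_z(x)=h_z(z)+(h_z(x)-h_z(z))$. The constant part gives $h_z(z)\lambda^{-1}\int_\Omega\partial_{z_i}\bar U^{2_s^*-1}$. By oddness of $\partial_{z_i}\bar U^{2_s^*-1}$ about $z$, the integral over $B_{d(z)}(z)$ vanishes. The exterior part is $O(\lambda^{-\frac{N+2s}{2}})$, which after the prefactor is $o(\lambda^{-(N-2s)})$ since $N<4s$. For the oscillation part, $|\lambda^{-1}\partial_{z_i}\bar U^{2_s^*-1}|\lesssim\bar U^{2_s^*-1}$. Under $a\in C^1(\Omega)$, Lemma~\ref{lem:hz} gives $|h_z(x)-h_z(z)|\lesssim|x-z|^\sigma$, and rescaling $X=\lambda(x-z)$ gives $\int\bar U^{2_s^*-1}|x-z|^\sigma\lesssim\lambda^{-\frac{N-2s}{2}-\sigma}$. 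This integral converges because $N+2s-\sigma>N$ at infinity, since $\sigma\le1<2s$. Thus this term is $O(\lambda^{-(N-2s)-\sigma})$ after the prefactor. For merely continuous $a$, I would use the $\delta$/$r_\delta$ splitting from Step 4 of Lemma~\ref{lem:first-derivative} to get $o(\lambda^{-(N-2s)})$.

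The main obstacle is the coupling in $M^{-1}$. The $b_i$ with $1\le i\le N$ receive contributions from the $O(\lambda^{-(N-2s)})$ right-hand sides for $i=0,N+1$ through the off-diagonal entries. Naively these entries are $O(\lambda^{-(N-2s)})$, giving a product $O(\lambda^{-2(N-2s)})$. This is acceptable when $N-2s\ge\sigma$, but not in general. To handle it, I would show that the mixed entries $(e_0,e_i)$ and $(e_{N+1},e_i)$ for $1\le i\le N$ are in fact $O(\lambda^{-(N-2s)-1})$, so that they are $o(\lambda^{-(N-2s)})$ times $\lambda^{-\sigma}$ or better. I would do this by expressing them via $\int_\Omega\bar U^{2_s^*-1}\partial_{z_i}(\bar U-P\bar U)$. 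Here $\bar U-P\bar U\approx\kappa\lambda^{-\frac{N-2s}{2}}H_0(z,\cdot)$ by Lemma~\ref{lem:projection-barU}, with smooth $H_0$. Its constant part is killed by the same oddness argument, and the gradient part carries an extra factor $\lambda^{-1}$. Then Cramer's rule or a Neumann series for $M^{-1}$ gives the stated bounds.
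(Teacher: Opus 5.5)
Your proposal is correct and follows the paper's proof essentially step by step: the Gram system with uniformly bounded $M^{-1}$, the splitting $h_z=h_z(z)+(h_z-h_z(z))$ with cancellation of the constant part, and the key bound $O(\lambda^{-(N-2s)-1})$ on the mixed entries $(e_0,e_i)$, $(e_{N+1},e_i)$. The paper does not prove that bound but quotes it from \cite[Lemma~B.2]{DeNittiKonig2023}, and then uses block inversion, which is equivalent to your Neumann-series argument.

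Two small fixes are needed. First, for $N=1$ one has $2s<1$, so the convergence of $\int_{\R^N}|X|^\sigma(1+|X|^2)^{-\frac{N+2s}{2}}dX$ must be justified by $\sigma\le 4s-N<2s$, not by $\sigma\le1<2s$. Second, if you prove the mixed-entry bound yourself, write it as $c_0\lambda^{-1}\int_\Omega\partial_{z_i}(\bar U^{2_s^*-1})\,P\bar U\,dx$. Then the derivative falls on the bubble, and the constant and linear parts of $H_0(z,\cdot)$ behave as you describe. This way only the $L^\infty$ bound on $f_{z,\lambda}$ from Lemma~\ref{lem:projection-barU} is needed, rather than an estimate for $\partial_{z_i}f_{z,\lambda}$, which the appendix does not provide.
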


\begin{proof}
The Gram matrix $M:=((e_j,e_k))_{j,k}$ converges, as $\lambda\to\infty$, to the corresponding Gram matrix of $\bar U_{0,1}$, $\lambda\partial_\lambda\bar U_{0,\lambda}|_{\lambda=1}$, $\partial_{z_i}\bar U_{0,1}$ in $\dot H^s(\R^N)$, which is diagonal and positive definite by translation and dilation invariance and parity (see \cite[Lemma~B.2]{DeNittiKonig2023}). Hence $M^{-1}$ is uniformly bounded and $(b_j)=-\kappa_{N,\mu,s}\lambda^{-\frac{N-2s}{2}}M^{-1}((h_z,e_k))_k$. Moreover, \cite[Lemma~B.2]{DeNittiKonig2023}  gives
\begin{equation*}
|M_{i0}|+|M_{i,N+1}|\lesssim\lambda^{-(N-2s)-1},
\qquad i=1,\ldots,N.
\end{equation*}
Grouping the indices $\{0,N+1\}$ and $\{1,\ldots,N\}$, both diagonal blocks converge to positive definite matrices. The block inversion formula therefore yields
\begin{equation*}
|(M^{-1})_{i0}|+|(M^{-1})_{i,N+1}|
\lesssim\lambda^{-(N-2s)-1},
\qquad i=1,\ldots,N.
\end{equation*}

Since $h_z\in\ths$, the definition of $P$ gives
\begin{equation*}
(h_z,e_0)=c_0\int_\Omega\bar U^{2_s^*-1}h_z,\qquad
(h_z,e_{N+1})=c_0\int_\Omega\lambda\partial_\lambda\left(\bar U^{2_s^*-1}\right)h_z,\qquad
(h_z,e_i)=c_0\lambda^{-1}\int_\Omega\partial_{z_i}\left(\bar U^{2_s^*-1}\right)h_z .
\end{equation*}
As $h_z$ is bounded, $|\lambda\partial_\lambda\bar U^{2_s^*-1}|\lesssim\bar U^{2_s^*-1}$ and $\int_\Omega\bar U^{2_s^*-1}\lesssim\lambda^{-\frac{N-2s}{2}}$, the first two quantities are $O(\lambda^{-\frac{N-2s}{2}})$. For the last one, since $\int_{\R^N}\partial_{z_i}\bar U^{2_s^*-1}=0$,
\begin{equation*}
\lambda^{-1}\int_\Omega\partial_{z_i}\left(\bar U^{2_s^*-1}\right)h_z
=\lambda^{-1}\int_\Omega\partial_{z_i}\left(\bar U^{2_s^*-1}\right)\left(h_z-h_z(z)\right)
-\lambda^{-1}h_z(z)\int_{\R^N\setminus\Omega}\partial_{z_i}\left(\bar U^{2_s^*-1}\right).
\end{equation*}
The last term is $O(\lambda^{-\frac{N+2s}{2}})$. Since $|\partial_{z_i}\bar U^{2_s^*-1}|\lesssim\lambda^{\frac{N+2s}{2}+1}(1+\lambda|x-z|)^{-(N+2s+1)}$, the change of variables $y=\lambda(x-z)$ and dominated convergence give $o(\lambda^{-\frac{N-2s}{2}})$ for the first term, uniformly for $z$ in compact subsets of $\Omega$, by Lemma~\ref{lem:hz} and Remark~\ref{rem:continuous-a}. If $a\in C^1(\Omega)$, Lemma~\ref{lem:hz} gives $|h_z(x)-h_z(z)|\lesssim|x-z|^\sigma$, and hence
\begin{equation*}
\lambda^{-1}\int_\Omega
\left|\partial_{z_i}\left(\bar U^{2_s^*-1}\right)\right|
|h_z(x)-h_z(z)|\,dx\lesssim
\lambda^{-\frac{N-2s}{2}-\sigma}
\int_{\R^N}\frac{|y|^\sigma}{(1+|y|)^{N+2s+1}}\,dy
\lesssim\lambda^{-\frac{N-2s}{2}-\sigma}.
\end{equation*}
where the integral is finite since $\sigma\leq1<2s+1$. Since $\sigma\leq4s-N<2s$, this bound also absorbs the exterior term.

Finally, the uniform bound for $M^{-1}$ gives $|b_0|+|b_{N+1}|=O(\lambda^{-(N-2s)})$. For the translation coefficients, the bounds on the mixed entries of $M^{-1}$ and on $(h_z,e_0)$ and $(h_z,e_{N+1})$ imply
\begin{equation*}
\sum_{i=1}^N|b_i|
\lesssim\lambda^{-\frac{N-2s}{2}}\sum_{i=1}^N|(h_z,e_i)|
+\lambda^{-2(N-2s)-1}.
\end{equation*}
The right-hand side is $o(\lambda^{-(N-2s)})$ when $a$ is only continuous, and $O(\lambda^{-(N-2s)-\sigma})$ if $a\in C^1(\Omega)$. The claim follows.

\end{proof}

\subsection{The refined lower bound}

\begin{Prop}[Refined lower bound]
\label{prop:refined-lower-bound}
Let $a\in C(\overline\Omega)$ satisfy $\SH(a)=\SH$, let $V\in L^\infty(\Omega)$, and let $\varepsilon\to0^+$ and $(u_\varepsilon)$ be as in Proposition~\ref{prop:profile-decomposition}. Write $u_\varepsilon=\alpha_\varepsilon(P\bar U_{z_\varepsilon,\lambda_\varepsilon}+w_\varepsilon)$ as in \eqref{eq:profile-decomposition}. Let $b_{j,\varepsilon}$ be the coefficients of Lemma~\ref{lem:tangential-h} at $(z_\varepsilon,\lambda_\varepsilon)$. For $\varepsilon$ small, set
\begin{equation*}
\lambda_\varepsilon':=\lambda_\varepsilon\left(1+\frac{b_{N+1,\varepsilon}}{1+b_{0,\varepsilon}}\right),
\qquad
\beta_\varepsilon:=(1+b_{0,\varepsilon})\left(\frac{\lambda_\varepsilon'}{\lambda_\varepsilon}\right)^{-\frac{N-2s}{2}}.
\end{equation*}

Define
\begin{equation*}
q_\varepsilon:=w_\varepsilon-\kappa_{N,\mu,s}\beta_\varepsilon\lambda_\varepsilon^{-\frac{N-2s}{2}}\Pi_{z_\varepsilon,\lambda_\varepsilon}^\perp h_{z_\varepsilon}\in T_{z_\varepsilon,\lambda_\varepsilon}^\perp .
\end{equation*}
We have $\beta_\varepsilon=1+O(\lambda_\varepsilon^{-(N-2s)})$. Then $\|\ds q_\varepsilon\|_2\to0$ and, with $\rho$ the constant of Proposition~\ref{prop:coercivity-with-potential},
\begin{enumerate}
\item[(a)] if $a\in C(\overline\Omega)$ only, then
\begin{equation}
\label{eq:refined-continuous}
\mathcal Q_\varepsilon[u_\varepsilon]\geq\SH+c_1\phi_a(z_\varepsilon)\lambda_\varepsilon^{-(N-2s)}+\frac{\rho}{4}\SH^{-\theta}\|\ds q_\varepsilon\|_2^2+o\left(\lambda_\varepsilon^{-(N-2s)}\right);
\end{equation}
\item[(b)] if $a\in C(\overline\Omega)\cap C^1(\Omega)$ and $\frac52s<N<4s$, then
\begin{equation}
\label{eq:refined-C1}
\begin{split}
\mathcal Q_\varepsilon[u_\varepsilon]\geq{}&\SH+c_1\phi_a(z_\varepsilon)\lambda_\varepsilon^{-(N-2s)}-c_2a(z_\varepsilon)\lambda_\varepsilon^{-2s}+c_3\varepsilon Q_V(z_\varepsilon)\lambda_\varepsilon^{-(N-2s)}+\frac{\rho}{4}\SH^{-\theta}\|\ds q_\varepsilon\|_2^2\\
&+o\left(\lambda_\varepsilon^{-2s}\right)+o\left(\varepsilon\lambda_\varepsilon^{-(N-2s)}\right)+o\left(|\phi_a(z_\varepsilon)|\lambda_\varepsilon^{-(N-2s)}\right).
\end{split}
\end{equation}
\end{enumerate}
\end{Prop}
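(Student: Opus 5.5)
The plan is to rewrite $u_\varepsilon$ as a rescaled and reparametrized test function plus a remainder orthogonal to the tangent space, and then apply Lemma~\ref{lem:second-order}, Lemma~\ref{lem:first-derivative}, Proposition~\ref{prop:coercivity-with-potential} and the upper-bound expansion \eqref{eq:quotient-expansion-short} (or \eqref{eq:quotient-expansion-continuous} in case (a)). Since $\mathcal Q_\varepsilon$ is $0$-homogeneous, we may drop $\alpha_\varepsilon$.

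\emph{Step 1: decomposition.} Write $P\bar U_{z_\varepsilon,\lambda_\varepsilon}+w_\varepsilon = P\bar U+\kappa_{N,\mu,s}\beta_\varepsilon\lambda_\varepsilon^{-\frac{N-2s}{2}}h_{z_\varepsilon}-\kappa_{N,\mu,s}\beta_\varepsilon\lambda_\varepsilon^{-\frac{N-2s}{2}}\Pi h_{z_\varepsilon}+q_\varepsilon$. By Lemma~\ref{lem:tangential-h}, the tangential piece equals $\beta_\varepsilon\sum_j b_je_j$. Its $e_0$ and $e_{N+1}$ components are absorbed into amplitude and scale, using
\begin{equation*}
(1+b_0)P\bar U_{z,\lambda}+b_{N+1}\lambda\partial_\lambda P\bar U_{z,\lambda}=\beta_\varepsilon' P\bar U_{z,\lambda'}+O_{\dot H^s}(\lambda^{-2(N-2s)}),
\end{equation*}
which follows from a Taylor expansion in $\lambda$ with $\lambda'/\lambda=1+O(\lambda^{-(N-2s)})$. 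The $h_z$ term then matches $\psi_{z,\lambda'}$ up to the factor, because $\lambda'^{-\frac{N-2s}{2}}$ versus $\beta\lambda^{-\frac{N-2s}{2}}$ is exactly how $\beta_\varepsilon$ was chosen, so that $(1+b_0)$ is the common factor. The fixed-point subtlety (namely that $\beta_\varepsilon$ multiplies the $b_j$ as well) is handled by computing $b_j$ from $h_z$ alone and noting that all factors are $1+O(\lambda^{-(N-2s)})$. This gives
\begin{equation*}
u_\varepsilon/\alpha_\varepsilon=(1+b_0)\bigl(\psi_{z_\varepsilon,\lambda'_\varepsilon}+\eta_\varepsilon\bigr),\qquad \eta_\varepsilon=q_\varepsilon+\beta\textstyle\sum_{i=1}^N b_ie_i+O(\lambda^{-2(N-2s)}),
\end{equation*}
with $\sum|b_i|=o(\lambda^{-(N-2s)})$, or $O(\lambda^{-(N-2s)-\sigma})$ when $a\in C^1$. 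Hence $\|\eta_\varepsilon - q_\varepsilon\|\lesssim\omega(\lambda)$, or $o(\lambda^{-(N-2s)})$ in case (a). Also $\|q_\varepsilon\|\to0$, since $\|w_\varepsilon\|\to0$ and $\lambda^{-\frac{N-2s}{2}}\|h_z\|\to0$, where $z_0\in\Omega$ by Proposition~\ref{prop:remainder-estimates}. Finally $\beta_\varepsilon=1+O(\lambda^{-(N-2s)})$ follows directly from $|b_0|+|b_{N+1}|=O(\lambda^{-(N-2s)})$.

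\emph{Step 2: expansion.} Apply Lemma~\ref{lem:second-order} at $(z_\varepsilon,\lambda'_\varepsilon)$ with $\eta=\eta_\varepsilon$. The first-order term is bounded by Lemma~\ref{lem:first-derivative}:
\begin{equation*}
|D\mathcal Q_\varepsilon\eta|\lesssim(|\phi_a|\lambda^{-(N-2s)}+\omega+\varepsilon\lambda^{-\frac{N-2s}{2}})\|\eta\|.
\end{equation*}
Since $q\in T_{z,\lambda}^\perp$ and the $e_i$ are almost orthogonal to $\bar U$, we have $(\bar U_{z,\lambda'},\eta)=O(\lambda^{-(N-2s)}\|q\|)+O(\omega)$, so the $O((\bar U,\eta)^2)$ term is negligible. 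Moving from $T_{z,\lambda}^\perp$ to the parameters $\lambda'$ changes $\mathcal E$ only by $o(\|q\|^2)$. Then Proposition~\ref{prop:coercivity-with-potential} gives $\mathcal E_{z,\lambda'}(\eta)\ge\rho\|q\|^2-C\|q\|\,\omega-C\omega^2$. Absorb all cross terms by Young's inequality into $\tfrac{\rho}{2}\SH^{-\theta}\|q\|^2$ plus squares. The squares are $|\phi_a|^2\lambda^{-2(N-2s)}=o(|\phi_a|\lambda^{-(N-2s)})$ (recall $\phi_a$ is bounded), $\omega^2=o(\lambda^{-2s})$ (which uses $N>\tfrac52 s$), and $\varepsilon^2\lambda^{-(N-2s)}=o(\varepsilon\lambda^{-(N-2s)})$. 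This leaves $\tfrac\rho4\SH^{-\theta}\|q\|^2$ after also absorbing the $o(\|\eta\|^2)$ errors.

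\emph{Step 3: insert the test function expansion.} For $\mathcal Q_\varepsilon[\psi_{z,\lambda'}]$, use \eqref{eq:quotient-expansion-short}, replacing $\lambda'$ by $\lambda$ at relative cost $O(\lambda^{-(N-2s)})$, which affects only lower-order terms. The term $\mathcal T_1$ is $O(|\phi_a|\lambda^{-2(N-2s)})=o(|\phi_a|\lambda^{-(N-2s)})$. This yields (b). For (a), use \eqref{eq:quotient-expansion-continuous} and the $C^0$ versions $\omega_0=o(\lambda^{-(N-2s)})$ together with $\|\eta\|=O(\lambda^{-\frac{N-2s}{2}})$, so all errors are $o(\lambda^{-(N-2s)})$.

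The main obstacle is Step 1: proving that the amplitude/scale reparametrization error is truly $O(\lambda^{-2(N-2s)})$ in $\dot H^s$, and that orthogonality transfers to the new parameters. This is precisely where squaring $\omega$ requires $N>\tfrac52 s$.
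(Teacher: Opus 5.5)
Your proposal is correct and follows the paper's proof essentially step for step: the reparametrization $\alpha^{-1}u=(1+b_0)(\psi_{z,\lambda'}+\eta)$ via a Taylor expansion in $\lambda$, then Lemma~\ref{lem:second-order} at $(z,\lambda')$, then coercivity of $\mathcal E_{z,\lambda}$ on $q\in T_{z,\lambda}^\perp$ transferred to $\lambda'$ at cost $o(\|q\|^2)$, then Young's inequality with $\omega^2=o(\lambda^{-2s})$ (where $N>\frac52 s$ enters), and finally the test-function expansion at $\lambda'$. One bookkeeping correction: pulling out the factor $(1+b_0)$ actually gives $\eta=(q+\varrho)/(1+b_0)$, so $\eta-q$ also contains $-b_0q/(1+b_0)=O(\lambda^{-(N-2s)}\|q\|)$; this need not be $O(\omega(\lambda))$, but it is $o(\|q\|)$ and is harmlessly absorbed, as in the paper, by factoring $(1+b_0)^{-2}=1+o(1)$ out of $\mathcal E_{z,\lambda'}(\eta)$.
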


\begin{proof}
We drop the index $\varepsilon$ and write $\|\cdot\|=\|\ds\cdot\|_2$. By Propositions~\ref{prop:profile-decomposition} and \ref{prop:remainder-estimates}, $z$ stays in a compact subset of $\Omega$, $\lambda\to\infty$ and $\|w\|\to0$; by Lemma~\ref{lem:hz}, $\|q\|\leq\|w\|+O(\lambda^{-\frac{N-2s}{2}})\to0$. Moreover, $q\in T^\perp_{z,\lambda}$ since both $w$ and $\Pi^\perp_{z,\lambda}h_z$ belong to $T_{z,\lambda}^\perp$.

\emph{Step 1: a reparametrization.} By the definition of $q$,
\begin{equation*}
\begin{split}
\alpha^{-1}u&=P\bar U_{z,\lambda}+\kappa_{N,\mu,s}\beta\lambda^{-\frac{N-2s}{2}}h_z-\kappa_{N,\mu,s}\beta\lambda^{-\frac{N-2s}{2}}\Pi_{z,\lambda}h_z+q\\
&=P\bar U_{z,\lambda}+\kappa_{N,\mu,s}\beta\lambda^{-\frac{N-2s}{2}}h_z+\beta\sum_{j=0}^{N+1}b_je_j+q,
\end{split}
\end{equation*}
with $b_j$ as in Lemma~\ref{lem:tangential-h}. By the choice of $\lambda'$, we have $\lambda'/\lambda=1+O(\lambda^{-(N-2s)})$ and $(1+b_0)(\lambda'-\lambda)\partial_\lambda P\bar U_{z,\lambda}=b_{N+1}e_{N+1}$. Since $\|\partial_\lambda^2P\bar U_{z,\mu}\|\lesssim\mu^{-2}$, Taylor's formula gives
\begin{equation*}
(1+b_0)P\bar U_{z,\lambda'}=(1+b_0)P\bar U_{z,\lambda}+b_{N+1}e_{N+1}+O\left(\lambda^{-2(N-2s)}\right)\quad\text{in }\ths,
\end{equation*}
Set
\begin{equation*}
\begin{split}
R_\lambda&:=(1+b_0)P\bar U_{z,\lambda}+b_{N+1}e_{N+1}-(1+b_0)P\bar U_{z,\lambda'},\\
\varrho&:=R_\lambda+(\beta-1)\sum_{j=0}^{N+1}b_je_j+\sum_{i=1}^Nb_ie_i.
\end{split}
\end{equation*}
The identity $(1+b_0)\lambda'^{-\frac{N-2s}{2}}=\beta\lambda^{-\frac{N-2s}{2}}$, together with $\|R_\lambda\|=O(\lambda^{-2(N-2s)})$, $\beta-1=O(\lambda^{-(N-2s)})$ and $\|\sum_{j=0}^{N+1}b_je_j\|=O(\lambda^{-(N-2s)})$, gives
\begin{equation}
\label{eq:reparametrization}
\alpha^{-1}u=(1+b_0)\left(\psi_{z,\lambda'}+\eta\right),\qquad
\eta:=\frac{q+\varrho}{1+b_0},\qquad
\|\varrho\|\lesssim\lambda^{-2(N-2s)}+\sum_{i=1}^N|b_i| .
\end{equation}
By Lemma~\ref{lem:tangential-h}, $\|\varrho\|=o(\lambda^{-(N-2s)})$ in case (a), while in case (b)
\begin{equation*}
\|\varrho\|\lesssim\lambda^{-(N-2s)-\sigma}+\lambda^{-2(N-2s)},
\qquad\text{hence}\qquad\|\varrho\|^2=o\left(\lambda^{-2s}\right),
\end{equation*}
because $N+\sigma>3s$ and $N>\frac52s$ imply $2(N-2s+\sigma)>2s$ and $4(N-2s)>2s$.
Since $\mathcal Q_\varepsilon$ is homogeneous of degree zero, $\mathcal Q_\varepsilon[u]=\mathcal Q_\varepsilon[\psi_{z,\lambda'}+\eta]$.

\emph{Step 2: expansion around $\psi_{z,\lambda'}$.} By Lemma~\ref{lem:second-order} at $(z,\lambda')$,
\begin{equation*}
\mathcal Q_\varepsilon[u]=\mathcal Q_\varepsilon[\psi_{z,\lambda'}]+D\mathcal Q_\varepsilon[\psi_{z,\lambda'}]\eta+\SH^{-\theta}\mathcal E_{z,\lambda'}(\eta)+O\left((\bar U_{z,\lambda'},\eta)^2\right)+o(\|\eta\|^2).
\end{equation*}
We estimate the four terms.

(i) Since $q\perp e_0$, we have $(\bar U_{z,\lambda'},q)=(\bar U_{z,\lambda'}-\bar U_{z,\lambda},q)=O(\lambda^{-(N-2s)}\|q\|)$, so that $(\bar U_{z,\lambda'},\eta)^2=o(\|q\|^2)+O(\|\varrho\|^2)$.

(ii) Since $|\bar U_{z,\lambda'}^{\gamma}-\bar U_{z,\lambda}^{\gamma}|$ is small in the relevant Lebesgue norms for $\gamma\in\{\twostar-2,\twostar-1,\twostar\}$ (as $\lambda'/\lambda\to1$), we have $\mathcal E_{z,\lambda'}(q)=\mathcal E_{z,\lambda}(q)+o(\|q\|^2)$. Hence, by Proposition~\ref{prop:coercivity-with-potential} and the Cauchy--Schwarz inequality,
\begin{equation*}
\mathcal E_{z,\lambda'}(\eta)\geq(1+b_0)^{-2}\left(\mathcal E_{z,\lambda}(q)-C\|q\|\|\varrho\|-C\|\varrho\|^2\right)+o(\|q\|^2)
\geq\frac{3\rho}{4}\|q\|^2-C\|\varrho\|^2+o(\|q\|^2).
\end{equation*}

(iii) By Lemma~\ref{lem:first-derivative} at $(z,\lambda')$ and Young's inequality, for every $\delta>0$,
\begin{equation*}
\begin{split}
\left|D\mathcal Q_\varepsilon[\psi_{z,\lambda'}]\eta\right|
&\lesssim\left(|\phi_a(z)|\lambda^{-(N-2s)}+\omega(\lambda)+\varepsilon\lambda^{-\frac{N-2s}{2}}\right)\left(\|q\|+\|\varrho\|\right)\\
&\leq\delta\|q\|^2+C_\delta\left(\phi_a(z)^2\lambda^{-2(N-2s)}+\omega(\lambda)^2+\varepsilon^2\lambda^{-(N-2s)}\right)\\
&\quad+C\left(|\phi_a(z)|\lambda^{-(N-2s)}+\omega(\lambda)+\varepsilon\lambda^{-\frac{N-2s}{2}}\right)\|\varrho\| ,
\end{split}
\end{equation*}
with $\omega$ replaced by $\omega_0$ in case (a). In case (b), $\omega(\lambda)^2=o(\lambda^{-2s})$ and Step~1 give $\omega(\lambda)\|\varrho\|\leq\frac12(\omega(\lambda)^2+\|\varrho\|^2)=o(\lambda^{-2s})$ and $\varepsilon\lambda^{-\frac{N-2s}{2}}\|\varrho\|=o(\varepsilon\lambda^{-(N-2s)})$. Since $\phi_a$ is bounded on compact sets, $\|\varrho\|\to0$ and $\varepsilon\to0$, the right-hand side is $\delta\|q\|^2+o(\lambda^{-2s})+o(\varepsilon\lambda^{-(N-2s)})+o(|\phi_a(z)|\lambda^{-(N-2s)})$. In case (a), using $\omega_0(\lambda)=o(\lambda^{-(N-2s)})$ and $\|\varrho\|=o(\lambda^{-(N-2s)})$ gives $\delta\|q\|^2+o(\lambda^{-(N-2s)})$.

(iv) In case (b), \eqref{eq:quotient-expansion-short} at $(z,\lambda')$, together with $\lambda'/\lambda=1+O(\lambda^{-(N-2s)})$ and $\mathcal T_1(\phi,\lambda)=O(\phi^2\lambda^{-2(N-2s)})$ for bounded $\phi$, gives
\begin{equation*}
\begin{split}
\mathcal Q_\varepsilon[\psi_{z,\lambda'}]&=\SH+c_1\phi_a(z)\lambda^{-(N-2s)}-c_2a(z)\lambda^{-2s}+c_3\varepsilon Q_V(z)\lambda^{-(N-2s)}\\&\quad+o\left(\lambda^{-2s}\right)+o\left(\varepsilon\lambda^{-(N-2s)}\right)+o\left(|\phi_a(z)|\lambda^{-(N-2s)}\right).
\end{split}
\end{equation*}
In case (a), \eqref{eq:quotient-expansion-continuous} gives $\mathcal Q_\varepsilon[\psi_{z,\lambda'}]=\SH+c_1\phi_a(z)\lambda^{-(N-2s)}+o(\lambda^{-(N-2s)})$.

Collecting (i)--(iv), choosing $\delta=\frac{\rho}{4}\SH^{-\theta}$, and using $\|\varrho\|^2=o(\lambda^{-2s})$ in case (b), respectively $\|\varrho\|^2=o(\lambda^{-(N-2s)})$ in case (a), we obtain \eqref{eq:refined-C1} and \eqref{eq:refined-continuous}.
\end{proof}

\section{Proof of Theorem~\ref{main} and Corollary~\ref{lem:de-1.3}}
\label{sec:proof-characterization}

\begin{proof}[Proof of (i) $\Rightarrow$ (ii)]
Let $x\in\Omega$ be such that $\phi_a(x)<0$. Take $\varepsilon=0$ and $z=x$ in Theorem~\ref{lem:de-2.1}. Since $N-2s<2s$, the term $\kappa_{N,\mu,s}^2a_{N,s}\mathfrak a^{(1)}_{N,s}\SH^{-\frac{N-2s}{N+2s-\mu}}\phi_a(x)\lambda^{-(N-2s)}$ in \eqref{eq:quotient-expansion} is negative and dominates all the other correction terms, so $\SH(a)\leq\SH(a)[\psi_{x,\lambda}]<\SH$ for $\lambda$ large. (Only the expansion up to $o(\lambda^{-(N-2s)})$ is needed here, which requires merely the continuity of $H_a$; hence the $C^1$ assumption of Theorem~\ref{lem:de-2.1} is not needed for this implication.)
\end{proof}

\begin{proof}[Proof of (iii) $\Rightarrow$ (ii)]
Taking $\varepsilon=0$ and letting $\lambda\to\infty$ in \eqref{eq:quotient-expansion} (again, only the leading term is needed) shows that $\SH(a)\leq\SH$ for every $a$ as in Theorem~\ref{main}. If $\SH(a)$ is achieved, Proposition~\ref{lem:de-3.1} excludes $\SH(a)=\SH$, and therefore $\SH(a)<\SH$.
\end{proof}

\begin{proof}[Proof of (ii) $\Rightarrow$ (iii)]
 Suppose that $\SH(a)<\SH$, and choose a minimizing sequence $(u_\varepsilon)\subset\ths$ normalized by
\begin{equation*}
\int_\Omega\int_\Omega
\frac{|u_\varepsilon(x)|^{\twostar}|u_\varepsilon(y)|^{\twostar}}{|x-y|^\mu}\,dx\,dy=1.
\end{equation*}
Then
\begin{equation*}
\|\ds u_\varepsilon\|_2^2+\int_\Omega au_\varepsilon^2\,dx=\SH(a)+o(1).
\end{equation*}
By coercivity, after passing to a subsequence, $u_\varepsilon\rightharpoonup u_0$ in $\ths$, with convergence in $L^2(\Omega)$ and almost everywhere in $\Omega$. The normalization and inequality \eqref{hls_ineq} imply
\begin{equation*}
-\int_\Omega au_0^2\,dx
=\lim_{\varepsilon\to0}\left(-\int_\Omega au_\varepsilon^2\,dx\right)
\geq\SH-\SH(a)>0,
\end{equation*}
so $u_0\not\equiv0$.

Set $v_\varepsilon=u_\varepsilon-u_0$. By weak convergence and strong convergence in $L^2(\Omega)$,
\begin{equation*}
\|\ds v_\varepsilon\|_2^2+\|\ds u_0\|_2^2+\int_\Omega au_0^2\,dx=\SH(a)+o(1).
\end{equation*}
Moreover, the nonlocal Brezis--Lieb lemma of Mukherjee and Sreenadh \cite[Lemma~3.3]{MukherjeeSreenadh2016}, together with $\twostar>1$, gives
\begin{equation*}
\begin{aligned}
1\leq{}&\left(\int_\Omega\int_\Omega
\frac{|u_0(x)|^{\twostar}|u_0(y)|^{\twostar}}{|x-y|^\mu}\,dx\,dy\right)^{\frac{1}{\twostar}}+\left(\int_\Omega\int_\Omega
\frac{|v_\varepsilon(x)|^{\twostar}|v_\varepsilon(y)|^{\twostar}}{|x-y|^\mu}\,dx\,dy\right)^{\frac{1}{\twostar}}+o(1).
\end{aligned}
\end{equation*}
Since $0<\SH(a)<\SH$ (positivity follows from coercivity), inequality \eqref{hls_ineq} applied to $v_\varepsilon$ yields
\begin{equation*}
\begin{aligned}
\SH(a)\leq{}&\SH(a)\left(\int_\Omega\int_\Omega
\frac{|u_0(x)|^{\twostar}|u_0(y)|^{\twostar}}{|x-y|^\mu}\,dx\,dy\right)^{\frac{1}{\twostar}}\\
&+\|\ds v_\varepsilon\|_2^2+o(1).
\end{aligned}
\end{equation*}
Combining this with the energy splitting and letting $\varepsilon\to0$, we obtain
\begin{equation*}
\|\ds u_0\|_2^2+\int_\Omega au_0^2\,dx
\leq\SH(a)\left(\int_\Omega\int_\Omega
\frac{|u_0(x)|^{\twostar}|u_0(y)|^{\twostar}}{|x-y|^\mu}\,dx\,dy\right)^{\frac{1}{\twostar}}.
\end{equation*}
The reverse inequality follows from the definition of $\SH(a)$. Thus $u_0$ is a minimizer. Note that this argument only uses that $a\in L^\infty(\Omega)$ and that $(-\Delta)^s+a$ is coercive; the continuity of $a$ is not needed.
\end{proof}

\begin{proof}[Proof of (ii) $\Rightarrow$ (i)]
Let $a\in C(\overline\Omega)$ be such that $(-\Delta)^s+a$ is coercive and $\SH(a)<\SH$, and assume by contradiction that $\phi_a\geq0$ in $\Omega$.

\emph{Step 1: the threshold potential.}
For $\gamma\geq0$, the operator $(-\Delta)^s+a+\gamma$ is coercive with the same coercivity constant as $(-\Delta)^s+a$. Hence, for $0\leq\gamma_1\leq\gamma_2$ and $u\in\ths\setminus\{0\}$, the Poincar\'e inequality gives
\begin{equation*}
0\leq\SH(a+\gamma_2)[u]-\SH(a+\gamma_1)[u]
=(\gamma_2-\gamma_1)\frac{\|u\|_{L^2(\Omega)}^2}{\mathcal D(u)^{1/\twostar}}
\leq C(\gamma_2-\gamma_1)\SH(a+\gamma_1)[u].
\end{equation*}
Taking the infimum over $u\in\ths\setminus\{0\}$, we obtain $0\leq\SH(a+\gamma_2)-\SH(a+\gamma_1)\leq C(\gamma_2-\gamma_1)\SH(a+\gamma_1)$. Hence $\gamma\mapsto\SH(a+\gamma)$ is nondecreasing and continuous.
Moreover, $\SH(a+\gamma)=\SH$ as soon as $a+\gamma\geq0$, since then $\SH(a+\gamma)[u]\geq\SH(0)[u]\geq\SH$ by \eqref{hls_ineq}, while $\SH(a+\gamma)\leq\SH$ by the test-function argument in the proof of (iii) $\Rightarrow$ (ii) above. Hence
\begin{equation*}
\gamma^*:=\sup\{\gamma\geq0:\SH(a+\gamma)<\SH\}\in(0,\|a^-\|_\infty],
\end{equation*}
and $a^*:=a+\gamma^*$ satisfies $\SH(a^*)=\SH$, while $\SH(a+\gamma)<\SH$ for every $\gamma\in[0,\gamma^*)$.

\emph{Step 2: $\inf_K\phi_{a^*}>0$ on compact sets.}
The resolvent identity gives
\begin{equation*}
G_a(x,y)-G_{a^*}(x,y)
=\frac{\gamma^*}{\gamma_{N,s}}\int_\Omega G_a(x,t)G_{a^*}(t,y)\,dt.
\end{equation*}
Since $2(N-2s)<N$, splitting the integral near and away from $x$ and using the Green function bounds and Remark~\ref{rem:continuous-a} allow us to let $y\to x$. Thus
\begin{equation*}
\phi_{a^*}(x)-\phi_a(x)
=\frac{\gamma^*}{\gamma_{N,s}}\int_\Omega G_a(x,t)G_{a^*}(t,x)\,dt>0,
\qquad x\in\Omega.
\end{equation*}
The strict inequality follows from $\gamma^*>0$ and the positivity of the Green functions. Since $\phi_a\geq0$, we have $\phi_{a^*}>0$ in $\Omega$.
Since $\phi_{a^*}$ is continuous, $m_K:=\min_K\phi_{a^*}>0$ for every compact set $K\subset\Omega$.

\emph{Step 3: blow-up and contradiction.} Let $\varepsilon_n\downarrow0$, $V\equiv-1$, so that $a^*+\varepsilon_nV=a+\gamma^*-\varepsilon_n$ and $\SH(a^*+\varepsilon_nV)<\SH$. Choose $u_n\in\ths$ satisfying \eqref{eq:profile-almost-minimizing} and \eqref{eq:profile-normalization} (with $a^*$ in place of $a$ and $\varepsilon_n$ in place of $\varepsilon$). Since $\SH(a^*)=\SH$, Propositions~\ref{prop:profile-decomposition} and \ref{prop:remainder-estimates} apply, and the concentration points $z_n$ stay in a compact set $K\subset\Omega$. By Proposition~\ref{prop:refined-lower-bound}\,(a),
\begin{equation*}
\SH(a^*+\varepsilon_nV)[u_n]\geq\SH+c_1m_K\lambda_n^{-(N-2s)}+o\left(\lambda_n^{-(N-2s)}\right)>\SH
\end{equation*}
for $n$ large. On the other hand, \eqref{eq:profile-almost-minimizing} and $\SH(a^*+\varepsilon_nV)<\SH$ give $\SH(a^*+\varepsilon_nV)[u_n]<\SH$ for $n$ large, a contradiction. Hence $\phi_a(x)<0$ for some $x\in\Omega$.
\end{proof}

\begin{proof}[Proof of Corollary~\ref{lem:de-1.3}]
Let $a$ be a critical potential. Then $\SH(a)=\SH>0$, so $(-\Delta)^s+a$ is coercive (see the proof of Proposition~\ref{prop:profile-decomposition}), and Theorem~\ref{main} gives $\phi_a\geq0$ in $\Omega$.
For $x\in\Omega$, set $w_x:=H_a(x,\cdot)-H_0(x,\cdot)$. Then $w_x\in\ths$ satisfies $(-\Delta)^sw_x+aw_x=aG_0(x,\cdot)$. Since $0\leq G_0(x,y)\leq|x-y|^{-(N-2s)}$ and $N<4s$,
\begin{equation*}
\sup_{x\in\Omega}\|G_0(x,\cdot)\|_{L^2(\Omega)}^2
\leq\sup_{x\in\Omega}\int_\Omega|x-y|^{-2(N-2s)}\,dy\leq C.
\end{equation*}
The energy estimate gives $\|w_x\|_{L^2(\Omega)}\leq C$. As $2>N/(2s)$, \cite[Lemma~2.5(a)]{BiccariWarmaZuazua2017}, applied to $(-\Delta)^sw_x=a(G_0(x,\cdot)-w_x)$, yields
\begin{equation*}
\|w_x\|_{L^\infty(\Omega)}\leq C\|a(G_0(x,\cdot)-w_x)\|_{L^2(\Omega)}\leq C,
\end{equation*}
uniformly for $x\in\Omega$. By continuity, $|\phi_a(x)-\phi_0(x)|=|w_x(x)|\leq C$. Together with $\phi_0(x)\gtrsim d(x)^{-(N-2s)}$ from \cite[Lemma~A.1, (A.1)]{DeNittiKonig2023}, this gives $\phi_a(x)\to+\infty$ as $x\to\partial\Omega$. Since $\phi_a$ is continuous,
the infimum of $\phi_a$ is attained.

Assume by contradiction that $m:=\min_\Omega\phi_a>0$. Let $\chi\in C(\overline\Omega)$, $0\leq\chi\leq1$, $\chi\not\equiv0$, and $\widetilde a:=a-\delta\chi$ with $\delta>0$ small.
The operators $(-\Delta)^s+\widetilde a$ are uniformly coercive for $0<\delta\leq\delta_0$, with $\delta_0>0$ sufficiently small. Set $v_x:=H_a(x,\cdot)-H_{\widetilde a}(x,\cdot)$. Then $v_x\in\ths$ satisfies $(-\Delta)^sv_x+\widetilde a v_x=\delta\chi G_a(x,\cdot)$, and the maximum principle gives $v_x\geq0$. Since $G_a(x,\cdot)=G_0(x,\cdot)-w_x$, the bounds above imply $\|G_a(x,\cdot)\|_{L^2(\Omega)}\leq C$. The energy estimate now gives $\|v_x\|_{L^2(\Omega)}\leq C\delta$, and \cite[Lemma~2.5(a)]{BiccariWarmaZuazua2017} yields
\begin{equation*}
\|v_x\|_{L^\infty(\Omega)}
\leq C\|\delta\chi G_a(x,\cdot)-\widetilde a v_x\|_{L^2(\Omega)}\leq C\delta,
\end{equation*}
with $C$ independent of $x$ and $\delta$. Evaluating at $x$ gives $0\leq\phi_a(x)-\phi_{\widetilde a}(x)=v_x(x)\leq C\delta$.
Hence $\phi_{\widetilde a}\geq m/2>0$ for $\delta$ small, and Theorem~\ref{main} gives $\SH(\widetilde a)=\SH$. Since $\widetilde a\leq a$ and $\widetilde a\not\equiv a$, this contradicts the criticality of $a$.
\end{proof}

\section{Proofs of Theorems \ref{lem:de-1.4}, \ref{lem:de-1.5} and \ref{lem:de-1.6}}
\label{sec:proof-asymptotics}

Throughout this section, $\frac52s<N<4s$, $a$ is a critical potential satisfying \eqref{eq:assumption-a-on-Na}, $V\in L^\infty(\Omega)$, and $c_1,c_2,c_3$, $\theta$ are as in Section~\ref{sec:lower-bound-refined}. By Corollaries~\ref{lem:de-1.3} and \ref{lem:de-2.2}, $\phi_a\geq0$ in $\Omega$, $\mathcal N_a\neq\emptyset$, and $\mathcal N_a$ is compact (since $\phi_a(x)\to+\infty$ as $x\to\partial\Omega$). For $x\in\mathcal N_a$ and $\Lambda>0$ we set
\begin{equation*}
F_x(\Lambda):=c_2|a(x)|\Lambda^{-2s}+c_3Q_V(x)\Lambda^{-(N-2s)},
\qquad
\Phi(x):=\frac{\left(Q_V(x)^-\right)^{\frac{2s}{4s-N}}}{|a(x)|^{\frac{N-2s}{4s-N}}},
\end{equation*}
where $Q_V^-:=\max\{-Q_V,0\}$, and
\begin{equation*}
C_*:=\frac{4s-N}{2s}\left(\frac{N-2s}{2s}\right)^{\frac{N-2s}{4s-N}}c_3^{\frac{2s}{4s-N}}c_2^{-\frac{N-2s}{4s-N}}
=\frac{4s-N}{2s}\left(\frac{N-2s}{2s}\right)^{\frac{N-2s}{4s-N}}
\frac{\kappa_{N,\mu,s}^{2}\,\SH^{-\theta}}{\left(a_{N,s}d_{N,s}b_{N,s}+\alpha_{N,s}\right)^{\frac{N-2s}{4s-N}}}.
\end{equation*}
Since $a<0$ on $\mathcal N_a$ and $Q_V$ is continuous (because $x\mapsto G_a(x,\cdot)^2$ is continuous in $L^1(\Omega)$ for $N<4s$), $\Phi$ is continuous on $\mathcal N_a$, and $\sup_{\mathcal N_a(V)}\Phi=\max_{\mathcal N_a}\Phi$ (with the convention $\sup\emptyset=0$).

\begin{lem}
\label{lem:F-minimization}
Let $x\in\mathcal N_a$. If $Q_V(x)\geq0$, then $F_x>0$ on $(0,\infty)$. If $Q_V(x)<0$, then
\begin{equation*}
\min_{\Lambda>0}F_x(\Lambda)=-C_*\Phi(x),
\end{equation*}
and the minimum is attained exactly at
\begin{equation*}
\Lambda_x:=\left(\frac{2s\,c_2|a(x)|}{(N-2s)\,c_3|Q_V(x)|}\right)^{\frac{1}{4s-N}}
=\left(\frac{2s\left(a_{N,s}d_{N,s}b_{N,s}+\alpha_{N,s}\right)|a(x)|}{(N-2s)|Q_V(x)|}\right)^{\frac{1}{4s-N}}.
\end{equation*}
\end{lem}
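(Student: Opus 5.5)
This is a one-variable calculus statement, and I will prove it by direct minimization of $F_x$, using only that $c_2,c_3>0$, that $a(x)<0$ (so $|a(x)|>0$) for $x\in\mathcal N_a$ by \eqref{eq:assumption-a-on-Na}, and that $0<N-2s<2s$.

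\emph{Case $Q_V(x)\geq0$.} Here both terms of $F_x(\Lambda)$ are nonnegative and the first, $c_2|a(x)|\Lambda^{-2s}$, is strictly positive. Hence $F_x>0$ on $(0,\infty)$.

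\emph{Case $Q_V(x)<0$.} Write $A:=c_2|a(x)|>0$ and $B:=c_3|Q_V(x)|>0$, so that $F_x(\Lambda)=A\Lambda^{-2s}-B\Lambda^{-(N-2s)}$. Then
\begin{equation*}
F_x'(\Lambda)=\Lambda^{-2s-1}\bigl(-2sA+(N-2s)B\Lambda^{4s-N}\bigr).
\end{equation*}
Since $4s-N>0$, the bracket is strictly increasing in $\Lambda$. It vanishes exactly at $\Lambda_x=\bigl(\tfrac{2sA}{(N-2s)B}\bigr)^{1/(4s-N)}$, it is negative before $\Lambda_x$ and positive after. So $\Lambda_x$ is the unique global minimizer on $(0,\infty)$; in particular the minimum is attained only there. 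Substituting $c_2/c_3=a_{N,s}d_{N,s}b_{N,s}+\alpha_{N,s}$, which follows from the definitions of $c_2,c_3$ (the factor $\kappa_{N,\mu,s}^2\SH^{-\theta}$ cancels), gives the second expression for $\Lambda_x$.

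\emph{Minimum value.} At the critical point $A\Lambda_x^{-2s}=\tfrac{N-2s}{2s}B\Lambda_x^{-(N-2s)}$, so
\begin{equation*}
F_x(\Lambda_x)=-\tfrac{4s-N}{2s}B\Lambda_x^{-(N-2s)}.
\end{equation*}
Inserting $\Lambda_x^{-(N-2s)}=\bigl(\tfrac{(N-2s)B}{2sA}\bigr)^{(N-2s)/(4s-N)}$ and using $1+\tfrac{N-2s}{4s-N}=\tfrac{2s}{4s-N}$, this becomes
\begin{equation*}
-\tfrac{4s-N}{2s}\left(\tfrac{N-2s}{2s}\right)^{\frac{N-2s}{4s-N}}B^{\frac{2s}{4s-N}}A^{-\frac{N-2s}{4s-N}}.
\end{equation*}
Expanding $A,B$ yields $-C_*\,|Q_V(x)|^{\frac{2s}{4s-N}}|a(x)|^{-\frac{N-2s}{4s-N}}=-C_*\Phi(x)$, because $Q_V(x)^-=|Q_V(x)|$ in this case.

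The argument has no genuine obstacle. The only points needing care are the exponent bookkeeping in the last display and confirming that the constant $C_*$ matches the paper's formula through $c_3^{2s/(4s-N)}c_2^{-(N-2s)/(4s-N)}$.
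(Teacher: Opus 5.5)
Your proof is correct and follows the paper's argument: the same substitution $A=c_2|a(x)|$, $B=c_3|Q_V(x)|$, the same critical-point identity $A\Lambda_x^{-2s}=\frac{N-2s}{2s}B\Lambda_x^{-(N-2s)}$, and the same exponent bookkeeping. The only difference is that you read off the sign change of $F_x'$ directly, whereas the paper combines the limits of $F_x$ at $0^+$ and at $\infty$ with the uniqueness of the critical point; both show that $\Lambda_x$ is the unique global minimizer.
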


\begin{proof}
The first claim is obvious. If $Q_V(x)<0$, write $F_x(\Lambda)=A\Lambda^{-2s}-B\Lambda^{-(N-2s)}$ with $A=c_2|a(x)|$, $B=c_3|Q_V(x)|$. Since $N-2s<2s$, $F_x(\Lambda)\to+\infty$ as $\Lambda\to0^+$ and $F_x(\Lambda)\to0^-$ as $\Lambda\to\infty$, and $F_x'(\Lambda)=0$ if and only if $\Lambda^{4s-N}=\frac{2sA}{(N-2s)B}$. At this point $A\Lambda^{-2s}=\frac{N-2s}{2s}B\Lambda^{-(N-2s)}$, so that
\begin{equation*}
F_x(\Lambda_x)=-\frac{4s-N}{2s}B\Lambda_x^{-(N-2s)}
=-\frac{4s-N}{2s}\left(\frac{N-2s}{2s}\right)^{\frac{N-2s}{4s-N}}B^{\frac{2s}{4s-N}}A^{-\frac{N-2s}{4s-N}}
=-C_*\Phi(x). \qedhere
\end{equation*}
\end{proof}

\begin{proof}[Proof of Theorem~\ref{lem:de-1.4}: upper bound for $\SH(a+\varepsilon V)$]
Let $x\in\mathcal N_a(V)$ and $\Lambda>0$. Taking $z=x$ and $\lambda=\Lambda\varepsilon^{-\frac{1}{4s-N}}$ in \eqref{eq:quotient-expansion-short}, and using $\phi_a(x)=0$ and $\mathcal T_1(0,\lambda)=0$, we get
\begin{equation*}
\SH(a+\varepsilon V)\leq\SH(a+\varepsilon V)[\psi_{x,\lambda}]=\SH+\varepsilon^{\frac{2s}{4s-N}}\left(F_x(\Lambda)+o(1)\right).
\end{equation*}
Choosing $\Lambda=\Lambda_x$ and then optimizing over $x\in\mathcal N_a(V)$, Lemma~\ref{lem:F-minimization} yields
\begin{equation}
\label{eq:liminf-energy}
\liminf_{\varepsilon\to0^+}\frac{\SH-\SH(a+\varepsilon V)}{\varepsilon^{\frac{2s}{4s-N}}}\geq C_*\sup_{\mathcal N_a(V)}\Phi>0 .
\end{equation}
In particular, $\SH(a+\varepsilon V)<\SH$ for all small $\varepsilon>0$. Since $\varepsilon\mapsto\SH(a+\varepsilon V)$ is concave (as an infimum of affine functions of $\varepsilon$) and equals $\SH$ at $\varepsilon=0$, it follows that $\SH(a+\varepsilon V)<\SH$ for every $\varepsilon>0$.
\end{proof}

The matching lower bound and Theorem~\ref{lem:de-1.6} follow from the next proposition.

\begin{Prop}
\label{prop:asymptotics-sequences}
Let $\varepsilon\to0^+$ along a sequence such that $\SH(a+\varepsilon V)<\SH$, and let $(u_\varepsilon)$ satisfy \eqref{eq:profile-almost-minimizing} and \eqref{eq:profile-normalization}. Let $z_\varepsilon,\lambda_\varepsilon,\alpha_\varepsilon$, $q_\varepsilon$ be as in Propositions~\ref{prop:profile-decomposition} and \ref{prop:refined-lower-bound}. Then, up to a subsequence, $z_\varepsilon\to z_0\in\mathcal N_a$, $a(z_0)<0$, $\phi_a(z_\varepsilon)\to0$, and
\begin{equation}
\label{eq:basic-inequality}
\begin{split}
&c_2|a(z_\varepsilon)|\lambda_\varepsilon^{-2s}(1+o(1))+c_3\varepsilon Q_V(z_\varepsilon)\lambda_\varepsilon^{-(N-2s)}+c_1\phi_a(z_\varepsilon)\lambda_\varepsilon^{-(N-2s)}(1+o(1))+\frac{\rho}{4}\SH^{-\theta}\|\ds q_\varepsilon\|_2^2\\
&\qquad\leq\left(\SH(a+\varepsilon V)-\SH\right)(1+o(1))+o\left(\varepsilon\lambda_\varepsilon^{-(N-2s)}\right).
\end{split}
\end{equation}
\end{Prop}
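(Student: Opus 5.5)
The plan is to combine the refined lower bound of Proposition~\ref{prop:refined-lower-bound} with the almost-minimizing property \eqref{eq:profile-almost-minimizing}. The hypothesis \eqref{eq:profile-almost-minimizing} gives
\begin{equation*}
\mathcal Q_\varepsilon[u_\varepsilon]=\SH(a+\varepsilon V)+o\bigl(\SH-\SH(a+\varepsilon V)\bigr)=\SH+\bigl(\SH(a+\varepsilon V)-\SH\bigr)(1+o(1)).
\end{equation*}
By Propositions~\ref{prop:profile-decomposition} and \ref{prop:remainder-estimates}, $z_\varepsilon$ stays in a compact subset of $\Omega$, $\lambda_\varepsilon\to\infty$, and after extraction $z_\varepsilon\to z_0\in\Omega$. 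Since $a$ is critical and satisfies \eqref{eq:assumption-a-on-Na}, and $\frac52s<N<4s$, Proposition~\ref{prop:refined-lower-bound}\,(b) applies. Inserting it into the identity above yields
\begin{equation*}
c_1\phi_a(z_\varepsilon)\lambda_\varepsilon^{-(N-2s)}-c_2a(z_\varepsilon)\lambda_\varepsilon^{-2s}+c_3\varepsilon Q_V(z_\varepsilon)\lambda_\varepsilon^{-(N-2s)}+\tfrac{\rho}{4}\SH^{-\theta}\|\ds q_\varepsilon\|_2^2
\le(\SH(a+\varepsilon V)-\SH)(1+o(1))
\end{equation*}
up to errors $o(\lambda_\varepsilon^{-2s})+o(\varepsilon\lambda_\varepsilon^{-(N-2s)})+o(\phi_a(z_\varepsilon)\lambda_\varepsilon^{-(N-2s)})$. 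Here we use $\phi_a\ge0$ from Corollary~\ref{lem:de-1.3}.

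The first step is to locate $z_0$. Since the right-hand side is negative, the part (a) inequality $c_1\phi_a(z_\varepsilon)\lambda_\varepsilon^{-(N-2s)}+o(\lambda_\varepsilon^{-(N-2s)})\le0$ already forces $\phi_a(z_\varepsilon)\to0$. Continuity of $\phi_a$ then gives $\phi_a(z_0)=0$, that is, $z_0\in\mathcal N_a$, and \eqref{eq:assumption-a-on-Na} gives $a(z_0)<0$. Consequently $|a(z_\varepsilon)|$ is bounded away from zero for $\varepsilon$ small and $-a(z_\varepsilon)=|a(z_\varepsilon)|$.

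The second step is to absorb the error terms into the displayed form \eqref{eq:basic-inequality}. Since $|a(z_\varepsilon)|\gtrsim1$, the error $o(\lambda_\varepsilon^{-2s})$ can be written as $c_2|a(z_\varepsilon)|\lambda_\varepsilon^{-2s}\,o(1)$. Since $\phi_a\ge0$, the error $o(\phi_a(z_\varepsilon)\lambda_\varepsilon^{-(N-2s)})$ becomes $c_1\phi_a(z_\varepsilon)\lambda_\varepsilon^{-(N-2s)}\,o(1)$. The error $o(\varepsilon\lambda_\varepsilon^{-(N-2s)})$ is kept on the right-hand side. Moving these factors to the left produces exactly \eqref{eq:basic-inequality}.

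The main obstacle is verifying that Proposition~\ref{prop:refined-lower-bound}\,(b) is legitimately applicable along this sequence. This requires the uniformity of the test-function expansion \eqref{eq:quotient-expansion-short} in $z$ on compacts, which comes from Theorem~\ref{lem:de-2.1}, and uniformity in the reparametrized scale $\lambda_\varepsilon'$. Both are already built into that proposition, so the remaining work is only bookkeeping of the $o(\cdot)$ terms, in particular checking that each error is relative to a quantity of the correct sign.
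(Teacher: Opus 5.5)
Your proposal is correct and follows the paper's proof: you combine \eqref{eq:profile-almost-minimizing} with Proposition~\ref{prop:refined-lower-bound}\,(b), use $\phi_a\geq0$ to get $\phi_a(z_\varepsilon)\to0$ and hence $z_0\in\mathcal N_a$, and then use $a(z_0)<0$ to absorb the $o(\lambda_\varepsilon^{-2s})$ error into the $c_2|a(z_\varepsilon)|\lambda_\varepsilon^{-2s}$ term. The only cosmetic difference is that you get $\phi_a(z_\varepsilon)\to0$ from part (a), which is legitimate because (a) holds for every continuous $a$; the paper instead reads off $\phi_a(z_\varepsilon)\lesssim\lambda_\varepsilon^{-(4s-N)}+\varepsilon$ directly from part (b).
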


\begin{proof}
By \eqref{eq:profile-almost-minimizing}, $\mathcal Q_\varepsilon[u_\varepsilon]-\SH=(\SH(a+\varepsilon V)-\SH)(1+o(1))<0$. By Proposition~\ref{prop:refined-lower-bound}\,(b),
\begin{equation*}
c_1\phi_a(z_\varepsilon)\lambda_\varepsilon^{-(N-2s)}(1+o(1))-c_2a(z_\varepsilon)\lambda_\varepsilon^{-2s}+c_3\varepsilon Q_V(z_\varepsilon)\lambda_\varepsilon^{-(N-2s)}+\frac{\rho}{4}\SH^{-\theta}\|\ds q_\varepsilon\|_2^2+o\left(\lambda_\varepsilon^{-2s}\right)+o\left(\varepsilon\lambda_\varepsilon^{-(N-2s)}\right)<0 .
\end{equation*}
Since $\phi_a\geq0$ and $a$, $Q_V$ are bounded on compact sets, this gives $\phi_a(z_\varepsilon)\lesssim\lambda_\varepsilon^{-(4s-N)}+\varepsilon+o(1)\to0$. By Proposition~\ref{prop:remainder-estimates}, $z_\varepsilon\to z_0\in\Omega$ up to a subsequence, and by continuity $\phi_a(z_0)=0$, that is, $z_0\in\mathcal N_a$. By \eqref{eq:assumption-a-on-Na}, $a(z_0)<0$, so that $-a(z_\varepsilon)=|a(z_\varepsilon)|$ for $\varepsilon$ small and the term $o(\lambda_\varepsilon^{-2s})$ can be absorbed into $c_2|a(z_\varepsilon)|\lambda_\varepsilon^{-2s}$. This gives \eqref{eq:basic-inequality}.
\end{proof}

\begin{proof}[Proof of Theorem~\ref{lem:de-1.4}: lower bound for $\SH(a+\varepsilon V)$]
Choose a sequence $\varepsilon\to0^+$ along which
\begin{equation*}
\varepsilon^{-\frac{2s}{4s-N}}\bigl(\SH-\SH(a+\varepsilon V)\bigr)\to\limsup_{\varepsilon'\to0^+}\varepsilon'^{-\frac{2s}{4s-N}}\bigl(\SH-\SH(a+\varepsilon' V)\bigr),
\end{equation*}
and let $(u_\varepsilon)$ be as in Proposition~\ref{prop:asymptotics-sequences}. By \eqref{eq:liminf-energy}, $\SH(a+\varepsilon V)-\SH\leq-\frac12C_*\sup_{\mathcal N_a(V)}\Phi\,\varepsilon^{\frac{2s}{4s-N}}<0$ for $\varepsilon$ small.

\emph{Step 1: $\lambda_\varepsilon\sim\varepsilon^{-\frac{1}{4s-N}}$.} Dropping the nonnegative terms involving $\phi_a$ and $q_\varepsilon$ in \eqref{eq:basic-inequality}, we obtain
\begin{equation*}
\frac{c_2}{2}|a(z_0)|\lambda_\varepsilon^{-2s}+\frac{1}{4}C_*\sup_{\mathcal N_a(V)}\Phi\,\varepsilon^{\frac{2s}{4s-N}}\leq C\varepsilon\lambda_\varepsilon^{-(N-2s)}
\end{equation*}
for $\varepsilon$ small. Comparing the right-hand side with the first term on the left gives $\lambda_\varepsilon^{-(4s-N)}\lesssim\varepsilon$, and comparing it with the second term gives $\lambda_\varepsilon^{-(N-2s)}\gtrsim\varepsilon^{\frac{N-2s}{4s-N}}$. Hence $\Lambda_\varepsilon:=\varepsilon^{\frac{1}{4s-N}}\lambda_\varepsilon$ satisfies $0<c\leq\Lambda_\varepsilon\leq C$, and in particular $o(\lambda_\varepsilon^{-2s})+o(\varepsilon\lambda_\varepsilon^{-(N-2s)})=o(\varepsilon^{\frac{2s}{4s-N}})$.

\emph{Step 2: conclusion.} Dividing \eqref{eq:basic-inequality} by $\varepsilon^{\frac{2s}{4s-N}}$ and using $a(z_\varepsilon)\to a(z_0)$, $Q_V(z_\varepsilon)\to Q_V(z_0)$, we get
\begin{equation}
\label{eq:rescaled-inequality}
F_{z_0}(\Lambda_\varepsilon)+\frac{c_1\phi_a(z_\varepsilon)\lambda_\varepsilon^{-(N-2s)}+\frac{\rho}{4}\SH^{-\theta}\|\ds q_\varepsilon\|_2^2}{\varepsilon^{\frac{2s}{4s-N}}}(1+o(1))+o(1)
\leq\frac{\SH(a+\varepsilon V)-\SH}{\varepsilon^{\frac{2s}{4s-N}}}(1+o(1)).
\end{equation}
Since the right-hand side is negative and bounded away from zero, Lemma~\ref{lem:F-minimization} forces $Q_V(z_0)<0$, i.e.\ $z_0\in\mathcal N_a(V)$, and then $F_{z_0}(\Lambda_\varepsilon)\geq-C_*\Phi(z_0)\geq-C_*\sup_{\mathcal N_a(V)}\Phi$. Therefore
\begin{equation*}
\limsup_{\varepsilon\to0^+}\frac{\SH-\SH(a+\varepsilon V)}{\varepsilon^{\frac{2s}{4s-N}}}\leq C_*\sup_{\mathcal N_a(V)}\Phi .
\end{equation*}
Together with \eqref{eq:liminf-energy}, this proves Theorem~\ref{lem:de-1.4}.
\end{proof}

\begin{proof}[Proof of Theorem~\ref{lem:de-1.6}]
By Theorem~\ref{lem:de-1.4}, $\SH(a+\varepsilon V)<\SH$ for every $\varepsilon>0$, so Propositions~\ref{prop:profile-decomposition}, \ref{prop:refined-lower-bound} and \ref{prop:asymptotics-sequences} apply to $(u_\varepsilon)$. By definition of $q_\varepsilon$,
\begin{equation*}
u_\varepsilon=\alpha_\varepsilon\Big(P\bar U_{z_\varepsilon,\lambda_\varepsilon}+\kappa_{N,\mu,s}\beta_\varepsilon\lambda_\varepsilon^{-\frac{N-2s}{2}}\Pi^\perp_{z_\varepsilon,\lambda_\varepsilon}\left(H_0(z_\varepsilon,\cdot)-H_a(z_\varepsilon,\cdot)\right)+r_\varepsilon\Big),
\qquad r_\varepsilon:=q_\varepsilon\in T_{z_\varepsilon,\lambda_\varepsilon}^\perp .
\end{equation*}
Consider any subsequence. By Theorem~\ref{lem:de-1.4}, the right-hand side of \eqref{eq:rescaled-inequality} converges to $-C_*\sup_{\mathcal N_a(V)}\Phi$. Passing to a further subsequence such that $z_\varepsilon\to z_0$ and $\Lambda_\varepsilon\to\Lambda_0\in(0,\infty)$, \eqref{eq:rescaled-inequality} and Lemma~\ref{lem:F-minimization} give
\begin{equation*}
-C_*\Phi(z_0)\leq F_{z_0}(\Lambda_0)\leq F_{z_0}(\Lambda_0)+\limsup_{\varepsilon\to0^+}\frac{c_1\phi_a(z_\varepsilon)\lambda_\varepsilon^{-(N-2s)}+\frac{\rho}{4}\SH^{-\theta}\|\ds q_\varepsilon\|_2^2}{\varepsilon^{\frac{2s}{4s-N}}}\leq-C_*\sup_{\mathcal N_a(V)}\Phi .
\end{equation*}
Hence all inequalities are equalities. Consequently: $z_0\in\mathcal N_a(V)$ and $\Phi(z_0)=\sup_{\mathcal N_a(V)}\Phi$, which is the asserted maximality property of $z_0$; $F_{z_0}(\Lambda_0)=\min F_{z_0}$, so that $\Lambda_0=\Lambda_{z_0}$ by Lemma~\ref{lem:F-minimization}; and
\begin{equation*}
\phi_a(z_\varepsilon)\lambda_\varepsilon^{-(N-2s)}=o\left(\varepsilon^{\frac{2s}{4s-N}}\right),
\qquad
\|\ds r_\varepsilon\|_2^2=o\left(\varepsilon^{\frac{2s}{4s-N}}\right).
\end{equation*}
Since $\lambda_\varepsilon^{-(N-2s)}\sim\varepsilon^{\frac{N-2s}{4s-N}}$, the first relation gives $\phi_a(z_\varepsilon)=o(\varepsilon)$. Thus, along every subsequence such that $z_\varepsilon\to z_0$, we have $\varepsilon^{\frac{1}{4s-N}}\lambda_\varepsilon\to\Lambda_{z_0}$, which is the asserted limit (if $\Phi$ has a unique maximizer on $\mathcal N_a(V)$, these convergences hold along the whole sequence).

It remains to estimate $\alpha_\varepsilon$. By \eqref{eq:profile-normalization}, $|\alpha_\varepsilon|^{2\twostar}\mathcal D(P\bar U_{z_\varepsilon,\lambda_\varepsilon}+w_\varepsilon)=D_0$. Since $\|\ds w_\varepsilon\|_2^2=O(\lambda_\varepsilon^{-(N-2s)})$ (Proposition~\ref{prop:remainder-estimates}), $w_\varepsilon\perp P\bar U_{z_\varepsilon,\lambda_\varepsilon}$ and $z_\varepsilon$ stays in a compact set, the expansion \eqref{eq:first-expansion-denominator} gives $\mathcal D(P\bar U_{z_\varepsilon,\lambda_\varepsilon}+w_\varepsilon)=D_0+O(\lambda_\varepsilon^{-(N-2s)})$. Hence $|\alpha_\varepsilon|=1+O(\lambda_\varepsilon^{-(N-2s)})=1+O(\varepsilon^{\frac{N-2s}{4s-N}})$, and $\alpha_\varepsilon=\varsigma+O(\varepsilon^{\frac{N-2s}{4s-N}})$ with $\varsigma\in\{-1,1\}$ given by Proposition~\ref{prop:profile-decomposition}.
Finally, Proposition~\ref{prop:refined-lower-bound} and the scale estimate give $\beta_\varepsilon=1+O(\lambda_\varepsilon^{-(N-2s)})=1+O(\varepsilon^{\frac{N-2s}{4s-N}})$.
\end{proof}

\begin{proof}[Proof of Theorem~\ref{lem:de-1.5}]
Assume $\mathcal N_a(V)=\emptyset$, that is, $Q_V\geq0$ on $\mathcal N_a$. For fixed $z\in\Omega$ and $\varepsilon\geq0$, letting $\lambda\to\infty$ in \eqref{eq:quotient-expansion} gives $\SH(a+\varepsilon V)\leq\SH$.
If $\SH(a+\varepsilon V)=\SH$ for all small $\varepsilon>0$ there is nothing to prove; otherwise, let $\varepsilon\to0^+$ along a sequence with $\SH(a+\varepsilon V)<\SH$ and let $(u_\varepsilon)$ be as in Proposition~\ref{prop:asymptotics-sequences}. After passing to a subsequence, we have $z_\varepsilon\to z_0\in\mathcal N_a$ and $Q_V(z_0)\geq0$.

Fix $\delta>0$. For $\varepsilon$ small, $c_3Q_V(z_\varepsilon)\geq-\delta$ and $|a(z_\varepsilon)|(1+o(1))\geq\frac12|a(z_0)|$, so that \eqref{eq:basic-inequality} and $o(\varepsilon\lambda_\varepsilon^{-(N-2s)})\geq-\delta\varepsilon\lambda_\varepsilon^{-(N-2s)}$ give
\begin{equation*}
\left(\SH(a+\varepsilon V)-\SH\right)(1+o(1))
\geq\frac{c_2}{2}|a(z_0)|\lambda_\varepsilon^{-2s}-2\delta\varepsilon\lambda_\varepsilon^{-(N-2s)}
\geq-C(\delta)\,\varepsilon^{\frac{2s}{4s-N}},
\end{equation*}
where, by the computation in the proof of Lemma~\ref{lem:F-minimization} (with $A=\frac{c_2}{2}|a(z_0)|$, $B=2\delta$ and $\lambda_\varepsilon=\Lambda\varepsilon^{-\frac{1}{4s-N}}$), $C(\delta)=C\,\delta^{\frac{2s}{4s-N}}$ with $C$ independent of $\delta$. Since $\delta>0$ is arbitrary, $\SH(a+\varepsilon V)-\SH\geq-o(\varepsilon^{\frac{2s}{4s-N}})$ along every such sequence, which proves the first assertion.

Assume now that $Q_V>0$ on $\mathcal N_a$ and, by contradiction, that $\SH(a+\varepsilon V)<\SH$ along a sequence $\varepsilon\to0^+$. With $(u_\varepsilon)$ as above, $Q_V(z_\varepsilon)\to Q_V(z_0)>0$, and \eqref{eq:basic-inequality} gives
\begin{equation*}
\frac{c_2}{2}|a(z_0)|\lambda_\varepsilon^{-2s}+\frac{c_3}{2}Q_V(z_0)\varepsilon\lambda_\varepsilon^{-(N-2s)}\leq\left(\SH(a+\varepsilon V)-\SH\right)(1+o(1))<0
\end{equation*}
for $\varepsilon$ small, which is impossible. Hence $\SH(a+\varepsilon V)=\SH$ for all sufficiently small $\varepsilon>0$.
\end{proof}

\appendix
\section{Basic computations}

The constants used above are given by the following identities; see \cite[Lemma~B.5]{DeNittiKonig2023}.

\begin{lem}[Explicit constants]
\label{lem:explicit-constants}
Let $2s<N<4s$ and set
\begin{equation*}
U_{0,1}(x):=\left(\frac{1}{1+|x|^2}\right)^{\frac{N-2s}{2}},\qquad
h(x):=\frac{1}{|x|^{N-2s}}
-\frac{1}{(1+|x|^2)^{\frac{N-2s}{2}}}.
\end{equation*}
Then, for every $0\leq k<\frac{N}{N-2s}$, we have
\begin{equation*}
\mathfrak a_{N,s}^{(k)}
:=\int_{\R^N}U_{0,1}(x)^{2_s^*-k}dx
=\pi^{N/2}
\frac{\Gamma\left(\frac{N}{2}(1-k)+ks\right)}
{\Gamma\left(\frac{N}{2}(2-k)+ks\right)}.
\end{equation*}
Furthermore,
\begin{equation*}
b_{N,s}
:=\int_{\R^N}
U_{0,1}(x)^{\frac{N+2s}{N-2s}}|x|^{4s-N}dx
=\pi^{N/2}
\frac{\Gamma(2s)\Gamma\left(\frac{N}{2}-s\right)}
{\Gamma\left(\frac{N}{2}\right)
 \Gamma\left(\frac{N}{2}+s\right)},
\end{equation*}
and
\begin{equation*}
\alpha_{N,s}
:=\int_{\R^N}U_{0,1}(x)h(x)dx
=\frac{\pi^{N/2}}{\Gamma\left(\frac{N}{2}\right)}
\Gamma\left(\frac{N}{2}-2s\right)
\left(
\frac{\Gamma(s)}{\Gamma\left(\frac{N}{2}-s\right)}
-\frac{\Gamma\left(\frac{N}{2}\right)}{\Gamma(N-2s)}
\right).
\end{equation*}
Moreover, the normalization constant in
\begin{equation*}
(-\Delta)^s u(z)
=C_{N,s}\operatorname{P.V.}\int_{\R^N}
\frac{u(z)-u(x)}{|z-x|^{N+2s}}dx
\end{equation*}
is given by
\begin{equation*}
C_{N,s}
:=\frac{2^{2s}s\Gamma\left(\frac{N+2s}{2}\right)}
{\pi^{N/2}\Gamma(1-s)}.
\end{equation*}
Finally,
\begin{equation*}
(-\Delta)^sU_{0,1}
=a_{N,s}U_{0,1}^{\frac{N+2s}{N-2s}},
\end{equation*}
where
\begin{equation*}
a_{N,s}
:=2^{2s}
\frac{\Gamma\left(\frac{N+2s}{2}\right)}
{\Gamma\left(\frac{N-2s}{2}\right)}.
\end{equation*}
\end{lem}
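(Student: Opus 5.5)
The statement to prove is Lemma~\ref{lem:explicit-constants}: a list of explicit integrals. The plan is to reduce each integral, in polar coordinates and with the substitution $t=r^2$, to Beta functions. The exception is $\alpha_{N,s}$, which needs analytic continuation because its integrand is only conditionally well behaved.

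\emph{The constants $\mathfrak a^{(k)}_{N,s}$ and $b_{N,s}$.} Since $2_s^*-k=\frac{2N}{N-2s}-k$, the integrand of $\mathfrak a^{(k)}_{N,s}$ is $(1+|x|^2)^{-\frac{2N-k(N-2s)}{2}}$. I will use the formula
\begin{equation*}
\int_{\R^N}\frac{|x|^{2\beta}}{(1+|x|^2)^{p}}\,dx=\pi^{N/2}\frac{\Gamma(\frac N2+\beta)\Gamma(p-\frac N2-\beta)}{\Gamma(\frac N2)\Gamma(p)},
\end{equation*}
valid for $\frac N2+\beta>0$ and $p>\frac N2+\beta$. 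It follows from $|\mathbb S^{N-1}|=2\pi^{N/2}/\Gamma(N/2)$ and the Beta integral $\int_0^\infty t^{a-1}(1+t)^{-a-b}dt=B(a,b)$. With $\beta=0$ and $p=N-\frac k2(N-2s)$, this gives $\mathfrak a^{(k)}_{N,s}$ after simplification. Convergence is exactly the condition $k<N/(N-2s)$. For $b_{N,s}$, I take $2\beta=4s-N$ and $p=\frac{N+2s}{2}$. Then $\frac N2+\beta=2s$ and $p-\frac N2-\beta=\frac N2-s$, which gives the stated formula directly.

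\emph{The constant $\alpha_{N,s}$ (main obstacle).} Each of the two pieces $U_{0,1}|x|^{-(N-2s)}$ and $U_{0,1}^2$ decays like $|x|^{-2(N-2s)}$. This is not integrable at infinity because $N<4s$. Only their difference $U_{0,1}h$ decays fast enough, since $h=O(|x|^{-(N-2s)-2})$. I will therefore introduce a parameter: replace the exponent $\frac{N-2s}{2}$ in $U_{0,1}$ by $\frac{q}{2}$ with $q$ large. For such $q$ both pieces converge separately and are given by the formula above:
\begin{equation*}
\pi^{N/2}\frac{\Gamma(s)\Gamma(\frac{q}{2}-s)}{\Gamma(\frac N2)\Gamma(\frac q2)}
\quad\text{and}\quad
\pi^{N/2}\frac{\Gamma(\frac{q+N-2s}{2}-\frac N2)}{\Gamma(\frac{q+N-2s}{2})}.
\end{equation*}
The difference integral is analytic in $q$ on the half-plane where it converges, and that half-plane includes $q=N-2s$, so the identity persists by analytic continuation. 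At $q=N-2s$, the first expression becomes $\pi^{N/2}\Gamma(s)\Gamma(\frac N2-2s)/(\Gamma(\frac N2)\Gamma(\frac N2-s))$. The second becomes $\pi^{N/2}\Gamma(\frac N2-2s)/\Gamma(N-2s)$. Factoring out $\pi^{N/2}\Gamma(\frac N2-2s)/\Gamma(\frac N2)$ yields the claim.

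The one point to justify carefully is that the difference integral is analytic on a domain containing both $q=N-2s$ and large $q$. This follows from dominated convergence, using the uniform bound $|h|\lesssim|x|^{-(N-2s)}\min\{1,|x|^{-2}\}$.

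\emph{The constants $C_{N,s}$ and $a_{N,s}$.} These are standard. $C_{N,s}$ is the normalization that makes the singular integral have Fourier symbol $|\xi|^{2s}$. The formula for $a_{N,s}$ follows by writing $U_{0,1}=c\,|\xi|^{-?}$-type Fourier transforms via Bessel functions. The more efficient route is to use stereographic projection: $(-\Delta)^s$ is intertwined with the conformally covariant operator on $\mathbb S^N$, whose value on constants is $\Gamma(\frac N2+s)/\Gamma(\frac N2-s)$. The factor $2^{2s}$ comes from the conformal factor $(\frac{2}{1+|x|^2})$. I would cite these, as the paper does via \cite[Lemma~B.5]{DeNittiKonig2023}.
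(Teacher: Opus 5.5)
Your proposal is correct, and it goes further than the paper, which gives no argument for this lemma and takes all of these identities from \cite[Lemma~B.5]{DeNittiKonig2023}. You verify the integral constants directly.
\begin{itemize}
\item $\mathfrak a^{(k)}_{N,s}$ and $b_{N,s}$ follow from the radial Beta integral, with $\beta=0$, $p=N-\frac k2(N-2s)$, respectively $\beta=2s-\frac N2$, $p=\frac{N+2s}{2}$.
\item For $\alpha_{N,s}$, you put the parameter only in the first factor. Then $F(q)=\int_{\R^N}(1+|x|^2)^{-q/2}h(x)\,dx$ is holomorphic on $\operatorname{Re}q>2s-2$. This half-plane contains $q=N-2s$ because $N>2s>4s-2$. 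For $\operatorname{Re}q>2s$, $F(q)$ equals $\frac{\pi^{N/2}}{\Gamma(\frac N2)}\bigl[B(s,\frac q2-s)-B(\frac N2,\frac q2-s)\bigr]$.
\end{itemize}
Only $C_{N,s}$ and $a_{N,s}$ are cited, as in the paper. Your stereographic sketch for $a_{N,s}$ is right: with $J=(\frac{2}{1+|x|^2})^N$, the intertwining relation applied to the constant function $1$ gives $2^{\frac{N-2s}{2}}(-\Delta)^sU_{0,1}=2^{\frac{N+2s}{2}}\frac{\Gamma(\frac N2+s)}{\Gamma(\frac N2-s)}U_{0,1}^{\frac{N+2s}{N-2s}}$. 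The preceding sentence about Bessel functions is incomplete and should be dropped.

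Your route makes the lemma self-contained. It also explains why $\alpha_{N,s}$ carries the factor $\Gamma(\frac N2-2s)<0$: this is $B(\cdot,\frac N2-2s)$ continued to a negative second argument. The same Beta form makes transparent the identity $a_{N,s}d_{N,s}b_{N,s}+\alpha_{N,s}=-\pi^{N/2}\Gamma(\frac N2-2s)/\Gamma(N-2s)$ used in Corollary~\ref{lem:de-2.2}. There the $B(s,\frac N2-2s)$ terms cancel.

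When you write it up, state explicitly that each of the two Gamma expressions has a pole at $q=2s$. This pole lies between the region of separate convergence and $q=N-2s$. It is harmless for three reasons:
\begin{itemize}
\item the identity theorem applies to the meromorphic right-hand side;
\item the two residues, both equal to $2\pi^{N/2}/\Gamma(\frac N2)$, cancel;
\item $q=N-2s$ is not a pole, because $\frac N2-2s\in(-1,0)$.
\end{itemize}
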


\begin{lem}
\label{lem:projection-barU}
Let $z\in\Omega$ and $N>2s$. Then the following assertions hold.

\textnormal{(i)} We have
\begin{equation*}
0\leq P\bar U_{z,\lambda}\leq\bar U_{z,\lambda}.
\end{equation*}
Moreover, the function
\begin{equation*}
\varphi_{z,\lambda}
:=\bar U_{z,\lambda}-P\bar U_{z,\lambda}
\end{equation*}
satisfies
\begin{equation*}
\left\|\varphi_{z,\lambda}\right\|_{L^\infty(\R^N)}
\lesssim
d(z)^{-N+2s}\lambda^{-\frac{N-2s}{2}},
\end{equation*}
and
\begin{equation*}
\left\|\varphi_{z,\lambda}\right\|_{L^{\frac{2N}{N-2s}}(\R^N)}
\lesssim
\bigl(d(z)\lambda\bigr)^{-\frac{N-2s}{2}}.
\end{equation*}

\textnormal{(ii)} We have the expansion
\begin{equation*}
\begin{split}
P\bar U_{z,\lambda}(x)
&=\bar U_{z,\lambda}(x)
-\kappa_{N,\mu,s}
\lambda^{-\frac{N-2s}{2}}H_0(z,x)
-f_{z,\lambda}(x),
\end{split}
\end{equation*}
where
\begin{equation*}
\begin{split}
f_{z,\lambda}
&:=\varphi_{z,\lambda}
-\kappa_{N,\mu,s}
\lambda^{-\frac{N-2s}{2}}H_0(z,\cdot)
\end{split}
\end{equation*}
satisfies
\begin{equation*}
\left\|f_{z,\lambda}\right\|_{L^\infty(\Omega)}
\lesssim
d(z)^{-N-2+2s}\lambda^{-\frac{N+4-2s}{2}}.
\end{equation*}
\end{lem}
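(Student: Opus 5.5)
Part (i) compares $P\bar U_{z,\lambda}$ with $\bar U_{z,\lambda}$ via the maximum principle; part (ii) isolates the harmonic-type correction $\kappa_{N,\mu,s}\lambda^{-\frac{N-2s}{2}}H_0(z,\cdot)$.

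\emph{Part (i).} The function $\varphi_{z,\lambda}=\bar U_{z,\lambda}-P\bar U_{z,\lambda}$ satisfies $(-\Delta)^s\varphi_{z,\lambda}=0$ in $\Omega$ weakly, and $\varphi_{z,\lambda}=\bar U_{z,\lambda}>0$ on $\R^N\setminus\Omega$. The weak maximum principle for $(-\Delta)^s$ on $\ths$, tested with $\varphi_{z,\lambda}^-$, gives $\varphi_{z,\lambda}\geq0$. Also $(-\Delta)^sP\bar U_{z,\lambda}=c_0\bar U_{z,\lambda}^{2_s^*-1}\geq0$ in $\Omega$ with zero exterior data, so $P\bar U_{z,\lambda}\geq0$. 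Together these give $0\leq P\bar U_{z,\lambda}\leq\bar U_{z,\lambda}$.

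For the sup bound we apply the maximum principle again. Since $\varphi_{z,\lambda}$ is $s$-harmonic in $\Omega$, $\sup_{\R^N}\varphi_{z,\lambda}=\sup_{\R^N\setminus\Omega}\bar U_{z,\lambda}$. On $\R^N\setminus\Omega$ we have $|x-z|\geq d(z)$, hence $\bar U_{z,\lambda}\lesssim\lambda^{-\frac{N-2s}{2}}d(z)^{-(N-2s)}$, which is the $L^\infty$ bound.

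For the $L^{2_s^*}$ bound, note that $\varphi_{z,\lambda}$ is the $\dot H^s$-orthogonal projection of $\bar U_{z,\lambda}$ onto $s$-harmonic functions in $\Omega$. I would therefore bound $\|\ds\varphi_{z,\lambda}\|_2$ by $\|\ds(\chi\bar U_{z,\lambda})\|_2$, where $\chi$ is a cutoff equal to $1$ outside $B_{d(z)}(z)$ and vanishing on $B_{d(z)/2}(z)$. A scaling computation gives $\|\ds(\chi\bar U_{z,\lambda})\|_2\lesssim(\lambda d(z))^{-\frac{N-2s}{2}}$, and Sobolev then yields the $L^{2_s^*}$ bound. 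This is the De Nitti--K\"onig argument, and I would cite it.

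\emph{Part (ii).} Write $\bar U_{z,\lambda}(x)=\kappa_{N,\mu,s}\lambda^{-\frac{N-2s}{2}}\bigl(|x-z|^{2}+\lambda^{-2}\bigr)^{-\frac{N-2s}{2}}$. The function $\kappa_{N,\mu,s}\lambda^{-\frac{N-2s}{2}}H_0(z,\cdot)$ is $s$-harmonic in $\Omega$ and equals $\kappa_{N,\mu,s}\lambda^{-\frac{N-2s}{2}}|x-z|^{-(N-2s)}$ outside $\Omega$. Hence $f_{z,\lambda}$ is $s$-harmonic in $\Omega$ with exterior data
\[
\kappa_{N,\mu,s}\lambda^{-\frac{N-2s}{2}}\Bigl[\bigl(|x-z|^2+\lambda^{-2}\bigr)^{-\frac{N-2s}{2}}-|x-z|^{-(N-2s)}\Bigr].
\]
By the mean value theorem this data is bounded by $\lambda^{-\frac{N-2s}{2}-2}|x-z|^{-(N-2s)-2}\lesssim\lambda^{-\frac{N+4-2s}{2}}d(z)^{-N-2+2s}$ for $|x-z|\geq d(z)$. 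The maximum principle applied to $\pm f_{z,\lambda}$ transfers this bound to $\Omega$.

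The main obstacle is justifying the maximum principle for $s$-harmonic functions whose exterior data are bounded but not compactly supported or in $H^s$. Here $H_0(z,\cdot)$ must be understood as the solution with data $|x-z|^{-(N-2s)}$, which is smooth and decays away from $z$. I would handle this through the Poisson kernel representation $u(x)=\int_{\R^N\setminus\Omega}P_\Omega(x,y)g(y)\,dy$ with $\int P_\Omega(x,\cdot)=1$, which gives $\|u\|_\infty\leq\|g\|_\infty$ directly.
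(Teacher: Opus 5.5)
The paper states this lemma without proof (it is De Nitti--K\"onig's projected-bubble lemma, rescaled by $\kappa_{N,\mu,s}$). Your maximum-principle argument is the standard proof and is correct.

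The obstacle you flag in (ii) does not actually arise, so the Poisson kernel is not needed. Take a cutoff $\eta\in C_c^\infty(B_{d(z)/2}(z))$ with $\eta=1$ near $z$, and set $\tilde g:=(1-\eta)|\cdot-z|^{-(N-2s)}\in\dot H^s(\R^N)$. Then $H_0(z,\cdot)=\tilde g-P\tilde g$, where $P$ is the $\dot H^s(\R^N)$-orthogonal projection onto $\ths$, exactly as in the definition of $P\bar U_{z,\lambda}$. Hence $f_{z,\lambda}\in\dot H^s(\R^N)$ and is weakly $s$-harmonic in $\Omega$. With $M:=\sup_{\R^N\setminus\Omega}|f_{z,\lambda}|$, the same truncation test you use in (i), now with $(\pm f_{z,\lambda}-M)^+\in\ths$, gives $|f_{z,\lambda}|\leq M$.

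Similarly, the $L^{2_s^*}$ bound in (i) follows directly by integrating $0\leq\varphi_{z,\lambda}\leq\min\{\bar U_{z,\lambda},\|\varphi_{z,\lambda}\|_\infty\}$. Your cutoff argument is still worth keeping, since it also yields the $\dot H^s$ bound on $\varphi_{z,\lambda}$.
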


\begin{lem}
\label{lem:barU-Lq-estimates}
Let $z\in\Omega$ and $q\in[1,\infty]$. As $\lambda\to\infty$, uniformly for $z$ in compact subsets of $\Omega$, we have
\begin{equation*}
\left\|\bar U_{z,\lambda}\right\|_{L^q(\Omega)}
\lesssim
\begin{cases}
\lambda^{\frac{N-2s}{2}-\frac{N}{q}},
& q>\dfrac{N}{N-2s},
\\[2mm]
\lambda^{-\frac{N-2s}{2}}
(\ln\lambda)^{\frac{N-2s}{N}},
& q=\dfrac{N}{N-2s},
\\[2mm]
\lambda^{-\frac{N-2s}{2}},
& 1\leq q<\dfrac{N}{N-2s}.
\end{cases}
\end{equation*}
Moreover,
\begin{equation*}
\left|\partial_\lambda\bar U_{z,\lambda}\right|
\lesssim
\lambda^{-1}\bar U_{z,\lambda},
\end{equation*}
and hence
\begin{equation*}
\left\|\partial_\lambda\bar U_{z,\lambda}\right\|_{L^q(\Omega)}
\lesssim
\lambda^{-1}\left\|\bar U_{z,\lambda}\right\|_{L^q(\Omega)}.
\end{equation*}
Finally, for every $i=1,\ldots,N$,
\begin{equation*}
\left\|\partial_{z_i}\bar U_{z,\lambda}\right\|_{L^q(\Omega)}
\lesssim
\begin{cases}
\lambda^{\frac{N-2s+2}{2}-\frac{N}{q}},
& q>\dfrac{N}{N-2s+1},
\\[2mm]
\lambda^{-\frac{N-2s}{2}}
(\ln\lambda)^{\frac{N-2s+1}{N}},
& q=\dfrac{N}{N-2s+1},
\\[2mm]
\lambda^{-\frac{N-2s}{2}},
& 1\leq q<\dfrac{N}{N-2s+1}.
\end{cases}
\end{equation*}
\end{lem}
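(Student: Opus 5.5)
The statement is Lemma~\ref{lem:barU-Lq-estimates}, so the plan is to reduce every estimate to a one-dimensional radial integral by rescaling. Since $\bar U_{z,\lambda}=\kappa_{N,\mu,s}U_{z,\lambda}$, constants are irrelevant. The substitution $x=z+y/\lambda$ gives
\begin{equation*}
\|\bar U_{z,\lambda}\|_{L^q(\Omega)}^q\lesssim\lambda^{\frac{(N-2s)q}{2}-N}\int_{\lambda(\Omega-z)}(1+|y|^2)^{-\frac{(N-2s)q}{2}}\,dy .
\end{equation*}
Since $\Omega$ is bounded, $\lambda(\Omega-z)\subset B_{\lambda R}(0)$ with $R=\operatorname{diam}\Omega$. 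For $z$ in a compact subset, $\lambda(\Omega-z)$ also contains $B_{\lambda d(z)}$, but only the upper bound is needed. The integrand behaves like $|y|^{-(N-2s)q}$ at infinity, which leads to three cases.

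\emph{Case $(N-2s)q>N$.} The integral over $\R^N$ is finite, and this gives $\lambda^{\frac{N-2s}{2}-\frac Nq}$.

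\emph{Case $(N-2s)q=N$.} The integral over $B_{\lambda R}$ is $O(\log\lambda)$. The prefactor is $\lambda^0$, so the $q$-th root gives $(\log\lambda)^{1/q}=(\log\lambda)^{\frac{N-2s}{N}}$. However, one must then check the stated power $\lambda^{-\frac{N-2s}{2}}$. In that case $\frac{N-2s}{2}-\frac Nq=-\frac{N-2s}{2}$, since $\frac Nq=N-2s$, and the combination works out to the claim.

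\emph{Case $(N-2s)q<N$.} The integral is $\lesssim(\lambda R)^{N-(N-2s)q}$. Multiplied by the prefactor, this gives $\lambda^{-\frac{(N-2s)q}{2}}$, and the $q$-th root gives $\lambda^{-\frac{N-2s}{2}}$.

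For $q=\infty$, I would use the trivial bound $\lambda^{\frac{N-2s}{2}}$, which agrees with the first case.

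For $\partial_\lambda$, differentiating gives
\begin{equation*}
\partial_\lambda U_{z,\lambda}=\tfrac{N-2s}{2}\,\lambda^{-1}U_{z,\lambda}\,\frac{1-\lambda^2|x-z|^2}{1+\lambda^2|x-z|^2}.
\end{equation*}
The last factor has modulus at most $1$, which yields the pointwise bound $|\partial_\lambda\bar U_{z,\lambda}|\lesssim\lambda^{-1}\bar U_{z,\lambda}$. The norm bound then follows immediately.

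For $\partial_{z_i}$, differentiating gives
\begin{equation*}
|\partial_{z_i}U_{z,\lambda}|=(N-2s)\,\lambda^{\frac{N-2s}{2}+2}\,|x_i-z_i|\,(1+\lambda^2|x-z|^2)^{-\frac{N-2s}{2}-1}\lesssim\lambda^{\frac{N-2s}{2}+1}(1+\lambda|x-z|)^{-(N-2s+1)}.
\end{equation*}
Rescaling as before gives $\lambda^{(\frac{N-2s}{2}+1)q-N}\int_{B_{\lambda R}}(1+|y|)^{-(N-2s+1)q}\,dy$, and the same three-case analysis applies with threshold $q=\frac{N}{N-2s+1}$. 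In the large-$q$ case this gives $\lambda^{\frac{N-2s+2}{2}-\frac Nq}$, as claimed. In the small-$q$ case it gives $\lambda^{\frac{N-2s}{2}+1-(N-2s+1)}=\lambda^{-\frac{N-2s}{2}}$, and in the borderline case the same power times $(\log\lambda)^{1/q}=(\log\lambda)^{\frac{N-2s+1}{N}}$.

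Everything here is routine. The only point needing care is bookkeeping of exponents in the borderline logarithmic cases, where the powers of $\lambda$ from the prefactor and from $q=\frac{N}{N-2s}$, respectively $q=\frac{N}{N-2s+1}$, must be checked to combine as stated. Uniformity in $z$ comes from using only $\Omega\subset B_R(z)$.
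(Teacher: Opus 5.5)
Your rescaling argument is correct. It is the standard computation behind this lemma, which the paper states in the appendix without proof. There is one bookkeeping slip: in the borderline case $q=\frac{N}{N-2s}$ the prefactor is $\lambda^{\frac{(N-2s)q}{2}-N}=\lambda^{-\frac N2}$, not $\lambda^0$, and its $q$-th root $\lambda^{-\frac{N-2s}{2}}$ is exactly the power you then recover, so your conclusion $\lambda^{-\frac{N-2s}{2}}(\ln\lambda)^{\frac{N-2s}{N}}$ stands.
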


\begin{lem}
\label{lem:barU-global-exterior-Lq}
Let $z\in\Omega$. As $\lambda\to\infty$, we have
\begin{equation*}
\left\|\bar U_{z,\lambda}\right\|_{L^q(\R^N)}
\lesssim
\lambda^{\frac{N-2s}{2}-\frac{N}{q}},
\qquad
\frac{N}{N-2s}<q<\infty,
\end{equation*}
\begin{equation*}
\left\|\partial_\lambda\bar U_{z,\lambda}\right\|_{L^q(\R^N)}
\lesssim
\lambda^{\frac{N-2s-2}{2}-\frac{N}{q}},
\qquad
\frac{N}{N-2s}<q<\infty,
\end{equation*}
and
\begin{equation*}
\left\|\partial_{z_i}\bar U_{z,\lambda}\right\|_{L^q(\R^N)}
\lesssim
\lambda^{\frac{N-2s+2}{2}-\frac{N}{q}},
\qquad
\frac{N}{N-2s+1}<q<\infty.
\end{equation*}
Moreover, for any $\tau=\tau_\lambda>0$ satisfying $\tau\lambda\to\infty$, we have
\begin{equation*}
\left\|\bar U_{z,\lambda}\right\|_{L^q(\R^N\setminus B_\tau(z))}
\lesssim
\lambda^{-\frac{N-2s}{2}}
\tau^{\frac{N-q(N-2s)}{q}},
\qquad
\frac{N}{N-2s}<q<\infty,
\end{equation*}
\begin{equation*}
\left\|\partial_\lambda\bar U_{z,\lambda}\right\|_{L^q(\R^N\setminus B_\tau(z))}
\lesssim
\lambda^{-\frac{N-2s+2}{2}}
\tau^{\frac{N-q(N-2s)}{q}},
\qquad
\frac{N}{N-2s}<q<\infty,
\end{equation*}
and
\begin{equation*}
\left\|\partial_{z_i}\bar U_{z,\lambda}\right\|_{L^q(\R^N\setminus B_\tau(z))}
\lesssim
\lambda^{-\frac{N-2s}{2}}
\tau^{\frac{N-q(N-2s+1)}{q}},
\qquad
\frac{N}{N-2s+1}<q<\infty.
\end{equation*}
\end{lem}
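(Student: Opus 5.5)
The proof is a direct computation based on the explicit formula
\[
\bar U_{z,\lambda}(x)=\kappa_{N,\mu,s}\,\lambda^{\frac{N-2s}{2}}\bigl(1+\lambda^2|x-z|^2\bigr)^{-\frac{N-2s}{2}}
\]
and the change of variables $y=\lambda(x-z)$. I would treat the global bounds on $\R^N$ first, then the exterior bounds. In each case I first derive a pointwise bound for the function or derivative in question, and then integrate it.

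\emph{Global bounds.} Substituting $y=\lambda(x-z)$ gives
\[
\|\bar U_{z,\lambda}\|_{L^q(\R^N)}^q=\kappa_{N,\mu,s}^q\,\lambda^{\frac{q(N-2s)}{2}-N}\int_{\R^N}(1+|y|^2)^{-\frac{q(N-2s)}{2}}\,dy .
\]
The integral is finite exactly when $q(N-2s)>N$, and taking the $q$-th root gives the first bound.

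For the $\lambda$-derivative, differentiating yields
\[
\partial_\lambda\bar U_{z,\lambda}=\frac{N-2s}{2}\,\lambda^{-1}\,\frac{1-\lambda^2|x-z|^2}{1+\lambda^2|x-z|^2}\,\bar U_{z,\lambda}.
\]
The middle fraction is bounded by $1$ in absolute value, so $|\partial_\lambda\bar U_{z,\lambda}|\lesssim\lambda^{-1}\bar U_{z,\lambda}$ pointwise. The second bound then follows from the first, with one extra factor $\lambda^{-1}$.

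For the $z$-derivative, we have
\[
|\partial_{z_i}\bar U_{z,\lambda}(x)|\lesssim\lambda^{\frac{N-2s}{2}+2}\,|x-z|\,\bigl(1+\lambda^2|x-z|^2\bigr)^{-\frac{N-2s+2}{2}} .
\]
After rescaling, this produces the factor $\lambda^{\frac{N-2s+2}{2}-\frac Nq}$ times the integral $\int|y|^q(1+|y|^2)^{-\frac{q(N-2s+2)}{2}}\,dy$. This integral converges at infinity if and only if $q(N-2s+1)>N$, which is the stated range.

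\emph{Exterior bounds.} On $\R^N\setminus B_\tau(z)$ we have $\lambda|x-z|\geq\lambda\tau\to\infty$. This gives the pointwise bounds
\[
\bar U_{z,\lambda}\lesssim\lambda^{-\frac{N-2s}{2}}|x-z|^{-(N-2s)}
\qquad\text{and}\qquad
|\partial_{z_i}\bar U_{z,\lambda}|\lesssim\lambda^{-\frac{N-2s}{2}}|x-z|^{-(N-2s+1)} .
\]
Combining the first with $|\partial_\lambda\bar U_{z,\lambda}|\lesssim\lambda^{-1}\bar U_{z,\lambda}$ gives
\[
|\partial_\lambda\bar U_{z,\lambda}|\lesssim\lambda^{-\frac{N-2s+2}{2}}|x-z|^{-(N-2s)} .
\]
Integrating in polar coordinates, $\int_{|x-z|>\tau}|x-z|^{-qm}\,dx=C\,\tau^{N-qm}$ whenever $qm>N$. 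Applying this with $m=N-2s$ and with $m=N-2s+1$, and taking $q$-th roots, gives the three exterior estimates with the stated powers of $\tau$.

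No step is a real obstacle. The only points needing care are two. First, the convergence thresholds $q>\frac N{N-2s}$ and $q>\frac N{N-2s+1}$ must be matched to the decay rates of the respective integrands. Second, the pointwise bounds are used only in the far region $\lambda|x-z|\gg1$, which is exactly what the assumption $\tau\lambda\to\infty$ guarantees. All constants are independent of $z$, so the bounds are uniform in $z$.
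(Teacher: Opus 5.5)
Your proposal is correct and is the routine computation the paper leaves implicit: it states this lemma without proof, and the scaling $y=\lambda(x-z)$ for the global bounds, together with pointwise decay bounds integrated in polar coordinates for the exterior bounds, is exactly what is needed. One small remark: the pointwise bounds $\bar U_{z,\lambda}\lesssim\lambda^{-\frac{N-2s}{2}}|x-z|^{-(N-2s)}$ and $|\partial_{z_i}\bar U_{z,\lambda}|\lesssim\lambda^{-\frac{N-2s}{2}}|x-z|^{-(N-2s+1)}$ hold for every $x$, since $1+\lambda^2|x-z|^2\geq\lambda^2|x-z|^2$, so the hypothesis $\tau\lambda\to\infty$ is not actually used for these upper bounds.
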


For the regular part of the Green function, we use \cite[Lemma~A.3]{DeNittiKonig2023}.

\begin{lem}[Diagonal expansion of $H_a$]
\label{lemma Ha expansion}
Let $2s<N<4s$, and let $a\in C^1(\Omega)\cap C(\overline{\Omega})$ be such that $(-\Delta)^s+a$ is coercive.

\textnormal{(i)} If $4s-N<1$, then, as $x\to z$,
\begin{equation*}
H_a(z,x)
=\phi_a(z)
-d_{N,s}a(z)|x-z|^{4s-N}
+o\left(|x-z|^{4s-N}\right).
\end{equation*}

\textnormal{(ii)} If $4s-N\geq1$, then there exists $\nu_z\in\R^N$ such that, as $x\to z$,
\begin{equation*}
H_a(z,x)
=\phi_a(z)
+\nu_z\cdot(x-z)
-d_{N,s}a(z)|x-z|^{4s-N}
+o\left(|x-z|^{4s-N}\right).
\end{equation*}
Here,
\begin{equation*}
d_{N,s}
:=-2^{-2s}
\frac{\Gamma\left(\frac{N-4s}{2}\right)\Gamma(s)}
{\Gamma\left(\frac{N-2s}{2}\right)\Gamma(2s)}
>0.
\end{equation*}
The expansions are uniform for $z$ in compact subsets of $\Omega$.
\end{lem}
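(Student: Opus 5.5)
The plan is to pass from $H_a$ to the function $h_z=H_0(z,\cdot)-H_a(z,\cdot)$ of Lemma~\ref{lem:hz}. It solves
\begin{equation*}
(-\Delta)^s h_z=-a\,G_a(z,\cdot)\quad\text{in }\Omega,\qquad h_z=0\quad\text{in }\R^N\setminus\Omega .
\end{equation*}
Since $H_0(z,\cdot)$ is smooth near $z$, uniformly for $z$ in compact sets, it suffices to expand $h_z$ at $z$. We have $H_0(z,x)=H_0(z,z)+\nabla_xH_0(z,z)\cdot(x-z)+O(|x-z|^2)$, and $O(|x-z|^2)=o(|x-z|^{4s-N})$ because $4s-N<2s<2$. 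We write $G_a(z,x)=|x-z|^{-(N-2s)}-H_a(z,x)$. Then
\begin{equation*}
-a(x)G_a(z,x)=-a(z)|x-z|^{-(N-2s)}+F_z(x),
\qquad
F_z(x):=-\bigl(a(x)-a(z)\bigr)|x-z|^{-(N-2s)}+a(x)H_a(z,x).
\end{equation*}
Here $a\in C^1(\Omega)$ and $H_a$ is bounded near the diagonal (Remark~\ref{rem:continuous-a}). Hence $|F_z(x)|\lesssim|x-z|^{-(N-2s)+1}+1$ near $z$, uniformly for $z$ in compact sets.

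\textbf{The singular part.} The key explicit input is the homogeneous identity $(-\Delta)^s|x|^{-\beta}=c(\beta)|x|^{-\beta-2s}$, valid in the distributional sense on $\R^N$. We use it with $\beta=N-4s\in(-2s,0)$, so that the right-hand side is $|x|^{-(N-2s)}$ up to a constant. The Fourier-multiplier formula for Riesz potentials gives
\begin{equation*}
c(\beta)=2^{2s}\frac{\Gamma\left(\frac{N-\beta}{2}\right)\Gamma\left(\frac{\beta+2s}{2}\right)}{\Gamma\left(\frac{\beta}{2}\right)\Gamma\left(\frac{N-\beta-2s}{2}\right)}.
\end{equation*}
At $\beta=N-4s$ this becomes $2^{2s}\Gamma(2s)\Gamma\left(\frac{N-2s}{2}\right)/\bigl(\Gamma\left(\frac{N-4s}{2}\right)\Gamma(s)\bigr)$. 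This is negative and equals $-d_{N,s}^{-1}$, so that
\begin{equation*}
(-\Delta)^s\bigl(d_{N,s}\,|x-z|^{4s-N}\bigr)=-|x-z|^{-(N-2s)}.
\end{equation*}
I would double-check this constant and its sign carefully. With $\chi$ a cutoff equal to $1$ near $z$, the function $\chi\,a(z)d_{N,s}|x-z|^{4s-N}$ therefore reproduces the leading singular forcing $-a(z)|x-z|^{-(N-2s)}$ near $z$, up to a correction that is smooth near $z$. We set
\begin{equation*}
r_z:=h_z-a(z)d_{N,s}\,\chi\,|\cdot-z|^{4s-N}.
\end{equation*}
The aim is to show $r_z(x)=r_z(z)+\nabla r_z(z)\cdot(x-z)\mathbf 1_{\{4s-N\geq1\}}+o(|x-z|^{4s-N})$. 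Then $H_a=H_0-h_z$ gives the claimed expansion, with $\nu_z=\nabla_xH_0(z,z)-\nabla r_z(z)$, and the linear term absorbed into the $o$-term when $4s-N<1$.

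\textbf{Regularity of $r_z$ near $z$.} In a ball $B=B_\delta(z)\Subset\Omega$, the function $r_z$ satisfies $(-\Delta)^s r_z=F_z+g_z$, where $g_z$ is smooth in $B$ and comes from the cutoff. I would split $r_z=I_s[\mathbf 1_BF_z]+I_s[\mathbf 1_Bg_z]+\tilde r_z$, where $I_s$ denotes the Riesz potential $c\,|\cdot|^{-(N-2s)}*$ and $\tilde r_z$ is $s$-harmonic in $B$. The function $\tilde r_z$ is smooth in $B_{\delta/2}$, with bounds controlled by the global $L^\infty$ and $\ths$ bounds of Lemma~\ref{lem:hz}. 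This avoids any boundary regularity. For $I_s[\mathbf 1_BF_z]$ I would split $F_z$ into two parts.
\begin{itemize}
\item The bounded part $a(x)H_a(z,x)$ lies in $L^\infty$, so its potential is $C^{2s-\epsilon}$. Since $2s>4s-N$, this is $C^{1,\alpha}$ with $\alpha>4s-N-1$ when $4s-N\geq1$, and $C^{4s-N+\epsilon'}$ otherwise. Either way its first-order Taylor expansion has remainder $o(|x-z|^{4s-N})$.
\item The part $-(a(x)-a(z))|x-z|^{-(N-2s)}$ is $O(|x-z|^{-(N-2s)+1})$. Its potential should behave like a function homogeneous of degree $4s-N+1$ at $z$, plus something smoother. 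I would verify this by writing $a(x)-a(z)=\nabla a(z)\cdot(x-z)+o(|x-z|)$ and estimating the Riesz potential directly, splitting the integral over $|y-z|<2|x-z|$ and its complement. This gives a remainder that is $C^1$ at $z$ with Taylor error $O(|x-z|^{4s-N+1-\epsilon})=o(|x-z|^{4s-N})$.
\end{itemize}

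\textbf{Main obstacle.} I expect the hardest step to be this last dyadic estimate of the Riesz potential of the non-smooth, singular forcing. One must get a genuinely $o(|x-z|^{4s-N})$ Taylor remainder, including the borderline case where $4s-N$ is close to $1$, and keep all constants uniform for $z$ in compact sets. The uniformity follows once every bound above is expressed through $\|a\|_{C^1(K)}$, $\sup_{z\in K}\|H_a(z,\cdot)\|_{L^\infty}$ and $\operatorname{dist}(K,\partial\Omega)$.
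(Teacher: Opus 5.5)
Your proof is correct, including the constant and sign in $(-\Delta)^s|x|^{4s-N}=-d_{N,s}^{-1}|x|^{2s-N}$, and it follows essentially the same route as the source: the paper gives no proof of its own and imports the statement from \cite[Lemma~A.3]{DeNittiKonig2023}, where, as far as I recall, the argument is this same subtraction of $d_{N,s}a(z)|x-z|^{4s-N}$ followed by interior regularity of the remainder. The step you flag as the main obstacle is routine: since $|F_z|\lesssim|x-z|^{1-(N-2s)}+1$, the Lipschitz bound on $a$ alone gives $F_z\in L^p_{\mathrm{loc}}$ for every $p<N/(N-2s-1)$ (every $p<\infty$ if $N-2s\leq1$), and the $L^p\to C^{2s-N/p}$ (resp.\ $C^{1,2s-N/p-1}$) estimate for the Riesz potential then gives an exponent above $4s-N$ at once; note also that for $4s-N>1$ the first-order Taylor remainder of that potential is in general only $O(|x-z|^{2})$ (a far-field contribution of the kernel), not $O(|x-z|^{4s-N+1-\epsilon})$ as you claim, which is harmless because $4s-N<2$.
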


Combining \cite[Lemma~A.4]{DeNittiKonig2023} with the identity $\bar U_{z,\lambda}=\kappa_{N,\mu,s}U_{z,\lambda}$ gives the following estimates.

\begin{lem}
\label{lem:barU-Ha-power-asymptotics}
Under the assumptions of Lemma \ref{lemma Ha expansion}, let $k \in \N$ with $k\leq2_s^*$. If $k=1$, then
\begin{small}
\begin{equation*}
\begin{split}
\lambda^{-\frac{N-2s}{2}}
\int_\Omega
\bar U_{z,\lambda}^{2_s^*-1}(x)H_a(z,x)dx
&=\kappa_{N,\mu,s}^{2_s^*-1}
\mathfrak a_{N,s}^{(1)}\phi_a(z)\lambda^{-(N-2s)}
-\kappa_{N,\mu,s}^{2_s^*-1}
d_{N,s}b_{N,s}a(z)\lambda^{-2s}
+o\left(\lambda^{-2s}\right).
\end{split}
\end{equation*}
\end{small}
If $2\leq k\leq K$, then
\begin{equation*}
\begin{split}
\lambda^{-\frac{k(N-2s)}{2}}
\int_\Omega
\bar U_{z,\lambda}^{2_s^*-k}(x)H_a^k(z,x)dx
&=\kappa_{N,\mu,s}^{2_s^*-k}
\mathfrak a_{N,s}^{(k)}\phi_a^k(z)
\lambda^{-k(N-2s)}+o\left(\lambda^{-2s}\right).
\end{split}
\end{equation*}
Finally, if $k\geq K+1$, then
\begin{equation*}
\begin{split}
\lambda^{-\frac{k(N-2s)}{2}}
\int_\Omega
\bar U_{z,\lambda}^{2_s^*-k}(x)
\left|H_a(z,x)\right|^kdx
&=o\left(\lambda^{-2s}\right).
\end{split}
\end{equation*}
The asymptotics are uniform for $z$ in compact subsets of $\Omega$.
\end{lem}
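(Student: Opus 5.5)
The last statement is Lemma~\ref{lem:barU-Ha-power-asymptotics}: the expansion of $\lambda^{-\frac{k(N-2s)}{2}}\int_\Omega\bar U_{z,\lambda}^{2_s^*-k}H_a^k(z,x)\,dx$. My plan is to reduce it to \cite[Lemma~A.4]{DeNittiKonig2023}, stated there for $U_{z,\lambda}$, and then redo the bubble integrals directly so the constants can be checked.

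\textbf{Reduction to the unnormalized bubble.} Since $\bar U_{z,\lambda}=\kappa_{N,\mu,s}U_{z,\lambda}$, we have $\bar U^{2_s^*-k}=\kappa_{N,\mu,s}^{2_s^*-k}U^{2_s^*-k}$. This factor is exactly the prefactor appearing in each claimed identity, so everything reduces to the same statements for $U_{z,\lambda}$.

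\textbf{Local expansion and rescaling.} Fix a compact $K_0\subset\Omega$ and $\delta<\frac12\operatorname{dist}(K_0,\partial\Omega)$, and split $\Omega=B_\delta(z)\cup(\Omega\setminus B_\delta(z))$.
\begin{itemize}
\item On the complement, $H_a$ is bounded by Remark~\ref{rem:continuous-a} and $U\lesssim\lambda^{-\frac{N-2s}{2}}$. Hence this part contributes $O(\lambda^{-\frac{(N-2s)}{2}(2_s^*-k)-\frac{k(N-2s)}{2}})=O(\lambda^{-N})=o(\lambda^{-2s})$.
\item On $B_\delta(z)$, insert Lemma~\ref{lemma Ha expansion}: $H_a(z,x)=\phi_a(z)+\nu_z\cdot(x-z)-d_{N,s}a(z)|x-z|^{4s-N}+o(|x-z|^{4s-N})$, with the linear term present only if $4s-N\ge1$.
\end{itemize}

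\textbf{The case $k=1$.} Change variables $y=\lambda(x-z)$, so that $U_{z,\lambda}^{2_s^*-1}(x)=\lambda^{\frac{N+2s}{2}}U_{0,1}^{2_s^*-1}(y)$.
\begin{itemize}
\item The constant term gives $\phi_a(z)\lambda^{-(N-2s)}\bigl(\mathfrak a^{(1)}_{N,s}-\int_{|y|>\lambda\delta}U_{0,1}^{2_s^*-1}\bigr)$. Since $U_{0,1}^{2_s^*-1}\sim|y|^{-(N+2s)}$, the tail is $O(\lambda^{-2s})$, so the error is $O(\lambda^{-N})$.
\item The linear term vanishes by oddness over the ball.
\item The $|x-z|^{4s-N}$ term gives $-d_{N,s}a(z)\lambda^{-2s}\int_{B_{\lambda\delta}}U_{0,1}^{2_s^*-1}|y|^{4s-N}$. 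The integrand decays like $|y|^{-2N+2s}$, which is integrable, so this converges to $-d_{N,s}a(z)b_{N,s}\lambda^{-2s}$.
\item The $o(|x-z|^{4s-N})$ remainder is handled by dominated convergence on the same weight, giving $o(\lambda^{-2s})$.
\end{itemize}
Uniformity in $z\in K_0$ follows from the uniform expansion in Lemma~\ref{lemma Ha expansion}.

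\textbf{The case $2\le k\le K$.} Write $H_a^k=\phi_a^k+O(|x-z|^{\sigma})$ near $z$ and use the boundedness of $H_a$ elsewhere. Rescaling shows that $\lambda^{-\frac{k(N-2s)}{2}}\int U^{2_s^*-k}\phi_a^k$ equals $\mathfrak a^{(k)}_{N,s}\phi_a^k\lambda^{-k(N-2s)}$, up to a tail of size $\lambda^{-N}$. This needs $k<\frac N{N-2s}$ for integrability, which holds because $k\le K\le\frac{2s}{N-2s}<\frac N{N-2s}$.

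The correction term is bounded by $\lambda^{-k(N-2s)}\int U_{0,1}^{2_s^*-k}\min\{(|y|/\lambda)^\sigma,1\}$. Here the $\min$ is the bound $|H_a^k-\phi_a^k|\lesssim\min\{|x-z|^\sigma,1\}$, valid since $H_a$ is bounded and $H_a-\phi_a=O(|x-z|^\sigma)$ near $z$.
\begin{itemize}
\item If $(2_s^*-k)(N-2s)-\sigma>N$, the integral converges and the term is $O(\lambda^{-k(N-2s)-\sigma})$.
\item Otherwise, splitting at $|y|\sim\lambda$ gives $O(\lambda^{-N}\log\lambda)$.
\end{itemize}
Both are $o(\lambda^{-2s})$ because $k\ge2$ gives $k(N-2s)+\sigma>2(N-2s)\ge\ldots$; more directly, $k(N-2s)\ge2(N-2s)>2s$ whenever $N>3s$. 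The main obstacle is the remaining range, where $k(N-2s)$ can be $\le2s$.

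\textbf{Handling the obstacle.} In that range I would use $\sigma=\min\{1,4s-N\}$ and the precise expansion: the first nonconstant correction is again the odd linear term, which integrates to zero. The next correction is of order $\lambda^{-k(N-2s)-(4s-N)}$. Its exponent satisfies $k(N-2s)+4s-N\ge2(N-2s)+4s-N=N>2s$, so it is $o(\lambda^{-2s})$.

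\textbf{The case $k\ge K+1$.} Then $k(N-2s)>2s$. Bound $|H_a|^k$ by a constant and use Lemma~\ref{lem:barU-Lq-estimates} with $q=2_s^*-k$ in the $\frac{N}{N-2s}$ regimes. This gives
\[
\lambda^{-\frac{k(N-2s)}{2}}\max\Bigl\{\lambda^{-\frac{(N-2s)(2_s^*-k)}{2}+\ldots},\ \lambda^{-\frac{N-2s}{2}}\Bigr\},
\]
and hence $O(\lambda^{-k(N-2s)}+\lambda^{-N}\log\lambda)=o(\lambda^{-2s})$, recalling $K+1>\frac{2s}{N-2s}$. For non-integer $2_s^*-k<1$ the same pointwise bound $U\lesssim\lambda^{\frac{N-2s}{2}}$ near $z$ and direct integration suffice.
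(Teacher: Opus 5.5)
Your proposal is correct and follows the paper's route: the paper factors out $\kappa_{N,\mu,s}^{2_s^*-k}$ via $\bar U_{z,\lambda}=\kappa_{N,\mu,s}U_{z,\lambda}$ and cites \cite[Lemma~A.4]{DeNittiKonig2023}, and your rescaling computation (including the oddness cancellation of $\nu_z\cdot(x-z)$ when $4s-N\geq1$) is a faithful verification of that cited lemma. One minor correction: since $K\geq2$ forces $N\leq3s$, the whole range $2\leq k\leq K$ is your ``obstacle'' range, and there the weight $|y|^{4s-N}U_{0,1}^{2_s^*-k}$ is not integrable, so after removing the linear term the remainder is $O(\lambda^{-N}\log\lambda)$ (by splitting at $|y|\sim\lambda$) rather than $O(\lambda^{-k(N-2s)-(4s-N)})$, which is still $o(\lambda^{-2s})$.
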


\begin{lem}
\label{lem:barU-Ha-double-endpoint-asymptotics}
Under the assumptions of Lemma \ref{lemma Ha expansion}, let $i,j$ be nonnegative integers. If $2\leq i+j\leq K$, and  $i=0$ or $j=0$, then
\begin{equation*}
\begin{split}
&\lambda^{-\frac{(i+j)(N-2s)}{2}}
\int_\Omega\int_\Omega
\frac{
\bar U_{z,\lambda}^{\twostar-i}(x)H_a^i(z,x)
\bar U_{z,\lambda}^{\twostar-j}(y)H_a^j(z,y)}
{|x-y|^\mu}dxdy\\=&a_{N,s}A_{N,\mu,s}^{-\frac{2s}{N+2s-\mu}}
\kappa_{N,\mu,s}^{2_s^*-i-j}
\mathfrak a_{N,s}^{(i+j)}\phi_a^{i+j}(z)
\lambda^{-(i+j)(N-2s)}
+o\left(\lambda^{-2s}\right).
\end{split}
\end{equation*}
If $2\leq i+j\leq K$, and  $i\geq1$, $j\geq1$, then
\begin{equation*}
\begin{split}
&\lambda^{-\frac{(i+j)(N-2s)}{2}}
\int_\Omega\int_\Omega
\frac{
\bar U_{z,\lambda}^{\twostar-i}(x)H_a^i(z,x)
\bar U_{z,\lambda}^{\twostar-j}(y)H_a^j(z,y)}
{|x-y|^\mu}dxdy
\\
=&C_{i,j}\phi_a^{i+j}(z)
\lambda^{-(i+j)(N-2s)}
+o\left(\lambda^{-2s}\right),
\end{split}
\end{equation*}
where
\begin{equation*}
C_{i,j}
:=
\int_{\R^N}\int_{\R^N}
\frac{
\bar U_{0,1}^{\twostar-i}(x)
\bar U_{0,1}^{\twostar-j}(y)}
{|x-y|^\mu}dxdy.
\end{equation*}
The constants $C_{i,j}$ are finite. Indeed, $\bar U_{0,1}^{\twostar-i}(x)\lesssim(1+|x|)^{-\alpha_i}$ with $\alpha_i:=2N-\mu-i(N-2s)$, and $\alpha_i+\mu=2N-i(N-2s)>N$, since $i(N-2s)<K(N-2s)\leq2s<N$. Hence
\begin{equation*}
\int_{\R^N}\frac{(1+|y|)^{-\alpha_j}}{|x-y|^\mu}dy\lesssim
\begin{cases}
(1+|x|)^{-\mu}, & \alpha_j>N,\\
(1+|x|)^{-\mu}\log(2+|x|), & \alpha_j=N,\\
(1+|x|)^{N-\mu-\alpha_j}, & \alpha_j<N,
\end{cases}
\end{equation*}
and the remaining integral in $x$ converges because $\alpha_i+\mu>N$ and $\alpha_i+\alpha_j>2N-\mu$; the latter is equivalent to $(i+j)(N-2s)<2N-\mu$, which holds since $(i+j)(N-2s)\leq K(N-2s)\leq2s<N<2N-\mu$.
Finally, if $0\leq i,j\leq K$ and $i+j\geq K+1$, then
\begin{equation*}
\begin{split}
&\lambda^{-\frac{(i+j)(N-2s)}{2}}
\int_\Omega\int_\Omega
\frac{
\bar U_{z,\lambda}^{\twostar-i}(x)|H_a(z,x)|^i
\bar U_{z,\lambda}^{\twostar-j}(y)|H_a(z,y)|^j}
{|x-y|^\mu}dxdy
=o\left(\lambda^{-2s}\right).
\end{split}
\end{equation*}

The asymptotics are uniform for $z$ in compact subsets of $\Omega$.
\end{lem}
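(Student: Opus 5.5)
This lemma is the double-integral analogue of Lemma~\ref{lem:barU-Ha-power-asymptotics}. I would prove it by treating the two endpoint cases separately. In both, the idea is to replace $H_a(z,\cdot)$ by the constant $\phi_a(z)$ and then to rescale around $z$.

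\emph{Case $j=0$ (and symmetrically $i=0$).} Integrate first in $y$ over $\R^N$ using \eqref{easybarU}:
\[
\int_{\R^N}\frac{\bar U^{\twostar}_{z,\lambda}(y)}{|x-y|^\mu}\,dy=c_0\,\bar U_{z,\lambda}^{\frac{\mu}{N-2s}}(x).
\]
This reduces the double integral to $c_0\int_\Omega \bar U^{2_s^*-i}_{z,\lambda}H_a^i(z,\cdot)$, to which Lemma~\ref{lem:barU-Ha-power-asymptotics} with $k=i\geq2$ applies. The factor $\kappa^{2_s^*-i}$ is produced by that lemma; note that $\twostar-i+\frac{\mu}{N-2s}=2_s^*-i$. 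What remains is the error from integrating over $\R^N\setminus\Omega$ instead of $\Omega$ in $y$. I would bound it exactly as in \eqref{eq:linear-Ha-exterior-error}: use HLS with an admissible pair $(t,r)$, the exterior decay in Lemma~\ref{lem:barU-global-exterior-Lq} (which gives $\lambda^{-(2N-\mu)/2}$), and boundedness of $H_a$. Since $i(N-2s)\le 2s$, the resulting power is $o(\lambda^{-2s})$.

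\emph{Case $i,j\geq1$.} Write $H_a(z,x)^i=\phi_a(z)^i+E_i(x)$, where $|E_i(x)|\lesssim\min\{1,|x-z|^{\sigma'}\}$ for some $\sigma'>0$ by Lemma~\ref{lemma Ha expansion}. The main term is
\[
\phi_a^{i+j}(z)\,\lambda^{-\frac{(i+j)(N-2s)}2}\int_\Omega\int_\Omega\frac{\bar U^{\twostar-i}_{z,\lambda}(x)\,\bar U^{\twostar-j}_{z,\lambda}(y)}{|x-y|^\mu}\,dx\,dy .
\]
After the change of variables $X=\lambda(x-z)$, the homogeneity count $\lambda^{\frac{(N-2s)}2(2\twostar-i-j)+\mu-2N}=\lambda^{-\frac{(i+j)(N-2s)}2}$ produces $\lambda^{-(i+j)(N-2s)}$ times the integral over $\lambda(\Omega-z)\times\lambda(\Omega-z)$. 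This converges to $C_{i,j}$, and $C_{i,j}<\infty$ by the decay computation recorded in the statement. The convergence error, i.e.\ the integral over the region where one of the variables lies outside $B_{\lambda d}$, is bounded by the tail of the same convergent integral. Using the decay $\alpha_i+\mu>N$ and $\alpha_i+\alpha_j>2N-\mu$ with a slightly smaller exponent, this tail is $O(\lambda^{-\delta})$ for some $\delta>0$.

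\emph{Main obstacle.} The delicate step is the $o(\lambda^{-2s})$ bookkeeping. The prefactor is $\lambda^{-(i+j)(N-2s)}$, which can be only borderline compared with $\lambda^{-2s}$ when $(i+j)(N-2s)=2s$ exactly. So I need genuine extra decay rather than a bare $o(1)$, and the $E_i$ terms must each gain a positive power. After rescaling, $|E_i|\lesssim\lambda^{-\sigma'}|X|^{\sigma'}$, which gives such a gain provided the weighted integral $\iint |X|^{\sigma'}(1+|X|)^{-\alpha_i}(1+|Y|)^{-\alpha_j}|X-Y|^{-\mu}$ converges. This holds for $\sigma'$ small, since the strict inequalities above leave room. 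I would choose $\sigma'<\min\{\sigma,\ \alpha_i+\mu-N,\ \alpha_i+\alpha_j-2N+\mu\}$.

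\emph{Case $i+j\geq K+1$.} Bound $|H_a|$ by a constant and repeat the same rescaling, or use the HLS argument. This gives $O(\lambda^{-(i+j)(N-2s)})$, possibly with a logarithm in the borderline situations where an integrand becomes non-integrable at infinity; in that event I would use HLS with Lemma~\ref{lem:barU-Lq-estimates} instead. Since $(i+j)(N-2s)>2s$ strictly by the definition of $K$, any such logarithm is harmless and the term is $o(\lambda^{-2s})$. Uniformity in $z$ follows because $d(z)$ is bounded below and the expansion in Lemma~\ref{lemma Ha expansion} is uniform.
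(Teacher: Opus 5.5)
Your case $i=0$ or $j=0$ matches the paper's argument. The case $i,j\ge1$, $2\le i+j\le K$, however, has a genuine gap, and it starts with the bookkeeping in your ``main obstacle'' paragraph, which is inverted.

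\textbf{The required gain is quantified.} The target is $o(\lambda^{-2s})$, while the main term carries $\lambda^{-(i+j)(N-2s)}$. At the borderline $(i+j)(N-2s)=2s$, a bare $o(1)$ relative error already suffices. When $(i+j)(N-2s)<2s$, every error term must gain a factor $\lambda^{-\delta}$ with $\delta>2s-(i+j)(N-2s)$. This is a fixed positive amount, as large as $6s-2N$ when $i+j=2$. So neither ``the tail is $O(\lambda^{-\delta})$ for some $\delta>0$'' nor ``choose $\sigma'$ small'' yields the claim. For the tail this is repairable: the true decay exponents $N-\max\{i,j\}(N-2s)$ and $2N-\mu-(i+j)(N-2s)$ both exceed $2s-(i+j)(N-2s)$. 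When $4s-N<1$, the $E_i$ terms can likewise be saved by taking $\sigma'$ above this threshold rather than small; there is room below your upper bounds.

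\textbf{The missing idea: cancellation of the linear term.} This concerns the regime $4s-N\ge1$, which occurs for $N=2$, $s\ge\frac34$. There Lemma~\ref{lemma Ha expansion}\,(ii) gives $H_a(z,x)=\phi_a(z)+\nu_z\cdot(x-z)+O(|x-z|^{4s-N})$. Hence your bound $|E_i|\lesssim|x-z|^{\sigma'}$ forces $\sigma'\le1$. The first-order term $\phi_a^{i+j-1}(z)\bigl(i\nu_z\cdot(x-z)+j\nu_z\cdot(y-z)\bigr)$ then has absolute contribution at least of order $\lambda^{-(i+j)(N-2s)-1}$ whenever $\phi_a(z)\nu_z\neq0$. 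For $i=j=1$ and $s\ge\frac56$ one has $2s-2(N-2s)=6s-4\ge1$, so no estimate by absolute values can give $o(\lambda^{-2s})$.

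The paper handles this in two steps. First, it discards the part of $\Omega\times\Omega$ outside $B_{r_0}(z)\times B_{r_0}(z)$ using the exterior estimates. Second, it uses that the linear term integrates to exactly zero on $B_{r_0}(z)\times B_{r_0}(z)$: the domain, the kernel $|x-y|^{-\mu}$ and $\bar U_{z,\lambda}$ are all invariant under $(x,y)\mapsto(2z-x,2z-y)$, while $\nu_z\cdot(x-z)$ changes sign. What remains is $O(|x-z|^{4s-N}+|y-z|^{4s-N})$. Its gain $\lambda^{-(4s-N)}$ suffices, since $4s-N>6s-2N\ge2s-(i+j)(N-2s)$ is equivalent to $N>2s$. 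With this weight the rescaled integral can diverge (e.g.\ for $i\ge2$). So you must bound its growth on $B_{\lambda r_0}(0)$ instead of insisting on convergence; the total is still $o(\lambda^{-2s})$.

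\textbf{A smaller point in the case $i+j\ge K+1$.} Your claim that the rescaled integral is bounded up to logarithms is not accurate. It grows polynomially when $(i+j)(N-2s)>2N-\mu$, giving $O(\lambda^{-(2N-\mu)})$, which is still $o(\lambda^{-2s})$. The paper sidesteps this bookkeeping with the weighted Young inequality $A^iB^j\le\frac{i}{i+j}A^{i+j}+\frac{j}{i+j}B^{i+j}$. This reduces the double integral, via \eqref{easybarU}, to Lemma~\ref{lem:barU-Ha-power-asymptotics} with $k=i+j$.
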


\begin{proof}
First suppose that $2\leq i+j\leq K$, and  $i=0$ or $j=0$. By symmetry, it suffices to assume that $i=0$ and $2\leq j\leq K$. The argument for \eqref{eq:linear-Ha-exterior-error} gives
\begin{equation*}
\lambda^{-\frac{j(N-2s)}{2}}
\left|
\int_\Omega\int_{\R^N\setminus\Omega}
\frac{
\bar U_{z,\lambda}^{\twostar}(x)
\bar U_{z,\lambda}^{\twostar-j}(y)H_a^j(z,y)}
{|x-y|^\mu}dxdy
\right|
=o\left(\lambda^{-2s}\right).
\end{equation*}
Combining this estimate with \eqref{easybarU} and Lemma \ref{lem:barU-Ha-power-asymptotics}, we obtain
\begin{equation*}
\begin{split}
&\lambda^{-\frac{j(N-2s)}{2}}
\int_\Omega\int_\Omega
\frac{
\bar U_{z,\lambda}^{\twostar}(x)
\bar U_{z,\lambda}^{\twostar-j}(y)H_a^j(z,y)}
{|x-y|^\mu}dxdy
\\
=&\lambda^{-\frac{j(N-2s)}{2}}
\int_\Omega\int_{\R^N}
\frac{
\bar U_{z,\lambda}^{\twostar}(x)
\bar U_{z,\lambda}^{\twostar-j}(y)H_a^j(z,y)}
{|x-y|^\mu}dxdy
+o\left(\lambda^{-2s}\right)
\\
=&a_{N,s}A_{N,\mu,s}^{-\frac{2s}{N+2s-\mu}}
\lambda^{-\frac{j(N-2s)}{2}}
\int_\Omega
\bar U_{z,\lambda}^{2_s^*-j}(y)H_a^j(z,y)dy
+o\left(\lambda^{-2s}\right)
\\
=&a_{N,s}A_{N,\mu,s}^{-\frac{2s}{N+2s-\mu}}
\kappa_{N,\mu,s}^{2_s^*-j}
\mathfrak a_{N,s}^{(j)}\phi_a^j(z)
\lambda^{-j(N-2s)}
+o\left(\lambda^{-2s}\right).
\end{split}
\end{equation*}
The case $j=0$ follows by interchanging $x$ and $y$.

Next suppose that $i\geq1$, $j\geq1$, and $2\leq i+j\leq K$. Fix $0<r_0<\operatorname{dist}(z,\partial\Omega)$. Arguing as in \eqref{eq:linear-Ha-exterior-error} and using Lemma \ref{lem:barU-global-exterior-Lq}, we obtain
\begin{equation*}
\begin{split}
&\lambda^{-\frac{(i+j)(N-2s)}{2}}
\left(
\int_{\Omega\setminus B_{r_0}(z)}\int_{B_{r_0}(z)}
\frac{
\bar U_{z,\lambda}^{\twostar-i}(x)|H_a(z,x)|^i
\bar U_{z,\lambda}^{\twostar-j}(y)|H_a(z,y)|^j}
{|x-y|^\mu}dxdy
\right.
\\
&\left.\qquad
+\int_\Omega\int_{\Omega\setminus B_{r_0}(z)}
\frac{
\bar U_{z,\lambda}^{\twostar-i}(x)|H_a(z,x)|^i
\bar U_{z,\lambda}^{\twostar-j}(y)|H_a(z,y)|^j}
{|x-y|^\mu}dxdy
\right)
=o\left(\lambda^{-2s}\right).
\end{split}
\end{equation*}
It remains to bound the integral over $B_{r_0}(z)\times B_{r_0}(z)$. If $0<4s-N<1$, Lemma \ref{lemma Ha expansion}\,(i) gives
\begin{equation*}
H_a^i(z,x)H_a^j(z,y)
=\phi_a^{i+j}(z)
+O\left(|x-z|^{4s-N}+|y-z|^{4s-N}\right).
\end{equation*}
A direct calculation using the HLS inequality gives
\begin{equation*}
\lambda^{-\frac{(i+j)(N-2s)}{2}}
\int_{B_{r_0}(z)}\int_{B_{r_0}(z)}
\frac{
\bar U_{z,\lambda}^{\twostar-i}(x)
\bar U_{z,\lambda}^{\twostar-j}(y)}
{|x-y|^\mu}
\left(
|x-z|^{4s-N}+|y-z|^{4s-N}
\right)dxdy
=o\left(\lambda^{-2s}\right).
\end{equation*}
On the other hand, if $4s-N\geq1$, then Lemma \ref{lemma Ha expansion}\,(ii) gives
\begin{small}
  \begin{equation*}
\begin{split}
H_a^i(z,x)H_a^j(z,y)
=&\phi_a^{i+j}(z)
+\phi_a^{i+j-1}(z)
\left(i\nu_z\cdot(x-z)+j\nu_z\cdot(y-z)\right)
+O\left(
|x-z|^{4s-N}+|y-z|^{4s-N}
+|x-z|^2+|y-z|^2
\right).
\end{split}
\end{equation*}
\end{small}
By symmetry,
\begin{equation*}
\begin{split}
&\int_{B_{r_0}(z)}\int_{B_{r_0}(z)}
\frac{
\bar U_{z,\lambda}^{\twostar-i}(x)
\bar U_{z,\lambda}^{\twostar-j}(y)}
{|x-y|^\mu}
\left(i\nu_z\cdot(x-z)+j\nu_z\cdot(y-z)\right)dxdy
=0.
\end{split}
\end{equation*}
The remaining terms satisfy, by the HLS inequality,
\begin{equation*}
\lambda^{-\frac{(i+j)(N-2s)}{2}}
\int_{B_{r_0}(z)}\int_{B_{r_0}(z)}
\frac{
\bar U_{z,\lambda}^{\twostar-i}(x)
\bar U_{z,\lambda}^{\twostar-j}(y)}
{|x-y|^\mu}
\left(
|x-z|^{4s-N}+|y-z|^{4s-N}
+|x-z|^2+|y-z|^2
\right)dxdy
=o\left(\lambda^{-2s}\right).
\end{equation*}
In both cases,
\begin{equation*}
\begin{split}
&\lambda^{-\frac{(i+j)(N-2s)}{2}}
\int_\Omega\int_\Omega
\frac{
\bar U_{z,\lambda}^{\twostar-i}(x)H_a^i(z,x)
\bar U_{z,\lambda}^{\twostar-j}(y)H_a^j(z,y)}
{|x-y|^\mu}dxdy
\\
=&\phi_a^{i+j}(z)\lambda^{-\frac{(i+j)(N-2s)}{2}}
\int_{B_{r_0}(z)}\int_{B_{r_0}(z)}
\frac{
\bar U_{z,\lambda}^{\twostar-i}(x)
\bar U_{z,\lambda}^{\twostar-j}(y)}
{|x-y|^\mu}dxdy
+o\left(\lambda^{-2s}\right)
\\
=&C_{i,j}\phi_a^{i+j}(z)\lambda^{-(i+j)(N-2s)}
+o\left(\lambda^{-2s}\right).
\end{split}
\end{equation*}

Finally, suppose that $0\leq i,j\leq K$ and $i+j\geq K+1$. By the weighted Young inequality,
\begin{equation*}
A^iB^j
\leq
\frac{i}{i+j}A^{i+j}
+\frac{j}{i+j}B^{i+j},
\qquad A,B\geq0.
\end{equation*}
Using this inequality, symmetry, and \eqref{easybarU}, we obtain
\begin{equation*}
\begin{split}
&\lambda^{-\frac{(i+j)(N-2s)}{2}}
\int_\Omega\int_\Omega
\frac{
\bar U_{z,\lambda}^{\twostar-i}(x)|H_a(z,x)|^i
\bar U_{z,\lambda}^{\twostar-j}(y)|H_a(z,y)|^j}
{|x-y|^\mu}dxdy
\\
=&\lambda^{-\frac{(i+j)(N-2s)}{2}}
\int_\Omega\int_\Omega
\frac{
\bar U_{z,\lambda}^{\twostar}(x)
\bar U_{z,\lambda}^{\twostar}(y)}
{|x-y|^\mu}
\left(
\frac{|H_a(z,x)|}{\bar U_{z,\lambda}(x)}
\right)^i
\left(
\frac{|H_a(z,y)|}{\bar U_{z,\lambda}(y)}
\right)^jdxdy
\\
\lesssim&\lambda^{-\frac{(i+j)(N-2s)}{2}}
\int_\Omega\int_\Omega
\frac{
\bar U_{z,\lambda}^{\twostar-i-j}(x)
|H_a(z,x)|^{i+j}
\bar U_{z,\lambda}^{\twostar}(y)}
{|x-y|^\mu}dxdy
\\
\lesssim&\lambda^{-\frac{(i+j)(N-2s)}{2}}
\int_\Omega
\bar U_{z,\lambda}^{2_s^*-i-j}(x)
|H_a(z,x)|^{i+j}dx.
\end{split}
\end{equation*}
Since $K+1\leq i+j\leq2K<2_s^*$, Lemma \ref{lem:barU-Ha-power-asymptotics} shows that the last expression is $o\left(\lambda^{-2s}\right)$.

\end{proof}

\vspace{0.5cm}
\noindent \textbf{Data Availability.} Data sharing is not applicable to this article as no datasets were generated or analyzed during the current study.

\vspace{0.5cm}
\noindent \textbf{Conflict of interest.} The authors have no competing interests to declare that are relevant to the content of this article.

\end{document}